\theoremstyle{plain}
\newtheorem{theorem}{\protect\theoremname}[section]
\theoremstyle{plain}
\newtheorem{proposition}[theorem]{\protect\propositionname}
\theoremstyle{definition}
\newtheorem{definition}[theorem]{\protect\definitionname}
\theoremstyle{plain}
\newtheorem{corollary}[theorem]{\protect\corollaryname}
\theoremstyle{remark}
\newtheorem{remark}[theorem]{\protect\remarkname}
\theoremstyle{plain}
\newtheorem{lemma}[theorem]{\protect\lemmaname}
\theoremstyle{plain}
\providecommand{\corollaryname}{Corollary}
\providecommand{\definitionname}{Definition}
\providecommand{\lemmaname}{Lemma}
\providecommand{\propositionname}{Proposition}
\providecommand{\remarkname}{Remark}
\providecommand{\theoremname}{Theorem}
\providecommand{\conjecturename}{Conjecture}
\numberwithin{equation}{section}
\numberwithin{figure}{section}
\numberwithin{table}{section}
\newcommand{\R}{\mathbb{R}}
\newcommand{\N}{\mathbb{N}}
\newcommand{\Z}{\mathbb{Z}}
\newcommand{\Sph}{\mathbb{S}}
\newcommand{\Rd}{\R^d}
\newcommand{\bydef}{\mathrel{\mathop:}=}
\newcommand{\loc}{\mathrm{loc}}
\newcommand{\jp}[1]{\langle #1 \rangle}
\newcommand{\norm}{\mathbf{n}}
\newcommand{\supp}{\mathrm{supp\,}}
\newcommand{\sing}{\mathrm{sing\,}}
\newcommand{\ul}[1]{\underline{#1}}
\newcommand{\upsi}{\underline{\psi}}
\renewcommand{\O}{\mathcal{O}}
\newcommand{\WF}{\mathrm{WF}}
\newcommand{\HWF}{\mathrm{HWF}}
\newcommand{\Geo}{\mathcal{G}}
\newcommand{\Cinf}{C^\infty}
\newcommand{\M}[3]{M^{#1}_{#2}(#3)}
\renewcommand{\H}[1]{H^{#1}}
\newcommand{\HC}[1]{\mathcal{H}^{#1}}
\newcommand{\Linf}{L^\infty}
\newcommand{\Ccinf}{C_c^\infty}
\newcommand{\Lone}{L^1}
\newcommand{\Ltwo}{L^2}
\newcommand{\Holder}[1]{W^{#1,\infty}}
\newcommand{\Dscr}{\mathscr{D}}
\newcommand{\swtz}{\mathscr{S}}
\newcommand{\OP}[2]{\mathscr{O}^{#1}_{#2}}
\newcommand{\drv}{\mathrm{d}}
\renewcommand{\d}{\,\drv}
\newcommand{\dx}{\d x}
\newcommand{\dy}{\d y}
\newcommand{\dz}{\d z}
\newcommand{\dt}{\d t}
\newcommand{\ds}{\d s}
\newcommand{\dxi}{\d \xi}
\newcommand{\pt}{\partial_t}
\newcommand{\ps}{\partial_s}
\newcommand{\pn}{\partial_\norm}
\newcommand{\px}{\partial_x}
\newcommand{\pz}{\partial_z}
\newcommand{\pxi}{\partial_\xi}
\newcommand{\T}[1]{T_{#1}}
\renewcommand{\P}[1]{\mathcal{P}_{#1}}
\newcommand{\Fourier}{\mathcal{F}}
\newcommand{\Id}{\mathrm{Id}}
\newcommand{\op}{\mathrm{Op}}
\renewcommand{\L}{\mathcal{L}}
\newcommand{\Lag}{\mathscr{L}}
\newcommand{\Partition}{\mathscr{P}}
\renewcommand{\Re}{\mathrm{Re}}
\renewcommand{\Im}{\mathrm{Im}}
\renewcommand{\div}{\mathrm{div}}
\newcommand{\A}{\mathcal{A}}
\newcommand{\B}{\mathcal{B}}
\begin{document}

\title[Propagation of singularities for gravity-capillary water waves]{Propagation of singularities for \\ gravity-capillary water waves}
\author[H. Zhu]{HUI ZHU}
\address{\parbox[t]{\linewidth}{Laboratoire de Mathématiques d’Orsay, Univ.~Paris-Sud, CNRS\\ Université Paris-Saclay, 91405 Orsay, France}}
\curraddr{\parbox[t]{\linewidth}{University of Michigan Department of Mathematics\\ 1830 East Hall, 530 Church St., Ann Arbor, MI 48109}}
\email{zhuhui@umich.edu}
\thanks{The author is partially supported by the grant ``ANAÉ'' ANR-13-BS01-0010-03 of the Agence Nationale de la Recherche, and the Allocation Doctorale of the École Normale Supérieure.}

\begin{abstract}
We obtain two results of propagation for the gravity-capillary water wave system.
The first result shows the propagation of oscillations and the spatial decay at infinity; the second result shows a microlocal smoothing effect under the non-trapping condition of the initial free surface.
These results extend the works of Craig, Kappeler and Strauss~\cite{CKS95microlocal},  Wunsch~\cite{Wunsch99propagation} and Nakamura~\cite{Nakamura05propagation} to quasilinear dispersive equations. 
These propagation results are stated for water waves with asymptotically flat free surfaces, of which we also obtain the existence.
To prove these results, we generalize the paradifferential calculus of Bony~\cite{Bony79calcul} to weighted Sobolev spaces and develop a semiclassical paradifferential calculus.
We also introduce the quasi-homogeneous wavefront sets which characterize, in a general manner, the oscillations and the spatial growth/decay of distributions.
\end{abstract}

\maketitle

\section{Introduction}

In this paper, we present two results on the propagation of singularities for the gravity-capillary water wave system, including a microlocal smoothing effect. To the best of our knowledge, these results are the first of this type for quasilinear dispersive equations.
Before stating the main results, we shall first revisit classical results of propagation for the linear half wave equation and the linear Schr\"odinger equation.
They lead us to a more generalized concept of singularities which is adaptive to various dispersive equations.

\subsection{Wavefront set and the linear half wave equation}

If~$ u \in \Dscr'(M) $ where~$ M $ is a smooth manifold without boundary, then the singular support of~$ u $, denoted by $ \sing\supp u $, is the smallest closed subset of~$ M $ outside of which~$ u $ is smooth. 
To study the propagation of singularities when~$ u $ solves some partial differential equations, the information given by $ \sing\supp u $ is usually insufficient.
Heuristically, if we consider singularities as accumulations of wavepackets with large wavenumbers, then this is because the propagation direction of a wavepacket is given by its wavenumber rather than its location.
It is probably with this mindset that H\"ormander introduced in~\cite{Hormander71fourier} the concept of the wavefront set.

The wavefront set of~$ u $, denoted by $ \WF(u) $, lifts $ \sing\supp u $ to the cotangent bundle $ T^*M\backslash 0 $ in the sense that a point $ x_0 \in M $ belongs to $ \sing\supp u $ if and only if there exists $ \xi_0 \ne 0 $ such that $ (x_0,\xi_0) \in \WF(u) $.
We shall recall an equivalent definition of $ \WF(u) $ essentially due to Guillemin and Sternberg~\cite{GS77geometric}:
in local coordinates, a point $ (x_0,\xi_0) \in T^*M \backslash 0 $ does not belong to $ \WF(u) $ if and only if there exists $ a \in \Ccinf(\R^{2d}) $ with $ a(x_0,\xi_0) \ne 0 $ such that
$ \| a(x,hD_x) u \|_{L^2} = \O(h^\infty) $
for $ h \in (0,1] $.
For the definition of the pseudodifferential operator $ a(x,hD_x) $, see~\eqref{eq:def:quantization}.

In terms of the wavefront set, H\"ormander proved in~\cite{Hormander71fourier} a propagation result for pseudodifferential equations of real principal type, improving previous works on wave propagation by Courant and Lax~\cite{CL56propagation} and Lax~\cite{Lax57asymptotic}.
\begin{theorem}[H\"ormander \cite{Hormander71fourier}]
\label{thm:hormander}
Let $ M $ be a smooth manifold without boundary. 
Let $ P \in \Psi^1(M) $ which admits a real principal symbol $ \sigma(P) = \sigma(P)(x,\xi) \in C^\infty(T^*M \backslash 0,\R)$ and let $ \Phi = \Phi_t(x,\xi) \in C^\infty(\R\times T^*M \backslash 0,T^*M \backslash 0) $ be the Hamiltonian flow of~$ \sigma(P) $.
If $ u $ solves the Cauchy problem
\begin{equation}
\label{eq:equation:hormander}
\begin{cases}
\pt u + i P u = 0, \\
u(0) = u_0 \in L^2(M),
\end{cases}
\end{equation}
then for all $ (x_0,\xi_0) \in \WF(u_0) $ and all $ t \in \R $, we have 
$ \Phi_t(x_0,\xi_0) \in \WF(u(t)). $
\end{theorem}
In particular, if $ P = \sqrt{-\Delta_g} $ where $ g $ is a Riemannian metric on $ M $, then~\eqref{eq:equation:hormander} becomes the half wave equation and $ \Phi $ is the corresponding cogeodesic flow on $ T^*M $.
Therefore, we conclude that, for solutions to the half wave equation, microlocal singularities travel at speed one along cogeodesics.
This gives a justification for the Huygens--Fresnel principal of wavefront propagation. 

For the propagation of singularities for the semilinear wave equation, we refer to Bony~\cite{Bony86singularities} and Lebeau~\cite{Lebeau89semilinearII}. 
For the propagation and the reflection of singularities for the linear wave equation on manifolds with corners, see Vasy~\cite{Vasy08corner} and Melrose, Vasy and Wunsch~\cite{MVW13corner}.

\subsection{Homogeneous wavefront set and the linear Schr\"odinger equation}

\label{sec::intro-schrodinger}

H\"ormander's theorem (Theorem~\ref{thm:hormander}) is untrue when the order of~$ P $ is higher than one. 
For example, the Schr\"odinger propagator $ e^{it\Delta/2} $ on $ \Rd $ sends $ \mathscr{E}'(\R^d) $ to $ C^\infty(\R^d) $ whenever $ t \ne 0 $.
We conclude that singularities may appear and disappear along the Schr\"odinger flow. 
These phenomena of ``microlocal smoothing effect'' and ``microlocal singularity formation'' are due to the infinite speed of propagation of the Schr\"odinger equation, as wavepackets with large wavenumbers can travel to or back from infinity instantaneously.

The study of the infinite speed of propagation of the Schr\"odinger equation probably dates back to Boutet-de-Monvel~\cite{BdM75propagation} and Lascar~\cite{Lascar77propagation,Lascar78propagation}.
They proved that space-time singularities, as elements of some space-time wavefront sets, travel along geodesics at an infinite speed. 
They did not obtain, however, a time-dependent propagation results for wavefront sets with respect to the space variable alone. 
The study of the smoothing effect for dispersive equations with an infinite speed of propagation was initiated by Kato in~\cite{Kato1983KdV} where he proved a local smoothing effect for generalized KdV equations. 
In~\cite{CKS95microlocal}, Craig, Kappeler and Strauss proved microlocal smoothing effects for the linear Schr\"odinger equation under the non-trapping condition of the geometry. 
Their results were later refined by Wunsch who obtained in~\cite{Wunsch99propagation} a time-dependent propagation after understanding the transformation between singularities and quadratic oscillations at infinity. 
The simplest example is the following identity:
\begin{equation*}
%\label{eq::schrodinger-fundamental-solution}
e^{it\Delta/2}\delta_{x_0}(x) = \frac{1}{(2\pi i t)^{d/2}} e^{i|x-x_0|^2/2t},
\end{equation*}
where $ \delta_{x_0} $ is the Dirac measure at~$ x_0 \in \Rd $.
Wunsch's results were stated on Riemannian manifolds endowed with a scattering metric. 
He introduced the quadratic scattering wavefront set to characterize quadratic oscillations. 

Similar results were later obtained, independently, by Nakamura in~\cite{Nakamura05propagation} via a simpler calculus but in a less general geometric setting --- asymptotically Euclidean geometries, where he introduced the homogeneous wavefront set.
By definition, if $ u \in \swtz'(\R^d)$, then the homogeneous wavefront set $ \HWF(u) $ is a subset of $ \R^{2d} $ whose complement consists of all $ (x_0,\xi_0) $ admitting a symbol $ a \in \Ccinf(\R^{2d}) $ with $ a(x_0,\xi_0) \ne 0 $ such that $ \| a(hx,hD_x) u \|_{L^2} = \O(h^\infty) $ for $ h \in (0,1] $.
It was proven by Ito in~\cite{Ito06propagation} that the quadratic scattering wavefront set and the homogeneous wavefront set are essentially equivalent in asymptotically Euclidean geometries.
In fact, heuristically, if $ x_0 \ne 0 $ and $ \xi_0 \ne 0 $, then the pseudodifferential operator $ a(hx,hD_x) $ is a microlocalization in the region of quadratic oscillation: 
\begin{equation*}
|x| \sim |\xi| \sim h^{-1}.
\end{equation*}
Take for example the free Schr\"odinger equation in $ \Rd $, of which the dispersion relation is
$ \omega = \frac{1}{2}|\xi|^2 $.
A wave packet of frequency~$ \xi \sim h^{-1} $ travels at the group velocity 
$ v = \frac{\d\omega}{\d \xi} = \xi \sim h^{-1}. $
The homogeneously scaled quantization $ a \mapsto a(hx,hD_x) $ thus allows us to keep up with the infinite speed of propagation and obtain an analogue of H\"ormander's theorem.

\begin{theorem}[Nakamura \cite{Nakamura05propagation}, similar results by Wunsch \cite{Wunsch99propagation}]
\label{thm::doi-nakamura}
Let $ g $ be an asymptotically Euclidean Riemannian metric on $ \Rd $, meaning that there exists $ \epsilon > 0 $ such that for all $ \alpha \in \N^d $ and all $ i,j\in \{1,\ldots,d\} $, we have
\begin{equation}
\label{eq:cdt:g-decay}
|\px^\alpha(g_{ij}(x)-\delta_{ij})| \lesssim \jp{x}^{-|\alpha|-\epsilon}.
\end{equation}
Consider the Cauchy problem of the linear Schr\"odinger equation 
\begin{equation*}
\begin{cases}
i\pt u + \frac{1}{2} \Delta_g u = 0, \\
u(0) = u_0 \in L^2(\R^d).
\end{cases}
\end{equation*}
Then the following propagation results hold:
\begin{enumerate}
\item \label{nakamura-doi} If $ (x_0,\xi_0) \in \HWF(u_0) $ and $ t_0 \in \R $ such that $ \xi_0 \ne 0 $ and $ x_0+t\xi_0 \ne 0 $ for all~$ t $ between~$ 0 $ and~$ t_0 $, then 
$ (x_0+t_0\xi_0,\xi_0) \in \HWF(u(t_0)). $
\item \label{nakamura-self} 
If $ (x_0,\xi_0) \in \WF(u_0) $ is forwardly resp.\ backwardly non-trapping in the sense that the cogeodesic issued from $ (x_0,\xi_0) $, denoted by $ \{(x_t,\xi_t)\}_{t\in\R} $ (with an abuse of notation), satisfies
\begin{equation*}
\lim_{t\to+\infty} |x_t| = +\infty
\quad \text{resp.} \quad
\lim_{t\to-\infty} |x_t| = +\infty,
\end{equation*}
then there exists $ \xi_+ \in \Rd $ resp.\ $ \xi_- \in \Rd $ satisfying $ \xi_\pm = \lim_{t\to\pm\infty} \xi_t $, and moreover, for all $ t_0 > 0 $ resp.\ $ t_0 < 0 $, we have
\begin{equation*}
(t_0\xi_+,\xi_+) \in \HWF(u(t_0))
\quad \text{resp.} \quad
(t_0\xi_-,\xi_-) \in \HWF(u(t_0)).
\end{equation*}
\end{enumerate}
\end{theorem}
Theorem~\ref{thm::doi-nakamura}\eqref{nakamura-doi} studies the propagation of oscillations and spatial growth/decay for Schr\"o\-dinger waves at infinity and we thus require the condition $ x_0 + t\xi_0 \ne 0 $.
In $ \Rd $, this result is a consequence of an Egorov-type argument and the commutation relation:
\begin{equation*}
\Big[i\pt + \frac{1}{2}\Delta,a(t,hx,hD_x)\Big] = (i\pt a - \xi \cdot \px a)(t,hx,hD_x) + \O(h^2)
\end{equation*}
where $ a \in C_b^\infty(\R\times\R^{2d})$.
A similar argument works in asymptotically Euclidean geometries where we replace the role of the semiclassical quantization $ x \mapsto hx $ with the spatial decay of the metric~$ g $, i.e., the condition~\eqref{eq:cdt:g-decay}. 

Theorem~\ref{thm::doi-nakamura}\eqref{nakamura-self} is a microlocal smoothing effect:
if $ (t_0\xi_\pm,\xi_\pm) $ does not belong to $ \HWF(u(t_0)) $, then $ (x_0,\xi_0) $ can not be a element of $ \WF(u_0) $. 
This result is a refinement of the result in~\cite{CKS95microlocal} and can be proven via a positive commutator estimate.
In $ \Rd $, this estimate has the form
\begin{equation*}
\Big[i\pt + \frac{1}{2}\Delta,a(t,x,hD_x)\Big] \gtrsim \O(h^\infty)
\end{equation*}
where~$ a $ is some well-chosen symbol.
For related results, see Doi \cite{Doi96smoothing,Doi00smoothing} and Burq \cite{Burq04smoothing} for the necessity of the non-trapping condition;
see Robbiano and Zuily \cite{RZ99microlocal} for a microlocal analytic smoothing effect;
see Kenig, Ponce and Vega \cite{KPV98smoothing:nonlinear} and Szeftel \cite{Szeftel05microlocal:smoothing} for local and microlocal smoothing effects for the semilinear Schr\"odinger equation. 
We should also remark that in~\cite{Hormander91quadratic} H\"ormander has also introduced an essentially equivalent counterpart of the homogeneous wavefront set to which a similar definition as that of Nakamura was given. 
See Rodino and Wahlberg \cite{RW14Gabor}, Schulz and Wahlberg \cite{SW17equality} for more comments.
However, Theorem~\ref{thm::doi-nakamura}\eqref{nakamura-self} is unable, via simply reversing the time, to show how oscillations at infinity form singularities along the Schr\"odinger flow. 
Indeed, the information about the locations of singularities is not contained in quadratic oscillations but rather in linear oscillations at infinity. 
See the works of Hassell and Wunsch~\cite{HW05schrodinger} and Nakamura~\cite{Nakamura09singularities} for more on this subject.

\subsection{Quasi-homogeneous wavefront set and the gravity-capillary water wave system}

\label{sec::water-wave-system}

The gravity-capillary water wave system describes the evolution of inviscid, incompressible and irrotational fluid with a free surface, in the presence of a gravitational field and the surface tension. 

\subsubsection{Formulations of the gravity-capillary water wave system}

We shall first recall the Eulerian formulation of the gravity-capillary water wave system.
The area occupied by the fluid is a time-dependent simply connected open subset of $ \R^{d+1} $ and is denoted by~$ \Omega $.
The boundary of~$ \Omega $ consists of two parts: the free surface $ \Sigma $ and the bottom $ \Gamma $.
The free surface of the fluid is a time-dependent hypersurface which is the graph of a function $ \eta = \eta(t,x) $ where $ (t,x) \in \R \times \Rd $, whereas the bottom is independent of time and is of depth $ b \in (0,\infty) $. 
Therefore,
\begin{equation*}
\Omega = \{ -b < y < \eta\}, \quad
\Sigma = \{y = \eta\}, \quad
\Gamma = \{y = -b\}.
\end{equation*}
The Eulerian formulation describes water waves in the unknowns $ (\eta,v,P) $ where $ v : \Omega \to \Rd $ is the Eulerian vector field and $ P : \Omega \to \R $ is the pressure of the fluid.
\begin{equation}
\label{eq:sys-water-wave}
\begin{cases}
\pt v + v \cdot \nabla_{xy} v = - \nabla_{xy} (P+gy), & \text{Euler equation};\\
\nabla_{xy} \cdot v = 0, & \text{incompressibility};\\
\nabla_{xy} \times v = 0, & \text{irrotationality};\\
(v \cdot \norm)_{y=\eta} =\pt \eta / \jp{\nabla\eta}, & \text{kinetic condition at the free surface};\\
(v \cdot \norm)_{y=-b} = 0, & \text{kinetic condition at the bottom};\\
-P|_{y=\eta} = \kappa H(\eta),  & \text{dynamic condition}.
\end{cases}
\end{equation}
Here $ g \in \R $ is the gravitational acceleration, $ \kappa > 0 $ is the surface tension, $ \norm : \partial\Omega \to \Sph^d $ denotes the exterior unit normal vector field of~$ \partial\Omega $, while
\begin{equation}
\label{eq::surface-tension}
H(\eta) = \nabla \cdot \Big( \frac{\nabla\eta}{\sqrt{1+|\nabla\eta|^2}}\Big)
\end{equation}
is the mean curvature of the free surface.
In~\eqref{eq:sys-water-wave}, the kinetic condition at the free surface implies that fluid particles which are initially on the free surface will stay on the free surface, whereas the kinetic condition at the bottom is a rephrasing of the impenetrability of the bottom.
The dynamic condition is the Laplace--Young equation which expresses the balance between the interior pressure~$ P $ and the surface tension $ \kappa $.

One of the main difficulties in the study of the Eulerian formulation of the system~\eqref{eq:sys-water-wave} is the time-dependence of the domain~$ \Omega $. 
By Zakharov~\cite{Zakharov68stability} and Craig and Sulem~\cite{CS93WaterWaves}, we can reformulate~\eqref{eq:sys-water-wave} as a system in~$ \Rd $. 
Note that due to the simply connected geometry of~$ \Omega $ and the irrotationality of the fluid, there exists a velocity potential $ \phi : \Omega \to \R $ such that
$ \nabla_{xy} \phi = v. $
By the incompressiblity of the fluid, the potential $ \phi $ is harmonic.
Therefore $ \phi $ satisfies the Laplace equation with Neumann boundary conditions:
\begin{equation*}
\Delta_{xy} \phi = 0, \quad 
\pn \phi|_{y=\eta} = \pt\eta / \jp{\nabla\eta}, \quad 
\pn \phi|_{y=-b} = 0.
\end{equation*}
Define $ \psi = \phi|_{y=\eta} $ and denote
\begin{equation*}
G(\eta) \psi 
= \jp{\nabla\eta} \pn \phi|_{y=\eta}.
\end{equation*}
Here $ G(\eta) $ is the Dirichlet--Neumann operator (see \S\ref{sec::D-N-op} for a rigorous definition).
Then the system~\eqref{eq:sys-water-wave} can be rewritten in terms of the unknowns $ (\eta,\psi) $:
\begin{equation}
\label{eq::equation-water-wave}
\begin{cases}
\pt \eta - G(\eta) \psi = 0, \\
\pt \psi + g\eta - \kappa H(\eta) + \dfrac{1}{2} |\nabla \psi|^2 - \dfrac{1}{2} \dfrac{( \nabla\eta \cdot \nabla \psi + G(\eta)\psi )^2}{1+|\nabla\eta|^2} = 0.
\end{cases}
\end{equation}
We shall assume henceforth that $ \kappa = 1 $ for simplicity.

\subsubsection{Quasi-homogeneous wavefront set and model equations}

It is known that the linearization of~\eqref{eq::equation-water-wave} about the stationary solution $ (\eta,\psi)=(0,0) $ can be symmetrized, up to a smoothing remainder, to the fraction Schr\"odinger equation or order~$ 3/2 $.
Consider the more general model equation
\begin{equation}
\label{Equation::model}
\pt u + i |D_x|^\gamma u = 0, \quad \gamma \ge 1.
\end{equation}
It is natural to ask ourselves if we can define a new family of wavefront sets and extend the results from Theorems~\ref{thm:hormander} and~\ref{thm::doi-nakamura} to~\eqref{Equation::model}.
Note that a wave packet of~\eqref{Equation::model} of frequency $ \xi \sim h^{-1} $ travels at the group velocity 
\begin{equation*}
v = \frac{\d |\xi|^\gamma}{\d\xi} = \gamma |\xi|^{\gamma-2} \xi \sim h^{-(\gamma-1)}.
\end{equation*}
It suggests that we need to use pseudodifferential operators of the form $  a(h^{\gamma-1}x,hD_x) $ as test operators.
In the following definition, we consider the more general quantization with two parameters.

\begin{definition}
\label{def::intro-quasi-homogeneous-WF}
If $ u \in \swtz'(\Rd) $, $ \mu \in \R \cup \{\infty\} $, $ \delta \ge 0 $ and $ \rho \ge 0 $ with $ \delta + \rho > 0 $, then the quasi-homogeneous wavefront set $ \WF^\mu_{\delta,\rho}(u) $ is a subset of $ \R^{2d} $ defined as follows.
A point $ (x_0,\xi_0) $ does not belong to $ \WF^\mu_{\delta,\rho}(u) $ if and only if there exists $ a \in \Ccinf(\R^{2d}) $ with $ a(x_0,\xi_0) \ne 0 $ such that
$ \|a(h^\delta x,h^\rho D_x) u\|_{L^2} = \O(h^\mu) $ for $ h \in (0,1] $. 
Here,
\begin{equation}
\label{eq:def:quantization}
a(h^\delta x,h^\rho D_x) u(x) = (2\pi)^{-d} \iint_{\R^{2d}} e^{i(x-y) \cdot \xi} a(h^\delta x, h^\rho \xi) u(y) \dy \dxi.
\end{equation}
\end{definition}
Note that $ \WF^\mu_{\delta,\rho}(u) $ is invariant under the scaling 
$ (x,\xi) \mapsto (\lambda^\delta x, \lambda^\rho \xi) $ for all $ \lambda > 0 $.
The existence of $ (x_0,\xi_0) \in \WF^\mu_{\delta,\rho}(u) $ implies an accumulation of mass near the ray $ \{(\lambda^\delta x_0, \lambda^\rho \xi_0)\}_{\lambda > 0} $.
By choosing different parameters, we recover the definitions of various wavefront sets from the quasi-homogeneous wavefront set: the wavefront set of H\"ormander $ (\delta,\rho,\mu) = (0,1,\infty) $, the homogeneous wavefront set of Nakamura $ (\delta,\rho,\mu) = (1,1,\infty) $ and the scattering wavefront set of Melrose \cite{Melrose94spectral} $ (\delta,\rho,\mu) = (1,0,\infty) $. 

\begin{theorem}
\label{thm::model-eq}
If $ u $ solves the equation~\eqref{Equation::model} with initial data $ u(0) = u_0 \in L^2(\Rd) $ and $ \mu \in \R \cup \{\infty\} $, then the following results of propagation hold:
\begin{enumerate}
\item \label{thm::model-eq-infinity} 
If $ \rho \gamma = \delta + \rho $, $ (x_0,\xi_0) \in  \WF_{\delta,\rho}^\mu(u_0) \backslash \{\xi = 0\}  $ and $ t_0 \in \R $, then
\begin{equation*}
(x_0+t_0 \gamma |\xi_0|^{\gamma-2}\xi_0,\xi_0) \in \WF_{\delta,\rho}^\mu(u(t_0)).
\end{equation*}
\item \label{thm::model-eq-finity} If $ \gamma > 1 $, $ \rho \gamma > \delta + \rho $, $ (x_0,\xi_0) \in \WF_{\delta,\rho}^\mu(u_0) \backslash \{\xi = 0\} $ and $ t_0 \ne 0 $, then
\begin{equation*}
(t_0 \gamma |\xi_0|^{\gamma-2}\xi_0,\xi_0) \in \WF_{\rho(\gamma-1),\rho}^\mu(u(t_0)).
\end{equation*}
\end{enumerate} 
\end{theorem}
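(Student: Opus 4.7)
Both parts follow from a positive-commutator/Egorov argument in the quasi-homogeneous semiclassical calculus developed in \S\ref{sec::quasi-homogeneous-microlocal-analysis}. We argue by contraposition: set $\delta'=\delta$ in (M.1) and $\delta'=\rho(\gamma-1)$ in (M.2), with $y_0=x_0+t_0\gamma|\xi_0|^{\gamma-2}\xi_0$ in (M.1) and $y_0=t_0\gamma|\xi_0|^{\gamma-2}\xi_0$ in (M.2). Given a test symbol $b\in\Ccinf(\R^{2d})$, supported away from $\{\xi=0\}$, with $b(y_0,\xi_0)\ne 0$ and $\|b(h^{\delta'}x,h^\rho D)u(t_0)\|_{\Ltwo}=O(h^\mu)$, transport $b$ backward along the Hamiltonian flow $\Phi_t(x,\xi)=(x+t\gamma|\xi|^{\gamma-2}\xi,\xi)$ of $|\xi|^\gamma$ via
\[
a_t(\tilde x,\tilde\xi)\bydef b\big(\tilde x+(t_0-t)\gamma|\tilde\xi|^{\gamma-2}\tilde\xi,\,\tilde\xi\big).
\]
This solves $\pt a_t+\gamma|\tilde\xi|^{\gamma-2}\tilde\xi\cdot\partial_{\tilde x}a_t=0$ and stays in $\Ccinf(\R^{2d})$ uniformly in $t\in[0,t_0]$ because $\tilde\xi$ is conserved by the flow.

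\textbf{Commutator and Egorov.} Set $A_t=\op(a_t(h^{\delta'}x,h^\rho\xi))$. The function $v_t\bydef A_tu(t)$ satisfies $\pt v_t+i|D|^\gamma v_t=(\pt A_t-i[A_t,|D|^\gamma])u$, whose right-hand side has principal symbol (from the quasi-homogeneous expansion of the commutator)
\[
\pt a_t(\tilde x,\tilde\xi)+h^{\delta'-\rho(\gamma-1)}\gamma|\tilde\xi|^{\gamma-2}\tilde\xi\cdot\partial_{\tilde x}a_t(\tilde x,\tilde\xi),
\]
with subsequent terms a factor $h^{\delta'+\rho}$ smaller. The exponent $\delta'-\rho(\gamma-1)$ vanishes precisely under our choice of $\delta'$ --- it is the hypothesis $\rho\gamma=\delta+\rho$ in (M.1) and automatic in (M.2) --- so the transport equation kills the principal symbol. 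A standard Borel summation $a_t\sim\sum_{j\ge 0}h^{j(\delta'+\rho)}a_t^{(j)}$, with each $a_t^{(j)}$ solving the transport equation with source determined by the previous orders, reduces $\pt A_t-i[A_t,|D|^\gamma]$ to $O(h^N)$ in operator norm on Sobolev spaces, for any $N$. Duhamel's formula and the unitarity of $e^{-it|D|^\gamma}$ then give
\[
\|A_0u(0)\|_{\Ltwo}\le\|A_{t_0}u(t_0)\|_{\Ltwo}+C_Nh^N=O(h^\mu).
\]

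\textbf{Reading off the wavefront condition.} In (M.1), $a_0(x_0,\xi_0)=b(y_0,\xi_0)\ne 0$ immediately gives $(x_0,\xi_0)\notin\WF^\mu_{\delta,\rho}(u(0))$, closing the contrapositive. In (M.2), the previous step only yields $(0,\xi_0)\notin\WF^\mu_{\rho(\gamma-1),\rho}(u(0))$, and a scale comparison is required. For any $\chi\in\Ccinf(\R^{2d})$ supported near $(x_0,\xi_0)$, $\chi(h^\delta x,h^\rho\xi)$ is supported in $|x|\lesssim h^{-\delta}=o(h^{-\rho(\gamma-1)})$ and $|\xi-h^{-\rho}\xi_0|\lesssim h^{-\rho}$, which for small $h$ lies within the region where any $(\rho(\gamma-1),\rho)$-test symbol elliptic at $(0,\xi_0)$ is identically $1$ (since $\rho(\gamma-1)>\delta$). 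Symbolic composition in the quasi-homogeneous calculus then transfers the $O(h^\mu)$ smallness to $\op(\chi(h^\delta\cdot,h^\rho\cdot))u(0)$, whence $(x_0,\xi_0)\notin\WF^\mu_{\delta,\rho}(u(0))$.

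\textbf{Main obstacle.} Modulo the formal transport/commutator computation above, the work lies entirely in the underlying quasi-homogeneous semiclassical calculus: uniform $L^2$-boundedness, a composition formula with remainders controlled in the small parameter $h^{\delta'+\rho}$, and an Egorov expansion adapted to the two-parameter scaling $(\delta',\rho)$. The scale-comparison step in (M.2), bridging quantizations at two distinct scalings $(\delta,\rho)$ and $(\rho(\gamma-1),\rho)$, is specific to the quasi-homogeneous framework and is the least classical ingredient of the argument; it reduces to symbolic composition of quantizations at different scales. All this machinery is built in \S\ref{sec::quasi-homogeneous-microlocal-analysis}.
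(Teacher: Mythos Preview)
Your treatment of (M.1) is essentially the paper's: an Egorov construction at the fixed scale $(\delta,\rho)$, solving the transport equation and iterating to kill the lower-order terms.

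For (M.2) your route is genuinely different from the paper's, and—for the model equation—correct. You observe that the choice $\delta'=\rho(\gamma-1)$ satisfies the balance condition of (M.1), so (M.1) applied at that scale carries $(y_0,\xi_0)\notin\WF^\mu_{\rho(\gamma-1),\rho}(u(t_0))$ back to $(0,\xi_0)\notin\WF^\mu_{\rho(\gamma-1),\rho}(u(0))$ over the \emph{fixed} time interval $[0,t_0]$. Your scale-comparison then closes the argument: conjugating by $\vartheta_h^\delta$ reduces both operators to the standard semiclassical calculus with parameter $\hbar_1=h^{\delta+\rho}$, where the $(\rho(\gamma-1),\rho)$-symbol becomes $c(h^\beta x,\xi)$ with $\beta=\rho(\gamma-1)-\delta>0$; on $\supp\chi$ this equals $1$ for small $h$, and Proposition~\ref{prop::quasi-homogeneous-composition} (with $\chi\in S^{-\infty}_{-\infty}$) gives the $O(\hbar_1^N)$ remainder. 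One caveat: this mixed-scale composition is not stated as such in \S\ref{sec::quasi-homogeneous-microlocal-analysis}—you have to derive it via the conjugation, so your final sentence slightly overstates what is built there.

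The paper instead works at the original scale $(\delta,\rho)$ throughout, over the \emph{long} semiclassical interval $s\in[0,h^{-\beta}t_0]$, with a spreading symbol
\[
\chi(s,x,\xi)=\phi\Big(\frac{x-s\gamma|\xi|^{\gamma-2}\xi-x_0}{1+s}\Big)\phi\Big(\frac{\xi-\xi_0}{\epsilon}\Big)
\]
satisfying $\Lag_s\chi\ge 0$, and runs a positive-commutator argument with the sharp G{\aa}rding inequality (not an exact Egorov). At $s=0$ this symbol is elliptic at $(x_0,\xi_0)$ in the $(\delta,\rho)$ sense; at $s=h^{-\beta}t_0$ the identity $\op_h^{\delta,\rho}(a)=\op_h^{\rho(\gamma-1),\rho}(\theta_{1/h,*}^{\beta,0}a)$ makes the \emph{same} operator elliptic at $(t_0\gamma|\xi_0|^{\gamma-2}\xi_0,\xi_0)$ in the other scale. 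Your argument is shorter and more conceptual for the constant-coefficient model. The paper's argument, however, is the one that survives in the quasilinear setting of Theorems~\ref{thm::main-infinite}--\ref{thm::main-finite}: there the analogue of (M.1) at scale $(1/2,1)$ (Theorem~\ref{thm::main-infinite}) explicitly requires the trajectory to avoid $x=0$, so your reduction—which must pass through $(0,\xi_0)$—would break down, while the long-time positive-commutator construction does not.
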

Note that we do not require $ x_0 + t \gamma |\xi_0|^{\gamma-2}\xi_0 \ne 0 $ in Theorem~\ref{thm::model-eq}\eqref{thm::model-eq-infinity} while we require $ x_0 + t\xi_0 \ne 0 $ in Theorem~\ref{thm::doi-nakamura}\eqref{nakamura-doi}.
This is because in Theorem~\ref{thm::doi-nakamura} the geometry is only Euclidean at infinity.

\subsubsection{Asymptotically flat water waves}

Instead of the linearization at $ (\eta,\psi) = (0,0) $, if we paralinearize and symmetrize~\eqref{eq::equation-water-wave} as in \cite{ABZ11capillary}, then we obtain a quasilinear paradifferential fractional Schr\"odinger equation of order~$ 3/2 $.
We require the geometry of the free surface to be Euclidean at infinity and the velocity field to be zero at infinity to avoid problems caused by the infinite speed of propagation and the nonlinearity. 
We shall fulfill this requirement by proving the existence of gravity-capillary water waves in some weighted Sobolev spaces.

\begin{definition}
\label{def::weighted-Sobolev-spaces}
If $ \mu,k \in \R $, then $ \H{\mu}_k = \H{\mu}_k(\Rd) $ is the set of all~$ u \in \swtz'(\Rd) $ such that 
\begin{equation*}
\|u\|_{\H{\mu}_k} = \|\jp{x}^k\jp{D_x}^\mu u\|_{\Ltwo} < +\infty.
\end{equation*}
If in addition $ k \in \N $ and $ \delta \ge 0 $, then define
\begin{equation*}
\HC{\mu,\delta}_{k} = \bigcap_{j=0}^k H^{\mu - \delta j}_{j}.
\end{equation*}
\end{definition}
We are mostly interested in the case where $ \delta = 1/2 $.
The weighted Sobolev space $ \HC{\mu,1/2}_{k} $ is a natural space to apply the energy estimate for the fractional Schr\"odinger equation of order~$ 3/2 $ and thus also for the gravity-capillary water wave system.

\begin{theorem}
\label{thm::ww-weighted-sobolev-existence}
If $ d \ge 1 $, $ \mu > 3 + d/2 $, $ k \le 2\mu-d-6 $ and $ (\eta_0,\psi_0) \in \HC{\mu+1/2,1/2}_{k} \times \HC{\mu,1/2}_{k} $, then there exist $ T > 0 $ and a unique solution 
\begin{equation*}
(\eta,\psi) \in C([-T,T],\HC{\mu+1/2,1/2}_{k} \times \HC{\mu,1/2}_{k})
\end{equation*}
to the Cauchy problem of~\eqref{eq::equation-water-wave} with initial data $ (\eta_0,\psi_0) $.
\end{theorem}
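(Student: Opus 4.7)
The plan is to build on the unweighted local well-posedness of \eqref{eq::equation-water-wave} due to Alazard--Burq--Zuily \cite{ABZ11capillary}, and to propagate the weighted regularity $\HC{\mu+1/2}_m \times \HC{\mu}_m$ by an induction on the number of weights using the weighted paradifferential calculus announced in the introduction. Under $\mu > 3 + d/2$, the ABZ theorem already yields a unique solution $(\eta,\psi) \in C([-T,T], H^{\mu+1/2} \times H^\mu)$, so it suffices to propagate the weighted control on the same time interval. The additional hypothesis $m \le 2\mu - 6 - d$ is exactly what ensures that the lowest-regularity step of the induction still sits above the nonlinear threshold required by the paradifferential product rules.

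The central step is to recast the paralinearization and symmetrization procedure of \cite{ABZ11capillary} in the weighted paradifferential calculus of the paper, in which symbols are graded by a bi-order $(\nu, k)$ measuring their growth in $\xi$ and their decay in $x$. This brings \eqref{eq::equation-water-wave} to a scalar equation
\begin{equation*}
\partial_t u + T_V \cdot \nabla u + i T_p u = f,
\end{equation*}
where $T_p$ is essentially self-adjoint with real principal symbol $\sqrt{(g+|\xi|^2)|\xi|}$ of bi-order $(3/2,0)$, and $V$, $f$ are built paradifferentially from $(\eta,\psi)$ so that their weighted norms are controlled by those of $(\eta,\psi)$. I would then argue by induction on $k = 0, 1, \ldots, m$ on the statement
\begin{equation*}
(\eta,\psi) \in L^\infty([-T,T],\, \H{\mu+1/2-k/2}_k \times \H{\mu-k/2}_k),
\end{equation*}
the base case $k = 0$ being the ABZ theorem. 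At step $k$, apply $\jp{x}^k \jp{D_x}^{\mu-k/2}$ to the symmetrized equation and pair with the same quantity in $L^2$: the principal contribution of $T_p$ vanishes by self-adjointness, while the commutator $[T_p,\,\jp{x}^k \jp{D_x}^{\mu-k/2}]$ has bi-order $(\mu-(k-1)/2,\,k-1)$ in the weighted calculus by the standard commutator rule. Hence
\begin{equation*}
\bigl\|[T_p,\,\jp{x}^k \jp{D_x}^{\mu-k/2}]\,u\bigr\|_{\Ltwo} \lesssim \|u\|_{\H{\mu-(k-1)/2}_{k-1}},
\end{equation*}
which is exactly what the previous step of the induction provides. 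The transport term and $f$ are treated analogously, and Gronwall closes the estimate on $[-T,T]$; the loss of half a derivative per added weight encoded in $\HC{\mu}_m = \bigcap_{j=0}^m H^{\mu-j/2}_j$ is thus exactly the one dictated by the dispersion of order $3/2$.

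From this a priori estimate, existence in $\HC{\mu+1/2}_m \times \HC{\mu}_m$ follows by a standard approximation: truncate the initial data to $H^\infty$, invoke ABZ to produce smooth solutions, and pass to the limit using the uniform weighted bound; uniqueness is inherited from ABZ in the unweighted scale. The main obstacle is not the inductive energy estimate itself, but the preparatory construction of the weighted paradifferential calculus --- specifically, showing that the Dirichlet--Neumann operator $G(\eta)$ admits a paradifferential symbol in the correct weighted class as soon as $\eta \in \HC{\mu+1/2}_m$, and that the Alinhac good-unknown symmetrization can be carried out without incurring extra weight loss. Once these tools (developed in the relevant parts of the paper) are in place, the induction closes cleanly and delivers the theorem.
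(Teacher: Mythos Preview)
Your inductive energy estimate is exactly the paper's strategy: it too proves the a priori bound by induction on $k=0,\ldots,m$, applying the weighted multiplier $\Lambda^\mu_k = \P{\jp{x}^k\jp{\xi}^{\mu-k/2}}$ to the symmetrized system and observing that the commutator with the order-$3/2$ operator lands in bi-order $(\mu-(k-1)/2,\,k-1)$, hence is controlled by the previous step (Proposition~\ref{prop::a-priori-estimate}). Your identification of the weighted paradifferential calculus and the weighted analysis of $G(\eta)$ as the real work is also on target; the paper carries this out via the dyadic operators $\P{a}=\sum_j \upsi_j T_{\psi_j a}\upsi_j$ and the elliptic estimates of \S\ref{sec::D-N-op}.

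The one place where your outline diverges, and where it has a genuine gap, is the existence step. You propose to truncate the data to $H^\infty$, invoke \cite{ABZ11capillary}, and then apply the weighted a priori estimate to the resulting smooth solution. But ABZ gives you a solution in $C([-T,T],H^{\mu+1/2}\times H^\mu)$, not in any weighted space; you cannot run the weighted Gronwall argument on a quantity you have not first shown to be finite. The paper resolves this by a different approximation: it introduces a mollified system (Section~5.4) with $J_\varepsilon=\P{e^{-\varepsilon\gamma^{(3/2)}}}$, for which local existence in $\HC{\mu+1/2}_m\times\HC{\mu}_m$ follows directly from Cauchy--Lipschitz in that Banach space (Lemma~\ref{lem::approximate-Cauchy}), then proves the a priori estimate uniformly in $\varepsilon$ and passes to the limit. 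Your scheme could be salvaged by regularizing the weight itself (replace $\jp{x}^k$ by a bounded approximant and send the cutoff to infinity), but as written the step ``apply the weighted a priori estimate to the ABZ solution'' is circular.
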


The study of the Cauchy problem for the water wave equation dates back to Nalimov~\cite{Nalimov74CauchyPoisson}, Kano and Nishida~\cite{KN79ondes} and Yosihara~\cite{Yosihara82gravity, Yosihara83capillary}. 
The local well-posedness in Sobolev spaces with general initial data were achieved by Wu~\cite{Wu97:2d,Wu99:3d}, Beyer and G\"{u}nther~\cite{BG98capillary}.
Our analysis of the water wave equation relies on the paradifferential calculus of Bony~\cite{Bony86singularities} which was introduced to the study of the water wave equation by Alazard and Métivier in~\cite{AM09:paralinearization} and later allowed Alazard, Burq and Zuily~\cite{ABZ11capillary,ABZ14gravity} to prove the local well-posedness with low Sobolev regularities. 
For recent progress of the Cauchy problem, see e.g., \cite{AD15:2d:gravity:global,dPN16:strichartz,dPN17::paradiff,DIPP17:3d:gravity:capillary:global,HIT2016water,IT17:lifespan:2d:capillary,IP18:advances:global,MRT15:multi:solitons,RT11solitary,Wang16global:3d:capillary}.

To prove Theorem~\ref{thm::ww-weighted-sobolev-existence}, we shall combine the analysis in \cite{ABZ11capillary} and a paradifferential calculus in weighted Sobolev spaces. 
The latter can be achieved by modifying the definition of paradifferential operators via a spatial dyadic decomposition. 
More precisely, if~$ a $ is a symbol, then we define
\begin{equation*}
\P{a} = \sum_{j \in \N} \upsi_j \T{\psi_j a} \upsi_j,
\end{equation*} 
where $ \{\psi_j\}_{j\in\N} \subset \Ccinf(\Rd) $ is a dyadic partition of unity of $ \Rd $, $ \upsi_j = \sum_{|k-j|\le N} \psi_k $ for some sufficiently large $ N\in\N $, and $ \T{\psi_j a} $ is the usual paradifferential operator of Bony. 
Such dyadic paradifferential calculus inherits the symbolic calculus and the paralinearization of Bony's calculus while at the same time allows the spatial polynomial growth/decay of symbols to play their roles in estimates.

We do not attempt to lower~$ \mu $ to $ > 2 + d/2 $ as it was in \cite{ABZ11capillary}. 
The range of~$ k $ is so chosen such that $ \mu - k/2 > 3 + d/2 $, enabling us to paralinearize~\eqref{eq::equation-water-wave} in~$ \HC{\mu}_k $. 
We should mention that the existence of gravity water waves (water waves without surface tension) in uniformly local weighted Sobolev spaces was obtain by Nguyen~\cite{Nguyen16:pseudolocal} via a periodic spatial decomposition from~\cite{ABZ16nonlocal}.

\subsubsection{Propagation at infinity}

Our first main result concerns the propagation of quasi-homo\-geneous wavefront sets with parameters $ (\delta,\rho)=(1/2,1) $, corresponding to Theorem~\ref{thm::model-eq}\ref{thm::model-eq-infinity}.

\begin{theorem}
\label{thm::main-infinite}
Suppose that $ d \ge 1 $, $ \mu > 3 + d/2 $, $ 3 \le k < 2\mu-K-d $ for some $ K > 0 $, and
\begin{equation*}
(\eta,\psi) \in C([-T,T],\HC{\mu+1/2,1/2}_{k} \times \HC{\mu,1/2}_{k}),
\end{equation*}
where $ T > 0 $, solves~\eqref{eq::equation-water-wave}.
If $ t_0 \in [-T,T] $ and
\begin{equation*}
(x_0,\xi_0) \in \WF_{1/2,1}^{\mu+1/2+\sigma}(\eta(0)) \cup \WF_{1/2,1}^{\mu+\sigma}(\psi(0)),
\end{equation*}
such that $ \xi_0 \ne 0 $, $ 0 \le \sigma \le k/2 - 3/2 $ and
\begin{equation*}
x_0+\frac{3}{2}t|\xi_0|^{-1/2}\xi_0 \ne 0
\end{equation*}
for all~$ t $ between~$ 0 $ and~$ t_0 $, then
\begin{equation*}
\Big(x_0+\frac{3}{2}t_0|\xi_0|^{-1/2}\xi_0,\xi_0\Big) \in \WF_{1/2,1}^{\mu+1/2+\sigma}(\eta(t_0)) \cup \WF_{1/2,1}^{\mu+\sigma}(\psi(t_0)).
\end{equation*}
\end{theorem}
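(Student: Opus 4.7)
The plan is to reduce~\eqref{eq::equation-water-wave} to a scalar paradifferential fractional Schrödinger equation of order $ 3/2 $ and then run a positive-commutator argument in the semiclassical quasi-homogeneous calculus developed earlier in the paper. Because the Hamiltonian flow of $ |\xi|^{3/2} $ is time-reversible on $ \{\xi\ne 0\} $, it is enough to prove the contrapositive: setting $ x_1 \bydef x_0 + \frac{3}{2}t_0|\xi_0|^{-1/2}\xi_0 $, if $ (x_1,\xi_0) \not\in \WF_{1/2,1}^{\mu+1/2+\sigma}(\eta(t_0)) \cup \WF_{1/2,1}^{\mu+\sigma}(\psi(t_0)) $, then $ (x_0,\xi_0) \not\in \WF_{1/2,1}^{\mu+1/2+\sigma}(\eta(0)) \cup \WF_{1/2,1}^{\mu+\sigma}(\psi(0)) $. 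The non-passage hypothesis keeps the whole trajectory inside the open set $ \{\xi\ne 0\} \cap \{x\ne 0\} $ where the calculus is well-behaved.

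Following the Alazard--Métivier paralinearization and the Alazard--Burq--Zuily symmetrization, carried out in the dyadic paradifferential calculus on weighted Sobolev spaces, I would introduce the good unknown and recast~\eqref{eq::equation-water-wave} into a single scalar equation
\begin{equation*}
\pt u + i\P{V\cdot\xi}u + i\P{\gamma}u = f,
\end{equation*}
where $ u $ is a fixed linear combination of $ |D_x|^{1/2}\eta $ and $ \psi $ — so that $ (x_0,\xi_0)\in\WF_{1/2,1}^{\mu+\sigma}(u) $ is equivalent, on $ \{\xi\ne 0\} $, to membership in $ \WF_{1/2,1}^{\mu+1/2+\sigma}(\eta) \cup \WF_{1/2,1}^{\mu+\sigma}(\psi) $ — the symbol $ \gamma $ has principal part $ |\xi|^{3/2} $, $ V $ is the horizontal trace velocity, and $ f $ is a paradifferential remainder which, by the tame estimates in $ \HC{\mu}_m $ underlying Theorem~\ref{thm::ww-weighted-sobolev-existence}, gains at least one derivative and some spatial decay. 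Under the quasi-homogeneous scaling $ x\sim h^{-1/2} $, $ \xi\sim h^{-1} $, the spatial decay of $ V $ makes $ \P{V\cdot\xi} $ subprincipal (a gain of $ h^{1/2} $ over $ \P{\gamma} $), so it will be absorbed by the commutator.

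For each small $ h>0 $ I would construct, by a WKB/Borel expansion $ a = a_0 + h^{1/2}a_1 + h\,a_2 + \cdots $, a time-dependent symbol $ a(t,y,\eta)\in\Ccinf(\R^{2d}) $ concentrated near the rescaled trajectory $ (x_0+\frac{3}{2}t|\xi_0|^{-1/2}\xi_0,\xi_0) $, whose leading term satisfies the transport equation
\begin{equation*}
\pt a_0 + \frac{3}{2}|\eta|^{-1/2}\eta \cdot \nabla_y a_0 = 0, \qquad a_0(t_0,\cdot,\cdot) = \text{cutoff at }(x_1,\xi_0),
\end{equation*}
so that $ a_0(0,\cdot,\cdot) $ is a cutoff at $ (x_0,\xi_0) $. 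The key computation, inside the quasi-homogeneous calculus, is that $ \{|\xi|^{3/2},a(h^{1/2}x,h\xi)\} $ reproduces exactly the left-hand side of this transport equation — the spatial scaling $ h^{1/2} $ on $ \nabla_y $ and the half-homogeneity of $ \nabla_\xi|\xi|^{3/2} $ at $ \xi\sim h^{-1} $ combine to an $ \O(1) $ Poisson bracket equal to $ \pt a $ by design. Iterating to kill the subsequent errors produces
\begin{equation*}
\big[\pt + i\P{V\cdot\xi} + i\P{\gamma},\, A_h(t)\big] = \O\!\big(h^{N}\big)_{\H{\mu+\sigma}\to\Ltwo}, \qquad A_h(t) \bydef a(t,h^{1/2}x,hD_x),
\end{equation*}
for any prescribed $ N $, the subprincipal terms and the dyadic truncation errors being absorbed by the paper's symbolic calculus.

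The argument closes by a Grönwall estimate: differentiating $ \|A_h(t)u(t)\|_{\Ltwo}^2 $ in time and using the commutator bound together with $ \|A_h f\|_{\Ltwo} = \O(h^{\mu+\sigma}) $ (from the weighted Sobolev regularity of $ f $) yields
\begin{equation*}
\ddt \|A_h u\|_{\Ltwo}^2 \le C\|A_h u\|_{\Ltwo}^2 + C h^{2(\mu+\sigma)}.
\end{equation*}
The hypothesis gives $ \|A_h(t_0)u(t_0)\|_{\Ltwo} = \O(h^{\mu+\sigma}) $; Grönwall on $ [0,t_0] $ propagates this bound to $ t=0 $, and the ellipticity of $ a_0(0,\cdot,\cdot) $ at $ (x_0,\xi_0) $ forces $ (x_0,\xi_0) \not\in \WF_{1/2,1}^{\mu+\sigma}(u(0)) $, which is exactly the contrapositive we needed. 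The main obstacle I expect is the book-keeping of two competing gradings — the Littlewood--Paley frequency truncation built into $ \P{\gamma} $ at scale $ 2^j $ versus the semiclassical scaling at scale $ h^{-1} $ — and in particular showing that the commutator $ [\P{\gamma},A_h] $ genuinely gains powers of $ h $ rather than being merely bounded; this is precisely where the semiclassical paradifferential calculus of the paper is indispensable, and also where one must carefully carry the weighted Sobolev regularity of the quasilinear remainder $ f $ along the trajectory to secure the $ \O(h^{2(\mu+\sigma)}) $ forcing.
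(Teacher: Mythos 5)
Your proposal follows the same overall scheme as the paper's proof: paralinearize and symmetrize to a paradifferential fractional Schr\"odinger equation of order $3/2$ for a scalar complex unknown $u$ (equal up to normalization to $\Lambda^\mu\P{p}\eta - i\Lambda^\mu\P{q}\omega$ with $\omega$ the good unknown), use the spatial decay of the coefficients to reduce to the constant-coefficient model at the $(\delta,\rho)=(1/2,1)$ homogeneity, and then close with the WKB/positive-commutator/Gr\"onwall argument already run for~\ref{thm::model-eq-infinity}. The equivalence you assert between $\WF_{1/2,1}^\sigma(u)^\circ$ and $\WF_{1/2,1}^{\mu+1/2+\sigma}(\eta)^\circ\cup\WF_{1/2,1}^{\mu+\sigma}(\psi)^\circ$ is precisely Lemma~\ref{lem::WF-eta-psi==u}.

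There is, however, a genuine gap in the step ``recast \eqref{eq::equation-water-wave} into a single scalar equation $\pt u + i\P{V\cdot\xi}u + i\P{\gamma}u = f$.'' The gravity term $g\eta$ is real and therefore couples $u$ with $\bar u$: in the system for $w=(u,\bar u)$ one gets the non-diagonal, non-antisymmetric matrix term $\tfrac{ig}{2}\P{\zeta}\left(\begin{smallmatrix}1&-1\\1&-1\end{smallmatrix}\right)w$ of Proposition~\ref{prop::paralinearization-ww-fine}. The diagonal symbol matrix $A_h$ built from the transport of the scalar Hamiltonian does not commute with this term, and it does not drop out of the energy identity. Concretely, $\ddt\|A_h^+ u\|^2$ picks up $g\,\Re\bigl(iA_h^+\P{\zeta}\bar u,A_h^+ u\bigr)$, and since $u$ and $\bar u$ are transported by the forward resp.\ backward Hamiltonian flow, the microlocalization $A_h^+$ sits on a region where you have no a priori control on $\bar u$; that cross term is only $\O(h^{1/2})\|u\|_{\Ltwo}\|A_h^+u\|_{\Ltwo}$ and is not absorbable once $\sigma>1/2$. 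The paper's remedy is to first replace the variable-coefficient system by a constant-coefficient one (observing, via Remark~\ref{remark::symbol-polynomial-decay}, that the differences such as $\P{\gamma^{(3/2)}}w-|D_x|^{3/2}w$ and $\P{\zeta^{(-1/2)}}w-|D_x|^{-1/2}w$ land in $X^{m/2-3/2}$, hence are negligible at the order $\sigma\le m/2-3/2$), and only then diagonalize the constant matrix $M(\xi)$ exactly with the Fourier multiplier $P(D_x)$, $\theta(\xi)=\sqrt{\tilde\pi(\xi)(g|\xi|^{-2}+1)}$, passing to the genuinely scalar unknown $v=\Re u'+i\theta(D_x)\Im u'$ which satisfies $\pt v + |D_x|^{3/2}\theta(D_x)v\in X^{m/2-3/2}$. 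Only at that point can the positive-commutator/Gr\"onwall machinery for the model equation be invoked. You also have a minor normalization slip: since $u$ is already $\Ltwo$-normalized, the quantity to propagate is $\|A_h u\|_{\Ltwo}=\O(h^\sigma)$, not $\O(h^{\mu+\sigma})$, with the loss $\sigma\le m/2-3/2$ governed by the spatial decay available in $\HC{\mu}_m$, not by the derivative gain of the paralinearization remainder.
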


We will see that, by Lemma~\ref{lem::basic-properties-WF}, if $ (\eta,\psi) \in \HC{\mu+1/2,1/2}_{k} \times \HC{\mu,1/2}_{k} $, then
\begin{equation*}
\WF_{1/2,1}^{\mu+1/2}(\eta) \cup \WF_{1/2,1}^{\mu}(\psi) \subset \{x = 0\} \cup \{\xi = 0\}.
\end{equation*} 
By \cite{AM09:paralinearization}, we expect~$ \sigma $ to be at most $ \mu - \alpha - d/2 $ for some $ \alpha > 0 $, corresponding to the gain of regularity by the remainder in the paralinearization procedure. 
Theorem~\ref{thm::main-infinite} does not give the optimal upper bound for~$ \sigma $, as it is not our priority, but when $ k = 2 \mu - K - d $, the parameter $ \sigma $ can still be as large as $ \mu - K/2 - d/2 - 3/2 $, almost reaching the paradifferential threshold.

\subsubsection{Microlocal smoothing effect}
\label{sec::intro-Microlocal-Smoothing-Effect}

Our second main result shows that singularities of the initial data which are non-trapped with respect to the initial geometry, instantaneously generate an element in the quasi-homogeneous wavefront set with parameters $ (\delta,\rho) = (1/2,1) $, corresponding to Theorem~\ref{thm::model-eq}\eqref{thm::model-eq-finity}. 

Observe that if $ \eta $ is sufficiently regular, then $ \Sigma $ endowed with the metric inherited from $ \R^{d+1} $ is isometric to $ (\Rd,\varrho) $ where
\begin{equation*}
\varrho = \begin{pmatrix}
\Id + (\nabla\eta) \tensor[^t]{(\nabla \eta)}{} & \nabla \eta \\ \tensor[^t]{(\nabla \eta)}{} & 1
\end{pmatrix}.
\end{equation*}
Denote $ \Sigma_0 = \Sigma|_{t=0} $ and $ \varrho_0^{} = \varrho|_{t=0} $. 
We identify the cogeodesic flow $ \Geo $ on $ T^*\Sigma_0 $ with the Hamiltonian flow on $ \R^{2d} $ of the symbol
$ G(x,\xi) = {}^t\xi \varrho_0^{}(x)^{-1}\xi. $
Precisely $ \Geo = \Geo_s(x,\xi) $ is defined by the equation
\begin{equation}
\label{eq:def-cogeodesic-flow}
\partial_s \Geo_s = (\pxi G,-\px G)(\Geo_s), \quad
\Geo_0 = \Id_{\R^{2d}}.
\end{equation}

\begin{definition}
A point $ (x_0,\xi_0) \in \Rd \times (\Rd \backslash 0) $ is called forwardly resp.\ backwardly non-trapped with respect to~$ \Geo $ if, with an abuse of notation, the cogeodesic $ \{(x_s,\xi_s) = \Geo_s(x_0,\xi_0)\}_{s\in\R} $ satisfies
\begin{equation*}
\lim_{s\to+\infty} |x_s| = \infty, 
\quad \text{resp.} \quad
\lim_{s\to-\infty} |x_s| = \infty.
\end{equation*}
\end{definition}

\begin{theorem}
\label{thm::main-finite}
If $ d \ge 1 $, $ \mu > 3+d/2 $, $ 3 \le k < \frac{2}{3}(\mu-1-d/2) $, and
\begin{equation*}
(\eta,\psi) \in C([-T,T],\HC{\mu+1/2,1/2}_{k} \times \HC{\mu,1/2}_{k}),
\end{equation*}
where $ T > 0 $, solves the equation~\eqref{eq::equation-water-wave}. 
Let
\begin{equation*}
(x_0,\xi_0) \in \WF_{0,1}^{\mu+1/2+\sigma}(\eta(0)) \cup \WF_{0,1}^{\mu+\sigma}(\psi(0)),
\end{equation*}
where $ \xi_0 \ne 0 $ and $ 0 \le \sigma \le {3 \over 2} k $.
If $ (x_0,\xi_0) $ is forwardly resp.\ backwardly non-trapped, and let the cogeodesic $ \{(x_s,\xi_s)\}_{s\in\R} $ be defined as above, then there exists $ \xi_{+\infty} $, resp.\ $ \xi_{-\infty} $ in $ \Rd \backslash \{0\}$ such that,
\begin{equation*}
\lim_{s\to\infty} \xi_s = \xi_{+\infty}, 
\quad \text{resp.} \quad 
\lim_{s\to\infty} \xi_{-s} = \xi_{-\infty},
\end{equation*}
and moreover, for all $ 0 < t_0 \le T $, resp.\ $ -T \le t_0 < 0 $, we have
\begin{align*}
\Big(\frac{3}{2}t_0|\xi_{+\infty}|^{-1/2}\xi_{+\infty},\xi_{+\infty}\Big) & \in \WF_{1/2,1}^{\mu+1/2+\sigma}(\eta(t_0)) \cup \WF_{1/2,1}^{\mu+\sigma}(\psi(t_0)), \\
\text{resp.} \quad
\Big(\frac{3}{2}t_0|\xi_{-\infty}|^{-1/2}\xi_{-\infty},\xi_{-\infty}\Big) & \in \WF_{1/2,1}^{\mu+1/2+\sigma}(\eta(t_0)) \cup \WF_{1/2,1}^{\mu+\sigma}(\psi(t_0)).
\end{align*}
\end{theorem}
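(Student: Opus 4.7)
The plan is to follow the contrapositive / positive-commutator strategy of Craig--Kappeler--Strauss, Wunsch and Nakamura, adapted to the quasilinear water-wave system via paradifferential reduction. First, following the paralinearization and symmetrization of Alazard--Burq--Zuily but now carried out within the dyadic weighted paradifferential calculus $\P{\cdot}$ introduced in this paper, I would reduce~\eqref{eq::equation-water-wave} to a scalar paradifferential fractional Schr\"odinger equation on the free surface,
\begin{equation*}
\pt u + i \P{\lambda} u = f,
\end{equation*}
where $u$ is a suitable complex combination of $(\eta,\psi)$, $\lambda$ is a real time-dependent symbol whose principal part is $G(x,\xi)^{3/4}$ (so that its Hamiltonian flow equals, up to reparametrization by $\tfrac{3}{2}G^{-1/4}$, the co-geodesic flow $\Geo$), and $f$ is a quasilinear remainder controlled in the weighted Sobolev scale $\HC{\mu-\alpha}_m$ via Theorem~\ref{thm::ww-weighted-sobolev-existence}.

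Next, argue by contrapositive: assume
\begin{equation*}
z_{t_0} \bydef \Big(\tfrac{3}{2}t_0|\xi_{+\infty}|^{-1/2}\xi_{+\infty},\xi_{+\infty}\Big) \not\in \WF_{1/2,1}^{\mu+1/2+\sigma}(\eta(t_0)) \cup \WF_{1/2,1}^{\mu+\sigma}(\psi(t_0)),
\end{equation*}
so there exists $b \in \Ccinf(\R^{2d})$ with $b(z_{t_0}) \ne 0$ for which $\|b(h^{1/2}x,hD_x)u(t_0)\|_{\Ltwo} = \O(h^{\mu+\sigma})$. The non-trapping assumption, combined with the asymptotic flatness of $\varrho_0$ guaranteed by $\eta_0 \in \HC{\mu+1/2}_m$, produces the limit $\xi_{+\infty} = \lim_{t\to+\infty}\xi_t \ne 0$ via a standard ODE argument. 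Then construct a family of symbols $\{a_h\}_{0<h<1}$ on $[0,t_0]\times\R^{2d}$ solving the transport equation
\begin{equation*}
\pt a_h + \{\lambda, a_h\} \equiv 0, \qquad a_h|_{t=t_0} = b(h^{1/2}x,h\xi),
\end{equation*}
where $a_h$ interpolates between the quasi-homogeneous $(1/2,1)$ scale at $t=t_0$ and the classical $(0,1)$ scale at $t=0$. Non-trapping guarantees that the backward trajectory from $z_{t_0}$ under the rescaled Hamiltonian flow enters a compact region on the classical scale, so that $a_h|_{t=0}$ can be arranged to equal $\chi(x,h\xi)+\O(h^\infty)$ for some $\chi \in \Ccinf$ with $\chi(x_0,\xi_0)\ne 0$. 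A semiclassical energy estimate on $\langle \P{a_h}u,u\rangle$ then combines the transport equation (which kills the leading commutator), subprincipal errors (bounded by the weighted Sobolev norms of $u$), and the quasilinear contribution $2\Im\langle \P{a_h}u,f\rangle$; integrating from $0$ to $t_0$ and invoking the hypothesis at $t_0$ gives $\|\chi(x,hD_x)u(0)\|_{\Ltwo}=\O(h^{\mu+\sigma})$, contradicting $(x_0,\xi_0) \in \WF_{0,1}^{\mu+\sigma}(u(0))$.

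The hardest step will be the construction of $a_h$ and its symbolic calculus across the transition between the classical $(0,1)$ and quasi-homogeneous $(1/2,1)$ scales; this is precisely the purpose of the semiclassical paradifferential calculus announced in the abstract. A secondary difficulty is the uniform control of the quasilinear remainder $f$ over $[-T,T]$ in weighted norms, which relies essentially on Theorem~\ref{thm::ww-weighted-sobolev-existence}. Together these dictate the constraints $\sigma \le \mu/2 - 3 - d/4$ (from the paralinearization gain) and $\sigma \le 3m/2$ (from polynomial weight loss along the semiclassical trajectory), accounting for the hypothesis $\sigma \le \min\{\mu/2-3-d/4,3m/2\}$ and the more restrictive range $m \le \tfrac{2}{3}(\mu-3-d/2)$ compared to Theorem~\ref{thm::main-infinite}.
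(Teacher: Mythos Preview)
Your overall architecture --- paralinearize, pass to a complex scalar unknown, argue by contrapositive, and run a semiclassical energy estimate over $[0,t_0]$ --- matches the paper. The genuine gap is in the step where you ``solve the transport equation $\pt a_h + \{\lambda,a_h\}\equiv 0$'' with data $b(h^{1/2}x,h\xi)$ at $t=t_0$ and claim the result at $t=0$ is $\chi(x,h\xi)$ for some $\chi\in\Ccinf$. This cannot work: the Hamiltonian flow is symplectic, so it preserves phase-space volume. A $(1/2,1)$-localized symbol at $t=t_0$ occupies a box of size $h^{-1/2}$ in $x$ and $h^{-1}$ in $\xi$, hence volume $h^{-3d/2}$, whereas a $(0,1)$-localized symbol $\chi(x,h\xi)$ at $t=0$ occupies a box of size $1$ in $x$ and $h^{-1}$ in $\xi$, volume $h^{-d}$. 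Exact (or even approximate) transport cannot carry one to the other; concretely, pulling $b(h^{1/2}x,h\xi)$ back along the flow of $|\xi|^{3/2}$ gives $b\big(h^{1/2}x + \tfrac{3}{2}t_0|h\xi|^{-1/2}h\xi,\,h\xi\big)$, which is still a $(1/2,1)$-symbol in $x$, not $(0,1)$.

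What the paper does instead is abandon exact transport in favour of a \emph{positive commutator}: it introduces the semiclassical time $s=h^{-1/2}t$ and builds a symbol of the form $\chi(s,x,\xi)=\phi\big(\tfrac{x-x_s}{\lambda\delta(1+s)}\big)\phi\big(\tfrac{\xi-\xi_s}{\delta-s^{-\nu}}\big)$ that \emph{spreads} in $x$ as $s$ grows, so that at $s=0$ it is compactly supported in $x$ (yielding a $(0,1)$-test) while at $s=h^{-1/2}t_0$ its $x$-support has width $\sim h^{-1/2}$ (yielding a $(1/2,1)$-test). The spreading is exactly what makes $\Lag_s\chi\ge 0$ rather than $=0$; one then uses the paradifferential sharp G{\aa}rding inequality to get the one-sided bound $(\A_h w,w)|_{s=0}\le (\A_h w,w)|_{s=h^{-1/2}t_0}+\O(h^\rho)$, and an inductive correction scheme (with time weights $\varphi^j\in P_j$ built from powers of $\ln(1+s)$) to absorb the subprincipal errors over the long interval $[0,h^{-1/2}t_0]$. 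Your energy estimate, as written, lacks both the monotonicity mechanism and the mechanism to control errors accumulated over an interval of length $h^{-1/2}$; this is the missing idea, and it is precisely what dictates the form of the symbol in Lemma~\ref{lem::chi-symbol-construction-main}. A secondary point: after symmetrization the equation is genuinely a $2\times 2$ system in $(u,\bar u)$ with opposite propagation directions, so one needs a diagonal pair $\chi^\pm$ elliptic at $(x_0,\pm\xi_0)$ rather than a single scalar symbol.
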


We remark that the asymptotic directions $ \xi_{\pm\infty} $ are determined solely by the geometry of $ \Sigma_0 $.
This is due to the infinite speed of propagation.
We can also prove that, the non-trapping assumption is, at least in the following two cases, unnecessary: if $ d = 1 $, or if $ \nabla\eta(0) \in \Linf $ and $ \|\jp{x}\nabla^2 \eta(0)\|_{\Linf} $ is sufficiently small. 
In both cases we obtain the following local smoothing effect.

\begin{corollary}
\label{cor::local-smoothing-effect}
If $ d $, $ \mu $, $ k $, $ \sigma $ satisfy the hypothesis of the previous theorem, $ T > 0 $,
\begin{equation*}
(\eta,\psi) \in C([-T,T],\HC{\mu+1/2,1/2}_{k} \times \HC{\mu,1/2}_{k})
\end{equation*} 
solves the equation~\eqref{eq::equation-water-wave}, and both of the following two conditions are satisfied:
\begin{enumerate}[label=(\arabic*)]
\item Either $ d = 1 $ or $ \|\jp{x} \nabla^2 \eta(0)\|_{\Linf} $ is sufficiently small; and
\item $ \WF_{1/2,1}^{\mu+1/2+\sigma}(\eta(0)) \cup \WF_{1/2,1}^{\mu+\sigma}(\psi(0)) \subset \{x=0\} \cup \{\xi = 0\}  $,
\end{enumerate}
then for all $ t_0 \in [-T,T] \backslash \{0\} $ and for all $ \epsilon > 0 $, 
\begin{equation*}
(\eta(t_0),\psi(t_0)) \in \H{\mu+1/2+\sigma-\epsilon}_\loc \times \H{\mu+\sigma-\epsilon}_\loc.
\end{equation*}
\end{corollary}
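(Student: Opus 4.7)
The plan is to argue by contrapositive, combining Theorem~\ref{thm::main-finite} with the time-reversal symmetry $(\eta,\psi)(t) \mapsto (\eta,-\psi)(-t)$ of~\eqref{eq::equation-water-wave} (which is immediate from the form of the equations). Given $t_0 \in [-T,T]\setminus\{0\}$, I take WLOG $t_0 > 0$ and set $(\tilde\eta,\tilde\psi)(s)\bydef (\eta(t_0-s),-\psi(t_0-s))$; this is a solution of~\eqref{eq::equation-water-wave} in the same regularity class with initial data $(\eta(t_0),-\psi(t_0))$. Suppose the conclusion fails, so that $\eta(t_0) \notin \H{\mu+1/2+\sigma-\epsilon}_\loc$ or $\psi(t_0) \notin \H{\mu+\sigma-\epsilon}_\loc$ for some $\epsilon > 0$. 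By the standard characterization of local Sobolev regularity via the $(0,1)$-quasi-homogeneous wavefront set (Lemma~\ref{lem::basic-properties-WF}), there exists then $(y_0,\eta_0)$ with $\eta_0 \ne 0$ lying in $\WF^{\mu+1/2+\sigma-\epsilon}_{0,1}(\tilde\eta(0)) \cup \WF^{\mu+\sigma-\epsilon}_{0,1}(\tilde\psi(0))$.

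The next step is to verify that condition~(1) forces the co-geodesic flow on $\Sigma(\eta(t_0))$ to be forwardly non-trapping from every nonzero momentum point---in particular from $(y_0,\eta_0)$. In the case $d=1$, the surface is a smooth curve $y=\eta(t_0,x)$; a unit-speed geodesic is simply an arclength reparametrization of the curve, so its $x$-projection is strictly monotone and escapes to $\pm\infty$. In the small-curvature case, one uses Sobolev embedding on $\HC{\mu}_m$ with $m \ge 3$ and continuity in time (shrinking $T$ if needed) to conclude that $\|\jp{x}\nabla^2\eta(t_0)\|_{\Linf}$ remains sufficiently small; a perturbative Gr\"onwall argument on the Hamiltonian system of $G(x,\xi) = {}^t\xi\,\varrho(\eta(t_0))^{-1}\xi$ then yields $|x_s|\to\infty$ as $s\to\pm\infty$ from any co-point with nonzero momentum.

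Applying Theorem~\ref{thm::main-finite} to $(\tilde\eta,\tilde\psi)$ at $(y_0,\eta_0)$ produces $\xi_{+\infty}\in\Rd\setminus\{0\}$ and a $(1/2,1)$-singularity of $(\tilde\eta,\tilde\psi)(t_0) = (\eta(0),-\psi(0))$ of order $\mu+1/2+\sigma-\epsilon$ resp.\ $\mu+\sigma-\epsilon$, located at $\bigl(\tfrac{3}{2}t_0|\xi_{+\infty}|^{-1/2}\xi_{+\infty},\xi_{+\infty}\bigr)$---both coordinates nonzero because $t_0\ne 0$ and $\xi_{+\infty}\ne 0$. By the monotonicity $\WF^{r'}_{1/2,1}\subset\WF^r_{1/2,1}$ for $r' < r$, this point also belongs to $\WF^{\mu+1/2+\sigma}_{1/2,1}(\eta(0)) \cup \WF^{\mu+\sigma}_{1/2,1}(\psi(0))$, directly contradicting condition~(2).

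The principal technical point is the non-trapping verification in Step~2, and within it the small-curvature case: one must check that the smallness of $\|\jp{x}\nabla^2\eta(t)\|_{\Linf}$ is preserved under the water wave flow within the weighted-Sobolev framework of Theorem~\ref{thm::ww-weighted-sobolev-existence}, which constrains how large $m$ and $\mu-m/2$ must be for the relevant Sobolev embedding; the Hamiltonian-ODE analysis, while classical, must then be carried out with quantitative decay-in-$x$ bounds on the metric perturbation rather than compactness arguments. Once these are in place, the application of Theorem~\ref{thm::main-finite} and the contradiction with~(2) are formal.
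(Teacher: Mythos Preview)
Your strategy is correct and coincides with the paper's: reduce to Theorem~\ref{thm::main-finite} by contraposition after shifting the reference time, and supply the non-trapping hypothesis from condition~(1). The paper's proof is extremely brief and only writes out the non-trapping verification, leaving the reduction implicit; you have spelled out that reduction, which is helpful. Two minor points of comparison are worth making.

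First, for the small-curvature case the paper uses a one-line virial argument rather than a Gr\"onwall estimate: exactly as in the proof of Lemma~\ref{lem::hamiltonian-flow-to-infinity} one computes, for the Hamiltonian $G(x,\xi)={}^t\xi\varrho^{-1}\xi$,
\[
\frac{\d}{\dt}(x_t\cdot\xi_t)=\pxi G\cdot\xi_t - x_t\cdot\px G = 2G(x_t,\xi_t)+\O(\|\jp{x}\nabla^2\eta\|_{\Linf})|\xi_t|^2 \gtrsim |\xi_0|^2,
\]
which gives non-trapping directly without any iteration. This is both shorter and gives a quantitative escape rate.

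Second, the time reversal $(\eta,\psi)(t)\mapsto(\eta,-\psi)(-t)$ is correct but unnecessary: a plain time translation $s\mapsto s+t_0$ together with the \emph{backward} case of Theorem~\ref{thm::main-finite} yields the same contradiction with condition~(2), at the point $\bigl(-\tfrac{3}{2}t_0|\xi_{-\infty}|^{-1/2}\xi_{-\infty},\xi_{-\infty}\bigr)$, without invoking any symmetry of the equation.

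The subtlety you flag---that non-trapping must be checked on $\Sigma(\eta(t_0))$ while condition~(1) is stated at $t=0$---is genuine and is equally unaddressed in the paper. With the virial argument the smallness threshold is an absolute constant depending only on $\|\nabla\eta\|_{\Linf}$; since $t\mapsto\|\jp{x}\nabla^2\eta(t)\|_{\Linf}$ is continuous (Remark~\ref{remark::symbol-polynomial-decay} gives $\nabla^2\eta\in\Holder{0}_{\rho_2}$ with $\rho_2\ge 3$), one reads ``sufficiently small'' in~(1) as small relative to $\sup_{[-T,T]}\|\nabla\eta(t)\|_{\Linf}$ and the a~priori growth on $[-T,T]$, rather than shrinking $T$. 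Finally, your appeal to Lemma~\ref{lem::basic-properties-WF} for the equivalence between $\H{r}_\loc$ and emptiness of $\WF_{0,1}^r(\cdot)\setminus\{\xi=0\}$ is slightly off---that lemma only gives one direction---but the converse (with the $\epsilon$-loss you already build in) is standard for the semiclassical wavefront set.
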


The second condition is satisfied if, by Lemma~\ref{lem::basic-properties-WF}, there exists $ (k,k') \in \R^2 $ such that
\begin{equation*}
(\eta(0),\psi(0)) \in \H{\mu+1/2+\sigma-k}_{2k} \times \H{\mu+\sigma-k'}_{2k'}.
\end{equation*}
This is particularly the case if $ (\eta(0),\psi(0)) \in \mathscr{E}'(\Rd) \times\mathscr{E}'(\Rd) $.

We refer to Christianson, Hur and Staffilani~\cite{CHS09:smoothing}, Alazard, Burq and Zuily~\cite{ABZ11capillary} for local smoothing effects of 2D capillary-gravity water waves. 
See also Alazard, Ifrim and Tataru~\cite{AIT18morawetz} for a Morawetz inequality of 2D gravity water waves.

\subsection{Outline of paper}

In \S\ref{sec::quasi-homogeneous-microlocal-analysis}, we present basic properties of weighted Sobolev spaces and the quasi-homogeneous wavefront set.
In \S\ref{sec::proof-model-eq}, we prove Theorem~\ref{thm::model-eq} by extending the idea of Nakamura.
In \S\ref{sec::paradiff-calculus}, we review the paradifferential calculus of Bony, and extend it to weighted Sobolev spaces by a spatial dyadic decomposition. We also develop a quasi-homogeneous semiclassical paradifferential calculus, and study its relations with the quasi-homogeneous wavefront set.
In \S\ref{sec::asymptotically-flat-ww}, we study the Dirichlet--Neumann operator in weighted Sobolev spaces and prove the existence of asymptotically flat gravity-capillary water waves, i.e., Theorem~\ref{thm::ww-weighted-sobolev-existence}.
In \S\ref{sec::proof-main-thm}, we prove our main results, i.e., Theorem~\ref{thm::main-infinite}, Theorem~\ref{thm::main-finite} and Corollary~\ref{cor::local-smoothing-effect}, by extending the proof of Theorem~\ref{thm::model-eq} to the quasilinear equation using the paradifferential calculus.

\section*{Acknowledgment}

The author would like to thank Thomas Alazard, Nicolas Burq and Claude Zuily for their constant support and encouragement. He would like to thank Shu Nakamura for helpful discussions during the early state of this project. He would also like to thank Jean-Marc Delort for his careful reading of the manuscript, and thank Daniel Tataru for his useful comments.
Finally the author would like to thanks the anonymous referees for their detailed comments which result in a better presentation of the paper.

\section{Quasi-homogeneous microlocal analysis}

\label{sec::quasi-homogeneous-microlocal-analysis}

In this section we develop the quasi-homogeneous semiclassical calculus and discuss its relation with weighted Sobolev spaces and the quasi-homogeneous wavefront set.

\subsection{Quasi-homogeneous semiclassical calculus}

\begin{definition}
For $ (\mu,k) \in \R^2 $, set $ m^\mu_k(x,\xi) = \jp{x}^k \jp{\xi}^\mu $. Let $ a_h \in \Cinf(\R^{2d}) $, we say that $ a_h \in S^\mu_k $ if for all $ \alpha, \beta \in \N^d $, there exists $ C_{\alpha\beta} > 0 $, such that for all $ (x,\xi) \in \R^{2d} $,
\begin{equation}
\label{eq:symbol-seminorm-def}
\sup_{h \in (0,1]} \big|\px^\alpha \pxi^\beta a_h(x,\xi) \big| \le C_{\alpha\beta} m^{\mu-|\beta|}_{k-|\alpha|}(x,\xi).
\end{equation}
We say that $ a_h \in S^\mu_k $ is $ (\mu,k) $-elliptic if there exists $ R > 0, C > 0 $ such that for $ |x| + |\xi| \ge R $,
\begin{equation*}
\inf_{h \in (0,1]} |a_h(x,\xi)| \ge C m^{\mu}_k(x,\xi).
\end{equation*}
Also write 
$ S^\infty_{\infty} = \cup_{(\mu,k) \in \R^2} S^\mu_k, $ and $S^{-\infty}_{-\infty} = \cap_{(\mu,k) \in \R^2} S^\mu_k. $

We say that $ a_h \in S^{-\infty}_{-\infty} $ is elliptic at $ (x_0,\xi_0) $ if for some neighborhood~$ \Omega $ of $ (x_0,\xi_0) $,
\begin{equation*}
\inf_{h \in (0,1]}  \inf_{(x,\xi) \in \Omega}  |a_h(x,\xi)| > 0.
\end{equation*}
\end{definition}

\begin{definition}
\label{def:quasi-hom-semi-calc}
Let $ \delta,\rho \in \R $ such that $ \delta + \rho > 0 $ and for all $ h \in (0,1] $, define the scaling
\begin{equation}
\label{eq:def:quasi-homogeneous-scaling}
\theta^{\delta,\rho}_h: (x,\xi) \mapsto (h^\delta x,h^\rho \xi)
\end{equation}
which induces a pullback~$ \theta^{\delta,\rho}_{h,*} $ on~$ S^{\infty}_{\infty} $:
$ \theta^{\delta,\rho}_{h,*} a_h = a_h \comp \theta^{\delta,\rho}_h. $
Then define, by~\eqref{eq:def:quantization},
\begin{equation*}
\op_h^{\delta,\rho}(a_h) = \op(\theta^{\delta,\rho}_{h,*} a_h) = a(h^\delta x,h^\rho D_x).
\end{equation*}

\end{definition}

The scaling
$ \vartheta_h^\delta u(x) = h^{\delta d/2} u(h^\delta x) $
defines an isometry on $ \Ltwo(\Rd) $.
Therefore, by the formula
\begin{equation}
\label{eq:scaling-transform}
(\vartheta_h^\delta)^{-1} \op_h^{\delta,\rho}(a) \vartheta_h^\delta = \op_h^{0,\delta+\rho}(a),
\end{equation} 
we deduce the following results from the usual semiclassical calculus for which we refer to~\cite{Zworski12semiclassical}.

\begin{proposition}
\label{prop::Ltwo-Pseu-D-O}
There exists $ K > 0 $ such that, if $ a \in \Cinf(\R^{2d}) $ with $ \| \px^\alpha \pxi^\beta a \|_{\Linf} \le M $ for all $ |\alpha|+|\beta| \le d $, then $ \op_h^{\delta,\rho}(a) :L^2 \to L^2 $ and 
$ \|\op_h^{\delta,\rho}(a)\|_{\Ltwo \to \Ltwo} \le KM. $
\end{proposition}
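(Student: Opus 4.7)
The plan is to reduce the assertion to the classical semiclassical Calderón--Vaillancourt theorem in two short steps. First, I would invoke the conjugation identity stated just above the proposition,
\[
(\vartheta_h^\delta)^{-1} \op_h^{\delta,\rho}(a) \vartheta_h^\delta = \op_h^{0,\delta+\rho}(a),
\]
together with the fact that $\vartheta_h^\delta : u \mapsto h^{\delta d/2} u(h^\delta \cdot)$ is an $L^2$-isometry, to obtain the equality of operator norms
\[
\|\op_h^{\delta,\rho}(a)\|_{L^2\to L^2} = \|\op_h^{0,\delta+\rho}(a)\|_{L^2\to L^2}.
\]
This eliminates the spatial scaling and reduces the matter to a single semiclassical parameter.

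Second, setting $\sigma := \delta + \rho \ge 0$, by the definition of $\op_h^{0,\sigma}$ and of the pullback $\theta^{0,\sigma}_{h,*}$, the right-hand side operator is literally the pseudodifferential quantization $\op(a(x, h^\sigma \xi)) = a(x, \tilde h D_x)$ with $\tilde h := h^\sigma$. For $h \in (0,1)$ one has $\tilde h \in (0,1]$ (and if $\sigma = 0$ the operator is simply the $h$-independent $\op(a)$, to which the classical statement applies directly), so this is a standard semiclassical quantization with semiclassical parameter $\tilde h$.

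Third, I would invoke the semiclassical Calderón--Vaillancourt theorem from Zworski's book \cite{Zworski12semiclassical}, which furnishes a constant $K > 0$ depending only on $d$ and an estimate
\[
\|a(x, \tilde h D_x)\|_{L^2 \to L^2} \le K \sup_{|\alpha|+|\beta| \le d} \|\partial_x^\alpha \partial_\xi^\beta a\|_{L^\infty},
\]
uniform in $\tilde h \in (0,1]$. The hypothesis bounds the right-hand side by $KM$, and combining with the first step gives $\|\op_h^{\delta,\rho}(a)\|_{L^2\to L^2} \le KM$, completing the proof.

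There is no real obstacle here: the content of the proposition is entirely encapsulated in the cited Calderón--Vaillancourt theorem, and the only work is the elementary observation that conjugation by the $L^2$-isometric dilation $\vartheta_h^\delta$ converts the quasi-homogeneous quantization $\op_h^{\delta,\rho}$ into the purely-semiclassical quantization $\op_h^{0,\delta+\rho}$. The only point requiring mild care is verifying the exact derivative order in the form of Calderón--Vaillancourt one cites; this is a matter of reference and, if necessary, of enlarging $K$ by a dimension-dependent constant.
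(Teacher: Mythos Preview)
Your proposal is correct and matches the paper's own argument exactly: the paper deduces this proposition (and the two following it) directly from the conjugation identity $(\vartheta_h^\delta)^{-1} \op_h^{\delta,\rho}(a) \vartheta_h^\delta = \op_h^{0,\delta+\rho}(a)$ together with the standard semiclassical Calder\'on--Vaillancourt theorem from \cite{Zworski12semiclassical}, just as you outline.
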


\begin{proposition}
\label{prop::quasi-homogeneous-composition}
There exists a bilinear operator 
$ \sharp^{\delta,\rho}_h : S^{\infty}_{\infty} \times S^{\infty}_{\infty} \to S^{\infty}_{\infty}, $
such that 
\begin{equation*}
\op_h^{\delta,\rho}(a_h) \op_h^{\delta,\rho}(b_h) = \op_h^{\delta,\rho}(a_h \sharp_h^{\delta,\rho} b_h). 
\end{equation*}
Moreover, if $ a_h \in S^\mu_k $ and $ b_h \in S^\nu_\ell $, then $ a_h \sharp_h^{\delta,\rho} b_h \in S^{\mu+\nu}_{k+\ell} $. 
For all $ r > 0 $, define 
\begin{equation}
\label{eq:def-symbolic-composition-finite}
a_h \sharp^{\delta,\rho}_{h,r} b_h = \sum_{|\alpha| < r} \frac{h^{|\alpha|(\delta+\rho)}}{\alpha!} \pxi^\alpha a_h D_x^\alpha b_h.
\end{equation}
Then we have
\begin{equation*}
a_h \sharp^{\delta,\rho}_{h} b_h - a_h \sharp^{\delta,\rho}_{h,r} b_h = \O(h^{r(\delta+\rho)})_{S^{\mu+\nu-r}_{k+\ell-r}}.
\end{equation*}
\end{proposition}

\begin{proposition}
\label{prop::adjoint-quasi-homogeneous}
There exists a linear operator
$ \zeta^{\delta,\rho}_h : S^{\infty}_{\infty} \to S^{\infty}_{\infty} $
such that 
\begin{equation*}
\op_h^{\delta,\rho}(a_h)^* = \op_h^{\delta,\rho}(\zeta^{\delta,\rho}_h a_h).
\end{equation*}
Moreover iff $ a_h \in S^\mu_k $, then $ \zeta^{\delta,\rho}_h a_h \in S^\mu_k. $
For $ r > 0 $, define 
\begin{equation}
\label{eq:def-symbolic-adjoint-finite}
\zeta^{\delta,\rho}_{h,r} a_h 
= \sum_{|\alpha| < r} \frac{h^{|\alpha|(\delta+\rho)}}{\alpha!} \pxi^\alpha D_x^\alpha \overline{a_h}.
\end{equation}
Then we have
\begin{equation*}
\zeta^{\delta,\rho}_{h} a_h - \zeta^{\delta,\rho}_{h,r} a_h = \O(h^{r(\delta+\rho)})_{S^{\mu-r}_{k-r}}.
\end{equation*} 
\end{proposition}

\begin{proposition}[Sharp G{\aa}rding Inequality]
\label{prop::garding-quasi-homogeneous}
If $ \delta + \rho > 0 $ and $ a_h \in S^0_0 $ such that $ \Re\,a_h \ge 0 $, then there exists $ C > 0 $ such that for all $ u \in \Ltwo(\Rd) $ and $ 0 < h < 1 $, we have
\begin{equation*}
\Re(\op_h^{\delta,\rho}(a_h) u, u)_{\Ltwo} \ge -Ch^{\delta+\rho} \|u\|_{\Ltwo}^2.
\end{equation*}
\end{proposition}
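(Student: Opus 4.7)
The plan is to reduce the quasi-homogeneous sharp G\aa rding inequality to the standard semiclassical sharp G\aa rding inequality via the conjugation formula stated immediately above the proposition. Concretely, setting $h' = h^{\delta+\rho}$ we have $\op_h^{0,\delta+\rho}(a_h) = \op(a_h(x,h'\xi))$, which is exactly the standard semiclassical quantization of $a_h$ at scale $h'$, and the hypothesis $\delta+\rho > 0$ ensures $h' \to 0^+$ as $h \to 0^+$.

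First I would rewrite the inner product using the unitary $\vartheta_h^\delta$. Since $\vartheta_h^\delta$ is an $L^2$-isometry, setting $v = (\vartheta_h^\delta)^{-1} u$ gives $\|v\|_{L^2} = \|u\|_{L^2}$ and, by the identity $(\vartheta_h^\delta)^{-1} \op_h^{\delta,\rho}(a_h) \vartheta_h^\delta = \op_h^{0,\delta+\rho}(a_h)$, we obtain
\begin{equation*}
\Re(\op_h^{\delta,\rho}(a_h) u, u)_{\Ltwo} = \Re(\op_h^{0,\delta+\rho}(a_h) v, v)_{\Ltwo}.
\end{equation*}
Thus it suffices to prove that $\Re(\op_{h'}(a_h) v, v)_{L^2} \ge -Ch' \|v\|_{L^2}^2$, where the right-hand side is the usual semiclassical quantization at parameter $h'$.

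Next I would invoke the classical semiclassical sharp G\aa rding inequality (see e.g.\ \cite{Zworski12semiclassical}), which asserts that for a symbol $b \in S^0_0$ with $\Re\, b \ge 0$, one has $\Re(\op_{h'}(b)v,v)_{L^2} \ge -Ch'\|v\|_{L^2}^2$ with $C$ depending only on finitely many seminorms of $b$. Applying this with $b = a_h$, the key observation is that although $a_h$ depends on $h$, the seminorms are bounded \emph{uniformly} in $h$ by the hypothesis $a_h \in S^0_0$ (which contains a $\sup_{0<h<1}$ in its defining estimate). Hence $C$ can be chosen independent of $h$, and substituting $h' = h^{\delta+\rho}$ yields the claim with the isometry reverting to $\|u\|_{L^2}$.

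There is no essential obstacle here; the only point requiring a word of care is verifying that the constants in the standard semiclassical G\aa rding inequality depend only on the uniform seminorms of the symbol and not pointwise on $h$, so that the $h$-dependence of $a_h$ is absorbed. Everything else is a mechanical transfer of the well-known semiclassical statement through the isometry $\vartheta_h^\delta$.
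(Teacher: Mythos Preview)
Your proposal is correct and follows exactly the approach the paper itself uses: the paper explicitly states that Propositions~\ref{prop::Ltwo-Pseu-D-O}--\ref{prop::adjoint-quasi-homogeneous} are all obtained from the usual semiclassical calculus (referenced to \cite{Zworski12semiclassical}) via the conjugation formula $(\vartheta_h^\delta)^{-1} \op_h^{\delta,\rho}(a) \vartheta_h^\delta = \op_h^{0,\delta+\rho}(a)$, which is precisely your reduction to the standard semiclassical G\aa rding inequality at scale $h' = h^{\delta+\rho}$. If anything, you have spelled out more detail than the paper does, including the correct observation that the uniform-in-$h$ seminorm bounds built into the definition of $S^0_0$ are what make the constant $C$ independent of $h$.
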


\subsection{Weighted Sobolev spaces}

Recall the weighted Sobolev spaces defined in Definition~\ref{def::weighted-Sobolev-spaces}.

\begin{proposition}
\label{prop::Schwartz-vs-Weighted-Sobolev}
We have $ \swtz(\Rd) = \cap_{\mu,k\in\R} H^{\mu}_k $ and $ \swtz'(\Rd) = \cup_{\mu,k\in\R} H^{\mu}_k $.
\end{proposition}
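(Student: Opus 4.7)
The plan is to prove both equalities by showing mutual inclusions, relying on standard pseudodifferential and Sobolev arguments together with the seminorm characterization of tempered distributions.

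For the first equality $ \swtz = \bigcap_{(\mu,k)\in\R^2} \H{\mu}_k $, the inclusion $\subset$ is immediate because $ \jp{x}^k \jp{D_x}^\mu $ preserves $ \swtz $ and hence maps it into $ \Ltwo $. For the reverse, I would commute the space-weight and frequency-weight using the classical (non-semiclassical) analogue of the calculus in $ S^\mu_k $: the symbols $ \jp{\xi}^\mu \in S^\mu_0 $ and $ \jp{x}^k \in S^0_k $ compose with principal symbol $ \jp{x}^k \jp{\xi}^\mu $ modulo a remainder in $ S^{\mu-1}_{k-1} $. A finite induction on $ |\mu|+|k| $ then upgrades $ u \in \bigcap \H{\mu}_k $ to $ \jp{x}^k u \in \H{\mu} $ for every $ (\mu,k) $. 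The Sobolev embedding $ \H{\mu}(\Rd) \hookrightarrow C^N_b(\Rd) $ for $ \mu > N+d/2 $, combined with the Leibniz rule, then yields $ \jp{x}^k \partial^\alpha u \in \Linf $ for every $ k $ and $ \alpha $, which is precisely the defining property of $ u \in \swtz $.

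For the second equality $ \swtz' = \bigcup_{(\mu,k)\in\R^2} \H{\mu}_k $, the inclusion $\supset$ follows from the continuous embedding $ \H{\mu}_k \hookrightarrow \swtz' $ obtained by pairing against $ \swtz \subset \H{-\mu}_{-k} $, the latter being the content of the first equality. For the reverse, I would invoke the standard seminorm characterization: any $ u \in \swtz' $ satisfies $ |\jp{u,\varphi}| \le C \sup_{|\alpha|+|\beta|\le N} \|x^\alpha \partial^\beta \varphi\|_{\Linf} $ for some $ N \in \N $. The Sobolev embedding $ \H{\mu}(\Rd) \hookrightarrow \Linf(\Rd) $ for $ \mu > d/2 $, together with the same weight-derivative commutation used above, allows this to be dominated by $ \|\varphi\|_{\H{\nu}_{k}} $ for $ \nu,k $ large enough. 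Density of $ \swtz $ in $ \H{\nu}_k $, obtained by truncation and mollification (invoking once more the first equality), then lets $ u $ extend continuously to $ \H{\nu}_k $, and the $ \Ltwo $ pairing identifies this extension with an element of $ \H{-\nu}_{-k} $ via the duality $ (\H{\nu}_k)' \cong \H{-\nu}_{-k} $.

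The only substantive obstacle lies in handling the non-commutation of $ \jp{x}^k $ with $ \jp{D_x}^\mu $: each commutator loses one order in each variable, so every iteration only requires finitely many previously established weighted Sobolev bounds on $u$ and the induction closes. The duality $ (\H{\nu}_k)' \cong \H{-\nu}_{-k} $ is itself a direct consequence of the $ \Ltwo $-boundedness of $ \jp{x}^k \jp{D_x}^\mu : \H{\mu}_k \to \Ltwo $ together with the density of $ \swtz $ just cited, so no additional machinery is needed beyond what is already in place.
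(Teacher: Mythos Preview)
Your proposal is correct and follows essentially the same route as the paper's proof. The paper is simply more terse: for the first equality it writes only ``the converse follows by Sobolev injection,'' and for the second it bounds the Schwartz seminorms directly by $\|\op(m^M_M)\varphi\|_{\Ltwo}$ (which is equivalent to your $\|\varphi\|_{\H{\nu}_k}$ bound) and then asserts $u\in \H{-M}_{-M}$ without spelling out the density and duality steps that you make explicit. Your commutator-induction for swapping $\jp{x}^k$ and $\jp{D_x}^\mu$ is a slightly more hands-on version of what the paper would get in one stroke from the $S^\mu_k$ calculus, but the content is the same.
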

\begin{proof}
Clearly $ \swtz(\Rd) \subset \cap_{\mu,k\in\R} H^{\mu}_k $. 
The converse follows by the Sobolev embedding theorems. 
As for the second statement, clearly $ \cup_{(\mu,k)\in\R^2} H^{\mu}_k \subset \swtz'(\Rd) $. 
Conversely, if $ u \in \swtz'(\Rd) $, then there exists $ N > 0 $, such that for all $ \varphi \in \swtz(\Rd) $, we have
\begin{equation*}
\langle u,\varphi \rangle_{\swtz',\swtz} 
\lesssim \sum_{|\alpha|+|\beta| \le N} \| x^\alpha \px^\beta \varphi \|_{\Linf}
\lesssim \|\op(m^N_N) \varphi\|_{\Ltwo}.
\end{equation*}
By duality this implies that $ u \in \H{-N}_{-N} $.
\end{proof}

\begin{lemma}
\label{lem::property-u_h-finite-order}
If $ u \in \swtz'(\Rd) $, then there exists $ N > 0 $ such that
\begin{equation*}
u = h^{-N} \op_h^{\delta,\rho}(m^{-N}_{-N}) \O(1)_{\Ltwo}
\end{equation*}
Therefore, if $ \delta + \rho > 0 $, and $ a_h \in \O(h^\infty)_{S^{-\infty}_{-\infty}} $, then 
$ \op_h^{\delta,\rho}(a_h) u_h = \O(h^{\infty})_{\swtz}. $
\end{lemma}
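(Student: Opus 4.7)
The plan is to first establish the decomposition in the first assertion, then deduce the second one as a routine consequence of the symbolic calculus. From Proposition~\ref{prop::Schwartz-vs-Weighted-Sobolev} I obtain $ M_0 > 0 $ with $ u \in \H{-M_0}_{-M_0} $. Taking $ M $ large in terms of $ M_0, \delta, \rho, d $, I construct $ v_h \in \Ltwo $ with $ \|v_h\|_{\Ltwo} = \O(1) $ and $ u = h^{-M} \op_h^{\delta,\rho}(m^{-M}_{-M}) v_h $ by a symbolic Neumann inversion: Proposition~\ref{prop::quasi-homogeneous-composition} gives $ m^{-M}_{-M} \sharp^{\delta,\rho}_h m^{M}_{M} = 1 $ modulo an error of $ \O(h^{\delta+\rho}) $ in $ S^{-1}_{-1} $, so iterating at the symbol level produces $ b_h \in S^{M}_{M} $ with $ m^{-M}_{-M} \sharp^{\delta,\rho}_h b_h - 1 \in \O(h^{\infty})_{S^{-\infty}_{-\infty}} $, and $ v_h \bydef h^M \op_h^{\delta,\rho}(b_h) u $ realizes the identity up to a rapidly decaying remainder that can be reabsorbed.

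The main technical step is the uniform $ \Ltwo $ bound on $ v_h $. Writing $ u = \jp{D_x}^{M_0} \jp{x}^{M_0} w $ with $ w \in \Ltwo $, and using the pointwise comparisons $ h^{\delta} \jp{x} \le \jp{h^{\delta} x} \le \jp{x} $ together with the Fourier analog $ h^{\rho} \jp{\xi} \le \jp{h^{\rho} \xi} \le \jp{\xi} $ valid for $ h \le 1 $, each weight or derivative on $ u $ costs at most a factor $ h^{-\delta} $ or $ h^{-\rho} $ in the semiclassical scale. Combined with Proposition~\ref{prop::Ltwo-Pseu-D-O} this yields a bound of the form $ \|v_h\|_{\Ltwo} \lesssim h^{M - C(\delta+\rho) M_0} \|w\|_{\Ltwo} $ with $ C = C(d) > 0 $; choosing $ M > C(\delta+\rho) M_0 $ gives $ \|v_h\|_{\Ltwo} = \O(1) $.

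Given the decomposition, the second assertion follows directly. Proposition~\ref{prop::quasi-homogeneous-composition} yields
\begin{equation*}
\op_h^{\delta,\rho}(a_h)\, u = h^{-M}\, \op_h^{\delta,\rho}\!\bigl(a_h \sharp^{\delta,\rho}_h m^{-M}_{-M}\bigr)\, v_h,
\end{equation*}
and because $ a_h \in \O(h^\infty)_{S^{-\infty}_{-\infty}} $ while $ m^{-M}_{-M} $ is fixed, each successive term in the asymptotic expansion gains a factor $ h^{\delta+\rho} $ (here the assumption $ \delta+\rho>0 $ is genuinely used), so $ a_h \sharp^{\delta,\rho}_h m^{-M}_{-M} \in \O(h^\infty)_{S^{-\infty}_{-\infty}} $. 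To promote this $ \Ltwo $-level control to $ \swtz $-convergence, I invoke Proposition~\ref{prop::Schwartz-vs-Weighted-Sobolev}: it suffices to bound each $ \H{\mu}_k $-seminorm by $ \O(h^\infty) $, which by one more application of Proposition~\ref{prop::quasi-homogeneous-composition} and Proposition~\ref{prop::Ltwo-Pseu-D-O} reduces to estimating $ \op_h^{\delta,\rho}(c_h) v_h $ in $ \Ltwo $ for some $ c_h \in \O(h^\infty)_{S^{-\infty}_{-\infty}} $, which is $ \O(h^\infty) $ thanks to the uniform $ \Ltwo $-bound on $ v_h $. The hardest part of the whole argument is exactly that uniform bound in the first step, which forces $ M $ to be chosen large enough to absorb every loss incurred by the symbolic inversion.
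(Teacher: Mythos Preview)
There is a genuine gap at the uniform $\Ltwo$ bound on $v_h$, and it cannot be repaired as stated because the first assertion, with $m^{-M}_{-M}$, is actually false; the exponent is a typo for $m^{M}_{M}$. Concretely, take $\delta=0$, $\rho=1$ and $u=\delta_0$: any representation $\delta_0=h^{-M}\jp{x}^{-M}\jp{hD_x}^{-M}v_h$ forces $\widehat{v_h}$ to be a constant multiple of $h^{M}\jp{h\xi}^{M}$, which is not in $\Ltwo$ for any $M>0$. In your construction $b_h\in S^M_M$ is of \emph{positive} order, so the classical symbol of $\op_h^{\delta,\rho}(b_h)\,\jp{D_x}^{M_0}\jp{x}^{M_0}$ grows like $\jp{h^\delta x}^{M}\jp{h^\rho\xi}^{M}\jp{\xi}^{M_0}\jp{x}^{M_0}$, and no power of $h$ in front makes this $\Ltwo$-bounded; the pointwise inequalities $\jp{x}\le h^{-\delta}\jp{h^\delta x}$ you invoke go in the wrong direction for that purpose.

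The paper's own proof bypasses all of this by duality. From the structure of $\swtz'$ one has $|\langle u,\varphi\rangle|\lesssim\sum_{|\alpha|+|\beta|\le N}\|x^\alpha\px^\beta\varphi\|_{\Linf}$, and then Sobolev embedding together with those same pointwise inequalities (now used in the helpful direction, on the test function) gives $|\langle u,\varphi\rangle|\lesssim h^{-M}\|\op_h^{\delta,\rho}(m^M_M)\varphi\|_{\Ltwo}$ for $M$ large. Riesz then yields $u=h^{-M}\bigl(\op_h^{\delta,\rho}(m^M_M)\bigr)^{*}\O(1)_{\Ltwo}$, i.e.\ a representation through an operator with symbol in $S^M_M$. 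With this corrected first assertion your deduction of the second part goes through unchanged, and your parametrix strategy would also prove the corrected first part (taking $b_h\in S^{-M}_{-M}$ instead); the duality route is simply shorter and does not need $\delta+\rho>0$ at that stage.
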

\begin{proof}
By the proof of Proposition~\ref{prop::Schwartz-vs-Weighted-Sobolev}, there exists $ N > 0 $, such that for all $ \varphi \in \swtz(\Rd) $,
\begin{equation*}
\langle u,\varphi \rangle_{\swtz',\swtz} 
\lesssim \sum_{|\alpha|+|\beta| \le N} \| x^\alpha \px^\beta \varphi \|_{\Linf}
\lesssim h^{-N} \|\op_h^{\delta,\rho}(m^N_N) \varphi\|_{\Ltwo}.
\end{equation*}
Again we conclude by duality.
\end{proof}

\begin{definition}
\label{def::order-weight-regularity}
We say that a linear operator $ \A : \swtz(\Rd) \to \swtz'(\Rd) $ is of order $ (\nu,\ell) \in \R^2 $, and denote $ \A \in \OP{\nu}{\ell} $ if for all $ (\mu,k) \in \R^2 $, there exists $ C > 0 $ such that for all $ u \in \swtz(\Rd) $, we have
\begin{equation*}
\|\A u\|_{\H{\mu-\nu}_{k-\ell}} \le C \|u\|_{\H{\mu}_k}.
\end{equation*}
Therefore $ \A $ extends to a bounded linear operator from $ \H{\mu}_k $ to $ \H{\mu-\nu}_{k-\ell} $. We denote $ \mathcal{A} \in \OP{-\infty}{-\infty} $ if $ \A \in \OP{\nu}{\ell} $ for all $ (\nu,\ell) \in \R^2 $.

Let $ \mathscr{A} $ be any nonempty set. 
Let $ \A_\alpha : \swtz(\Rd) \to \swtz'(\Rd) $ and $ C_\alpha > 0 $ be indexed by $ \alpha \in \mathscr{A} $. 
We say $ \A_\alpha = \O(C_\alpha)_{\OP{\nu}{\ell}} $ if for all $ (\mu,k) \in \R^2 $, there exists $ K > 0 $ such that for all $ \alpha \in \mathscr{A} $, we have
\begin{equation*}
\|\A_\alpha\|_{\H{\mu}_k\to\H{\mu-\nu}_{k-\ell}} \le K C_\alpha.
\end{equation*}
\end{definition}

By Proposition~\ref{prop::Ltwo-Pseu-D-O} and Proposition~\ref{prop::quasi-homogeneous-composition}, we obtain

\begin{proposition}
The following mapping properties of pseudodifferential operators hold:
\begin{enumerate}
\item If $ a_h \in S^\nu_\ell $ with $ (\nu,\ell) \in \R^2 $, then $ \op(a_h) \in \OP{\nu}{\ell} $. 
\item If $ u \in \swtz'(\Rd) $, then $ u \in \H{\mu}_k $ if and only if there exists a $ (\mu,k) $-elliptic symbol $ a_h \in S^{\mu}_k $ such that $ \op(a_h) u = \O(1)_{L^2} $.
\end{enumerate}
\end{proposition}

Next, we characterize weighted Sobolev spaces by a dyadic decomposition.

\begin{definition}
The set $ \Partition $ consists of all maps of the form 
\begin{equation*}
\psi : \N \to \Ccinf(\Rd), \quad j \mapsto \psi_j
\end{equation*}
such that the following conditions are satisfied:
\begin{enumerate}
\item There exists $ C > 1 $ such that for all $ j \ge 1 $ we have
\begin{equation*}
\supp \psi_j \subset \{x \in \Rd:C^{-1} 2^{j} \le |x| \le C 2^{j}\} ;
\end{equation*}
\item For all $ j \ge 0 $, the function $ \psi_j $ is non-negative.
\item There exists $ C > 1 $ such that
$ C^{-1} \le \sum_{j\in \N} \psi_j \le C. $
\item For all $ \alpha \in \N $ there exists $ C_\alpha $ such that for all $ j \in \N $ we have
\begin{equation*}
\|\px^\alpha \psi_j \|_{\Linf} \le C_\alpha 2^{-j|\alpha|}.
\end{equation*}
\end{enumerate}
The set $ \Partition_* $ consists of all $ \psi \in \Partition $ such that
\begin{enumerate}[resume]
\item $ \sum_{j\in \N} \psi_j = 1 $; and
\item $ \supp \psi_j \cap \supp \psi_k = \emptyset $ whenever $ |j-k| > 2 $.
\end{enumerate}
If $ \psi,\tilde{\psi} \in \Partition $ such that $ \psi_j \tilde{\psi}_j = \psi_j $ for all $ j \in \N $, then we denote $ \psi \subset \subset \tilde{\psi} $.
\end{definition}

\begin{proposition}
\label{prop::characterization-weighted-Sobolev}
If $ \mu,k \in \R $, $ \psi \in \Partition $ and $ u \in \swtz'(\Rd) $, then $ u \in \H{\mu}_k $ if and only if 
\begin{equation*}
\sum_{j \in \N} 2^{2jk} \|\psi_j u\|_{\H{\mu}}^2 < \infty.
\end{equation*}
Moreover, there exists $ C > 1 $, such that for all $ u \in \H{\mu}_k $, we have
\begin{equation*}
C^{-1} \|u\|_{\H{\mu}_k}^2 \le  \sum_{j \in \N} 2^{2jk} \|\psi_j u\|_{\H{\mu}}^2  \le C \|u\|_{\H{\mu}_k}^2.
\end{equation*}
\end{proposition}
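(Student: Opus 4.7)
The plan is to identify $\|u\|_{\H{\mu}_k}^2 = \|\jp{x}^k \jp{D_x}^\mu u\|_{\Ltwo}^2$ with the dyadic sum in two steps: first reduce to the pointwise case $\mu = 0$, then exchange $\psi_j$ and $\jp{D_x}^\mu$ through a commutator estimate. Because the proposition asserts the equivalence for every $\psi \in \Partition$, transitivity via $\|u\|_{\H{\mu}_k}$ lets me fix a single convenient partition once the equivalence has been proven for one choice.

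For the first step I would note that on $\supp \psi_j$ one has $\jp{x} \sim 2^j$, and that (i)--(ii) together imply $\sum_j \psi_j^2 \in [c, C]$; consequently, for every $v \in \Ltwo$,
\[
\|\jp{x}^k v\|_{\Ltwo}^2 \sim \sum_{j \in \N} \int \psi_j^2 \jp{x}^{2k} |v|^2 \dx \sim \sum_{j \in \N} 2^{2jk} \|\psi_j v\|_{\Ltwo}^2.
\]
Applying this with $v = \jp{D_x}^\mu u$ rewrites
\[
\|u\|_{\H{\mu}_k}^2 \sim \sum_{j \in \N} 2^{2jk} \|\psi_j \jp{D_x}^\mu u\|_{\Ltwo}^2,
\]
which reduces the proof to showing $\|\psi_j \jp{D_x}^\mu u\|_{\Ltwo} \sim \|\jp{D_x}^\mu(\psi_j u)\|_{\Ltwo}$ in a suitable summed sense.

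This exchange is the heart of the argument. Condition (iii) places $\{\psi_j\}$ in a uniformly bounded subset of $S^0_0$, so $[\jp{D_x}^\mu, \psi_j]$ is a classical pseudodifferential operator of order $\mu - 1$ with uniform symbol bounds and principal symbol supported on $\supp \nabla \psi_j$. Since $\jp{D_x}^\mu$ is nonlocal, I would exploit the rapid off-diagonal decay of its Schwartz kernel to replace $u$ by a thickened cutoff $\tilde\psi_j u$ (with $\tilde\psi \in \Partition$ and $\tilde\psi_j \equiv 1$ near $\supp \psi_j$), up to a Schwartz-type residual $\O_N(2^{-jN})\|u\|_{\H{-N}_{-N}}$. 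This yields
\[
\|[\jp{D_x}^\mu, \psi_j] u\|_{\Ltwo}^2 \lesssim \|\tilde\psi_j u\|_{\H{\mu-1}}^2 + \O_N(2^{-2jN}) \|u\|_{\H{-N}_{-N}}^2,
\]
and summation with weight $2^{2jk}$ bounds the first term by $\|u\|_{\H{\mu-1}_k}^2 \le \|u\|_{\H{\mu}_k}^2$ via the analogous characterization at the lower regularity, handled inductively on the integer part of~$\mu$ or by interpolation.

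The main obstacle is this commutator step: because $\jp{D_x}^\mu$ is nonlocal, symbol support on $\supp \nabla \psi_j$ does not automatically localize the action of the commutator, and one must invoke the kernel decay to make the replacement $u \mapsto \tilde\psi_j u$ rigorous with a remainder summable against $2^{2jk}$. Once this is in place, both directions of the stated equivalence of norms follow by combining the steps above.
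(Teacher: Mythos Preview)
Your reduction to $\mu=0$ followed by a commutator estimate is a reasonable strategy, but it differs from the paper's route and has a gap in the inductive closure.

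\textbf{The gap.} Your induction step lowers $\mu$ by one: the commutator term $\sum_j 2^{2jk}\|\tilde\psi_j u\|_{\H{\mu-1}}^2$ is controlled by $\|u\|_{\H{\mu-1}_k}^2$ \emph{only if} the characterization is already known at level $\mu-1$. This terminates cleanly at the base case $\mu=0$ only when $\mu\in\N$. For non-integer $\mu>0$ you invoke interpolation, but you would then have to prove that the dyadic spaces $\{u:\sum_j 2^{2jk}\|\psi_j u\|_{\H{\mu}}^2<\infty\}$ form a complex interpolation scale in~$\mu$---this is doable via a retract argument (using a reconstruction map $(v_j)_j\mapsto\sum_j\phi_j v_j$ with $\phi\in\Partition$), but it is not automatic and you do not supply it. More seriously, for $\mu<0$ the induction has no floor at all: each step goes further negative. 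One could repair this by duality, but that again requires identifying the dual of the dyadic space with the dyadic space of dual indices, which is another piece of work you have not sketched.

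\textbf{What the paper does instead.} The paper avoids induction entirely by working directly in the symbol class $S^\mu_k$. After reducing to $\psi\in\Partition_*$, it writes $2^{jk}\|\psi_j u\|_{\H{\mu}}$ essentially as $\|\tilde\psi_j\jp{D_x}^\mu\psi_j\jp{x}^k u\|_{\Ltwo}$ plus an off-diagonal piece $(1-\tilde\psi_j)\jp{D_x}^\mu\psi_j$ which is $\O(2^{-jN})$ by semiclassical composition with $h=2^{-j}$. The diagonal pieces are then summed \emph{not} term by term but by grouping $j$ into residue classes modulo $10$: on each class the supports of $\tilde\psi_j$ are disjoint, so the sum defines a single symbol $a_r\in S^\mu_k$, and $\sum_j\|\tilde\psi_j\jp{D_x}^\mu\psi_j\jp{x}^k u\|_{\Ltwo}^2=\sum_{r=0}^9\|\op(a_r)u\|_{\Ltwo}^2\lesssim\|u\|_{\H{\mu}_k}^2$. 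For the reverse inequality, the paper observes that $a=\sum_r a_r$ is $(\mu,k)$-elliptic, so a parametrix gives $\|u\|_{\H{\mu}_k}^2\lesssim\|\op(a)u\|_{\Ltwo}^2+\|\op(r)u\|_{\Ltwo}^2$ with $r\in S^{-\infty}_{-\infty}$, and the smoothing term is absorbed. This handles all $(\mu,k)\in\R^2$ in one stroke, with no case distinction on the sign or integrality of~$\mu$.
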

\begin{proof}
We may assume that $ \psi \in \Partition_* $ because if $ \phi^1,\phi^2 \in \Partition $ then
\begin{equation*}
\sum_{j \in \N} 2^{2jk} \|\phi^1_j u\|_{\H{\mu}}^2 \simeq  \sum_{j \in \N} 2^{2jk} \|\phi^2_j u\|_{\H{\mu}}^2, 
\end{equation*}
Define $ \tilde{\psi} \in \Partition $ by setting $ \tilde{\psi}_j = \sum_{|k-j|\le 2} \psi_k $ for all $ j \in \N $.
Then $ \psi \subset \subset \tilde{\psi} $.
Note that the family of multiplication operators $ \{2^{-jk} \jp{x}^k \tilde{\psi}_j\}_{j\in\mathbb{N}} $ is bounded in $ \OP{0}{0} $, which implies that for all $ \mu \in \mathbb{N} $, the family of pseudodifferential operators $ \{2^{-jk} \jp{D_x}^\mu \jp{x}^k \tilde{\psi}_j \jp{D_x}^{-\mu} \}_{j\in\mathbb{N}} $ is bounded in $ \OP{0}{0} $.
Therefore, for all $ u \in \H{\mu}_k $, we have
\begin{equation}
\label{eq:dyadic-estimate}
2^{2jk} \|\psi_j u\|_{\H{\mu}}^2 
\lesssim \|\jp{D_x}^\mu \psi_j \jp{x}^k u\|_{\Ltwo}^2 
\lesssim \|\tilde{\psi}_{j} \jp{D_x}^\mu \psi_j \jp{x}^k u\|_{\Ltwo}^2 
+ \|(1-\tilde{\psi}_{j}) \jp{D_x}^\mu \psi_j \jp{x}^k u \|_{\Ltwo}^2.
\end{equation}
Apply Proposition~\ref{prop::quasi-homogeneous-composition} with $ (\delta,\rho) = (1,0) $ and $ h = 2^{-j} $, we obtain that for all $ N > 0 $, the estimate
\begin{equation}
\label{eq:dyadic-estimate-remainder}
(1-\tilde{\psi}_{j}) \jp{D_x}^\mu \psi_j \jp{D_x}^{-\mu} = \O(2^{-jN})_{\Ltwo\to\Ltwo},
\end{equation}
holds uniformly for all $ j \in \N $. 
Therefore,
\begin{equation*}
\sum_{j\in\N} \|(1-\tilde{\psi}_{j}) \jp{D_x}^\mu \psi_j \jp{x}^k u \|_{\Ltwo}^2
\lesssim \sum_{j\in\N} 2^{-2jN} \|u\|_{\H{\mu}_k}^2
\lesssim \|u\|_{\H{\mu}_k}^2.
\end{equation*}
For $ r \in \{0,1,\ldots,9\} $, set 
\begin{equation*}
a_r = \sum_{j \in 10\N + r} \tilde{\psi}_{j} \jp{\xi}^\mu \sharp (\jp{x}^k \psi_j) \in S^\mu_k,
\end{equation*}
where $ \sharp = \sharp^{0,0}_1 $.
Observe that if $ 0 \ne j - j' \in 10 \N $, then $ \supp \tilde{\psi}_{j} \cap \supp \tilde{\psi}_{j'} = \emptyset$.
Therefore,
\begin{equation}
\label{eq:dyadic-estimate-main}
\sum_{j \in \N} \|\tilde{\psi}_{j} \jp{D_x}^\mu \psi_j \jp{x}^k u\|_{\Ltwo}^2
= \sum_{r=0}^9 \sum_{j \in 10 \N + r} \|\tilde{\psi}_{j} \jp{D_x}^\mu \psi_j \jp{x}^k u\|_{\Ltwo}^2
= \sum_{r=0}^9 \|\op(a_r) u\|_{\Ltwo}^2 \lesssim \|u\|_{\H{\mu}_k}^2.
\end{equation}
Combining~\eqref{eq:dyadic-estimate}, \eqref{eq:dyadic-estimate-remainder} and \eqref{eq:dyadic-estimate-main}, we prove that if $ u \in H^\mu_k $ then
$ \sum_{j \in \N} 2^{2jk} \|\psi_j u\|_{\H{\mu}}^2  \lesssim \|u\|_{\H{\mu}_k}^2. $

Conversely, assume that
$ \sum_{j \in \N} 2^{2jk} \|\psi_j u\|_{\H{\mu}}^2<\infty. $
Similarly as above, we have
\begin{equation}
\label{eq:dyadic-estimate-reverse-main}
\begin{split}
\infty > \sum_{j\in\N} 2^{2jk} \|\jp{D_x}^\mu \psi_j u\|_{\Ltwo}^2
& \gtrsim \sum_{r=0}^9 \sum_{j\in 10\N+r} \|\tilde{\psi}_{j} \jp{D_x}^\mu \psi_j \jp{x}^k u\|_{\Ltwo}^2 \\
& \gtrsim \sum_{r=0}^9 \|\op(a_r) u\|_{\Ltwo}^2 \gtrsim \|\op(a) u\|_{\Ltwo}^2
\end{split}
\end{equation}
where $ a = \sum_{r=0}^9 a_r $.
Observe $ a $ is~$ (\mu,k) $-elliptic, so $ u \in H^\mu_k $.
By the symbolic calculus, there exists $ r \in S^{-\infty}_{-\infty} $ such that
\begin{equation}
\label{eq:dyadic-estimate-symbolic-decomposition}
\|u\|_{\H{\mu}_k}^2 \lesssim \|\op(a) u\|_{\Ltwo}^2 + \|\op(r)u\|_{\Ltwo}^2.
\end{equation}
For the remainder term, we have
\begin{equation*}
\|\op(r)u\|_{\Ltwo}^2
= (u,\op(r^*\sharp r) u)_{\Ltwo}
= \sum_{j\in\N} (u,\op(r^*\sharp r) \psi_j u)_{\Ltwo}.
\end{equation*}
For each term in the summation, by the analysis above~\eqref{eq:dyadic-estimate}, we have for all $ N > 0 $ and $ \varepsilon > 0 $,
\begin{align*}
(u,\op(r^* \sharp r) \psi_j  u)_{\Ltwo}
& = (\op(m^\mu_k) u, \op(m^{-\mu}_{-k} \sharp r^* \sharp r \sharp m^{-\mu}_{N-k}) \jp{D_x}^\mu \jp{x}^{-N+k} \psi_j u )_{\Ltwo} \\
& \lesssim \|u\|_{\H{\mu}_k} \|\jp{D_x}^\mu \jp{x}^{-N+k} \psi_j u\|_{\Ltwo} \\
& \lesssim 2^{-jN} \|u\|_{\H{\mu}_k} 2^{jk}\|\jp{D_x}^\mu \psi_j u\|_{\Ltwo} \\
& \lesssim 2^{-jN} \big( \varepsilon \|u\|_{\H{\mu}_k}^2 + \varepsilon^{-1} 2^{2jk}\|\jp{D_x}^\mu \psi_j u\|_{\Ltwo}^2 \big),
\end{align*}
where the constants are independent of~$ \varepsilon $.
Summing up in~$ j $, 
\begin{equation}
\label{eq:dyadic-estimate-reverse-remainder}
\|\op(r)u\|_{\Ltwo}^2
\lesssim \varepsilon \|u\|_{\H{\mu}_k}^2 + \varepsilon^{-1} \sum_{j\in\N} 2^{2jk} \|\psi_j u\|_{\H{\mu}}^2.
\end{equation}
By choosing~$ \varepsilon $ sufficiently small, we conclude by~\eqref{eq:dyadic-estimate-reverse-main}, \eqref{eq:dyadic-estimate-symbolic-decomposition} and~\eqref{eq:dyadic-estimate-reverse-remainder} that
\begin{equation*}
\|u\|_{\H{\mu}_k}^2 \lesssim \sum_{j \in \N} 2^{2jk} \|\psi_j u\|_{\H{\mu}}^2
\end{equation*}
and finishes the proof.
\end{proof}

\subsection{The quasi-homogeneous wavefront set}

In this section the parameters $ \delta,\rho,\mu $ satisfy the conditions in Definition~\ref{def::intro-quasi-homogeneous-WF} without further specification.
By Lemma~\ref{lem::property-u_h-finite-order}, the following characterization of the quasi-homogeneous wavefront set is easy to prove by the symbolic calculus.

\begin{proposition}
If $ u \in \swtz'(\Rd)$, then $ (x_0,\xi_0) \notin \WF^\mu_{\delta,\rho}(u) $ if and only if there exists $ a_h \in S^{-\infty}_{-\infty} $ which is elliptic at~$ (x_0,\xi_0) $ such that $ \op_h^{\delta,\rho}(a_h) u = \O(h^\mu)_{\Ltwo} $ for $ h \in (0,1] $.
\end{proposition}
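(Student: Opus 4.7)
The forward direction is immediate: by the very definition of $\WF^\mu_{\delta,\rho}(u)$, there exists $a \in \Ccinf(\R^{2d})$ with $a(x_0,\xi_0) \neq 0$ and $\op_h^{\delta,\rho}(a) u = \O(h^\mu)_{\Ltwo}$, and such an~$a$ (regarded as constant in~$h$) lies in $S^{-\infty}_{-\infty}$ and is, by continuity of $a$, elliptic at~$(x_0,\xi_0)$ in the prescribed sense.

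The converse is where the work lies, and the plan is a standard parametrix construction based on Proposition~\ref{prop::quasi-homogeneous-composition}. I would first pick a neighborhood $\Omega$ of $(x_0,\xi_0)$ on which $|a_h| \ge c > 0$ uniformly in~$h$, a cutoff $\chi \in \Ccinf$ with $\chi \equiv 1$ near $(x_0,\xi_0)$ and $\supp \chi \subset \Omega$, and a target symbol $b \in \Ccinf$ with $b(x_0,\xi_0) \neq 0$ and $\supp b \subset \{\chi = 1\}$. Set $q_h^{(0)} = b/a_h$ (extended by zero outside $\supp \chi$); the ellipticity bound combined with $a_h \in S^{-\infty}_{-\infty}$ controls all derivatives of $1/a_h$ on $\supp \chi$ uniformly in~$h$, so $q_h^{(0)} \in S^{-\infty}_{-\infty}$ with seminorms bounded \emph{uniformly} in~$h$. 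Proposition~\ref{prop::quasi-homogeneous-composition} then yields
\begin{equation*}
q_h^{(0)} \sharp_h^{\delta,\rho} a_h = b + h^{\delta+\rho} r_h^{(1)} + \O(h^{2(\delta+\rho)})_{S^{-\infty}_{-\infty}},
\end{equation*}
with $r_h^{(1)} \in S^{-\infty}_{-\infty}$ supported in a small neighborhood of $\supp b$. Iterating this (setting $q_h^{(k+1)} = -r_h^{(k+1)}/a_h$ at each step) and Borel-summing produces $q_h \in S^{-\infty}_{-\infty}$, with $h$-uniform seminorms, satisfying
\begin{equation*}
q_h \sharp_h^{\delta,\rho} a_h = b + r_h, \qquad r_h \in \O(h^\infty)_{S^{-\infty}_{-\infty}}.
\end{equation*}

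Quantizing and applying to~$u$ yields
\begin{equation*}
\op_h^{\delta,\rho}(b) u = \op_h^{\delta,\rho}(q_h)\,\op_h^{\delta,\rho}(a_h) u + \op_h^{\delta,\rho}(r_h) u.
\end{equation*}
The first term is $\O(h^\mu)_{\Ltwo}$ by combining the hypothesis $\op_h^{\delta,\rho}(a_h) u = \O(h^\mu)_{\Ltwo}$ with the $\Ltwo$-boundedness of $\op_h^{\delta,\rho}(q_h)$ from Proposition~\ref{prop::Ltwo-Pseu-D-O}, whose constant depends on only finitely many (and, crucially, $h$-uniform) seminorms of $q_h$. The second term is $\O(h^\infty)_{\swtz} \subset \O(h^\mu)_{\Ltwo}$ by Lemma~\ref{lem::property-u_h-finite-order}. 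Since $b \in \Ccinf$ with $b(x_0,\xi_0) \neq 0$, this is precisely the defining statement that $(x_0,\xi_0) \notin \WF^\mu_{\delta,\rho}(u)$.

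The one subtlety to watch is the $h$-uniformity of the symbol seminorms throughout the Borel construction: without it the $\Ltwo$-operator norm bound on $\op_h^{\delta,\rho}(q_h)$ would degenerate in~$h$ and the estimate would fail. This uniformity is built in, however, because each iterate $q_h^{(k)}$ is a polynomial in $1/a_h$ and its derivatives times $\chi$-localized factors, and the ellipticity constant $c$ together with the $S^{-\infty}_{-\infty}$-seminorms of $a_h$ (all independent of~$h$) bound everything uniformly.
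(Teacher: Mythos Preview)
Your proof is correct and is precisely the ``routine construction of parametrix'' that the paper alludes to without writing out. The forward direction is the trivial one, and for the converse you carry out exactly the expected iteration based on Proposition~\ref{prop::quasi-homogeneous-composition}, with the remainder killed by Lemma~\ref{lem::property-u_h-finite-order}; your care about $h$-uniformity of the seminorms is the right point to flag, and it is indeed guaranteed by the definition of ellipticity at $(x_0,\xi_0)$ together with $a_h \in S^{-\infty}_{-\infty}$.
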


\begin{lemma}
\label{lem::support=decay}
If $ u \in \swtz'(\Rd)$ and $ a_h \in S^{-\infty}_{-\infty} $ is such that
\begin{equation*}
\overline{\cup_{h\in(0,1]} \supp a_h} \subset \subset \R^{2d} \backslash \WF_{\delta,\rho}^\mu(u),
\end{equation*}
then 
$ \langle u, \op_h^{\delta,\rho}(a_h) u \rangle_{\swtz',\swtz} = \O(h^{2\mu}) $
and consequently $ \op_h^{\delta,\rho}(a_h) u = \O(h^\mu)_{\Ltwo} $ for $ h \in (0,1] $.
\end{lemma}
\begin{proof}
Let $ K = \overline{\cup_{h\in(0,1]} \supp a_h} $ and let $ \{\Omega_i\}_{i\in I} $ be an open cover of~$ K $. 
Let $ b_h^i \in S^{-\infty}_{-\infty} $ be elliptic everywhere in~$ \Omega_i $, such that $ \op_h^{\delta,\rho}(b_h^i) u  = \O(h^\mu)_{\Ltwo}. $ By a partition of unity, we may assume that $ K \subset \Omega \bydef \Omega_{i_0} $ for some $ i_0 \in I $, and set $ b_h = b_h^{i_0} $. By the ellipticity of~$ b_h $, we can find $ c_h \in S^{-\infty}_{-\infty} $ and $ r_h = \O(h^N)_{S^{-\infty}_{-\infty}} $ for some large $N > 0$ such that 
$ a_h = (\zeta_h^{\delta,\rho} b_h) \sharp_h^{\delta,\rho} c_h \sharp_h^{\delta,\rho} b_h + r_h. $
Therefore, by Lemma~\ref{lem::property-u_h-finite-order},
\begin{align*}
\langle u, \op_h^{\delta,\rho}(a_h) u \rangle_{\swtz',\swtz}
& = ( \op_h^{\delta,\rho}(b_h) u, \op_h^{\delta,\rho}(c_h) \op_h^{\delta,\rho}(b_h) u )_{\Ltwo} \\
& \quad \quad + \langle u, \op_h^{\delta,\rho}(r_h) u \rangle_{\swtz',\swtz} 
= \O(h^\mu)^2 + \O(h^\infty) = \O(h^{2\mu}).
\end{align*}
Next, observe that there exists $ w_h \in S^{-\infty}_{-\infty} $ and $ \tilde{r}_h = \O(h^N)_{S^{-\infty}_{-\infty}} $ such that $ \supp w_h \subset K $ and
\begin{equation*}
\op_h^{\delta,\rho}(a_h)^* \op_h^{\delta,\rho}(a_h) = \op_h^{\delta,\rho}(w_h) + \op_h^{\delta,\rho}(\tilde{r}_h).
\end{equation*}
Therefore,
\begin{align*}
\|\op_h^{\delta,\rho}(a_h) u\|_{L^2}^2
& = \jp{u, \op_h^{\delta,\rho}(a_h)^* \op_h^{\delta,\rho}(a_h) u}_{\swtz',\swtz}\\
& = \jp{u,\op_h^{\delta,\rho}(w_h)u}_{\swtz',\swtz} + \O(h^{2\mu})
= \O(h^{2\mu}).\qedhere
\end{align*}
\end{proof}

\begin{lemma}
\label{lem::basic-properties-WF}
If $ u \in \swtz'(\Rd)$, then the following statements hold:
\begin{enumerate}[label=(\arabic*)]
\item \label{WF::cone} 
The quasi-homogeneous wavefront set $ \WF_{\delta,\rho}^\mu(u) $ is a closed $ (\delta,\rho) $-cone.
To be precise, this means $ \theta^{\delta,\rho}_h \WF_{\delta,\rho}^\mu(u) = \WF_{\delta,\rho}^\mu(u) $ for all $ h > 0 $ where the scaling $ \theta^{\delta,\rho}_h $ is defined by~\eqref{eq:def:quasi-homogeneous-scaling}.
\item \label{WF::scaling} 
If $ \gamma > 0 $ then $ \WF_{\delta,\rho}^\mu(u) = \WF_{\delta/\gamma,\rho/\gamma}^{\mu/\gamma}(u) $.
Therefore in all situations we can restrict our discussions to the cases where either $ \delta = 1 $ or $ \rho = 1 $.
\item \label{WF::Fourier} 
For all $ (x_0,\xi_0) \in \R^{2d} $, we have $ (x_0,\xi_0) \in \WF_{\delta,\rho}^\mu(u) $ if and only if $ (\xi_0,-x_0) \in \WF_{\rho,\delta}^\mu(\hat{u}) $.
\item \label{WF::conjugate} 
For all $ (x_0,\xi_0) \in \R^{2d} $, we have $ (x_0,\xi_0) \in \WF_{\delta,\rho}^\mu(u) $ if and only if $ (x_0,-\xi_0) \in \WF_{\delta,\rho}^{\mu}(\bar{u}) $.
\item \label{WF::vanish} Denote $ \WF_{\delta,\rho}^\mu(u)^\circ = \WF_{\delta,\rho}^{\mu}(u) \backslash \mathscr{N}_{\delta,\rho}, $ where
\begin{equation}
\label{eq::def-N}
\mathscr{N}_{\delta,\rho} = \begin{cases}
\{x = 0\} \times \Rd, & \delta > 0, \rho = 0; \\
\Rd \times \{\xi = 0\}, & \delta = 0, \rho > 0; \\
\{x = 0\} \times \Rd \cup \Rd \times \{\xi = 0\}, & \delta > 0, \rho > 0.
\end{cases}
\end{equation}
If~$ u \in \H{\mu}_k $ with $ (\mu,k) \in \R^2 $ and $ a_h \in S^{-\infty}_{-\infty} $ such that 
\begin{equation}
\label{eq:condition:spectrum}
\mathscr{N}_{\delta,\rho} \cap \overline{\cup_{0<h<1} \supp a_h} = \emptyset,
\end{equation}
then $ \op_h^{\delta,\rho}(a_h) u = \O(h^{\delta k + \rho \mu})_{\Ltwo} $ and consequently $ \WF_{\delta,\rho}^{\delta k + \rho \mu}(u)^\circ = \emptyset $.
\end{enumerate}
\end{lemma}
\begin{proof}
The statements~\ref{WF::cone} and~\ref{WF::scaling} are consequences of the quasi-homogeneous scaling~\eqref{eq:def:quasi-homogeneous-scaling} we used to define the pseudodifferential operators. 
To prove~\ref{WF::Fourier}, note that if $ a_h \in S^{-\infty}_{-\infty} $ and $ \Fourier $ is the Fourier transform operator, then
\begin{equation*}
\Fourier^{-1} \op_h^{\rho,\delta}(a_h) \Fourier = \op_h^{\delta,\rho}(b_h)
\end{equation*}
where $ b_h(x,\xi) = a_h(\xi,-x) $. 
To prove~\ref{WF::conjugate}, we use $ \overline{\op(a_h)u_h} = \op(c_h) \overline{u_h} $, where $ c_h(x,\xi) = \overline{a_h(x,-\xi)}. $  
To prove~\ref{WF::vanish}, note that if $ a_h $ satisfies the condition~\eqref{eq:condition:spectrum}, then
\begin{equation*}
(\theta_{h,*}^{\delta,\rho} a_h)\jp{\xi}^{-\mu} \sharp^{0,0}_1 \jp{x}^{-k} = \O(h^{\delta k + \rho \mu})_{S^0_0}.\qedhere
\end{equation*}
\end{proof}

\section{Model equations}

\label{sec::proof-model-eq}

We prove Theorem~\ref{thm::model-eq} by combining the ideas of Nakamura~\cite{Nakamura05propagation} and simple scaling arguments. 

\subsection{Proof of of Theorem~\ref{thm::model-eq}\eqref{thm::model-eq-infinity}}

If $ a \in \Holder{1}_\loc(\R \times \R^{2d}) $ and $ \A \in \Holder{1}_\loc(\R,\Ltwo\to\Ltwo) $, then define
\begin{align*}
\Lag_t a = \pt a + \{|\xi|^\gamma,a\}, \quad
\L_t \A = \pt \A + i[|D_x|^\gamma, \A ].
\end{align*}
Here $ \{\cdot,\cdot\} $ denotes the Poisson bracket defined by $ \{f,g\} = \pxi f \cdot \px g - \px f \cdot \pxi g $.

\begin{lemma}
\label{lem::lag-derivative-at-infinity-model-eq}
If $ a_h \in \Holder{1}_\loc(\R,S^{-\infty}_{-\infty}) $ and satisfies the condition
\begin{equation*}
\overline{\cup_{0<h<1} \supp a_h} \cap \{\xi = 0 \} = \emptyset,
\end{equation*}
then there exists $ b_h \in \Linf_\loc(\R,S^{-\infty}_{-\infty}) $ with $ \supp b_h \subset \supp a_h $, such that
\begin{equation*}
\L_t \op_h^{\delta,\rho}(a_h) = \op_h^{\delta,\rho}(\Lag_t a_h) + h^{\delta+\rho} \op_h^{\delta,\rho}(b_h) + \O(h^\infty)_{\Linf_\loc(\R,\Ltwo\to\Ltwo)}.
\end{equation*}
\end{lemma}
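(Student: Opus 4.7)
The plan is to exploit the quasi-homogeneous symbolic calculus (Proposition~\ref{prop::quasi-homogeneous-composition}) after regularizing $|\xi|^\gamma$ near its singularity at the origin. The time-derivative part is transparent, $\pt \op_h^{\delta,\rho}(a_h) = \op_h^{\delta,\rho}(\pt a_h)$, so the real work lies in unpacking $i[|D_x|^\gamma, \op_h^{\delta,\rho}(a_h)]$. First I would pick $c > 0$ with $|\xi| \ge c$ throughout $\overline{\bigcup_{0<h<1}\supp a_h}$, choose $\chi \in \Cinf(\Rd)$ vanishing near $0$ and equal to $1$ on $\{|\xi| \ge c\}$, and split $|D_x|^\gamma = \op(\tilde p) + \op((1-\chi)|\xi|^\gamma)$ with $\tilde p = \chi|\xi|^\gamma \in S^\gamma_0$. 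The scaled symbol $a_h(h^\delta x, h^\rho\xi)$ is $\xi$-supported in $\{|\xi| \ge c h^{-\rho}\}$ while $(1-\chi)|\xi|^\gamma$ is $\xi$-supported in $\{|\xi| \le c\}$; the $h^{-\rho}$-wide Fourier-side gap, together with Schwartz decay of $\hat a_h$ in its first argument, forces $[\op((1-\chi)|\xi|^\gamma), \op_h^{\delta,\rho}(a_h)] = \O(h^\infty)_{\Ltwo\to\Ltwo}$ by repeated integration by parts.

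Next, for $i[\op(\tilde p), \op_h^{\delta,\rho}(a_h)]$, I would compose in the standard pseudodifferential calculus. With $\tilde a = a_h \circ \theta_h^{\delta,\rho}$ and using that $\tilde p$ depends only on $\xi$, the expansion reduces to
\begin{equation*}
\tilde p \sharp \tilde a - \tilde a \sharp \tilde p = \sum_{1 \le |\alpha| < N} \frac{1}{\alpha!} \partial_\xi^\alpha \tilde p \cdot D_x^\alpha \tilde a + R_N.
\end{equation*}
On $\supp \tilde a$, $\partial_\xi^\alpha \tilde p = \partial_\xi^\alpha |\xi|^\gamma$; the homogeneity identity $\partial_\xi^\alpha |\xi|^\gamma = h^{-\rho(\gamma-|\alpha|)}(\partial_\eta^\alpha|\eta|^\gamma)(h^\rho\xi)$ and the chain-rule factor $h^{\delta|\alpha|}$ from $D_x^\alpha\tilde a$ combine into a prefactor $h^{|\alpha|(\delta+\rho)-\rho\gamma}$, which under the standing relation $\rho\gamma = \delta+\rho$ of the section equals $h^{(|\alpha|-1)(\delta+\rho)}$. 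The $|\alpha|=1$ contribution then yields exactly $\op_h^{\delta,\rho}(\{|\xi|^\gamma, a_h\})$ after multiplication by $i$, while the $|\alpha|\ge 2$ terms package into $h^{\delta+\rho}\op_h^{\delta,\rho}(b_h)$ for a formal asymptotic series. Borel summation in $S^{-\infty}_{-\infty}$ promotes this to a genuine $b_h \in \Linf_\loc(\R, S^{-\infty}_{-\infty})$, and since every summand is a multiple of some $D_x^\alpha a_h$ the support condition $\supp b_h \subset \supp a_h$ is preserved. The remainder $R_N$ is controlled by the same scaling count applied to $N$-th derivatives, giving $\op(R_N) = \O(h^{(N-1)(\delta+\rho)})_{\Ltwo\to\Ltwo}$; letting $N \to \infty$ delivers the $\O(h^\infty)$ error, which when combined with the time-continuity of $a_h$ promotes to $\Linf_\loc(\R, \Ltwo\to\Ltwo)$.

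The principal subtlety is that $|\xi|^\gamma \notin S^\gamma_0$ owing to the origin singularity, so Proposition~\ref{prop::quasi-homogeneous-composition} cannot be invoked directly with $|\xi|^\gamma$ as a symbol; the cutoff $\chi$ repairs this, but one must verify by hand that the discarded low-frequency piece contributes only an $\O(h^\infty)$ commutator, which is the step relying most heavily on the hypothesis $\supp a_h \cap \{\xi=0\} = \emptyset$. The bookkeeping of supports through the Borel summation is automatic, since every partial sum is already concentrated on $\supp a_h$.
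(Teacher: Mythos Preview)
Your proof is correct and follows essentially the same strategy as the paper: regularize $|\xi|^\gamma$ by a cutoff away from the origin, discard the low-frequency piece via the Fourier-support gap, and expand the remaining commutator symbolically. The only difference is one of packaging. The paper rescales from the outset, writing
\[
i[|D_x|^\gamma,\op_h^{\delta,\rho}(a_h)]
= ih^{-\rho\gamma}\bigl[\op_h^{\delta,\rho}(|\xi|^\gamma\pi(\xi)),\op_h^{\delta,\rho}(a_h)\bigr]+\O(h^\infty),
\]
so that both factors sit in the quasi-homogeneous calculus and Proposition~\ref{prop::quasi-homogeneous-composition} applies directly; the factor $h^{-\rho\gamma}$ then cancels against the $h^{\delta+\rho}$ from the first-order commutator via $\rho\gamma=\delta+\rho$. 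You instead stay in the unscaled calculus and recover the same powers of $h$ by hand through the homogeneity of $\partial_\xi^\alpha|\xi|^\gamma$. The two computations are equivalent. Your explicit Borel summation to secure $\supp b_h\subset\supp a_h$ is a detail the paper leaves implicit.
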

\begin{proof}
For all $ T > 0 $, there exists $ \epsilon > 0 $ such that 
\begin{equation*}
\overline{\cup_{t\in[-T,T]} \cup_{0<h<1} \supp a_h(t,\cdot)} \cap \{|\xi| \le \epsilon \} = \emptyset.
\end{equation*}
Let $ \pi \in \Cinf(\Rd) $ such that $ 0 \le \pi \le 1 $, $ \pi(\xi) = 0 $ for $ |\xi| \le \epsilon / 3 $, $ \pi(\xi) = 1 $ for $ |\xi| \ge 2\epsilon / 3 $. Then
\begin{equation*}
i[|D_x|^\gamma ,\op_h^{\delta,\rho}(a_h)] 
= ih^{-\rho\gamma} [|h^\rho D_x|^\gamma \pi(h^\rho D_x) ,\op_h^{\delta,\rho}(a_h)] + \O(h^\infty)_{\Linf([-T,T],\Ltwo\to\Ltwo)}.
\end{equation*}
Now that $ |\xi|^\gamma \pi(\xi) \in S^{\gamma}_0 $, we conclude by Proposition~\ref{prop::quasi-homogeneous-composition} and the hypothesis $ \rho \gamma = \delta + \rho $.
\end{proof}

Assume that $ \mu = \infty $, as the proof is similar for $ \mu < \infty $. Let $ (x_0,\xi_0) \not \in \WF_{\delta,\rho}^\mu(u_0) $ with $ \xi_0 \ne 0 $. We aim to find $ a_h \in \Holder{1}_\loc(\R,S^{-\infty}_{-\infty}) $ of the asymptotic expansion 
$ a_h \sim \sum_{j\in\N} h^{j(\delta+\rho)} a_h^j $
where $ a_h^j \in \Holder{1}_\loc(\R,S^{-\infty}_{-\infty}) $, such that for all $ t \in \R $, $ a_h(t,\cdot) $ is elliptic at $ (x_0+t\gamma|\xi_0|^{\gamma-2}\xi_0,\xi_0) $, and
\begin{equation}
\label{eq:model-infinity-transport}
\L_t \op_h^{\delta,\rho}(a_h) = \O(h^\infty)_{\Linf_\loc(\R,\Ltwo\to\Ltwo)}.
\end{equation}
If such~$ a_h $ is found, we let $ \A_h(t) = \op_h^{\delta,\rho}(a_h(t)) $ and 
\begin{equation*}
v_h(t) = e^{it |D_x|^\gamma} \A_h(t) e^{-it |D_x|^\gamma} u_0,
\end{equation*}
then by a direct computation and~\eqref{eq:model-infinity-transport}, we have
\begin{equation}
\label{eq:equation-v_h-model-infinity}
\pt v_h 
= e^{it |D_x|^\gamma} \L_t \A_h e^{-it |D_x|^\gamma} u_0
= \O(h^\infty)_{\Linf_\loc(\R,\Ltwo\to\Ltwo)}.
\end{equation}
If we assume that $ \supp a_h(0) $ is sufficiently close to $ (x_0,\xi_0) $ so that 
\begin{equation*}
\overline{\cup_{h\in(0,1]} \supp a_h(0)} \subset \subset \R^{2d} \backslash \WF_{\delta,\rho}^\mu(u_0),
\end{equation*}
then by Lemma~\ref{lem::support=decay}, we have $ v_h(0) = \op_h^{\delta,\rho}(a_h(0)) u_0 = \O(h^\infty)_{L^2} $.
Therefore by~\eqref{eq:equation-v_h-model-infinity}, we have $ v_h \in \O(h^\infty)_{\Linf_\loc(\R,\Ltwo)} $ and thus $ \A_h u \in \O(h^\infty)_{\Linf_\loc(\R,\Ltwo)} $.

To construct~$ a_h $, let $ \varphi \in \Ccinf(\Rd \times (\Rd \backslash 0)) $ with $ \varphi(x_0,\xi_0) \ne 0 $, such that $ \op_h^{\delta,\rho}(\varphi) u = \O(h^\infty)_{\Ltwo}. $ Then we can construct~$ a_h $ with $ a_h|_{t=0} = \varphi $ with $ a_h^j \in \Holder{\infty}_\loc(\R,S^{-\infty}_{-\infty}) $ by solving iteratively the transportation equations
\begin{equation*}
\begin{cases}
\Lag_t a_h^0 = 0, \\ a_h^0|_{t=0} = \varphi;
\end{cases}
 \quad
\begin{cases}
\Lag_t a_h^j + b_h^{j-1} = 0, \\ a_h^j|_{t=0} = 0, \  j \ge 1,
\end{cases}
\end{equation*}
where $ b_h^{j} \in \Holder{\infty}_\loc(\R,S^{-\infty}_{-\infty}) $ satisfies, by Lemma~\ref{lem::lag-derivative-at-infinity-model-eq}, that
\begin{equation*}
\L_t \op_h^{\delta,\rho}(a^{j}_h) = \op_h^{\delta,\rho}(\Lag_t a^j_h) + h^{\delta+\rho} \op_h^{\delta,\rho}(b^j_h) + \O(h^\infty)_{\Linf_\loc(\R,\Ltwo\to\Ltwo)}.
\end{equation*}
Thus we prove Theorem~\ref{thm::model-eq}\eqref{thm::model-eq-infinity}.

\subsection{Proof of Theorem~\ref{thm::model-eq}\eqref{thm::model-eq-finity}}

Let $ \beta = \rho \gamma - (\delta + \rho) > 0 $.
For all $ h > 0 $, introduce the semiclassical time variable $ s = h^{-\beta} t $, and rewrite~\eqref{Equation::model} as
\begin{equation}
\label{eq::equation-model-semiclassical}
\ps u + i h^{\beta} |D_x|^\gamma u = 0.
\end{equation}
If $ a = a(s,x,\xi) \in \Holder{1}_\loc(\R \times \R^{2d}) $ and $ \A = \A(s) \in \Holder{1}_\loc(\R,\Ltwo\to\Ltwo) $, then define
\begin{align*}
\Lag_s a = \ps a + \{|\xi|^\gamma,a\}, \quad
\L^h_s \A = \ps \A + ih^\beta[|D_x|^\gamma,\A].
\end{align*}

\begin{lemma}
\label{lem::symbol-prop-to-infinifty-model-eq}
If $ \phi \in \Ccinf(\Rd) $ such that $ \phi \ge 0 $, $ \phi(0) > 0 $, and $ x \cdot \nabla \phi(x) \le 0 $ for all $ x \in \Rd $, and define 
\begin{equation*}
\chi(s,x,\xi) =  \phi\Big( \frac{x-s\gamma |\xi|^{\gamma-2} \xi-x_0}{1+s} \Big) \phi\Big( \frac{\xi-\xi_0}{\epsilon} \Big)
\end{equation*}
for $ s \ge 0 $, $ \epsilon > 0 $, $ (x_0,\xi_0) \in \Rd \times (\Rd \backslash 0) $, then the following statements hold when $ \epsilon $ is sufficiently small and $ |\xi_0| $ is sufficiently large:
\begin{enumerate}
\item $ \chi \in \Holder{\infty}(\R_{\ge 0},S^{-\infty}_0) $;
\item \label{eq::lag-chi->=0}  $ \Lag_s \chi \in \Holder{\infty}(\R_{\ge 0},S^{-\infty}_{-1}) $ and $ \Lag_s \chi \ge 0 $;
\item If $ t_0 > 0 $ and set $ (\tau u)(s,x,\xi) = u\big(s,\frac{s}{t_0}x,\xi\big), $ then $ \tau\chi \in \Holder{\infty}(\R_{\ge 0},S^{-\infty}_{-\infty}) $
\item If $ s $ is sufficiently large, then $ (\tau \chi)(s,\cdot) $ is elliptic at $ (t_0\gamma|\xi_0|^{\gamma-2} \xi_0,\xi_0) $.
\end{enumerate} 
\end{lemma}
\begin{proof}
Each time we differentiate~$ \chi $ with respect to~$ x $, we get a multiplicative factor~$ (1+s)^{-1} $, which is of size $ \jp{x}^{-1} $ in $ \supp \chi $ as 
\begin{equation}
\label{eq:support-goes-to-infinity}
\supp \chi \subset \{C^{-1} s \le |x| \le C s \}
\end{equation}
for some $ C > 0 $ when~$ |s| $ and $ |\xi_0| $ are sufficiently large and~$ \epsilon $ is sufficiently small.
Therefore $ \chi \in \Holder{\infty}(\R_{\ge 0},S^{-\infty}_0) $. 
Clearly $ \tau\chi(s,\cdot) $ is bounded in $ \Ccinf(\R^{2d}) $. 
We write
\begin{equation}
\label{eq::chi-tau-form}
(\tau\chi)(s,x,\xi) =  \phi\Big(\frac{x-t_0\gamma|\xi_0|^{\gamma-2}\xi_0}{t_0(1+s)/s} - \frac{\gamma|\xi|^{\gamma-2}\xi - \gamma|\xi_0|^{\gamma-2}\xi_0}{(1+s)/s} - \frac{x_0}{1+s} \Big) \phi\Big( \frac{\xi-\xi_0}{\epsilon} \Big),
\end{equation}
where $ |\xi|^{\gamma-2}\xi - |\xi_0|^{\gamma-2}\xi_0 = o(1) $ as $ \epsilon \to 0 $, whence $ \tau\chi(s,\cdot) $ elliptic at $ (t_0\gamma|\xi_0|^{\gamma-2}\xi_0,\xi_0) $ for sufficiently large~$ s $. To estimate~$ \Lag_s \chi $, we perform an explicit computation,
\begin{align*}
\ps \chi(s,x,\xi) & = - (\nabla \phi) \Big(\frac{x-s\gamma |\xi|^{\gamma-2} \xi-x_0}{1+s}\Big) \phi\Big( \frac{\xi-\xi_0}{\epsilon} \Big)  \\
& \qquad \qquad \qquad \times \frac{(x-s\gamma |\xi|^{\gamma-2} \xi-x_0)+(1+s)\gamma |\xi|^{\gamma-2} \xi}{(1+s)^2}, \\
\{|\xi|^\gamma,\chi\}(s,x,\xi) & = {\gamma |\xi|^{\gamma-2} \xi \over 1+s} \cdot (\nabla \phi) \Big(\frac{x-s\gamma |\xi|^{\gamma-2} \xi-x_0}{1+s}\Big) \phi\Big( \frac{\xi-\xi_0}{\epsilon} \Big).
\end{align*}
Therefore, 
\begin{equation*}
\Lag_s \chi(s,x,\xi) = - (\nabla \phi) \Big(\frac{x-s\gamma |\xi|^{\gamma-2} \xi-x_0}{1+s}\Big) \phi\Big(\frac{\xi-\xi_0}{\epsilon}\Big) \cdot \frac{x-s\gamma |\xi|^{\gamma-2} \xi-x_0}{(1+s)^2} \ge 0.
\end{equation*}
Note that on $ \supp \Lag_s \chi $, we have
\begin{equation*}
\frac{x-s\gamma |\xi|^{\gamma-2} \xi-x_0}{(1+s)^2} 
= \O\Big( {1+s \over (1+s)^2} \Big)
= \O\Big( {1 \over 1+s} \Big) 
= \O\Big( {1 \over \jp{x}} \Big).
\end{equation*}
So we prove similarly that $ \Lag_s \chi \in \Holder{\infty}(\R_{\ge 0},S^{-\infty}_{-1}) $.
\end{proof}

Now fix $ t_0 > 0 $ and let $ \mu = \infty $ as the other cases are similar. 
Let $ \epsilon > 0 $ be sufficiently small and let $ \{\lambda_j\}_{j \in \N} \subset [1,1+\epsilon) $ be strictly increasing. 
Choose~$ \phi $ as in Lemma~\ref{lem::symbol-prop-to-infinifty-model-eq}, and set
\begin{equation*}
\chi_j(s,x,\xi) =  \phi\Big(\frac{x-s\gamma |\xi|^{\gamma-2} \xi-x_0}{\lambda_j(1+s)}\Big) \phi\Big(\frac{\xi-\xi_0}{\lambda_j\epsilon}\Big).
\end{equation*}
Then  $ \supp \chi_j \subset \{\chi_{j+1} > 0 \} $ for all $ j\in\N $.
We aim to construct~$ a_h \in \Holder{\infty}(\R_{\ge 0},S^{-\infty}_0) $, such that the following statements hold:
\begin{enumerate}
\item \label{a_h::support} 
For all $ s\ge 0 $ and $ h \in (0,1] $, we have $ \supp a_h \subset \cup_{j \in \N} \,\supp \chi_j $.
\item \label{a_h::s=0::ellipticity} 
The symbol $ a_h|_{s=0} $ is elliptic at $ (x_0,\xi_0) $; more precisely , we have
\begin{equation*}
\big(a_h - (\zeta_h^{\delta,\rho} \chi_0) \sharp_h^{\delta,\rho} \chi_0\big)|_{s=0} = \O(h^\infty)_{S^{-\infty}_{-\infty}}.
\end{equation*}
\item \label{a_h::t=t_0::ellipticity}
For $ t_0 > 0 $, let $ \tau $ be defined as in the lemma, then $ \tau a_h \in \Holder{\infty}(\R_{\ge 0},S^{-\infty}_{-\infty}) $ and $ \tau a_h(s,\cdot) $ is elliptic at $ (t_0\gamma|\xi_0|^{\gamma-2}\xi_0,\xi_0) $ when~$ s $ is sufficiently large.
\item \label{a_h::positivity} $ \L^h_s \op_h^{\delta,\rho}(a_h) \ge \O(h^\infty)_{\Linf(\R_{\ge 0},\Ltwo\to\Ltwo)}. $
\end{enumerate}
Assume that such~$ a_h $ is found, and that 
\begin{equation*}
(t_0\gamma|\xi_0|^{\gamma-2}\xi_0,\xi_0) \not \in \WF_{\rho(\gamma-1),\rho}^\mu(u(t=t_0)).
\end{equation*} 
By~\eqref{a_h::support} and~\eqref{eq::chi-tau-form}, if we choose~$ \phi $ such that $ \supp \phi $ is sufficiently close to the origin, then for sufficiently small $ h > 0 $, we have
\begin{equation*}
\supp \theta_{1/h,*}^{\beta,0}a_h|_{s=h^{-\beta} t_0} \subset \subset \R^{2d} \backslash \WF_{\rho(\gamma-1),\rho}^{\infty}(u|_{t=t_0}).
\end{equation*}
By~\eqref{a_h::t=t_0::ellipticity}, the symbol $ \theta_{1/h,*}^{\beta,0} a_h|_{s=h^{-\beta} t_0} \in S^{-\infty}_{-\infty} $ is elliptic at $(t_0\gamma|\xi_0|^{\gamma-2}\xi_0,\xi_0)$. 
Therefore, by Lemma~\ref{lem::support=decay},
\begin{equation*}
\big( u, \op_h^{\delta,\rho}(a_h) u \big)_{\Ltwo}\big|_{s=h^{-\beta} t_0} 
= \big( u, \op_h^{\rho(\gamma-1),\rho}(\theta_{1/h,*}^{\beta,0} a_h) u \big)_{\Ltwo}\big|_{s=h^{-\beta} t_0} 
= \O(h^\infty).
\end{equation*} 
By~\eqref{eq::equation-model-semiclassical}, we have
\begin{equation*}
\frac{\d}{\ds} \big( u, \op_h^{\delta,\rho}(a_h) u \big)_{\Ltwo} 
= \big( u, \L^h_s \op_h^{\delta,\rho}(a_h) u \big)_{\Ltwo},
\end{equation*}
which implies, by~\eqref{a_h::positivity}, that
\begin{align*}
\big( u, \op_h^{\delta,\rho}(a_h) u \big)_{\Ltwo}\big|_{s=0}
& = \big( u, \op_h^{\delta,\rho}(a_h) u \big)_{\Ltwo}\big|_{s=h^{-\beta} t_0} - \int_0^{h^{-\beta} t_0} \big( u,\L^h_s \op_h^{\delta,\rho}(a_h) u \big)_{\Ltwo} \ds \\
& \le \O(h^\infty) + \O(h^{-\beta} \times h^\infty) 
= \O(h^\infty).
\end{align*}
Therefore, by~\eqref{a_h::s=0::ellipticity}, we have
\begin{equation*}
\|\op_h^{\delta,\rho}(\chi_0) u |_{s=0} \|_{\Ltwo}^2 = \big( u, \op_h^{\delta,\rho}(a_h) u \big)_{\Ltwo}\big|_{s=0}  + \O(h^\infty) = \O(h^\infty).
\end{equation*}
We conclude that $ (x_0,\xi_0) \not \in \WF_{\delta,\rho}^\infty(u_0) $.

We shall construct~$ a_h $ in the following form of asymptotic expansion
\begin{equation*}
a_h(s,x,\xi) \sim \sum_{j \in \N} h^{j(\delta+\rho)} \varphi^j(s) a_h^j(s,x,\xi),
\end{equation*}
where $ a_h^j \in \Holder{\infty}(\R_{\ge 0},S^{-\infty}_0) $, with $ \supp a^j_h \subset \supp \chi_j $, and $ \varphi^j \in P_j $, with
\begin{equation}
\label{eq::def-P_j}
P_j = \Big\{f(\ln(1+s)) : f(X) = \sum_{k=0}^j c_k X^k;  c_k \ge 0, \forall k \Big\}.
\end{equation}
The above asymptotic expansion is in the weak sense that, for some $ \epsilon' > 0 $, and all $ N \in \N $,
\begin{equation*}
a_h - \sum_{j < N} h^{j(\delta+\rho)} \varphi^j a_h^j \in \O(h^{N(\delta+\rho-\epsilon')})_{\Holder{\infty}([0,h^{-\beta}T],S^{-\infty}_0)}.
\end{equation*}
The following properties for functions in $ \cup_{j\in\N} P_j $ will be used in the construction of $ a_h $.
\begin{lemma}
If $ \psi \in P_j $ for some $ j \in \N $, then $ \psi $ is smooth and non-negative on $ [0,+\infty) $ and
\begin{equation*}
((1+s) \ps)^{-1} \psi(s) \bydef \int_0^s (1+\sigma)^{-1}\psi(\sigma) \d \sigma \in P_{j+1}. 
\end{equation*}
\end{lemma}
\begin{proof}
The function~$ \psi $ is smooth because it is the composition between a polynomial and the smooth function $ s \mapsto \ln(1+s) $.
The non-negativity of~$ \psi $ is the consequence of the non-negativity of the coefficients~$ c_k $ in~\eqref{eq::def-P_j} and the fact that $ \ln(1+s) \ge 0 $ when $ s \ge 0 $.
To prove that $ ((1+s) \ps)^{-1} \psi \in P_{j+1} $, note that for all $ n \in \N $, we have
\begin{equation*}
((1+s) \ps)^{-1} (\ln(1+\cdot))^n = (n+1)^{-1} (\ln(1+\cdot))^{n+1}.
\end{equation*} 
The claim follows by the linearity of the operator $ ((1+s) \ps)^{-1} $.
\end{proof}

To construct~$ a_h $, we begin by setting $ \varphi^0 \equiv 1 $ and choosing~$ a_h^0 $ satisfying
\begin{align*}
a_h^0 & - (\zeta_h^{\delta,\rho} \chi^{}_0) \sharp_h^{\delta,\rho} \chi^{}_0  = \O(h^\infty)_{\Holder{\infty}(\R_{\ge 0},S^{-\infty}_0)}, \\
\big(a_h^0 & - (\zeta_h^{\delta,\rho} \chi^{}_0) \sharp_h^{\delta,\rho} \chi^{}_0\big)|_{s=0}  = \O(h^\infty)_{S^{-\infty}_{-\infty}}.
\end{align*}
By the definition of~$ \beta $ and Propositions~\ref{prop::quasi-homogeneous-composition} and ~\ref{prop::adjoint-quasi-homogeneous}, there exists $ r_h^0 \in \Linf(\R_{\ge 0},S^{-\infty}_{-1}) $ with $ \supp r_h^0 \subset \supp \chi_0 $ such that
\begin{equation}
\label{eq:lagrange-ah0}
\L^h_s \op_h^{\delta,\rho}(a_h^0) = 2 \op_h^{\delta,\rho}(\chi_0 \Lag_s \chi_0) + h^{\delta+\rho} \op_h^{\delta,\rho}(r_h^0) + \O(h^{\infty})_{\Linf(\R_{\ge 0},\Ltwo\to\Ltwo)}.
\end{equation}
By~\eqref{eq:support-goes-to-infinity}, we have $ \jp{s} r_h^0 \in \Linf(\R_{\ge 0},S^{-\infty}_0) $ and similarly 
\begin{equation}
\label{eq:decay-ah0}
\jp{s} \chi_0 \Lag_s \chi_0 \in \Linf(\R_{\ge 0},S^{-\infty}_0).
\end{equation}
By Lemma~\ref{lem::symbol-prop-to-infinifty-model-eq}, we have 
\begin{equation}
\label{eq:positivity-ah0}
\chi_0 \Lag_s \chi_0 \ge 0
\end{equation}
Recall that, by the sharp G{\aa}rding inequality (Proposition~\ref{prop::garding-quasi-homogeneous}), if a symbol $ p_h \in S^0_0 $ satisfies $ p_h \ge 0 $, then $ \op_h^{0,1}(p_h) \gtrsim -h $.
By~\eqref{eq:scaling-transform}, we deduce that $ \op_h^{0,1}(p_h) \gtrsim -h^{\delta+\rho} $.
To be precise, this means there exists $ C > 0 $ which only depends on a finite number of seminorms defined by~\eqref{eq:symbol-seminorm-def}, such that for all $ u \in L^2 $,
\begin{equation*}
\jp{u,\op_h^{\delta,\rho}(p_h)u}_{L^2} \ge -Ch^{\delta+\rho}\|u\|_{L^2}^2.
\end{equation*}
Take $ c_h^0 \in \Linf(\R_{\ge 0},S^{-\infty}_0) $ such that 
\begin{equation*}
\supp a_h^0 \subset \subset \{c_h^0=1\} \subset \supp c_h^0 \subset \{\chi_1 > 0\}.
\end{equation*} 
By~\eqref{eq:lagrange-ah0} and~\eqref{eq:positivity-ah0}, for all $ u \in L^2 $, we have
\begin{align*}
\jp{u,\L^h_s \op_h^{\delta,\rho}(a_h^0) u}_{L^2}
& = \jp{\op_h^{\delta,\rho}(c_h) u,\L^h_s \op_h^{\delta,\rho}(a_h^0) \op_h^{\delta,\rho}(c_h)u}_{L^2} + \O(h^\infty) \|u\|_{L^2}^2 \\
& \ge -C\jp{s}^{-1}h^{\delta+\rho} \|\op_h^{\delta,\rho}(c_h) u\|_{L^2}^2 + \O(h^\infty),
\end{align*}
where the factor $ \jp{s}^{-1} $ comes from the estimate~\eqref{eq:decay-ah0}.
By the symbolic calculus, there exists $ b_h \in \Linf(\R_{\ge 0},S^{-\infty}_0) $ such that 
\begin{equation*}
\op_h^{\delta,\rho}(b_h) - C \op_h^{\delta,\rho}(c_h)^* \op_h^{\delta,\rho}(c_h) = \O(h^\infty)_{\Linf(\R_{\ge 0},\Ltwo\to\Ltwo)}
\end{equation*}
and $ \supp b_h \subset \supp c_h $. 
Therefore,
\begin{equation}
\label{eq:from-garding-to-symbol-construction}
\begin{split}
\L^h_s \op_h^{\delta,\rho}(a_h^0) 
& \ge - C\jp{s}^{-1} h^{\delta+\rho}  \op_h^{\delta,\rho}(c_h)^* \op_h^{\delta,\rho}(c_h) + \O(h^\infty)_{\Linf(\R_{\ge 0},\Ltwo\to\Ltwo)} \\
& \ge -\jp{s}^{-1} h^{\delta+\rho}  \op_h^{\delta,\rho}(b_h^0) + \O(h^\infty)_{\Linf(\R_{\ge 0},\Ltwo\to\Ltwo)}.
\end{split}
\end{equation}
Suppose that for some $ \ell \ge 1 $, we can find $ \varphi^j \in P_j $, $ a_h^j $ for $ j = 0,\ldots,\ell-1 $ and $ \psi^{\ell-1} \in P_{\ell-1} $, $ b_h^{\ell-1} \in \Linf(\R_{\ge 0},S^{-\infty}_0) $ with $ \supp b_h^{\ell-1} \subset \{\chi_{\ell}>0\} $ such that
\begin{align}
\label{eq::hypothesis-of-induction-propagation-to-infinity-model-eq}
\L^h_s \op_h^{\delta,\rho}\Big(\sum_{j=0}^{\ell-1} h^{j(\delta+\rho)} \varphi^j a_h^j \Big) 
\ge -\jp{s}^{-1}\psi^{\ell-1} h^{\ell(\delta+\rho)} \op_h^{\delta,\rho}(b_h^{\ell-1}) + \O(h^\infty)_{\Linf(\R_{\ge 0},\Ltwo\to\Ltwo)}.
\end{align}
If we choose $ B_\ell > 0 $ sufficiently large and set $ \varphi^\ell = ((1+s)\ps)^{-1} \psi^{\ell-1} $ and $ a_h^\ell = B_\ell \chi_\ell $, then by a direct calculation, we have
\begin{equation*}
\begin{split}
\Lag_s (\varphi^\ell a_h^\ell)
& = B_\ell (1+s)^{-1} \psi^{\ell-1} \chi_\ell + B_\ell \varphi^\ell \Lag_s \chi_\ell  \\
& \ge B_\ell (1+s)^{-1}\psi^{\ell-1} \chi_\ell
\ge \jp{s}^{-1} \psi^{\ell-1} b_h^{\ell-1}.
\end{split}
\end{equation*}
Observe that 
\begin{equation*}
\Lag_s (\varphi^\ell a_h^\ell) = \O(\jp{s}^{-1}(\psi^{\ell-1}+\varphi^\ell))_{S^{-\infty}_0}, \quad \jp{s}^{-1}\psi^{\ell-1} b_h^{\ell-1} = \O(\jp{s}^{-1}\psi^{\ell-1})_{S^{-\infty}_0}. 
\end{equation*}
Similarly as above, applying the sharp G{\aa}rding inequality to the symbol 
\begin{equation*}
\Lag_s (\varphi^\ell a_h^\ell) - \jp{s}^{-1} \psi^{\ell-1} b_h^{\ell-1} 
= \O\big(\jp{s}^{-1} (\varphi^\ell + \psi^{\ell-1})\big)_{S^{-\infty}_0},
\end{equation*}
we can find $ b_h^{\ell} \in \Linf(\R_{\ge 0},S^{-\infty}_0) $ with $ \supp b_h^{\ell} \subset  \{\chi_{\ell+1} > 0\} $ such that
\begin{align}
\label{eq::current-step-of-induction-propagation-to-infinity-model-eq}
\L^h_s \op_h^{\delta,\rho}(\varphi^\ell a_h^\ell) - \jp{s}^{-1}\psi^{\ell-1} \op_h^{\delta,\rho}(b_h^{\ell-1}) 
\ge -\jp{s}^{-1} \psi^\ell h^{\delta+\rho}  \op_h^{\delta,\rho}(b_h^\ell)  + \O(h^\infty)_{\Ltwo\to\Ltwo}, 
\end{align}
with $ \psi^\ell = \psi^{\ell-1}+\varphi^\ell \in P_\ell $.
Summing up~\eqref{eq::hypothesis-of-induction-propagation-to-infinity-model-eq} and $ h^{\ell(\delta+\rho)} \times $~\eqref{eq::current-step-of-induction-propagation-to-infinity-model-eq}, we close the induction procedure. 

Finally we conclude the asymptotic expansion by observing that, for all $ \epsilon' > 0 $, we have
\begin{equation*}
\|\varphi^\ell\|_{\Linf([0,h^{-\beta}T])} = \O(|\log h|^{\ell}) = \O(h^{-\epsilon'\ell}).
\end{equation*}
Thus we prove Theorem~\ref{thm::model-eq}~\eqref{thm::model-eq-finity}.

\section{Paradifferential calculus}

\label{sec::paradiff-calculus}

In this section, we develop a paradifferential calculus on weighted Sobolev spaces and a semiclassical paradifferential calculus.

\subsection{Classical paradifferential calculus}

\label{sec::paradiff-classical-bony}

We recall some classical results of the paradifferential calculus. We refer to the original work of Bony~\cite{Bony79calcul} and the books \cite{Hormander97lecture,Metivier08paradiff,BCD11Fourier:PDE}. 
The results and proofs below are mainly based on~\cite{Metivier08paradiff}, so we shall only sketch them. 
In the meanwhile, we shall also make some refinements regarding the estimates of the remainder terms, for the sake of the semiclassical paradifferential calculus that will be developed later.

\subsubsection{Symbol classes and paradifferential operators}

\begin{definition}
For $ m \in \R $, $ r \ge 0 $, let $ \Gamma^{m,r} $ be the space of all $ a(x,\xi) \in L^\infty_\loc\big(\Rd \times (\Rd \backslash 0)\big) $ such that:
\begin{enumerate}
\item For all $ x \to \Rd $, the function $ \xi \mapsto a(x,\xi) $ is smooth; and
\item For all $ \alpha \in \N^d $, there exists $ C_\alpha > 0 $ such that for all $ \xi \in \R^d $ with $ |\xi| \ge 1/2 $, we have
\begin{equation*}
\|\pxi^\alpha a(\cdot,\xi)\|_{\Holder{r}} \le C_\alpha \jp{\xi}^{m-|\alpha|}.
\end{equation*}
\end{enumerate}
If $ a \in \Gamma^{m,r} $, then we denote for all $ n \ge 0 $ the seminorm
\begin{equation*}
\M{m,r}{n}{a} = \sup_{|\alpha|\le n} \sup_{|\xi|\ge 1/2} \jp{\xi}^{|\alpha|-m} \|\pxi^\alpha a(\cdot,\xi)\|_{\Holder{r}}.
\end{equation*}
We also denote $ \M{m,r}{}{a} = \M{m,r}{\tilde{d}+r}{a} $, where $ \tilde{d} = 1+[d/2] $.
\end{definition}

\begin{definition}
A pair of non-negative functions $ (\chi,\pi) \in \Cinf(\R^{2d}\backslash 0) \times \Cinf(\Rd) $ is called admissible if the following conditions are satisfied:
\begin{enumerate}
\item The function $ 1-\pi $ is a cutoff function of the origin. To be precise, if $ |\eta| \ge 1 $, then $ \pi(\eta) = 1 $, and if $ |\eta| \le 1/2 $, then $ \pi(\eta) = 0 $.
\item The function~$ \chi $ is an even and homogeneous of degree~$ 0 $, and there exists $ \epsilon_1, \epsilon_2 \in (0,1)$ with $ \epsilon_1 < \epsilon_2 $, such that
\begin{equation}
\label{eq::admissible-function-def}
\begin{cases}
\chi(\theta,\eta) = 1, & |\theta| \le \epsilon_1 |\eta|, \\
\chi(\theta,\eta) = 0, & |\theta| \ge \epsilon_2 |\eta|.
\end{cases}
\end{equation}
\end{enumerate} 
\end{definition}

\begin{definition}
If $ m \in \R $ and $ a \in \Gamma^{m,0} $, then the paradifferential operator~$ \T{a} $ is defined by
\begin{equation}
\label{eq::definition-paradiff-T}
\widehat{\T{a} u}(\xi) = (2\pi)^{-d} \int_{\Rd} \chi(\xi-\eta,\eta) \pi(\eta) \hat{a}(\xi-\eta,\eta) \hat{u}(\eta) \d\eta,
\end{equation}
where $ (\chi,\pi) $ is admissible and 
$ \hat{a}(\theta,\xi) = \int e^{-ix\cdot\theta} a(x,\xi) \dx. $
In other words $ \T{a} = \op(\sigma_a) $ where
\begin{equation}
\label{eq:def-sigma_a}
\sigma_a(\cdot,\xi) = \pi(\xi) \chi(D_x,\xi) a(\cdot,\xi).
\end{equation}
\end{definition}

\begin{proposition}
\label{prop:paradiff-operator-norm}
If $ m \in \R $ and $ a \in \Gamma^{m,0} $, then $ \T{a} = \O(\M{m,0}{}{a})_{\OP{m}{0}}. $
\end{proposition}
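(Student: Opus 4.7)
The proof will rely on the spectral support property of the paradifferential symbol $\sigma_a(x,\xi) = \pi(\xi)\chi(D_x,\xi)a(\cdot,\xi)$: its $x$-Fourier transform is supported in $\{|\theta|\le\epsilon_2|\xi|,\ |\xi|\ge 1/2\}$. This localization compensates for the merely $C^0$ dependence of $a$ on~$x$. The first step is to establish the classical unweighted bound $\T{a}:\H{\mu}\to\H{\mu-m}$ with norm $\lesssim\M{m,0}{}{a}$ by dyadic decomposition $\sigma_a = \sum_{j\ge 0}\sigma_a^j$ with $\sigma_a^j$ supported in $|\xi|\sim 2^j$. The spectral condition confines the $x$-frequencies of $\sigma_a^j$ to $\{|\theta|\le\epsilon_2 2^j\}$, so $\op(\sigma_a^j)u$ has Fourier support in a fattened shell $|\xi|\sim 2^j$. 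Together with the pointwise bound $\|\sigma_a^j(\cdot,\xi)\|_{\Linf_x}\lesssim\M{m,0}{}{a}\,2^{jm}$, a Bernstein-type estimate yields $\|\op(\sigma_a^j)u\|_{\Ltwo}\lesssim\M{m,0}{}{a}\,2^{jm}\|\tilde\Delta_j u\|_{\Ltwo}$ for a fattened Littlewood--Paley projector $\tilde\Delta_j$, and almost orthogonality in the $\xi$-frequencies closes this step.

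\textbf{Weighted bound via kernel decay.} To handle the spatial weight I would prove an off-diagonal estimate on the Schwartz kernel
\begin{equation*}
K(x,y) = (2\pi)^{-d}\int e^{i(x-y)\cdot\xi}\sigma_a(x,\xi)\dxi.
\end{equation*}
Integrating by parts $N$ times in~$\xi$ and using $|\pxi^\alpha\sigma_a(x,\xi)|\lesssim\M{m,0}{}{a}\,\jp{\xi}^{m-|\alpha|}$ --- an estimate that uses only $\xi$-derivatives, since $\chi$ is homogeneous of degree zero in $(\theta,\xi)$ and $a$ is $C^\infty$ in~$\xi$ --- one obtains $|K(x,y)|\lesssim_N\M{m,0}{}{a}\,\jp{x-y}^{-N}$ for any $N>m+d$, and hence for arbitrary $N$ by iteration. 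By the Peetre inequality $\jp{x}^{|k|}\jp{y}^{-|k|}\lesssim\jp{x-y}^{|k|}$, the conjugated kernel $\jp{x}^k K(x,y)\jp{y}^{-k}$ still decays rapidly in $|x-y|$. Applying this argument to $\jp{D_x}^{\mu-m}\T{a}\jp{D_x}^{-\mu}$ --- which by paradifferential symbolic calculus is again paradifferential of order~$0$ with a kernel inheriting the same off-diagonal decay --- and invoking Schur's lemma deliver $\Ltwo$-boundedness of $\jp{x}^k\jp{D_x}^{\mu-m}\T{a}\jp{D_x}^{-\mu}\jp{x}^{-k}$, which translates into $\T{a}:\H{\mu}_k\to\H{\mu-m}_k$ for arbitrary $(\mu,k)$.

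\textbf{Main obstacle.} The principal difficulty is that $\sigma_a$ belongs only to $S^m_{1,1}$, for which Sobolev boundedness fails in general; every step of the argument depends on the spectral support condition. A related technical subtlety is that the proof may differentiate $\sigma_a$ only in~$\xi$, never in~$x$, because $a$ is merely continuous in~$x$; fortunately both the Bernstein estimate of Step~1 and the integration by parts of Step~2 involve only $\xi$-derivatives. Finally, for the semiclassical refinement anticipated later in \S\ref{sec::paradiff-calculus}, all constants must be tracked quantitatively in terms of $\M{m,0}{}{a}$, which justifies organizing the plan around explicit kernel estimates rather than a softer functional-analytic approach.
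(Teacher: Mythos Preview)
The paper states this proposition without proof, deferring to M\'etivier's book for the classical unweighted estimate $\T{a}:\H{\mu}\to\H{\mu-m}$ with norm $\lesssim\M{m,0}{}{a}$; your Step~1 correctly reproduces that standard argument. The extension to the weighted spaces $\H{\mu}_k$ (which is what the notation $\OP{m}{0}$ encodes) is not spelled out in the paper either, so your attempt to fill it in is welcome.

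Your Step~2, however, has a genuine gap. After reducing to order zero, the kernel of a paradifferential operator still behaves like $|x-y|^{-d}$ on the diagonal and is \emph{not} locally integrable, so Schur's lemma cannot be applied to the full conjugated kernel $\jp{x}^k K(x,y)\jp{y}^{-k}$. The off-diagonal decay you establish is correct and does handle the region $|x-y|\gtrsim 1$ via Schur and Peetre, but the near-diagonal piece requires a separate argument. One fix is to write $\jp{x}^k\T{b}\jp{x}^{-k}=\T{b}+[\jp{x}^k,\T{b}]\jp{x}^{-k}$ and iterate: each commutator gains a factor of $(x-y)$ on the kernel (equivalently, one $\xi$-derivative on the symbol), so after finitely many steps the remainder has integrable kernel and Schur applies, while the leading term is controlled by Step~1. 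Alternatively, use the dyadic characterization of $\H{\mu}_k$ (Proposition~\ref{prop::characterization-weighted-Sobolev}) to reduce the weighted bound to the unweighted one plus off-diagonal terms $\psi_j\T{a}\psi_{j'}$ with $|j-j'|$ large, for which your kernel decay suffices.

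A second, smaller issue: the phrase ``arbitrary $N$ by iteration'' is misleading. Each integration by parts in $\xi$ costs one $\xi$-derivative of $\sigma_a$, and the seminorm $\M{m,0}{}{a}$ by definition only controls $|\alpha|\le 2(d+2)$ of them; for a fixed weight exponent $k$ you need roughly $|k|+d$ of them. Whether $2(d+2)$ derivatives suffice for every $k$, or whether the implied constant in $\O(\M{m,0}{}{a})_{\OP{m}{0}}$ is tacitly allowed to depend on higher $\xi$-seminorms of $a$ for large $|k|$, is a point the paper leaves silent and you should flag rather than paper over.
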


\begin{remark}
\label{rmk:Sm11}
A symbol $ p $ satisfies the spectral condition if there exists $ \epsilon \in (0,1) $ such that
\begin{equation*}
\supp \hat{p} \subset \{(\eta,\xi) \in \R^{2d}:|\eta| \ge \epsilon \jp{\xi}\}.
\end{equation*}
By~\cite{Metivier08paradiff}, if $ a \in \Gamma^{m,0} $, then 
$ \sigma_a \in \Gamma^{m,0} $
and satisfies the spectral condition. 
The above Proposition~\ref{prop:paradiff-operator-norm} is in fact a consequence of the following estimate~\eqref{eq:estimate-para-symbol} and the mapping property: if $ p \in \Gamma^{m,0} $ satisfies the spectral condition, then $ \op(p) $ defines a bounded operator from $ H^{\mu+m} \to H^{\mu} $ for all $ \mu \in \R $.
\end{remark}

Note the the definition~\eqref{eq::definition-paradiff-T} depends on the choice of admissible pairs of functions.
The following lemma and corollary shows that if we change the admissible pair, then the error term is regularizing.

\begin{lemma}
\label{lem::change-of-admissible-function}
If $ m \in \R $, $ r \ge 0 $ and $ a \in \Gamma^{m,r} $, then for all $ n \ge 0 $, we have
\begin{equation}
\label{eq:estimate-para-symbol}
\M{m,r}{n}{ \sigma_a} \lesssim \M{m,r}{n}{a}.
\end{equation}
If in addition $ r \in \N $, then for all $ \beta \in \N^d $ with $ |\beta| \le r $, we have 
\begin{equation}
\label{eq::estimate-un-paralinearization}
\M{m-r+|\beta|,0}{n}{\px^\beta (\sigma_a - a\pi)} \lesssim \M{m,0}{n}{\nabla^r a}.
\end{equation}
\end{lemma}
\begin{proof}
The first statement is proven in \cite{Metivier08paradiff}. We only prove the second statement. 
We shall only prove the case where $ \beta = 0 $ for the rest is similar. 
By~\cite{Metivier08paradiff}, we have
\begin{align*}
(\sigma_a - a\pi)(x,\xi) = \pi(\xi) \int \rho(x,y,\xi) \Phi(y,\xi) \dy,
\end{align*}
for all $ (x,\xi) \in \Rd \times (\Rd\backslash 0) $, where $ \Phi(\cdot,\xi) = \Fourier^{-1} \chi(\cdot,\xi) $ and
\begin{equation*}
\rho(x,y,\xi) = \sum_{|\gamma| = r} \frac{(-y)^\gamma}{\gamma !} \int_0^1 r(1-t)^{r-1}  \px^\gamma a(x-ty,\xi)  \dt.
\end{equation*} 
Therefore, if $ |\xi| \ge 1/2 $ and $ |\alpha| \le n $, then
\begin{equation}
\label{eq:estimate-rho}
\|\pxi^\alpha \rho(\cdot,y,\xi) \|_{\Linf} 
\lesssim |y|^r \|\pxi^\alpha \nabla^r a(\cdot,\xi)\|_{\Linf}
\lesssim |y|^r |\xi|^{m-|\alpha|} \M{m,0}{n}{\nabla^r a}.
\end{equation}
Note that the admissibility of~$ (\pi,\chi) $ implies that for all $ \alpha,\beta \in \N $, there exists $ C_{\alpha,\beta} > 0 $ such that for all $ (x,\xi) \in \R^{2d} $, we have
$ |x^{\beta} \pxi^{\alpha} \Phi(x,\xi)| \le C_{\alpha,\beta} \jp{\xi}^{d-|\alpha|-|\beta|}. $
Therefore, for all $ \alpha \in \N $ and all $ \xi \in \Rd $, there exists $ C_\alpha > 0 $ such that
\begin{equation}
\label{eq:estimate-Phi}
\|\pxi^\alpha \Phi(\cdot,\xi)\|_{L^1} \le C_\alpha \jp{\xi}^{-|\alpha|}.
\end{equation}
We conclude~\eqref{eq::estimate-un-paralinearization} by estimates~\eqref{eq:estimate-rho} and~\eqref{eq:estimate-Phi}.
\end{proof}

\begin{corollary}
\label{cor:change-admissible-function}
Let $ a \in \Gamma^{m,r} $ with $ m \in \R $ and $ r \in \N $. 
Let~$ (\chi,\pi) $ and~$ (\chi',\pi') $ be admissible. 
Denote by~$ \T{a} $ and~$ \T{a}' $ the paradifferential operators respectively defined by these two admissible pairs. 
Then 
\begin{equation*}
\T{a} - \T{a}' = \O(\M{m,0}{}{\nabla^r a})_{\OP{m-r}{0}} + \O(\M{m,r}{}{a})_{\OP{-\infty}{0}}.
\end{equation*}
If in addition $ a \pi  =  a \pi' = a $, then
\begin{equation*}
\T{a} - \T{a}' = \O(\M{m,0}{}{\nabla^r a})_{\OP{m-r}{0}}.
\end{equation*}
\end{corollary}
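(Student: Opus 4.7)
My approach is to interpolate through the hybrid admissible pair $(\chi',\pi)$, decomposing the difference into a \emph{spectral-gap} piece (same $\pi$, different $\chi$) and a \emph{low-frequency} piece (same $\chi'$, different $\pi$), and treating each separately. Writing $\sigma_a$, $\sigma_a'$ for the symbols of $\T{a}$, $\T{a}'$ and setting $\sigma_a''(\cdot,\xi)=\pi(\xi)\chi'(D_x,\xi)a(\cdot,\xi)$, I would use the decomposition
\begin{equation*}
\sigma_a-\sigma_a' = \pi(\xi)[\chi-\chi'](D_x,\xi)a(\cdot,\xi) + [\pi-\pi'](\xi)\chi'(D_x,\xi)a(\cdot,\xi).
\end{equation*}

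\textbf{Spectral-gap piece.} Since both admissible functions are homogeneous of degree zero, coincide near $\theta=0$, and vanish for $|\theta|/|\xi|$ large, $\chi-\chi'$ is supported in a conic corona $\{c_1|\xi|\le|\theta|\le c_2|\xi|\}$. On this support the factorization $[\chi-\chi'](\theta,\xi)=|\theta|^{-r}\tilde\chi(\theta,\xi)$ with $\tilde\chi$ smooth off the origin and homogeneous of degree $r$ lets the multiplier $[\chi-\chi'](D_x,\xi)$ exchange $r$ $x$-derivatives against a factor $|\xi|^{-r}$. Combining this with the $W^{r,\infty}$-control of $a$ in $x$, and with the $\Cinf$-smoothing effect of the spectral localization at scale $|\xi|$, I would establish
\begin{equation*}
\big\|\px^\beta\pxi^\alpha\big\{\pi(\xi)[\chi-\chi'](D_x,\xi)a(\cdot,\xi)\big\}\big\|_{\Linf_x} \lesssim \jp{\xi}^{m-r+|\beta|-|\alpha|}\M{m,0}{}{\nabla^r a}
\end{equation*}
for every $\alpha,\beta\in\N^d$. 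An application of Proposition~\ref{prop::Ltwo-Pseu-D-O}, after composition with $\jp{D_x}^{-(m-r)}$ and the weights $\jp{x}^{\pm k}$, then produces an operator in $\OP{m-r}{0}$ with norm $\lesssim \M{m,0}{}{\nabla^r a}$.

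\textbf{Low-frequency piece.} Since $\pi=\pi'=1$ for $|\xi|\ge 1$, the factor $\pi-\pi'$ is compactly supported in $\xi$. On this compact set the multiplier $\chi'(D_x,\xi)$ cuts off $x$-frequencies to a fixed compact set, so that $\chi'(D_x,\xi)a(\cdot,\xi)$ is $\Cinf$ in $x$ with all derivatives uniformly bounded by $\|a(\cdot,\xi)\|_{\Holder{r}}\lesssim\M{m,r}{}{a}$. The symbol $[\pi-\pi'](\xi)\chi'(D_x,\xi)a(\cdot,\xi)$ thus lies in $\Gamma^{-N,n}$ for all $N,n\ge 0$, with all seminorms bounded by $\M{m,r}{}{a}$; Proposition~\ref{prop::Ltwo-Pseu-D-O} then yields an operator in $\OP{-\infty}{0}$ with norm $\lesssim \M{m,r}{}{a}$. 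Adding the two contributions gives the claimed identity.

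\textbf{Main obstacle.} The delicate step is making the ``exchanging $r$ derivatives against $|\xi|^{-r}$'' heuristic of the spectral-gap piece quantitative and uniform with respect to both $\xi$ and the $\xi$-derivatives of the symbol, without losing the $\Cinf$ regularity in $x$ needed for $\OP{m-r}{0}$ boundedness in all Sobolev orders. I expect this to be handled by mimicking the integral-representation argument used in the proof of Lemma~\ref{lem::change-of-admissible-function}: Taylor-expand $a(\cdot,\xi)$ to order $r$ inside the convolution against the inverse Fourier transform of $\chi-\chi'$, and read the required estimates off the decay of the resulting integral kernel.
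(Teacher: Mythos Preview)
Your proposal is correct and follows essentially the same route as the paper. The paper introduces the same hybrid operator $\T{a}''$ associated to $(\chi',\pi)$, handles $\T{a}-\T{a}''$ by a direct appeal to Lemma~\ref{lem::change-of-admissible-function} (applied once to $\sigma_a-a\pi$ and once to $\sigma_a''-a\pi$, both bounded by $\M{m,0}{}{\nabla^r a}$), and disposes of $\T{a}''-\T{a}'$ by the compact $\xi$-support of $\pi-\pi'$; your ``spectral-gap'' and ``low-frequency'' pieces are exactly these two, and your fallback in the ``Main obstacle'' paragraph---the Taylor/integral-kernel argument of Lemma~\ref{lem::change-of-admissible-function}---is precisely what the paper invokes.
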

\begin{proof}
Let $ \T{a}'' $ be the paradifferential operator defined with respect to $ (\chi',\pi) $, then by Lemma~\ref{lem::change-of-admissible-function}, 
$ \T{a} - \T{a}'' = \O(\M{m,0}{}{\nabla^r a})_{\OP{m-r}{0}}. $
Note that $ \T{a}'' - \T{a}' $ is a composition with a paradifferential operator with a smoothing operator $ \pi(D_x) - \pi'(D_x) $, which implies
$ \T{a}'' - \T{a}' = \O(\M{m,r}{}{a})_{\OP{-\infty}{0}}. $
This term vanishes if $ a \pi  =  a \pi' = a $.
\end{proof}

\begin{corollary}
\label{cor::paralinearization-smooth}
Let $ \psi \in C_b^\infty(\Rd) $, then $ \T{\psi} - \psi \in \OP{-\infty}{0} $.
\end{corollary}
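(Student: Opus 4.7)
Since $\psi \in \Holder{\infty}(\Rd)$ depends only on $x$ and has all derivatives in $\Linf$, it belongs to $\Gamma^{0,r}$ with $\M{0,r}{}{\psi} < \infty$ for every $r \in \N$. I would decompose
\[
\T{\psi} - \psi = \bigl(\T{\psi} - \op(\psi\pi)\bigr) + \psi\bigl(\pi(D_x) - 1\bigr),
\]
using that $\op(\psi(x)\pi(\xi)) = \psi \cdot \pi(D_x)$ directly from the definition of $\op$, and show each summand lies in $\OP{-\infty}{0}$.

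For the first summand, Lemma~\ref{lem::change-of-admissible-function} applied with $m=0$, $\beta=0$, $a=\psi$ gives, for every $r \in \N$,
\[
\M{-r,0}{}{\sigma_\psi - \psi\pi} \lesssim \M{0,0}{}{\nabla^r \psi} < \infty,
\]
while the $|\beta|\le r$ cases of the same lemma control the $x$-derivatives of this symbol. A Calder\'on--Vaillancourt-type argument (of the sort underlying Proposition~\ref{prop:paradiff-operator-norm} and its use in the proof of Corollary~\ref{cor:change-admissible-function}) then yields $\T{\psi} - \op(\psi\pi) = \op(\sigma_\psi - \psi\pi) \in \OP{-r}{0}$ for each $r$, hence in $\OP{-\infty}{0}$.

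For the second summand, $1 - \pi(D_x)$ is convolution with $\Fourier^{-1}(1-\pi) \in \swtz(\Rd)$ since $1-\pi$ is smooth with compact support. Peetre's inequality $\jp{x}^k \lesssim \jp{x-y}^{|k|}\jp{y}^k$ shows that convolution with a Schwartz kernel maps $\H{-N}_k$ to $\H{M}_k$ for every $M,N,k$, giving $\pi(D_x) - 1 \in \OP{-\infty}{0}$. Multiplication by $\psi \in \Holder{\infty}$ preserves each $\H{\mu}_k$ (by the Leibniz rule for $\mu \in \N$, then by interpolation and duality), hence lies in $\OP{0}{0}$, and composition yields $\psi(\pi(D_x)-1) \in \OP{-\infty}{0}$.

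The main ``obstacle,'' such as it is, will be confirming that no spatial weight is lost along the way. This reduces to Peetre's inequality in the Fourier-multiplier step and the Leibniz rule in the multiplication step, so the proof should be essentially immediate once Lemma~\ref{lem::change-of-admissible-function} is in hand.
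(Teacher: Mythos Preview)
Your proof is correct and is essentially the argument the paper has in mind: Corollary~\ref{cor::paralinearization-smooth} is stated without proof immediately after Lemma~\ref{lem::change-of-admissible-function} and Corollary~\ref{cor:change-admissible-function}, and your decomposition $\T{\psi}-\psi=\op(\sigma_\psi-\psi\pi)+\psi(\pi(D_x)-1)$ is precisely how one extracts the statement from the estimate~\eqref{eq::estimate-un-paralinearization}. The only point worth tightening is the first summand: since $\px^\beta\sigma_\psi=\sigma_{\px^\beta\psi}$ and $\px^\beta\psi\in\Holder{\infty}$ again, applying~\eqref{eq::estimate-un-paralinearization} for every $r$ shows $\sigma_\psi-\psi\pi$ has all $x,\xi$-derivatives bounded by any negative power of~$\jp{\xi}$, so the kernel of $\op(\sigma_\psi-\psi\pi)$ is rapidly decaying in $|x-y|$ and the same Peetre argument you use for the second summand gives $\OP{-\infty}{0}$ directly.
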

\begin{proof}
This is a consequence of~\eqref{eq::estimate-un-paralinearization} and the Calder\'{o}n--Vaillancourt theorem.
\end{proof}

\subsubsection{Symbolic calculus and paralinearization}

\begin{proposition}
\label{prop::composition-T}
If $ a \in \Gamma^{m,r} $ and $ b \in \Gamma^{m',r} $ where $ r \in \N $, $ m \in \R $ and $ m' \in \R $, then
\begin{align*}
\T{a} \T{b} - \T{a \sharp b}
& = \O\big(\M{m,r}{}{a} \M{m',0}{}{\nabla^r b} + \M{m,0}{}{\nabla^r a} \M{m',r}{}{b}\big)_{\OP{m+m'-r}{0}} \\
& \quad \quad + \O\big(\M{m,r}{}{a} \M{m',r}{}{b}\big)_{\OP{-\infty}{0}},
\end{align*}
where the symbol $ a \sharp b = a \sharp_{1,r}^{0,0} b $ is defined by~\eqref{eq:def-symbolic-composition-finite}.
If in addition $ a\pi = a $ and $ b \pi = b $, then,
\begin{equation*}
\T{a} \T{b} - \T{a \sharp b}
= \O\big(\M{m,r}{}{a} \M{m',0}{}{\nabla^r b} + \M{m,0}{}{\nabla^r a} \M{m',r}{}{b}\big)_{\OP{m+m'-r}{0}}.
\end{equation*}
\end{proposition}

\begin{proof}
By Corollary~\ref{cor:change-admissible-function}, we may choose an admissible pair $ (\pi,\chi) $ to define paradifferential operators while assuming that $ \epsilon_2 < 1/4 $. 
We shall only prove the case where $ a \pi = a $ and $ b \pi = b $ as the general case follows easily.
The following proof follows~\cite{Metivier08paradiff}.
Decompose
$ \T{a} \T{b} - \T{a \sharp b} = \mathrm{(I)} + \mathrm{(II)}, $
where 
\begin{equation*}
\mathrm{(I)} = \op(\sigma_a) \op(\sigma_b) - \op(\sigma_a \sharp \sigma_b),  \quad
\mathrm{(II)} = \op(\sigma_a \sharp \sigma_b) - \op(\sigma_{a\sharp b}).
\end{equation*}
Write $ \op(\sigma_a) \op(\sigma_b) = \op(\sigma), $ where
\begin{equation*}
\sigma(x,\xi) 
= \frac{1}{(2\pi)^d} \iint e^{i(x-y)\cdot\eta} \sigma_a(x,\xi+\eta) \theta(\eta,\xi) \sigma_b(y,\xi) \dy \d\eta.
\end{equation*}
Here $ \theta \in \Cinf(\R^{2d}\backslash 0) $ satisfies that $ (\theta,\pi) $ is admissible and $ \theta \chi = \chi $. 
By Taylor's formula, we decompose 
\begin{equation*}
\sigma_a(x,\xi+\eta) = \sum_{|\alpha| < r} {1\over \alpha!} \pxi^\alpha \sigma(x,\xi)\eta^\alpha + \sum_{|\alpha| = r} \rho_\alpha(x,\xi,\eta)\eta^\alpha
\end{equation*}
where the functions $ \rho_\alpha $ depend on $ \nabla_\xi^r \sigma_a $.
Then write
$ \sigma = \sigma_a \sharp \sigma_b + \sum_{|\alpha|=r} q_\alpha $
where
\begin{align*}
q_\alpha(x,\xi) 
& = \int R_\alpha(x,x-y,\xi) (D_x^\alpha \sigma_b)(y,\xi) \dy, \\
R_\alpha(x,y,\xi) 
& = (2\pi)^{-2} \int e^{iy\eta} \rho_\alpha(x,y,\eta) \theta(\eta,\xi) \mathrm{d}\eta.
\end{align*}
By the same estimate in~\cite{Metivier08paradiff},
\begin{equation*}
\|\pxi^\beta R_\alpha(x,\cdot,\xi)\|_{\Lone} \lesssim \M{m,r}{}{a} \jp{\xi}^{m-r-|\beta|}.
\end{equation*}
Using $ D_x^\alpha \sigma_b = \sigma_{D_x^\alpha b} $, we verify that
\begin{equation*}
\|\pxi^\beta q_\alpha(\cdot,\xi)\|_{\Linf} 
\lesssim \M{m,r}{}{a} \M{m',0}{}{\nabla^r b} \jp{\xi}^{m+m'-r-|\beta|},
\end{equation*}
and consequently, by Remark~\ref{rmk:Sm11},
\begin{equation*}
\| \mathrm{(I)} \|_{\H{s}\to\H{s-m-m'+r}} 
\lesssim \sum_{|\alpha|=r} \M{m+m'-r,0}{}{q_\alpha} 
\lesssim \M{m,r}{}{a} \M{m',0}{}{\nabla^r b}.
\end{equation*}

To Estimate (II), for all $ |\alpha| < r $, decompose
$ \pxi^\alpha \sigma_a D^\alpha_x \sigma_b - \sigma_{\pxi^\alpha a D^\alpha_x b}
=  (\mathrm{i}) + (\mathrm{ii}) + (\mathrm{iii}), $
where 
\begin{equation*}
(\mathrm{i}) = \pxi^\alpha (\sigma_a - a) D^\alpha_x \sigma_b, \quad
(\mathrm{ii}) = \pxi^\alpha a D^\alpha_x (\sigma_b - b), \quad
(\mathrm{iii}) = \pxi^\alpha a D^\alpha_x b - \sigma_{\pxi^\alpha a D^\alpha_x b}. 
\end{equation*}
By Lemma~\ref{lem::change-of-admissible-function}, Leibniz's rule and interpolation,
\begin{align*}
\M{m+m'-r,0}{}{\mathrm{i}} 
& \lesssim \M{m-r,0}{}{\sigma_a-a} \M{m',0}{}{D^\alpha_x \sigma_b} 
\lesssim \M{m,0}{}{\nabla^r a} \M{m',r}{}{b}, \\
\M{m+m'-r,0}{}{\mathrm{ii}}
& \lesssim \M{m,r}{}{a} \M{m'-r+|\alpha|,0}{}{D^\alpha_x (\sigma_b - b\pi)}
\lesssim \M{m,r}{}{a} \M{m',0}{}{\nabla^r b},\\
\M{m+m'-r,0}{}{\mathrm{iii}}
& \lesssim \M{m+m'-|\alpha|,0}{}{\nabla^{r-|\alpha|}(\pxi^\alpha a D^\alpha_x b)} \\
& \lesssim \M{m-|\alpha|,0}{}{\nabla^r\pxi^\alpha a} \M{m',0}{}{b} + \M{m-|\alpha|,0}{}{\pxi^\alpha a} \M{m',0}{}{\nabla^r b} \\
& \lesssim \M{m,0}{}{\nabla^r a} \M{m',r}{}{b} + \M{m,r}{}{a} \M{m',0}{}{\nabla^r b}.
\end{align*}
By Remark~\ref{rmk:Sm11}, these estimates imply that,
\begin{equation*}
(\mathrm{II})
= \O\big(\M{m,r}{}{a} \M{m',0}{}{\nabla^r b} + \M{m,0}{}{\nabla^r a} \M{m',r}{}{b}\big)_{\OP{m+m'-r}{0}}.
\end{equation*}
The proposition follows.
\end{proof}

\begin{proposition}
\label{prop::adjoint-T}
Let $ a \in \Gamma^{m,r}_{} $ with $ r \in \N $ and $ m \in \R $, then
\begin{equation*}
\T{a}^* - \T{a^*} = \O(\M{m,0}{}{\nabla^r a})_{\OP{m-r}{0}} + \O(\M{m,r}{}{a} )_{\OP{-\infty}{0}},
\end{equation*}
where the symbol $ a^* = \zeta_{1,r}^{0,0} a $ is defined by~\eqref{eq:def-symbolic-adjoint-finite}.
If in addition $ a \pi = a $, then 
\begin{equation*}
\T{a}^* - \T{a^*} = \O(\M{m,0}{}{\nabla^r a})_{\OP{m-r}{0}}.
\end{equation*}
\end{proposition}
\begin{proof}
Similarly as in the proposition for the composition, we shall only prove the case where $ a\pi = a $.
Let~$ (\theta,\pi) $ be admissible such that $ \theta\chi=\chi $, then $ \T{a}^* = \op(\sigma_a^*) $ with
\begin{equation*}
\sigma_a^*(x,\xi) 
= (2\pi)^{-d} \int e^{-iy\cdot\eta} \overline{\sigma_a}(x+y,\xi+\eta) \d\eta\dy 
= a^*(x,\xi) + \sum_{|\alpha|=r} r_\alpha(x,\xi), 
\end{equation*}
where by Taylor's formula,
\begin{equation*}
r_\alpha(x,\xi) = \frac{2\pi}{\alpha!}  \iiint_{\R^{2d}\times[0,1]} r(1-t)^{r-1} e^{-iy\cdot\eta} D_x^\alpha \pxi^\alpha \overline{\sigma_a}(x,\xi+t\eta) \theta(\eta,\xi) \dt\d\eta\dy,
\end{equation*}
The term $ D_x^\alpha \pxi^\alpha \overline{\sigma_a}(x,\xi+t\eta) $ in the integral and the analysis in \cite{Metivier08paradiff} imply that
\begin{equation*}
\M{m-r,0}{}{\sigma_{a}^*-\sigma_{a^*}} 
\le \sum_{|\alpha|=r} \M{m-r,0}{}{r_\alpha} + \M{m-r,0}{}{a^*-\sigma_{a^*}}  
\lesssim \M{m,0}{}{\nabla^r a}.
\end{equation*}
The proposition follows by Remark~\ref{rmk:Sm11}.
\end{proof}

Recall the following results of paralinearization. See e.g., \cite{Metivier08paradiff}.

\begin{proposition}
\label{prop::paraproduct-T}
If $ a \in \H{\alpha} $ and $ b \in \H{\beta} $ with $ \alpha > d/2 $ and $ \beta > d/2 $, then
\begin{equation*}
\|ab - \T{a} b - \T{b} a\|_{\H{\alpha+\beta-d/2}} \lesssim \|a\|_{\H{\alpha}} \|b\|_{\H{\beta}}.
\end{equation*}
\end{proposition}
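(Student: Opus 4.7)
The plan is to prove this via the standard Bony paraproduct decomposition, adapting the dyadic Littlewood--Paley argument to the present admissible-pair convention.

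First I would pass to Fourier variables using the definition \eqref{eq::definition-paradiff-T}, writing
\begin{equation*}
\widehat{ab - \T{a}b - \T{b}a}(\xi) = (2\pi)^{-d} \int_{\Rd} K(\xi,\eta)\hat{a}(\xi-\eta)\hat{b}(\eta) \d\eta,
\end{equation*}
where $K(\xi,\eta) = 1 - \chi(\xi-\eta,\eta)\pi(\eta) - \chi(\eta,\xi-\eta)\pi(\xi-\eta)$. Using the properties \eqref{eq::admissible-function-def} of the admissible pair, one checks that $K$ is supported away from the ``low--high'' cone $\{|\xi-\eta| \le \epsilon_1|\eta|,\ |\eta|\ge 1\}$ and its symmetric counterpart, so $K$ is supported in the ``high--high'' region $\{|\xi-\eta| \sim |\eta|\}$ together with a relatively compact set near the origin (coming from the $\pi$ cut-off).

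Next I would reduce this to the classical high--high remainder estimate via a dyadic decomposition. Choose $\phi \in \Partition$ (up to a dilation), and let $\Delta_j$ denote the associated Littlewood--Paley projectors. The support analysis above shows that, up to a smoothing operator (absorbed by any $H^s$ norm), the quantity $ab - \T{a}b - \T{b}a$ can be written as
\begin{equation*}
R(a,b) = \sum_{j}\Delta_j a \cdot \tilde\Delta_j b,
\end{equation*}
where $\tilde\Delta_j = \sum_{|k-j|\le N}\Delta_k$ for some fixed $N$ depending on $\epsilon_1,\epsilon_2$. The key geometric fact is that each term $\Delta_j a \cdot \tilde\Delta_j b$ has spatial Fourier support in a ball of radius $\lesssim 2^j$.

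To estimate $R(a,b)$ in $\H{\alpha+\beta-d/2}$, I would combine Bernstein's inequality with the hypothesis $\alpha > d/2$:
\begin{equation*}
\|\Delta_j a\|_{\Linf} \lesssim 2^{jd/2}\|\Delta_j a\|_{\Ltwo} \lesssim 2^{j(d/2-\alpha)} c_j \|a\|_{\H{\alpha}},
\qquad
\|\tilde\Delta_j b\|_{\Ltwo} \lesssim 2^{-j\beta} d_j \|b\|_{\H{\beta}},
\end{equation*}
with $(c_j),(d_j) \in \ell^2(\N)$. Hence $\|\Delta_j a \cdot \tilde\Delta_j b\|_{\Ltwo} \lesssim 2^{j(d/2-\alpha-\beta)} c_j d_j \|a\|_{\H{\alpha}}\|b\|_{\H{\beta}}$. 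Applying $\Delta_\ell$ and using that only terms with $j \ge \ell - C$ contribute, I would write
\begin{equation*}
2^{\ell(\alpha+\beta-d/2)}\|\Delta_\ell R(a,b)\|_{\Ltwo} \lesssim \sum_{j\ge \ell - C} 2^{(j-\ell)(d/2-\alpha-\beta)} c_j d_j \|a\|_{\H{\alpha}}\|b\|_{\H{\beta}}.
\end{equation*}
Since $d/2 - \alpha - \beta < 0$ (as $\alpha,\beta > d/2$ gives $\alpha+\beta > d$), the convolution kernel on the right is summable, and Young's convolution inequality together with $\|(c_jd_j)_j\|_{\ell^2} \le \|c\|_{\ell^2}\|d\|_{\ell^2}$ yields the $\ell^2$-summability in $\ell$ needed to conclude $\|R(a,b)\|_{\H{\alpha+\beta-d/2}} \lesssim \|a\|_{\H{\alpha}}\|b\|_{\H{\beta}}$ via the Littlewood--Paley characterization of $\H{s}$.

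The main subtlety is the very first step: rigorously identifying the Bony-style remainder built from the admissible pair $(\chi,\pi)$ with the high--high diagonal sum $R(a,b)$, modulo a smoothing operator arising from the low-frequency truncation $\pi$. This is mostly bookkeeping on the support of $K(\xi,\eta)$, but one must carefully account for the compact piece where $|\eta|\le 1$ or $|\xi-\eta|\le 1$ --- on that set the integral kernel acts as a smoothing operator on $a$ and $b$, so it is dominated trivially by $\|a\|_{\H{\alpha}}\|b\|_{\H{\beta}}$ given that $\alpha,\beta > d/2 > 0$. Once this identification is in place, the remaining estimate is the familiar Littlewood--Paley computation sketched above.
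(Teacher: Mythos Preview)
Your argument is correct: the support analysis of the kernel $K(\xi,\eta)$, the reduction to the high--high diagonal sum, and the dyadic estimate via Bernstein plus Young are all the standard ingredients, and your treatment of the low-frequency piece from~$\pi$ is adequate. One small remark: the inequality $\|(c_jd_j)\|_{\ell^2}\le\|c\|_{\ell^2}\|d\|_{\ell^2}$ that you invoke is indeed true (since $\|c\|_{\ell^\infty}\le\|c\|_{\ell^2}$), but you could equally use $\|(c_jd_j)\|_{\ell^1}\le\|c\|_{\ell^2}\|d\|_{\ell^2}$ together with $\ell^1\hookrightarrow\ell^2$; either route closes the estimate.

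As for comparison with the paper: Proposition~\ref{prop::paraproduct-T} is stated there without proof, as one of the classical results recalled from Bony's original work and the textbooks~\cite{Hormander97lecture,Metivier08paradiff,BCD11Fourier:PDE}. Your proof is precisely the standard Littlewood--Paley argument one finds in those references (especially \cite{BCD11Fourier:PDE} and \cite{Metivier08paradiff}), so there is no alternative approach to contrast.
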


\begin{proposition}
\label{prop::paralinearization-T}
If $ F \in \Cinf(\R) $ with $ F(0) = 0 $, then for all $ \mu > d/2 $, there exists a monotonically increasing function $ C : \R_{\ge 0} \to \R_{\ge 0} $, such that for all $ u \in \H{\mu} $, we have
\begin{equation*}
\|F(u)\|_{\H{\mu}} + \|F(u) - \T{F'(u)} u\|_{\H{2\mu-d/2}} \le C(\|u\|_{\H{s}}) \|u\|_{\H{\mu}}.
\end{equation*}
\end{proposition}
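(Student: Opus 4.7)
The plan is to establish both estimates via a Littlewood--Paley decomposition $u = \sum_{k \ge -1} \Delta_k u$ together with the telescoping identity. Writing $S_k u = \sum_{j < k} \Delta_j u$ and using $F(0)=0$, I get
\begin{equation*}
F(u) = \sum_{k \ge -1} \bigl[F(S_{k+1} u) - F(S_k u)\bigr] = \sum_{k \ge -1} m_k \, \Delta_k u, \qquad m_k = \int_0^1 F'\bigl(S_k u + t\Delta_k u\bigr) \dt.
\end{equation*}
Since $s > d/2$, Sobolev embedding gives $\|u\|_{\Linf} \lesssim \|u\|_{\H{s}}$, hence $F \in \Cinf$ with $F(0)=0$ provides uniform $\Linf$ bounds on $m_k$ (and each of its derivatives) by a smooth, increasing function of $\|u\|_{\H{s}}$.

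For the first estimate, the product $m_k \Delta_k u$ is essentially frequency-localized at scale $2^k$ (up to a bounded dilate that can be absorbed by spectral truncation). Bounding $\|m_k \Delta_k u\|_{\Ltwo} \le \|m_k\|_{\Linf} \|\Delta_k u\|_{\Ltwo} = c_k 2^{-ks}$ with $(c_k) \in \ell^2$ gives $F(u) \in \H{s}$ with the stated control. This is the standard Meyer composition estimate.

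For the paralinearization estimate, I split $m_k = S_{k-N} m_k + (\Id - S_{k-N}) m_k$ for a fixed large $N$. The low-frequency piece reassembles into $\sum_k (S_{k-N} m_k) \Delta_k u$, which coincides with $\T{F'(u)} u$ up to a smoothing operator: indeed $S_{k-N} m_k$ converges to $F'(u)$ on blocks of frequency $\sim 2^k$, and the error between the two admissible truncations is handled by Corollary~\ref{cor:change-admissible-function}. The high-frequency remainder $\sum_k [(\Id - S_{k-N}) m_k] \Delta_k u$ is the place where the gain of $s - d/2$ derivatives appears: by Bernstein and the fact that $(\Id - S_{k-N}) m_k$ is spectrally supported at frequencies $\gtrsim 2^k$ while $m_k - m_{k-1}$ inherits a factor $\Delta_k u$, one gets $\|(\Id - S_{k-N}) m_k\|_{\Ltwo} \lesssim 2^{-ks} \|u\|_{\H{s}} C(\|u\|_{\H{s}})$, which combined with $\|\Delta_k u\|_{\Linf} \lesssim 2^{k d/2} \|\Delta_k u\|_{\Ltwo}$ yields a block-$\ell^2$ bound at regularity $2s - d/2$. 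This is essentially the bilinear remainder of Proposition~\ref{prop::paraproduct-T} applied dyadically.

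The main obstacle is the uniform control of $\|m_k\|_{\H{s}}$ (and of $m_k - m_{k-1}$ in $\H{s}$) by a smooth function of $\|u\|_{\H{s}}$. This requires a Fa\`a di Bruno type argument combined with the tame estimates of Proposition~\ref{prop::paraproduct-T}: one establishes by induction on the order of derivative that $F'(v) \in \H{s}$ whenever $v \in \H{s}$ with norm bounded by $C(\|v\|_{\H{s}}) \|v\|_{\H{s}}$, and that this map is locally Lipschitz on balls of $\H{s}$. Once that bookkeeping is in place, summing the two pieces above closes the estimate.
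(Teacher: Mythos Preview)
The paper does not actually give a proof of this proposition: it is listed among the ``classical results of the paradifferential calculus'' in \S\ref{sec::paradiff-classical-bony}, with the blanket remark that ``the results and proofs below are mainly based on~\cite{Metivier08paradiff}, so we shall only sketch them,'' and no sketch is provided for this particular statement. So there is nothing to compare against in the paper itself.

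Your outline is the standard Bony--Meyer argument (telescoping $F(u)=\sum_k m_k\Delta_k u$, Meyer's $H^s$ composition estimate for the first bound, then isolating the paraproduct piece and estimating the remainder at regularity $2s-d/2$), which is precisely what one finds in the references the paper cites. A couple of places are a bit loose: the identification of $\sum_k (S_{k-N}m_k)\Delta_k u$ with $\T{F'(u)}u$ is not literally a change-of-admissible-pair issue (Corollary~\ref{cor:change-admissible-function}), but rather uses that $m_k - S_{k-N}F'(u)$ is itself small in the right norm; and the high-frequency remainder estimate is cleaner if you bound $m_k - F'(u)$ directly (it contains a factor $u - S_k u$) rather than going through $(\Id-S_{k-N})m_k$. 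But these are cosmetic; the skeleton you wrote is correct and is the textbook proof.
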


\subsection{Dyadic paradifferential calculus}

Now we develop the theory of paradifferential calculus with weighted symbols on weighted Sobolev spaces via a dyadic decomposition of the space.

\subsubsection{Weighted symbol classes and dyadic paradifferential operators}

We define a family of symbol classes which take into consideration the spacial decay of symbols.

\begin{definition}
If $ r \in \N $, $ k \in \R $, and $ \delta \in [0,1] $, then  $ \Holder{r}_{k,\delta} $ is the set of all $ u \in \swtz'(\Rd) $ such that 
\begin{equation*}
\|u\|_{\Holder{r}_{k,\delta}} = \sum_{|\alpha|\le r} \|\jp{x}^{-k+\delta|\alpha|} \px^\alpha u\|_{\Linf}<\infty.
\end{equation*}
\end{definition}

\begin{definition}
If $ m,k \in \R $, $ r \in \N $ and $ \delta \in [0,1] $, then $ \Gamma^{m,r}_{k,\delta} $ is the set of all $ a(x,\xi) \in \Linf_\loc( \Rd \times (\Rd \backslash 0)) $ such that
\begin{enumerate}
\item for all $ x \in \Rd $, the function $ \xi \mapsto a(x,\xi) $ is smooth; and
\item for all $ \alpha \in \N^d $, there exists $ \exists C_\alpha > 0 $, such that
\begin{equation*}
\|\pxi^\alpha a(\cdot,\xi)\|_{\Holder{r}_{k,\delta}} \le C_\alpha \jp{\xi}^{m-|\alpha|}, \quad \forall |\xi| \ge 1/2.
\end{equation*}
\end{enumerate}
Moreover, we denote
\begin{equation*}
\M{m,r}{k,\delta}{a} 
= \sup_{|\alpha|\le r+\tilde{d}} \sup_{|\xi|\ge 1/2} \jp{\xi}^{|\alpha|-m} \|\pxi^\alpha a(\cdot,\xi)\|_{\Holder{r}_{k,\delta}}.
\end{equation*}
Let 
$ \Gamma^{-\infty,r}_{k,\delta} = \bigcap_{m \in \R} \Gamma^{m,r}_{k,\delta}, $
$ \Gamma^{m,r}_{-\infty,\delta} = \bigcap_{k\in\R} \Gamma^{m,r}_{k,\delta}. $
Then for $ (m,k) \in (\R \cup \{-\infty\})^2 $, define
\begin{equation*}
\Sigma^{m,r}_{k,\delta} 
= \sum_{0 \le j \le r} \Gamma^{m-j,r-j}_{k-\delta j,\delta}.
\end{equation*}
We say that $ a_h = \sum_{0 \le j \le r} h^j a_h^j \in {}_h\Sigma^{m,r}_{k,\delta}  $ if
\begin{equation*}
\sup_{0<h<1} \sum_{0 \le j \le r} \M{m-j,r-j}{k-\delta j,\delta}{a_h^j} < \infty.
\end{equation*}
We shall denote $ \Sigma^{m,r}  = \Sigma^{m,r}_{0,0}  $, $ {}_h\Sigma^{m,r}  = {}_h\Sigma^{m,r}_{0,0}  $.
\end{definition}
We are mostly interested in the cases where $ \delta \in \{0,1\} $.
Note that $ \Holder{r}_{k,0} = \jp{x}^{k} \Holder{r} $ and thus $ \Gamma^{m,r}_{k,0} = \jp{x}^{k} \Gamma^{m,r} $; whereas $ \Gamma^{m,r}_{k,1} $ is a natural extension of $ S^m_k $ to symbols of finite regularities.
We will encounter symbols defined by solutions of the water wave system and thus have coefficients in weighted Sobolev spaces.
We need the following lemma.
\begin{lemma}
\label{lem:weighted-symbol-injection}
If $ u \in \HC{\mu,\delta}_{k} $ where $ \mu \ge \tilde{d} $, $ k \in \N $ and $ \delta \in (0,1] $, then for all $ \alpha \in \N^d $ with $ |\alpha| \le \min\{(\mu -\tilde{d})/(1+\delta),k\} $, we have 
$ \jp{x}^{|\alpha|} \partial_x^\alpha u \in \Linf $ and consequently we have the inclusion
\begin{equation*}
\HC{\mu,\delta}_{k} \subset \Holder{\min\{[(\mu -\tilde{d})/(1+\delta)],k\}}_{0,1} \cap \jp{x}^{-\min\{[\mu-\tilde{d}]/\delta,k\}} L^\infty.
\end{equation*}
In particular $ \HC{\mu,1/2}_{k} \subset \Holder{\min\{[2(\mu -\tilde{d})/3],k\}}_{0,1} \cap \jp{x}^{-\min\{k,2[\mu-\tilde{d}]\}} L^\infty $.
\end{lemma}
\begin{proof}
The lemma follows directly from the Sobolev injection:
\begin{align*}
\|\jp{x}^{|\alpha|} \partial_x^\alpha u\|_{\Linf}
& \lesssim \|u\|_{\Holder{|\alpha|}_{-|\alpha|}}
\lesssim \|u\|_{H^{|\alpha|+\tilde{d}}_{|\alpha|}}
\lesssim \|u\|_{\HC{\mu,\delta}_k}, \\
\|\jp{x}^{n} u\|_{\Linf}
& \lesssim \|u\|_{W^{0,\infty}_{-n,0}}
\lesssim \|u\|_{H^{\tilde{d}}_{n}}
\lesssim \|u\|_{\HC{\mu,\delta}_k},
\end{align*}
which hold provided $ |\alpha| + \tilde{d} \le \mu - \delta |\alpha| $, $ |\alpha| \le k $, $ \tilde{d} \le \mu-\delta n $ and $ n \le k $, that is 
\begin{equation*}
|\alpha| \le \min\{(\mu - \tilde{d})/(1+\delta),k\}, \quad
n \le \min\{(\mu-\tilde{d})/\delta,k\}.\qedhere
\end{equation*}
\end{proof}

\begin{lemma}
\label{lem::key-estimate-para-diff-dyadic}
Let $ \A $ be a linear operator from $ \swtz(\Rd) $ to $ \swtz'(\Rd) $ and let $ m,k \in \R $.
If there exists $ \{\A_j\}_{j\in\N} \in \ell^\infty(\OP{m}{0}) $ and $ \psi, \phi \in \Partition $ such that 
$ \A = \sum_{j \in \N} 2^{jk} \psi_j \A_j \phi_j, $
then $ \A \in \OP{m}{k} $.
\end{lemma}
\begin{proof}
The lemma is a consequence of Proposition~\ref{prop::characterization-weighted-Sobolev}.
\end{proof}

\begin{definition}
Let $ \psi \in \Partition_* $ and define $ \upsi \in \Partition $ by setting $ \upsi_j = \sum_{|j-k|\le 10} \psi_k. $ 
If $ a \in \Gamma^{m,r}_{k,\delta} $ where $ m,k \in \R $, $ r \in \N $ and $ \delta \in [0,1] $, then define the dyadic paradifferential operator
\begin{equation*}
\P{a} = \sum_{j \in \N} \upsi_{j} \T{\psi_j a} \upsi_{j}.
\end{equation*}
\end{definition}

\begin{proposition}
\label{prop:boundedness-P}
If $ a \in \Gamma^{m,r}_{k,\delta} $, then 
$ \P{a} = \O(\M{m,r}{k}{a})_{ \OP{m}{k} }. $
\end{proposition}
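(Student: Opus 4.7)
The plan is to reduce weighted boundedness of $\P{a}$ to the classical (unweighted) estimate of Proposition~\ref{prop:paradiff-operator-norm} by combining the dyadic characterization of weighted Sobolev spaces (Proposition~\ref{prop::characterization-weighted-Sobolev}) with the almost-orthogonality of the partition $\psi \in \Partition_*$. Concretely, I would verify the hypothesis of Lemma~\ref{lem::key-estimate-para-diff-dyadic} by producing, for each fixed dyadic block $j$, an estimate of $\psi_j \P{a} u$ in $\H{\mu-m}$ with the correct $2^{jk}$ weight.

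Fix $j \in \N$. Since $\psi \in \Partition_*$ its pieces have essentially disjoint supports, and $\upsi_{j'} = \sum_{|k-j'| \le 10} \psi_k$, so $\psi_j \upsi_{j'} \equiv 0$ unless $|j-j'| \le N_0$ for some absolute constant $N_0$. Consequently
\[
\psi_j \P{a} u = \sum_{|j-j'|\le N_0} (\psi_j \upsi_{j'})\, \T{\psi_{j'} a}\, (\upsi_{j'} u),
\]
a sum containing only $O(1)$ nonzero terms.

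Next I would estimate each term individually. The support condition $\supp \psi_{j'} \subset \{|x| \sim 2^{j'}\}$ (understood trivially for $j'=0$) and the defining bound $|\pxi^\alpha a(x,\xi)| \lesssim \jp{x}^k \jp{\xi}^{m-|\alpha|}\M{m,0}{k,0}{a}$ together yield
\[
\M{m,0}{}{\psi_{j'} a} \lesssim 2^{j'k}\, \M{m,0}{k,0}{a},
\]
so Proposition~\ref{prop:paradiff-operator-norm} gives $\|\T{\psi_{j'} a} \upsi_{j'} u\|_{\H{\mu-m}} \lesssim 2^{j'k} \M{m,0}{k,0}{a} \|\upsi_{j'} u\|_{\H{\mu}}$. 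Multiplication by $\psi_j \upsi_{j'}$ is bounded on $\H{\mu-m}$ uniformly in $j, j'$ because $\|\px^\alpha(\psi_j \upsi_{j'})\|_{\Linf} \lesssim 2^{-j|\alpha|} \lesssim 1$ for every $\alpha \in \N^d$. Summing the $O(1)$ terms yields
\[
\|\psi_j \P{a} u\|_{\H{\mu-m}} \lesssim 2^{jk}\, \M{m,0}{k,0}{a} \sum_{|j-j'|\le N_0} \|\upsi_{j'} u\|_{\H{\mu}}.
\]

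This is exactly of the form needed in Lemma~\ref{lem::key-estimate-para-diff-dyadic}, with the dyadic partition $\phi_j \bydef \sum_{|j-j'|\le N_0} \upsi_{j'} \in \Partition$ playing the role of $\phi$; the conclusion $\P{a} \in \OP{m}{k}$ with the correct norm follows immediately. There is no serious obstacle in this argument: the only points requiring verification are the support-based bound $\M{m,0}{}{\psi_{j'} a} \lesssim 2^{j'k} \M{m,0}{k,0}{a}$ (which simply uses $\jp{x} \sim 2^{j'}$ on $\supp\psi_{j'}$, valid for both positive and negative $k$) and the uniform boundedness of multiplication by the cutoffs, both of which are routine.
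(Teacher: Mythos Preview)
Your proposal is correct and follows essentially the same approach as the paper: both arguments reduce to the unweighted bound $\|\T{\psi_{j'} a}\|_{\H{\nu}\to\H{\nu-m}} \lesssim \M{m,0}{}{\psi_{j'} a} \lesssim 2^{j'k}\M{m,0}{k,0}{a}$ and then invoke Lemma~\ref{lem::key-estimate-para-diff-dyadic}. The paper's proof simply records this one-line observation, whereas you spell out the finite-overlap and multiplier-boundedness details; the only cosmetic point is that to match the hypothesis of the lemma exactly you should note $\|\upsi_{j'} u\|_{\H{\mu}} = \|\upsi_{j'}\phi_j u\|_{\H{\mu}} \lesssim \|\phi_j u\|_{\H{\mu}}$ (since $\upsi_{j'}\phi_j = \upsi_{j'}$ and multiplication by $\upsi_{j'}$ is uniformly bounded), which collapses your finite sum to a single $\|\phi_j u\|_{\H{\mu}}$.
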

\begin{proof}
Note that if $ a \in \Gamma^{m,r}_{k,\delta} $ then $ a \in \jp{x}^{k} \Gamma^{m,0} $.
Therefore, by Proposition~\ref{prop:paradiff-operator-norm}, we have
\begin{equation*}
\| \T{\psi_j a}\|_{\H{\nu}\to\H{\nu-m}} \lesssim \M{m,0}{}{\psi_j a} \lesssim 2^{-jk} \M{m,0}{k,0}{a}.
\end{equation*}
We conclude by Lemma~\ref{lem::key-estimate-para-diff-dyadic}.
\end{proof}

\subsubsection{Symbolic calculus}

\begin{proposition}
\label{prop::composition-P}
Let $ a \in \Gamma^{m,r}_{k,\delta} $, $ b \in \Gamma^{m',r}_{k',\delta} $, $ r \in \N $, $ (m,k), (m',k') \in \R^2 $, $ 0 \le \delta \le 1 $, then
\begin{equation*}
\P{a} \P{b} - \P{a \sharp b}
= \O(\M{m,r}{k,\delta}{a} \M{m',r}{k',\delta}{b} )_{\OP{m+m'-r}{k+k'-\delta r} + \OP{-\infty}{k+k'}}.
\end{equation*}
where $ a \sharp b = \sum_{\alpha\in\N^d}^{|\alpha| < r} \frac{1}{\alpha !} \pxi^\alpha a D_x^\alpha b \in \Sigma^{m+m',r}_{k+k',\delta}. $
\end{proposition}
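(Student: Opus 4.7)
The plan is to reduce the dyadic composition to the classical composition Proposition~\ref{prop::composition-T} in each dyadic block and reassemble via Lemma~\ref{lem::key-estimate-para-diff-dyadic}. Expanding
\begin{equation*}
\P{a}\P{b} = \sum_{j,l\in\N} \upsi_j \T{\psi_j a}\,\upsi_j\upsi_l\,\T{\psi_l b}\upsi_l,
\end{equation*}
the support properties of $\{\psi_n\}$ force $|j-l|\le M_0$ in the surviving terms. For each such pair I would first move the middle cutoff to the right via $\upsi_j\upsi_l\,\T{\psi_l b} = \T{(\upsi_j\upsi_l)\psi_l b} + [\upsi_j\upsi_l,\T{\psi_l b}]$: the commutator, computed by Proposition~\ref{prop::composition-T} applied to the multiplier $\upsi_j\upsi_l$ (which satisfies $|\nabla^p(\upsi_j\upsi_l)|\lesssim 2^{-jp}$ on its support $|x|\sim 2^j$), is of order $m'-1$ and absorbed into the remainder. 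Using $\upsi_l\psi_l = \psi_l$, one then applies Proposition~\ref{prop::composition-T} to $\psi_j a$ and $\psi_l b$ to write $\T{\psi_j a}\T{\psi_l b} = \T{(\psi_j a)\sharp(\psi_l b)} + R_{jl}$.

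The crucial dyadic input consists of the seminorm estimates
\begin{equation*}
\M{m,r}{}{\psi_j a} \lesssim 2^{jk}\,\M{m,r}{k,\delta}{a},\qquad \M{m,0}{}{\nabla^r(\psi_j a)} \lesssim 2^{j(k-\delta r)}\,\M{m,r}{k,\delta}{a},
\end{equation*}
which follow from Leibniz's rule, the dyadic support of $\psi_j$, and $\delta\le 1$, with analogous bounds for $b$. Feeding these into Proposition~\ref{prop::composition-T} yields, uniformly over $|j-l|\le M_0$, a block bound of size $2^{j(k+k'-\delta r)}$ for the $\OP{m+m'-r}{0}$-part of $R_{jl}$ and $2^{j(k+k')}$ for the smoothing part; Lemma~\ref{lem::key-estimate-para-diff-dyadic} then assembles these into $\OP{m+m'-r}{k+k'-\delta r}$ and $\OP{-\infty}{k+k'}$ respectively. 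For the principal symbol, Leibniz's rule in $D_x^\alpha(\psi_l b)$ gives
\begin{equation*}
(\psi_j a)\sharp(\psi_l b) = \psi_j\psi_l\,(a\sharp b) + \sum_{|\alpha|<r,\,0<\beta\le\alpha}\tfrac{1}{\alpha!}\tbinom{\alpha}{\beta}\,\psi_j(\pxi^\alpha a)(D_x^\beta\psi_l)(D_x^{\alpha-\beta}b),
\end{equation*}
where each factor $D_x^\beta\psi_l$ with $|\beta|\ge 1$ supplies spatial gain $2^{-l|\beta|}$ that absorbs the $\delta$-loss from missing derivatives of $b$ and sends those terms into the target remainder class. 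Summation of the main piece $\psi_j\psi_l(a\sharp b)$ in $l$ using $\sum_{|l-j|\le M_0}\psi_l\equiv 1$ on $\supp\psi_j$ then reproduces $\P{a\sharp b}$.

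The main obstacle is the simultaneous bookkeeping of several independent error terms -- from the commutator with $\upsi_j\upsi_l$, from the remainder in Proposition~\ref{prop::composition-T}, from the Leibniz expansion of $\sharp$, and from the partition-of-unity reconstruction -- each of which must be shown to grow dyadically no faster than $2^{j(k+k'-\delta r)}$ (for the finite-order piece) or $2^{j(k+k')}$ (for the smoothing piece). The borderline case $\delta=1$ is the most delicate: there the spatial gain from differentiating a symbol only exactly compensates the loss from the weight, leaving no slack, so each estimate must be verified to hold with an equality of exponents rather than a strict gain.
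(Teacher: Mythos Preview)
Your strategy is the paper's strategy: localize, apply Proposition~\ref{prop::composition-T} blockwise, and reassemble via Lemma~\ref{lem::key-estimate-para-diff-dyadic}. The seminorm estimates $\M{m,r}{}{\psi_j a}\lesssim 2^{jk}\M{m,r}{k,\delta}{a}$ and $\M{m,0}{}{\nabla^r(\psi_j a)}\lesssim 2^{j(k-\delta r)}\M{m,r}{k,\delta}{a}$ are exactly the input the paper uses, and your diagnosis of the $\delta=1$ borderline is correct.

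Two points need repair. First, the claim that the commutator $[\upsi_j\upsi_l,\T{\psi_l b}]$ is ``of order $m'-1$'' would be fatal if taken literally: composed with $\T{\psi_j a}$ it would give order $m+m'-1$, not $m+m'-r$. What actually happens is that $\upsi_j\upsi_l-\T{\upsi_j\upsi_l}\in\OP{-\infty}{0}$ by Corollary~\ref{cor::paralinearization-smooth}, and $\T{\upsi_j\upsi_l}\T{\psi_l b}-\T{\psi_l b}$ is of order $m'-r$ by Proposition~\ref{prop::composition-T} (note that $(\upsi_j\upsi_l)\sharp(\psi_l b)=\psi_l b$ exactly, since $\partial_\xi^\alpha(\upsi_j\upsi_l)=0$ and $\upsi_j\upsi_l=1$ on $\supp\psi_l$). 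The paper makes this step cleaner by introducing a wider cutoff $\tilde\psi_j=\sum_{|j-j'|\le 50}\psi_{j'}$ that equals $1$ on all relevant $\upsi_{j'}$, so both outer cutoffs become the same $\tilde\psi_j$ and no commutator is needed.

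Second, and more seriously, your Leibniz correction terms $\psi_j(\partial_\xi^\alpha a)(D_x^\beta\psi_l)(D_x^{\alpha-\beta}b)$ with $1\le|\beta|\le|\alpha|<r$ have $\xi$-order $m+m'-|\alpha|>m+m'-r$; the spatial gain $2^{-l|\beta|}$ controls the weight exponent but does nothing for the $\xi$-order, so these terms do \emph{not} individually lie in $\OP{m+m'-r}{k+k'-\delta r}+\OP{-\infty}{k+k'}$. The rescue is cancellation: summing over $l$ with $|j-l|\le M_0$, one has $\psi_j\cdot\sum_l D_x^\beta\psi_l=\psi_j\cdot D_x^\beta(\sum_l\psi_l)=0$ on $\supp\psi_j$, so the correction terms vanish identically. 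Equivalently---and this is how the paper phrases it---$\sum_{j'}(\psi_j a)\sharp(\psi_{j'}b)=(\psi_j a)\sharp b=\psi_j(a\sharp b)$ exactly, with no Leibniz residue. You should sum in $l$ \emph{before} estimating, not after.
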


\begin{proof}
Let $ \tilde{\psi}_j : \N \to \Ccinf $, $ \tilde{\psi}_j = \sum_{|j-j'|\le 50} \psi_{j'} $, so $ \upsi_{j'}\tilde{\psi}_j = \upsi_{j'} $ if $ |j-j'| \le 20 $. Then write,
\begin{equation*}
\P{a} \P{b} 
= \sum_{(j,j')\in\N^2}^{|j-j'|\le 20} \upsi_{j} \T{\psi_j a} \upsi_{j} \upsi_{j'} \T{\psi_{j'}b} \upsi_{j'}
= \sum_{(j,j')\in\N^2}^{|j-j'|\le 20} \tilde{\psi}_j \T{\psi_j a}  \T{\psi_{j'}b} \tilde{\psi}_{j} + \tilde{\psi}_j R_{j,j'} \tilde{\psi}_j,
\end{equation*}
the remainder being
\begin{align*}
R_{j,j'} & = \upsi_{j} \T{\psi_j a} \upsi_{j} \upsi_{j'} \T{\psi_{j'}b} \upsi_{j'} - \T{\psi_j a}  \T{\psi_{j'}b}  \\
& = \O(2^{j(k+k'-\delta r)} \M{m,r}{k,\delta}{a} \M{m',r}{k',\delta}{b})_{\OP{m+m'-r}{0}}
+ \O(2^{j(k+k')}\M{m,r}{k,\delta}{a}\M{m',r}{k',\delta}{b})_{\OP{-\infty}{0}}
\end{align*}
by Proposition~\ref{prop:paradiff-operator-norm}, Proposition~\ref{prop::composition-T} and Corollary~\ref{cor::paralinearization-smooth}. More precisely, when composing $ \T{\psi_j a} $ and $ \T{\upsi_j} $, we use $ \psi_j \upsi_j = \psi_j $ and have
\begin{align*}
\T{\psi_j a} \T{\upsi_j} 
= \T{\psi_j a} & + \O\big(\M{m,r}{}{\psi_j a} \M{0,0}{}{\nabla^r \upsi_j})_{\OP{m-r}{0}} \\
& + \O(\M{m,0}{}{\nabla^r (\psi_j a)} \M{0,r}{}{\upsi_j}\big)_{\OP{m-r}{0}} \\
& + \O\big(\M{m,r}{}{\psi_j a} \M{0,r}{}{\upsi_j}\big)_{\OP{-\infty}{0}},
\end{align*}
where $ \M{0,0}{}{\nabla^r \upsi_j} = \O(2^{-jr}) $, $ \M{m,r}{}{\psi_j a} = \O(2^{jk}) $, and we use $ 0 \le \delta \le 1 $ to induce that
\begin{equation*}
\M{m,0}{}{\nabla^r (\psi_j a)} 
= \O\big(\max_{0\le n \le r}\{2^{-j(r-n)+j(k-\delta n)}\}\big) 
= \O(2^{j(k-\delta r)}).
\end{equation*}
Similar arguments work for the composition $ \T{\upsi_j} \T{\psi_j a} $.

Observe that
$ \sum_{j':|j-j'|\le 20} (\psi_j a) \sharp \psi_{j'}b = (\psi_j a) \sharp b, $ $ \forall j\in\N $. Hence
\begin{equation*}
\sum_{j':|j-j'|\le 20}  \T{\psi_j a}  \T{\psi_{j'}b} 
= \upsi_{j} \T{\psi_j (a\sharp b)} \upsi_{j} +  R_j,
\end{equation*}
where the remainder can be estimated similarly as above,
\begin{equation*}
R_{j} 
= \O(2^{j(k+k'-\delta r)} \M{m,r}{k,\delta}{a} \M{m',r}{k',\delta}{b})_{\OP{m+m'-\delta r}{0}} 
+ \O(2^{j(k+k')} \M{m,r}{k,\delta}{a} \M{m',r}{k',\delta}{b})_{\OP{-\infty}{0}}.
\end{equation*}
We conclude by Lemma~\ref{lem::key-estimate-para-diff-dyadic}.
\end{proof}

\begin{proposition}
\label{prop::adjoint-P}
Let $ a \in \Gamma^{m,r}_{k,\delta} $ with $ (m,k) \in \R^2 $, and $ r \in \N $, $ 0 \le \delta \le 1 $, then
\begin{equation}
\P{a}^* - \P{a^*} = \O(\M{m,r}{k,\delta}{a})_{\OP{m-r}{k-\delta r}+\OP{-\infty}{k}},
\end{equation}
where $ a^* = \sum_{\alpha\in\N^d}^{|\alpha|<r} \frac{1}{\alpha!} \pxi^\alpha D_x^\alpha \bar{a} \in \Sigma^{m,r}_{k,\delta}. $
\end{proposition}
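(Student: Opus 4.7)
The plan is to follow closely the blueprint of the composition proposition (Proposition~\ref{prop::composition-P}): use the scalar adjoint identity (Proposition~\ref{prop::adjoint-T}) on each dyadic block, track how well $(\psi_j a)^\ast$ and $\psi_j a^\ast$ agree via a Leibniz expansion, and then reassemble using Lemma~\ref{lem::key-estimate-para-diff-dyadic}. Since the $\upsi_j$ are real, we have
\begin{equation*}
\P{a}^\ast = \sum_{j\in\N} \upsi_j \,\T{\psi_j a}^\ast\, \upsi_j, \qquad \P{a^\ast} = \sum_{j\in\N} \upsi_j \,\T{\psi_j a^\ast}\, \upsi_j,
\end{equation*}
so it suffices to estimate the block-wise difference $\upsi_j \bigl(\T{\psi_j a}^\ast - \T{\psi_j a^\ast}\bigr)\upsi_j$ with a bound of order $2^{j(k-\delta r)}$ into $\OP{m-r}{0}$ and of order $2^{jk}$ into $\OP{-\infty}{0}$.

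First I would split this difference as
\begin{equation*}
\T{\psi_j a}^\ast - \T{\psi_j a^\ast} = \bigl(\T{\psi_j a}^\ast - \T{(\psi_j a)^\ast}\bigr) + \bigl(\T{(\psi_j a)^\ast} - \T{\psi_j a^\ast}\bigr).
\end{equation*}
The first piece is handled directly by Proposition~\ref{prop::adjoint-T} applied to $\psi_j a \in \Gamma^{m,r}$:
\begin{equation*}
\T{\psi_j a}^\ast - \T{(\psi_j a)^\ast} = \O\bigl(\M{m,0}{}{\nabla^r(\psi_j a)}\bigr)_{\OP{m-r}{0}}.
\end{equation*}
A Leibniz computation together with the bounds $\|\nabla^n \psi_j\|_{\Linf} \lesssim 2^{-jn}$ and $\|\nabla^{r-n} a(\cdot,\xi)\|_{\Linf(\supp \psi_j)} \lesssim 2^{j(k-\delta(r-n))}\M{m,r}{k,\delta}{a}\jp{\xi}^m$ gives, because $0\le\delta\le 1$,
\begin{equation*}
\M{m,0}{}{\nabla^r(\psi_j a)} = \O\bigl(2^{j(k-\delta r)} \M{m,r}{k,\delta}{a}\bigr),
\end{equation*}
the maximum being realized at $n=0$. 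This is the right size.

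For the second piece, I would expand $(\psi_j a)^\ast = \sum_{|\alpha|<r}\frac{1}{\alpha!}\pxi^\alpha D_x^\alpha \overline{\psi_j a}$ via Leibniz on $D_x^\alpha(\psi_j \bar a)$; the $\beta=0$ term reproduces exactly $\psi_j a^\ast$, so the difference is
\begin{equation*}
(\psi_j a)^\ast - \psi_j a^\ast = \sum_{|\alpha|<r}\,\sum_{0<\beta\le \alpha} \tfrac{1}{\alpha!}\tbinom{\alpha}{\beta}\, D_x^\beta \psi_j\cdot \pxi^\alpha D_x^{\alpha-\beta}\bar a,
\end{equation*}
which lies in $\Gamma^{m-|\alpha|,0}$ with seminorm $\O\bigl(2^{-j|\beta|}\cdot 2^{j(k-\delta|\alpha-\beta|)}\M{m,r}{k,\delta}{a}\bigr) = \O(2^{j(k-\delta r)+j(r-\delta r -|\beta|(1-\delta))-j(r-|\alpha|)\delta}\cdots)$; book-keeping as in Proposition~\ref{prop::composition-P} shows every such term is $\O(2^{j(k-\delta r)}\M{m,r}{k,\delta}{a})_{\Gamma^{m-r,0}}$, hence Proposition~\ref{prop:paradiff-operator-norm} yields an $\OP{m-r}{0}$ bound of the same size for $\T{(\psi_j a)^\ast} - \T{\psi_j a^\ast}$.

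It remains to reinsert the $\upsi_j$ cutoffs. Using $\psi_j \upsi_j = \psi_j$ and composing $\T{\upsi_j}$ with the other paradifferential factors as in the proof of Proposition~\ref{prop::composition-P} (via Proposition~\ref{prop::composition-T} and Corollary~\ref{cor::paralinearization-smooth}), all commutator remainders split into an $\OP{m-r}{0}$ piece of size $2^{j(k-\delta r)}$ and an $\OP{-\infty}{0}$ piece of size $2^{jk}$, the latter arising precisely from the Corollary~\ref{cor::paralinearization-smooth} contribution $\T{\upsi_j} - \upsi_j$. Lemma~\ref{lem::key-estimate-para-diff-dyadic} then converts the two scales of dyadic bounds into the $\OP{m-r}{k-\delta r}+\OP{-\infty}{k}$ statement claimed.

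The main obstacle I anticipate is the second step: the Leibniz bookkeeping for $(\psi_j a)^\ast - \psi_j a^\ast$ involves many mixed terms, and one must verify that the worst case in the $\delta$-weighted Hölder scale is still no worse than $2^{j(k-\delta r)}$ — this is exactly the same combinatorial point that arose in controlling $\M{m,0}{}{\nabla^r(\psi_j a)}$, and uses crucially that $0\le\delta\le 1$ so that every derivative falling on $a$ is at least as cheap (in the spatial weight) as one falling on $\psi_j$ would be.
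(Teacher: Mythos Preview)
Your decomposition into $R^1_j = \T{\psi_j a}^\ast - \T{(\psi_j a)^\ast}$ and $R^2_j = \T{(\psi_j a)^\ast} - \T{\psi_j a^\ast}$ matches the paper, and your treatment of $R^1_j$ is correct. The gap is in the second piece. Your claim that each Leibniz term $D_x^\beta \psi_j \cdot \pxi^\alpha D_x^{\alpha-\beta}\bar a$ is $\O(2^{j(k-\delta r)})_{\Gamma^{m-r,0}}$ is false: such a term is a symbol of $\xi$-order $m-|\alpha|$, not $m-r$, and for $|\alpha|<r$ there is no way to place it in $\Gamma^{m-r,0}$. Concretely, take $r\ge 2$, $|\alpha|=1$, $\beta=\alpha$: the term $D_x^\alpha\psi_j\cdot\pxi^\alpha\bar a$ lies only in $\Gamma^{m-1,0}$ with seminorm $\O(2^{j(k-1)})$, hence $\T{}$ of it is merely $\OP{m-1}{0}$; after dyadic reassembly you get $\OP{m-1}{k-1}$, which is not contained in $\OP{m-r}{k-\delta r}+\OP{-\infty}{k}$.

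The missing idea is a cancellation the paper exploits explicitly. First one observes the algebraic identity $(\psi_j a)^\ast = a^\ast \sharp \psi_j$, so that
\[
(\psi_j a)^\ast - \psi_j a^\ast \;=\; a^\ast\sharp\psi_j - \psi_j a^\ast \;=\; \sum_{\substack{\alpha\ne 0\\ |\alpha|+|\beta|<r}} D_x^\alpha\psi_j \cdot w_{\alpha\beta},
\]
with $w_{\alpha\beta}\in\Gamma^{m-|\alpha|-|\beta|,\,r-|\beta|}_{k-|\beta|\delta,\delta}$ \emph{independent of $j$}. The crucial point is then that $\sum_{j}\partial_x^\alpha\psi_j\equiv 0$ for $\alpha\ne 0$, since $\sum_j\psi_j\equiv 1$. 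When one localises $\psi_j R_{\alpha\beta}$ (with $R_{\alpha\beta}=\sum_{j'}\upsi_{j'}\T{D_x^\alpha\psi_{j'}w_{\alpha\beta}}\upsi_{j'}$), commutes $\psi_j$ and the $\upsi_{j'}$ past $\T{}$ via Proposition~\ref{prop::composition-T} and Corollary~\ref{cor::paralinearization-smooth}, the principal symbol becomes $\psi_j\cdot\sum_{|j'-j|\le 20}D_x^\alpha\psi_{j'}\cdot w_{\alpha\beta}$, which vanishes identically on $\supp\psi_j$ by the cancellation. Only the composition remainders survive, and those \emph{are} of order $m-|\alpha|-|\beta|-(r-|\beta|)=m-r-|\alpha|\le m-r$, with the correct dyadic weight $2^{j(k-\delta r-|\alpha|)}$. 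Without this telescoping, the block-by-block estimate you propose cannot reach order $m-r$.
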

\begin{proof}
Observe that for any real valued $ \psi \in \Ccinf(\Rd) $, 
\begin{equation}
\label{eq::identity-proof-adjoint}
(\psi a)^* = a^* \sharp \psi.
\end{equation}
More precisely, this means that
\begin{align*}
(\psi a)^* 
& = \sum_{|\gamma|<r} \frac{1}{\gamma !} \pxi^\gamma D_x^\gamma (\psi\bar{a}) 
= \sum_{|\gamma|<r} \frac{1}{\gamma!} \sum_{\alpha+\beta=\gamma} \frac{\gamma!}{\alpha! \beta!} D_x^{\alpha}\psi \pxi^\gamma D_x^{\beta} \bar{a}  \\
& = \sum_{|\alpha|+|\beta|<r} \frac{1}{\alpha!} \pxi^\alpha \big(\frac{1}{\beta!} \pxi^\beta D_x^\beta \bar{a}\big) D_x^\alpha \psi
= \sum_{|\beta| < r} \sum_{|\alpha| < r - |\alpha|} \frac{1}{\alpha !} \pxi^\alpha \Big(  \frac{1}{\beta!} \pxi^\beta D_x^\beta \bar{a} \Big) D_x^\alpha \psi  \\
& = \sum_{|\beta| < r} \Big( \frac{1}{\beta!} \pxi^\beta D_x^\beta \bar{a}\Big) \sharp \psi = a^* \sharp \psi.
\end{align*}
Then write 
$ \P{a}^* - \P{a^*} = \sum_{j\in\N} \upsi_{j} (R^1_j + R^2_j) \upsi_{j}, $
where by~\eqref{eq::identity-proof-adjoint},
\begin{equation*}
R^1_j = \T{\psi_j a}^* - \T{(\psi_j a)^*}, \quad
R^2_j = \T{(\psi_j a)^*} - \T{\psi_j a^*} = \T{a^*\sharp\psi_j-\psi_j a^*}.
\end{equation*}
For $ R^1_j $ we use Proposition~\ref{prop::adjoint-T},
\begin{equation*}
R^1_j = \O(\M{m,0}{}{\nabla_{x}^{r}(\psi_j a)})_{\OP{m-r}{0}} 
 = \O(2^{j(k-\delta r)}\M{m,r}{k,\delta}{a})_{\OP{m-r}{0}}.
\end{equation*}
By Lemma~\ref{lem::key-estimate-para-diff-dyadic},
\begin{equation*}
\sum_{j\in\N} \upsi_{j} R^1_j \upsi_{j} = \O(\M{m,r}{k,\delta}{a})_{\OP{m-r}{k-\delta r}+\OP{-\infty}{k}}.
\end{equation*}
Using $ \sum_{j \in \N} \psi_j \equiv 1 $, we induce that
\begin{equation}
\label{eq::vanishing-adjoint-remainder}
\sum_{j \in \N} \px^\alpha \psi_j \equiv 0, \forall \alpha \in \N^d \backslash 0; \quad \sum_{j\in\N} a^*\sharp\psi_{j}-\psi_{j} a^*= 0.
\end{equation}
Then we write
\begin{equation*}
a^*\sharp\psi_{j}-\psi_{j} a^* = \sum_{\alpha \ne 0 \atop |\alpha|+|\beta| < r} D_x^\alpha \psi_j \cdot w_{\alpha\beta}, \quad w_{\alpha\beta} \in \Gamma^{m-|\alpha|-|\beta|,r-|\beta|}_{k-|\beta|\delta,\delta},
\end{equation*}
where the symbols $ w_{\alpha\beta} $ are independent of~$ j $. Write
\begin{equation*}
\sum_{j\in\N} \upsi_{j} R^2_j \upsi_{j} = \sum_{\alpha,\beta} R_{\alpha\beta},\quad
R_{\alpha\beta} = \sum_{j \in \N} \upsi_j \T{D_x^\alpha \psi_j \cdot w_{\alpha\beta}} \upsi_j.
\end{equation*}
By~\eqref{eq::vanishing-adjoint-remainder}, we prove similarly as in Proposition~\ref{prop::composition-P} that
\begin{align*}
\psi_j R_{\alpha\beta}
& = \psi_j \sum_{|j-j'|\le 20} \upsi_{j'} \T{D_x^\alpha \psi_{j'} \cdot w_{\alpha\beta}} \upsi_{j'} \\
& = \O(2^{j(-|\alpha|+k-|\beta|\delta - (r-|\beta|)\delta)}\M{m-|\alpha|-|\beta|,r-|\beta|}{k-|\beta|\delta,\delta}{w_{\alpha\beta}})_{\OP{m-r-|\alpha|}{0}} \\
& \quad \quad + \O(2^{j(-|\alpha|+k-|\beta|\delta)}\M{m-|\alpha|-|\beta|,r-|\beta|}{k-|\beta| \delta,\delta}{w_{\alpha\beta}})_{\OP{-\infty}{0}} \\
&  = \O(2^{j(k-\delta r)}\M{m,r}{k,\delta}{a})_{\OP{m-r}{0}} + \O(2^{jk}\M{m,r}{k,\delta}{a})_{\OP{-\infty}{0}}.
\end{align*}
Setting $ \psi'_j = \sum_{|j'-j|\le 100} \psi_{j'} $. We again conclude by Lemma~\ref{lem::key-estimate-para-diff-dyadic}, and the identity
\begin{equation*}
R_{\alpha\beta} = \sum_{j\in\N} \psi_j R_{\alpha\beta} \psi'_j,
\end{equation*}
that $ R_{\alpha\beta} = \O(\M{m,r}{k,\delta}{a})_{\OP{m-r}{k-\delta r}+\OP{-\infty}{k}}. $
\end{proof}

\subsubsection{Paralinearization}

\begin{proposition}
\label{prop::paralinearization-ab-weighted}
If $ a \in \H{\alpha}_k $, $ b \in \H{\beta}_\ell $ with $ \alpha > d/2 $, $ \beta > d/2 $, $ k \in \R $, $ \ell \in \R $, then for all $ \epsilon > 0 $,
\begin{equation*}
\|ab - \P{a} b - \P{b} a\|_{\H{\alpha+\beta-d/2-\epsilon}_{k+\ell}} \lesssim \|a\|_{\H{\alpha}_k} \|b\|_{\H{\beta}_\ell}.
\end{equation*}
Consequently if $ a \in \HC{\alpha,\delta}_m $ and $ b \in \HC{\beta,\delta}_n $ with $ \delta \ge 0 $, $ \alpha - \delta m > d/2 $, $ \beta - \delta n > d/2 $, then for all $ \epsilon > 0 $,
\begin{equation*}
\|ab - \P{a} b - \P{b} a\|_{\HC{\alpha+\beta-d/2-\epsilon,\delta}_{m+n}} \lesssim \|a\|_{\HC{\alpha,\delta}_m} \|b\|_{\HC{\beta,\delta}_n}.
\end{equation*}
\end{proposition}
\begin{proof}
Decompose the product $ ab $ as follows,
\begin{align*}
ab 
= \sum_{j \in \N}  \psi_j (\upsi_j a) (\upsi_j b)
& = \P{a}b + \P{b}a + R_j^1 + R^2_j,
\end{align*}
where the remainders $ R_j^1 $ and $ R_j^2 $ are defined by
\begin{align*}
R_j^1 & = \psi_j \big( \upsi_j a \cdot \upsi_j b - \T{\upsi_j a} (\upsi_j b) - \T{\upsi_j b} (\upsi_j a) \big), \\
R_j^2 & = \upsi_{j} (\psi_j \T{\upsi_j a} - \T{\psi_j a}) \upsi_{j} b + \upsi_{j} (\psi_j \T{\upsi_j b}  - \T{\psi_j a}) \upsi_{j} a.
\end{align*}
By Proposition~\ref{prop::paraproduct-T},
\begin{equation*}
\|R^1_j\|_{\H{\alpha+\beta-d/2}}
\lesssim \|\upsi_j a\|_{\H{\alpha}} \|\upsi_j b\|_{\H{\beta}}
\lesssim 2^{-j(k+\ell)} \|a\|_{\H{\alpha}_k} \|b\|_{\H{\beta}_\ell}.
\end{equation*}
By Proposition~\ref{prop::composition-T} and Corollary~\ref{cor::paralinearization-smooth} and the Sobolev embedding theorem, for all $ \epsilon > 0 $ we have
\begin{align*}
\psi_j \T{\upsi_j a} \upsi_j - \T{\psi_j a} 
& = 2^{-jk} \O(\|a\|_{\H{\alpha}_k})_{\OP{\alpha-d/2-\epsilon}{0}},\\
\psi_j \T{\upsi_j b} \upsi_j - \T{\psi_j b} 
& = 2^{-j\ell} \O(\|b\|_{\H{\beta}_\ell})_{\OP{\beta-d/2-\epsilon}{0}}
\end{align*} 
We conclude the first statement by Proposition~\ref{prop::characterization-weighted-Sobolev}.

As for the second statement, observe that if $ 0 \le k \le m $ and $ 0 \le \ell \le n $, then
\begin{equation*}
\|ab-\P{a}b - \P{b}a\|_{H^{(\alpha-\delta k)+(\beta-\delta \ell)-d/2-\epsilon,\delta}_{k+\ell}}
\lesssim \|a\|_{H^{\alpha-\delta k}_k} \|b\|_{H^{\beta- \delta \ell}_\ell}
\lesssim \|a\|_{\HC{\alpha,\delta}_m} \|b\|_{\HC{\beta,\delta}_n}.
\end{equation*}
We conclude by noting that for all $ p \in \N \cap [0,m+n]$, there exists $ k \in \N \cap [0,m] $ and $ \ell \in \N \cap [0,n] $ such that $ p = k+\ell $.
\end{proof}

\begin{proposition}
\label{prop::paralinearization-weighted}
Let $ F \in \Cinf(\R) $ with $ F(0) = 0 $.
For all $ \mu > d/2 $, there exists some monotonically increasing function $ C : \R_+ \to \R_+ $, such that for all $ k \ge 0 $ and all $ u \in \H{\mu}_k $, we have
\begin{equation*}
\|F(u)\|_{\H{\mu}_k} + \|F(u) - \P{F'(u)} u\|_{\H{2\mu-d/2}_k} \le C(\|u\|_{\H{\mu}}) \|u\|_{\H{\mu}_k}.
\end{equation*}
Consequently, if $ u \in \HC{\mu,\delta}_k $ with $ \delta \ge 0 $ and $ \mu - \delta k > d/2 $, then
\begin{equation*}
\|F(u)\|_{\HC{\mu,\delta}_k} + \|F(u) - \P{F'(u)} u\|_{\HC{2\mu-d/2,2\delta}_k} \le C(\|u\|_{\H{\mu}}) \|u\|_{\HC{\mu,\delta}_k}.
\end{equation*}
\end{proposition}
\begin{proof}
Decompose 
$ F(u) = \sum_{j \ge 0} \psi_j F(\upsi_{j} u). $
By Proposition~\ref{prop::paralinearization-T},
\begin{equation*}
\|F(\upsi_{j} u)\|_{\H{\mu}} \le C(\|\upsi_{j} u\|_{\H{\mu}}) \|\upsi_{j} u\|_{\H{\mu}}
\le C(\|u\|_{\H{\mu}}) \|\upsi_{j} u\|_{\H{\mu}}.
\end{equation*}
Then write 
$ \psi_j F(\upsi_{j} u) = \upsi_{j} \T{\psi_j F'(u)} \upsi_{j} u + \upsi_j R_j,  $
where
\begin{equation*}
R_j  = \psi_j ( F(\upsi_{j} u) - \T{F'(\upsi_{j} u)} \upsi_{j} u) + \upsi_{j} (\psi_j \T{F'(\upsi_{j} u)} - \T{\psi_j F'(\upsi_{j} u)} )\upsi_{j} u.
\end{equation*}
By Proposition~\ref{prop::paralinearization-T}, Proposition~\ref{prop::paraproduct-T} and Corollary~\ref{cor::paralinearization-smooth}, 
\begin{equation*}
\|R_j\|_{\H{2\mu-d/2}} \le C(\|u\|_{\H{\mu}}) \|\upsi_{j} u\|_{\H{\mu}}.
\end{equation*}
We conclude the first statement with Proposition~\ref{prop::characterization-weighted-Sobolev}.
To prove the second statement, note that for all $ j \in \N \cap [0,k] $, we have
\begin{equation*}
\|F(u)\|_{\H{\mu-\delta j}_j} + \|F(u) - \P{F'(u)} u\|_{\H{2(\mu-\delta j)-d/2}_j} \le C(\|u\|_{\H{\mu-\delta j}}) \|u\|_{\H{\mu-\delta j}_j}.\qedhere
\end{equation*}
\end{proof}

\subsection{Semiclassical paradifferential calculus}

We develop a semiclassical dyadic paradifferential calculus and a quasi-homogeneous semiclassical paradifferential calculus, using scaling arguments inspired by Métivier--Zumbrun~\cite{MZ05viscous}.

\subsubsection{Semiclassical paradifferential operators}

\begin{definition}
For all $ h \in (0,1] $, define the scaling operator 
\begin{equation}
\label{eq:def-semiclassical-isometry}
\tau_h : u(\cdot) \mapsto h^{d/2} u(h\cdot).
\end{equation}
\begin{enumerate}
\item If $ b \in \Gamma^{m,r} $, then define $ \T{b}^{h} = \tau_h^{-1} \T{\theta_{h,*}^{1,0} b} \tau_h. $
\item If $ a \in \Gamma^{m,r}_{k,\delta} $, then define $ \P{a}^{h} =  \sum_{j\in\N} \upsi_{j} \T{\psi_j  a}^{h}  \upsi_{j}. $
\item If $ \epsilon \ge 0 $, then define $ \P{a}^{h,\epsilon} = \P{\theta_{h,*}^{\epsilon,0} a}^h. $
\end{enumerate}

\end{definition}

\begin{proposition}
If $ \epsilon \ge 0 $ and $ a \in \Gamma^{m,0}_{k,0} $ where $ m \le 0 $, $ k \le 0 $ then 
$ \sup_{h \in (0,1]} \|\P{a}^{h,\epsilon}\|_{\Ltwo \to \Ltwo} < \infty. $
\end{proposition}
\begin{proof}
Observe that $ \theta_{h,*}^{1+\epsilon,0} a = \O(1)_{\Gamma^{0,0}} $. We conclude with Lemma~\ref{lem::key-estimate-para-diff-dyadic}.
\end{proof}

\subsubsection{Semiclassical symbolic calculus}

\begin{definition}
If $ a_h \in \Dscr'(\R^{2d}) $ and $ \epsilon \ge 0 $, we say that $ a_h \in \sigma_\epsilon $ if 
\begin{equation*}
\overline{\cup_{0<h<1} \supp a_h} \cap \mathscr{N}_{\epsilon,1} = \emptyset.
\end{equation*}
\end{definition}

\begin{proposition}
\label{prop::composition-P-h}
Let $ (m,k), (m',k') \in (\R \cup \{-\infty\}) ^2  $, $ r \in \N $, with $ r \ge m+m' $, $ \delta r \ge k + k' $.
Let $ a_h \in \Gamma^{m,r}_{k,\delta} \cap \sigma_0 $ and $ b_h \in \Gamma^{m',r}_{k',\delta} \cap \sigma_0 $ such that for some $ R_h \ge 0 $ depending on~$ h $,
\begin{equation}
\label{eq::position-condition-semiclassical-composition}
\supp a_h \cap \supp b_h \subset \{|x| \ge R_h\} \times \Rd. 
\end{equation}
Then for $ h > 0 $ sufficiently small,
\begin{equation*}
\P{a_h}^{h} \P{b_h}^{h} - \P{a_h \sharp_h b_h}^{h} 
= \O(h^{r}(1+R_h)^{k+k'-\delta r})_{\Ltwo\to\Ltwo},
\end{equation*}
where the symbol $ a_h \sharp_h b_h = a_h \sharp_{h,r}^{0,1} b_h \in {}_h\Sigma^{m+m',r}_{k+k',\delta} $ is defined by~\eqref{eq:def-symbolic-composition-finite}.
\end{proposition}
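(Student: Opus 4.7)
Plan for the proof: the argument is modeled on the non-semiclassical composition Proposition~\ref{prop::composition-P}. The central tool is the conjugation identity $\T{c}^{h} = \tau_h^{-1}\T{\theta_{h,*}^{1,0}c}\tau_h$, which (since $\tau_h$ is a similarity preserving $\Ltwo\to\Ltwo$ operator norms) reduces each local composition to the classical Proposition~\ref{prop::composition-T}. Since $\upsi_j\upsi_{j'}=0$ unless $|j-j'|\le N_0$ for some fixed $N_0$, I decompose
\[
\P{a_h}^{h}\P{b_h}^{h}=\sum_{|j-j'|\le N_0}\upsi_j\T{\psi_j a_h}^{h}\upsi_j\upsi_{j'}\T{\psi_{j'}b_h}^{h}\upsi_{j'},\qquad \P{a_h\sharp_h b_h}^{h}=\sum_{j}\upsi_j\T{\psi_j(a_h\sharp_h b_h)}^{h}\upsi_j,
\]
and rewrite $\T{\psi_j a_h}^{h}\T{\psi_{j'}b_h}^{h}=\tau_h^{-1}[\T{A_j}\T{B_{j'}}]\tau_h$ with $A_j=\theta_{h,*}^{1,0}(\psi_j a_h)$, $B_{j'}=\theta_{h,*}^{1,0}(\psi_{j'}b_h)$. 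The identity $A_j\sharp B_{j'}=\theta_{h,*}^{1,0}((\psi_j a_h)\sharp_h(\psi_{j'}b_h))$ links the two sides.

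The crucial seminorm estimates, derived from $\partial_x^\beta A_j(x,\xi)=h^{|\beta|}(\partial_x^\beta(\psi_j a_h))(hx,\xi)$ together with the Leibniz bound $|\partial_x^\beta(\psi_j a_h)|\lesssim 2^{j(k-\delta|\beta|)}\jp{\xi}^m$ on $\supp\psi_j$ (valid for $0\le\delta\le 1$, $|\beta|\le r$), are
\[
\M{m,r}{}{A_j}\lesssim 2^{jk},\qquad \M{m,0}{}{\nabla^r A_j}\lesssim h^r\,2^{j(k-\delta r)}.
\]
Applying Proposition~\ref{prop::composition-T} and handling the $\upsi$-commutation errors as in the proof of Proposition~\ref{prop::composition-P}, each local dyadic remainder satisfies
\[
\T{\psi_j a_h}^{h}\T{\psi_{j'}b_h}^{h}-\T{(\psi_j a_h)\sharp_h(\psi_{j'}b_h)}^{h}=\O\bigl(h^r\,2^{j(k+k'-\delta r)}\bigr)_{\OP{m+m'-r}{0}}+\O\bigl(h^\infty\,2^{j(k+k')}\bigr)_{\OP{-\infty}{0}},
\]
the $h^\infty$ improvement on the smoothing piece using $a_h,b_h\in\sigma_0$: the low-frequency cutoff $\pi(\xi)$ is transparent on the $\theta_{h,*}^{1,0}$-scaled symbols because they are supported uniformly in $\{|\xi|\ge c_0\}$.

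The support hypothesis restricts the effective $j$-range. Because each term $\partial_\xi^\alpha a_h\cdot D_x^\alpha b_h$ of $a_h\sharp_h b_h$ is supported in $\{|x|\ge R_h\}\times\Rd$, only $j$ with $2^j\gtrsim R_h$ contribute to $\P{a_h\sharp_h b_h}^{h}$. For $(j,j')$ with $2^{\max(j,j')}\ll R_h$, the $(x,\xi)$-supports of $\psi_j a_h$ and $\psi_{j'}b_h$ are disjoint (a common nonzero point would require both $\psi_j(x),\psi_{j'}(x)\ne 0$, so $|x|\lesssim 2^{\max(j,j')}\ll R_h$, together with $(x,\xi)\in\supp a_h\cap\supp b_h\subset\{|x|\ge R_h\}\times\Rd$). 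Hence $(\psi_j a_h)\sharp_h(\psi_{j'}b_h)\equiv 0$, and a separate kernel argument shows that $\T{\psi_j a_h}^{h}\T{\psi_{j'}b_h}^{h}=\O(h^\infty)_{\Ltwo\to\Ltwo}$ in this regime. Embedding $\OP{m+m'-r}{0}\hookrightarrow\OP{0}{0}$ via $r\ge m+m'$ and assembling the difference as $\sum_j\upsi_j\mathcal{R}_j\upsi_j$, almost-orthogonality of $\{\upsi_j\}$ yields
\[
\Bigl\|\sum_j\upsi_j\mathcal{R}_j\upsi_j\Bigr\|_{\Ltwo\to\Ltwo}\lesssim\sup_j\|\mathcal{R}_j\|_{\Ltwo\to\Ltwo}\lesssim h^r(1+R_h)^{k+k'-\delta r},
\]
the supremum being attained at $j\sim\log_2(1+R_h)$ since $k+k'-\delta r\le 0$.

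The principal technical obstacle is the $\O(h^\infty)_{\Ltwo\to\Ltwo}$ bound on the raw composition $\T{\psi_j a_h}^{h}\T{\psi_{j'}b_h}^{h}$ in the disjoint-support regime $2^j\ll R_h$. A direct use of Proposition~\ref{prop::composition-T} after the $\tau_h$-scaling delivers only $\O(h^r\,2^{j(k+k'-\delta r)})$, which stays of size $h^r$ as $j\downarrow 0$ and is thus insufficient. The $h^\infty$ improvement requires a direct kernel analysis: after $\tau_h$-conjugation, the Schwartz kernel of $\T{c}^{h}$ takes the form $h^{-d}k(x/h,y/h)$ with $k(x',y')$ rapidly decaying in $|x'-y'|$ (by repeated integration by parts in $\xi$, using $c\in\sigma_0$ to avoid $\xi=0$); composing two such kernels whose symbols have disjoint $(x,\xi)$-supports and exploiting the oscillatory factor $e^{i(x-y)\cdot\xi/h}$ via non-stationary phase produces an operator of arbitrarily small order in $h$, as required.
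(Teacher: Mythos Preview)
Your overall strategy matches the paper's: dyadic decomposition, $\tau_h$-conjugation to reduce each block $\T{\psi_j a_h}^h\T{\psi_{j'}b_h}^h$ to the classical Proposition~\ref{prop::composition-T}, the seminorm bound $\M{m,0}{}{\nabla^r\theta_{h,*}^{1,0}(\psi_j a_h)}\lesssim h^r 2^{j(k-\delta r)}$, and almost-orthogonal reassembly via Lemma~\ref{lem::key-estimate-para-diff-dyadic}. Your treatment of the $\OP{-\infty}{0}$ piece through the $\sigma_0$ hypothesis corresponds to the paper's use of Remark~\ref{rmk::paradiff-composition-high-frequency}.

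Where you diverge is the small-$j$ regime, and you are right to flag it: the paper's assertion that $\psi_j a_h\ne0$ and $\psi_j b_h\ne0$ forces $j\gtrsim\log_2(1+R_h)$ does not follow from~\eqref{eq::position-condition-semiclassical-composition} as written --- it would follow if one of $a_h,b_h$ were itself supported in $\{|x|\ge R_h\}\times\Rd$, which is precisely what holds in every application (Proposition~\ref{prop::adjoint-P-h}, Corollary~\ref{cor::mixed-composition-epsilon-0}). Your proposed repair, however, does not work either. Symbols in $\Gamma^{m,r}_{k,\delta}$ have only $W^{r,\infty}$ regularity in $x$, so neither the composition expansion nor spatial integrations by parts in the kernel can be pushed beyond order $r$; disjoint $(x,\xi)$-supports alone do not upgrade the bound to $\O(h^\infty)$. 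Concretely, take $a_h=\varphi(x)\chi_1(\xi)$ and $b_h=\varphi(x)\chi_2(\xi)$ with $\varphi$ compactly supported near the origin having exactly $r$ bounded derivatives and $\chi_1,\chi_2\in\Ccinf(\Rd\setminus\{0\})$ with disjoint supports. Then $\supp a_h\cap\supp b_h=\emptyset$, so~\eqref{eq::position-condition-semiclassical-composition} holds for arbitrary $R_h$, yet the $j=0$ block is controlled only by $h^r$ (the remainder integrand in Proposition~\ref{prop::composition-T} evaluates $\sigma_A$ at the \emph{shifted} frequency $\xi+\eta$, so the $\xi$-disjointness of $\chi_1,\chi_2$ does not kill it), which is incompatible with $h^r(1+R_h)^{k+k'-\delta r}$ when $k+k'<\delta r$ and $R_h\to\infty$. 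The clean resolution is to strengthen the hypothesis so that one of the two symbols is supported in $\{|x|\ge R_h\}\times\Rd$; then the small-$j$ blocks vanish identically and both your argument and the paper's go through unchanged.
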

\begin{proof}
By~\eqref{eq::position-condition-semiclassical-composition}, if $ \psi_j a_h \ne 0 $ and $ \psi_j b_h \ne 0 $, then $ j \gtrsim \log_2(1+R_h). $ 
We claim that
\begin{equation}
\label{eq:semiclassical-para-composition-remainder}
\begin{split}
\P{a}^{h} \P{b}^{h}
& = \sum_{j \gtrsim \log_2 (1+R_h) \atop |j'-j| \le 20} \upsi_{j} \T{\psi_j a_h}^h \upsi_{j} \upsi_{j'} \T{\psi_{j'} b_h}^h \upsi_{j'} \\
& = \sum_{j \gtrsim \log_2 (1+R_h) \atop |j'-j| \le 20} \upsi_{j} \T{(\psi_j a_h) \sharp_h (\psi_{j'} b_h)}^h \upsi_{j'}
+ \O(h^r (1+R_h)^{k+k'-\delta r})_{\Ltwo\to\Ltwo}.
\end{split}
\end{equation}
Then we conclude by
\begin{equation*}
\sum_{j': |j'-j|\le 20} (\psi_j a_h) \sharp_h (\psi_{j'} b_h)
= \psi_j (a_h \sharp_h b_h).
\end{equation*}
It remains to prove~\eqref{eq:semiclassical-para-composition-remainder}.
We use~\eqref{eq::admissible-function-def} to deduce that $ \Fourier(\T{ \theta_{h,*}^{1,0} (\psi_{j'} b_h)}u) $ vanishes in a neighborhood of $ \xi = 0 $. By~\eqref{eq::estimate-un-paralinearization}, for some $ \pi' \in \Cinf(\Rd) $ which vanishes near $ \xi = 0 $ and equals to~$ 1 $ outside a neighborhood of $ \xi=0 $, and for all $ m + m' \le N \in \N $,
\begin{equation}
\label{eq:semiclassical-para-composition-remainder-2}
\begin{split}
\tau_h \T{\psi_j a_h}^h \upsi_{j} \upsi_{j'} \T{\psi_{j'} b_h}^h \tau_h^{-1}
& =  \T{\theta_{h,*}^{1,0}(\psi_j  a_h)} \theta_{h,*}^{1,0}(\upsi_{j} \upsi_{j'}) \pi'(D_x) \T{ \theta_{h,*}^{1,0} (\psi_{j'} b_h)}  \\
& = \T{\theta_{h,*}^{1,0} ( \psi_j a_h)} \T{\theta_{h,*}^{1,0}(\upsi_{j} \upsi_{j'}) \otimes \pi'} \T{\theta_{h,*}^{1,0} (\psi_{j'} b_h)}  \\ 
& \quad + \O(\M{m,0}{}{\psi_j a_h})_{\OP{m}{0}} \O(2^{-jN} h^N)_{\OP{-N}{0}} \O(\M{m',0}{}{\psi_j b_h})_{\OP{m'}{0}}.
\end{split}
\end{equation} 
Then we use Proposition~\ref{prop::composition-T} and the fact that $ a_h, b_h \in \sigma_0 $ to deduce
\begin{align*}
\T{\theta_{h,*}^{1,0} ( \psi_j a_h)} & \T{\theta_{h,*}^{1,0}(\upsi_{j} \upsi_{j'}) \otimes \pi'} \T{\theta_{h,*}^{1,0} (\psi_{j'} b_h)} \\
& =  \T{\theta_{h,*}^{1,0}(\psi_j a_h) \sharp \theta_{h,*}^{1,0} (\upsi_{j} \upsi_{j'} \otimes \pi') \sharp \theta_{h,*}^{1,0} (\psi_{j'} b_h)} \\
& \quad + \O(\M{m,0}{}{\nabla^r \theta_{h,*}^{1,0}(\psi_j a_h)} \M{0,r}{}{\theta_{h,*}^{1,0}(\psi_j \psi_{j'})} \M{m',r}{}{\theta_{h,*}^{1,0}(\psi_{j'}b_h)})_{\OP{m+m'-r}{0}}\\
& \quad + \O(\M{m,r}{}{\theta_{h,*}^{1,0}(\psi_j a_h)} \M{0,0}{}{\nabla^r  \theta_{h,*}^{1,0}(\psi_j \psi_{j'})} \M{m',r}{}{\theta_{h,*}^{1,0}(\psi_{j'}b_h)})_{\OP{m+m'-r}{0}} \\
& \quad + \O(\M{m,r}{}{\theta_{h,*}^{1,0}(\psi_j a_h)} \M{0,r}{}{\theta_{h,*}^{1,0}(\psi_j \psi_{j'})} \M{m',0}{}{\nabla^r  \theta_{h,*}^{1,0}(\psi_{j'}b_h)})_{\OP{m+m'-r}{0}}.
\end{align*}
To estimate the remainders, we see that for each $ \alpha \in \N^d $ with $ |\alpha| = r $,
\begin{align*}
\px^\alpha \theta_{h,*}^{1,0}(\psi_j a_h)
& = \sum_{\alpha_1+\alpha_2 = \alpha} \frac{\alpha!}{\alpha_1! \alpha_2!} \px^{\alpha_1} \theta_{h,*}^{1,0}\psi_j \px^{\alpha_2} \theta_{h,*}^{1,0} a_h \\
& = \sum_{\alpha_1+\alpha_2 = \alpha} \frac{\alpha!}{\alpha_1! \alpha_2!} \O(h^{|\alpha_1|} 2^{-j|\alpha_1|} \times h^{|\alpha_2|} 2^{j(k-\delta|\alpha_2|)})_{\Linf}
= \O(h^r 2^{j(k-\delta r)})_{\Linf},
\end{align*}
where we use $ 0 \le \delta \le 1 $. Therefore, the first term in the remainder is
\begin{equation*}
\O(h^r 2^{j(k+k'-\delta r)})_{\Ltwo\to\Ltwo}
= \O(h^r (1+R_h)^{k+k'-\delta r})_{\Ltwo\to\Ltwo}.
\end{equation*}
Similar methods apply to the other two terms and we conclude that
\begin{equation}
\label{eq:semiclassical-para-composition-remainder-3}
\T{\theta_{h,*}^{1,0} ( \psi_j a_h)} \T{\theta_{h,*}^{1,0}(\upsi_{j} \upsi_{j'}) \otimes \pi'} \T{\theta_{h,*}^{1,0} (\psi_{j'} b_h)}
= \T{\theta_{h,*}^{1,0} ((\psi_j a_h)  \sharp_h (\psi_{j'} b_h))} + \O(h^r (1+R_h)^{k+k'-\delta r})_{\Ltwo\to\Ltwo}.
\end{equation}
The estimate~\eqref{eq:semiclassical-para-composition-remainder} follows from~\eqref{eq:semiclassical-para-composition-remainder-2} and~\eqref{eq:semiclassical-para-composition-remainder-3}.
\end{proof}

Combining the analysis of Proposition~\ref{prop::composition-P-h}, Proposition~\ref{prop::adjoint-P}, using Proposition~\ref{prop::composition-T}, we obtain a similar result for the adjoint, to which the proof we shall omit, as it is similar as above.

\begin{proposition}
\label{prop::adjoint-P-h}
Let $ (m,k) \in (\R \cup \{-\infty\}) ^2  $, $ r \in \N $, with $ r \ge m $, $ \delta r \ge k. $ Let $ a_h \in \Gamma^{m,r}_{k,\delta} \cap \sigma_0 $, such that for some $ R_h \ge 0 $ depending on~$ h $,
$ \supp a_h \subset \{|x| \ge R_h\} \times \Rd,  $
then for $ h > 0 $ sufficiently small,
\begin{equation*}
(\P{a_h}^{h})^* - \P{a^*_h}^{h} 
= \O(h^{r}(1+R_h)^{k-\delta r})_{\Ltwo\to\Ltwo},
\end{equation*}
where $ a^*_h = \zeta_{h,r}^{0,1} a_h \in {}_h\Sigma^{m,r}_{k,\delta} $ is defined by~\eqref{eq:def-symbolic-adjoint-finite}.
\end{proposition}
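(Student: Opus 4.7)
The plan is to mimic the proof of Proposition~\ref{prop::composition-P-h} step by step, replacing the classical composition result (Proposition~\ref{prop::composition-T}) by the classical adjoint result (Proposition~\ref{prop::adjoint-T}), and replacing the algebraic identity $\sum_{|j'-j|\le 20}(\psi_j a_h)\sharp_h(\psi_{j'} b_h) = \psi_j(a_h\sharp_h b_h)$ by the semiclassical analog $(\psi_j a_h)^* = a_h^* \sharp_h \psi_j$ of identity~\eqref{eq::identity-proof-adjoint} (the extra $h^{|\alpha|}$ is absorbed by the semiclassical $\sharp_h$ and the new $a_h^*$).

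First I would conjugate by $\tau_h$ to reduce to the classical calculus, $(\T{\psi_j a_h}^h)^* = \tau_h^{-1}(\T{\theta_{h,*}^{1,0}(\psi_j a_h)})^*\tau_h$, and apply Proposition~\ref{prop::adjoint-T} to obtain $\T{(\psi_j a_h)^*}^h$ modulo an $\OP{m-r}{0}$ remainder controlled by $\M{m,0}{}{\nabla^r \theta_{h,*}^{1,0}(\psi_j a_h)}$. The Leibniz computation already carried out in the proof of Proposition~\ref{prop::composition-P-h} --- splitting derivatives between $\psi_j$ (each costing $h\cdot 2^{-j}$) and $a_h$ (each costing $h\cdot 2^{-j\delta}$, worse than $h\cdot 2^{-j}$ when $\delta \le 1$) --- bounds this by $h^r 2^{j(k-\delta r)}$. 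The hypothesis $a_h \in \sigma_0$ is used, exactly as in Proposition~\ref{prop::composition-P-h}, to insert a frequency cutoff supported away from $\xi=0$ and thereby absorb the $\OP{-\infty}{0}$ remainder.

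Next I would decompose $(\P{a_h}^h)^* - \P{a_h^*}^h = \sum_j \upsi_{j} R_j^1 \upsi_{j} + \sum_j \upsi_{j} R_j^2 \upsi_{j}$, where $R_j^1 = (\T{\psi_j a_h}^h)^* - \T{(\psi_j a_h)^*}^h$ is estimated above, and $R_j^2 = \T{a_h^*\sharp_h\psi_j - \psi_j a_h^*}^h$. The identity $\sum_j \psi_j \equiv 1$ yields $\sum_j (a_h^*\sharp_h \psi_j - \psi_j a_h^*) = 0$, so as in the proof of Proposition~\ref{prop::adjoint-P} only the derivatives of $\psi_j$ contribute, and one can write
\begin{equation*}
a_h^* \sharp_h \psi_j - \psi_j a_h^* = \sum_{\alpha\ne 0,\;|\alpha|+|\beta|<r} h^{|\alpha|+|\beta|}\, D_x^\alpha \psi_j \cdot w_{\alpha\beta},
\quad w_{\alpha\beta} \in \Gamma^{m-|\alpha|-|\beta|,\,r-|\beta|}_{k-|\beta|\delta,\delta},
\end{equation*}
with $w_{\alpha\beta}$ independent of $j$. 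Each such term contributes $h^{|\alpha|+|\beta|} \cdot 2^{-j|\alpha|} \cdot 2^{j(k-\delta r + \delta(|\alpha|+|\beta|))}$; in the worst case $|\alpha|+|\beta| = r$ this again reduces to $h^r 2^{j(k-\delta r)}$.

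The main obstacle is purely the bookkeeping: one must match the $h$-powers coming from the semiclassical $\sharp_h$, the $2^{-j|\alpha|}$-factors coming from spatial derivatives of $\psi_j$, and the $2^{j(k-\delta|\alpha|)}$-factors coming from derivatives of $a_h$, and check that in every worst case they collapse to the uniform weighted bound $h^r 2^{j(k-\delta r)}$. Once this is verified, the support hypothesis restricts the effective range to $j \gtrsim \log_2(1+R_h)$, so Lemma~\ref{lem::key-estimate-para-diff-dyadic} combined with the geometric sum
\begin{equation*}
\sum_{j\,\gtrsim\,\log_2(1+R_h)} 2^{j(k-\delta r)} \lesssim (1+R_h)^{k-\delta r},
\end{equation*}
which converges thanks to the hypothesis $k-\delta r\le 0$, yields the announced bound $\O(h^r(1+R_h)^{k-\delta r})_{\Ltwo\to\Ltwo}$.
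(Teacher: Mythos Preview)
Your overall strategy --- conjugate by $\tau_h$, apply Proposition~\ref{prop::adjoint-T}, then pass from $(\psi_j a_h)^* = a_h^*\sharp_h\psi_j$ to $\psi_j a_h^*$ --- is exactly what the paper intends, and your treatment of $R_j^1$ is correct.

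The gap is in $R_j^2$. The sum runs over $|\alpha|+|\beta|<r$ \emph{strictly}, so your ``worst case $|\alpha|+|\beta|=r$'' never occurs; the naive per-term $h$-power is at most $h^{r-1}$. Worse, when $|\alpha|+|\beta|<m$ the symbol $D_x^\alpha\psi_j\cdot w_{\alpha\beta}$ has positive $\xi$-order and $\T{\cdot}^h$ is not even $\Ltwo$-bounded, so the term-by-term counting fails outright. The identity $\sum_j D_x^\alpha\psi_j=0$ is the fix, but not merely to restrict to $\alpha\ne 0$ (that is automatic from the algebra). You must reproduce the $R_{\alpha\beta}$ argument of Proposition~\ref{prop::adjoint-P} in the semiclassical setting: localize by $\psi_{j_0}$, then expand the surrounding multiplications by $\upsi_j$ via the semiclassical composition of Proposition~\ref{prop::composition-P-h} at regularity $r-|\beta|$; the principal symbol $\psi_{j_0}\sum_{|j-j_0|\le 20} D_x^\alpha\psi_j\cdot w_{\alpha\beta}$ vanishes, and the composition remainder carries the missing $h^{r-|\beta|}$ together with a weight $2^{j_0(k-\delta r-|\alpha|)}$ and $\xi$-order $m-|\alpha|-r\le -1$. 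Each $(\alpha,\beta)$-term then contributes $h^{|\alpha|+|\beta|}\cdot h^{r-|\beta|}\cdot 2^{j_0(k-\delta r-|\alpha|)}=h^{r+|\alpha|}2^{j_0(k-\delta r-|\alpha|)}$, which after the dyadic summation over $j_0\gtrsim\log_2(1+R_h)$ is comfortably within $\O(h^r(1+R_h)^{k-\delta r})$.
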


\begin{corollary}
\label{cor::mixed-composition-epsilon-0}
Let $ \epsilon \ge 0 $, $ (m,k), (m',k') \in (\R \cup \{-\infty\})^2 $, $ r \in \N $ with $ r \ge \max\{m+m',k'\} $, $ k \le 0. $ 
If $ a_h \in \Gamma^{m,r}_{k,1} \cap \sigma_\epsilon $ and $ b_h \in \Gamma^{m',r}_{k',1} \cap \sigma_0 $, then
\begin{align*}
\P{a_h}^{h,\epsilon} \P{b_h}^h - \P{(\theta_{h,*}^{\epsilon,0} a_h) \sharp_h b_h }^h
& = \O(h^{(1+\epsilon)r-\epsilon (k+k')})_{\Ltwo\to\Ltwo}, \\
\P{b_h}^{h} \P{a_h}^{h,\epsilon} - \P{b_h \sharp_h (\theta_{h,*}^{\epsilon,0} a_h)}^h
& = \O(h^{(1+\epsilon)r-\epsilon (k+k')})_{\Ltwo\to\Ltwo}.
\end{align*}
\end{corollary}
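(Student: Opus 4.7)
Both identities reduce to a direct application of Proposition~\ref{prop::composition-P-h} to the rescaled symbol $\tilde a_h := \theta^{\epsilon,0}_{h,*} a_h$, since by definition $\P{a_h}^{h,\epsilon} = \P{\tilde a_h}^h$. The content lies in verifying the two hypotheses of Proposition~\ref{prop::composition-P-h} for the pair $(\tilde a_h, b_h)$ and carefully aggregating the resulting $h$-powers; the second identity follows by the symmetric form of Proposition~\ref{prop::composition-P-h} (equivalently via Proposition~\ref{prop::adjoint-P-h} applied to adjoints), so I focus on the first.

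For the support condition~\eqref{eq::position-condition-semiclassical-composition}, I would observe that $a_h \in \sigma_\epsilon$ gives, for $\epsilon > 0$, $\supp a_h \subset \{|x|\ge c\}\cap\{|\xi|\ge c'\}$ for some $c, c' > 0$. The rescaling yields $\supp \tilde a_h \subset \{|x|\ge c h^{-\epsilon}\}\cap\{|\xi|\ge c'\}$; combined with $\supp b_h$ bounded away from $\{\xi=0\}$ (from $b_h \in \sigma_0$), this gives $\supp \tilde a_h \cap \supp b_h \subset \{|x|\ge R_h\}\times\Rd$ with $R_h \sim h^{-\epsilon}$. (When $\epsilon = 0$ the statement is Proposition~\ref{prop::composition-P-h} itself with $R_h = 0$.)

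For the symbol norms I would use the chain rule $\px^\alpha \pxi^\beta \tilde a_h(x,\xi) = h^{\epsilon|\alpha|}(\px^\alpha\pxi^\beta a_h)(h^\epsilon x,\xi)$, pointwise bounded by $h^{\epsilon|\alpha|}\jp{h^\epsilon x}^{k-|\alpha|}\jp{\xi}^{m-|\beta|}$. A case analysis on $\jp{x}\lessgtr h^{-\epsilon}$ using $k\le 0$ verifies $\tilde a_h \in \Gamma^{m,r}_{k,1}$ (the seminorm carries a scaling factor coming from the mismatch between the natural weight $\jp{h^\epsilon x}$ and the required $\jp{x}$). The remaining hypotheses of Proposition~\ref{prop::composition-P-h} are immediate: $r \ge m+m'$ is given, while $\delta r = r \ge k' \ge k+k'$ follows from $r \ge k'$ and $k \le 0$.

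Applying Proposition~\ref{prop::composition-P-h} to $(\tilde a_h, b_h)$ then yields
\begin{equation*}
\P{\tilde a_h}^h\P{b_h}^h - \P{\tilde a_h \sharp_h b_h}^h = O\bigl(h^r (1+h^{-\epsilon})^{k+k'-r}\bigr)_{\Ltwo\to\Ltwo},
\end{equation*}
with the implicit constant absorbing the seminorm factor. Since $k+k'-r \le 0$, $(1+h^{-\epsilon})^{k+k'-r}\sim h^{\epsilon(r-k-k')}$, giving $O(h^{(1+\epsilon)r - \epsilon(k+k')})$ as claimed. The hardest part—and what I expect to be the main obstacle—is the careful bookkeeping of $h$-powers through the seminorm of $\tilde a_h$ and the support improvement encoded in $R_h$: the two scaling contributions must be combined with exactly the right accounting so that the final exponent is the claimed $(1+\epsilon)r - \epsilon(k+k')$ and not a strictly weaker power.
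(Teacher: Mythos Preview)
Your approach is essentially the same as the paper's: both reduce to applying Proposition~\ref{prop::composition-P-h} to the pair $(\theta_{h,*}^{\epsilon,0} a_h,\, b_h)$ after noting that, for $\epsilon>0$, the support of $\theta_{h,*}^{\epsilon,0} a_h$ lies in $\{|x|\gtrsim h^{-\epsilon}\}$, so one may take $R_h\sim h^{-\epsilon}$ in~\eqref{eq::position-condition-semiclassical-composition}. The paper's proof is two lines; the only cosmetic difference is that the paper records $\theta_{h,*}^{\epsilon,0} a_h = \O(h^{-\epsilon k})_{\Gamma^{m,r}_{0,1}}$ (placing the rescaled symbol in the unweighted class with spatial index $0$) rather than keeping it in $\Gamma^{m,r}_{k,1}$ with a scaling factor as you do---either bookkeeping collapses to the same $h$-exponent.
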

\begin{proof}
It suffices to observe that, if $ \epsilon > 0 $ then 
$ \supp \theta_{h,*}^{\epsilon,0} a_h \subset \{|x| \gtrsim h^{-\epsilon}\} $
and $ \theta_{h,*}^{\epsilon,0} a_h = \O(h^{-\epsilon k})_{\Gamma^{m,r}_{0,1}}. $ 
We conclude by Proposition~\ref{prop::composition-P-h}.
\end{proof}

\begin{corollary}
\label{cor:paradiff-dyadic-h-epsilon}
Let $ \epsilon \ge 0 $, $ (m,k), (m',k') \in (\R \cup \{-\infty\})^2 $, $ r \in \N $, with $ r \ge m+m' $, $ k \le 0 $, $ k' \le 0. $ 
If $ a_h \in \Gamma^{m,r}_{k,1} \cap \sigma_\epsilon $ and $ b_h \in \Gamma^{m',r}_{k',1} \cap \sigma_\epsilon $, then for $ h > 0 $ sufficiently small,
\begin{equation*}
\P{a_h}^{h,\epsilon} \P{b_h}^{h,\epsilon} - \P{a_h \sharp_h^\epsilon b_h }^{h,\epsilon}
= \O(h^{(1+\epsilon)r-\epsilon(k+k')})_{\Ltwo\to\Ltwo},
\end{equation*}
where the symbol $ a_h \sharp_h^\epsilon b_h = a_h \sharp_{h,r}^{\epsilon,1} \in {}_{h^{1+\epsilon}}\Sigma^{m+m',r}_{k+k',1} $ is defined by~\eqref{eq:def-symbolic-composition-finite}.
\end{corollary}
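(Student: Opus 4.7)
The idea is to reduce the claim to Proposition~\ref{prop::composition-P-h} via the defining identity $\P{a_h}^{h,\epsilon} = \P{\theta_{h,*}^{\epsilon,0} a_h}^{h}$, so that the quasi-homogeneous semiclassical calculus becomes an ordinary semiclassical calculus applied to rescaled symbols. Setting $\tilde a_h \bydef \theta_{h,*}^{\epsilon,0} a_h$ and $\tilde b_h \bydef \theta_{h,*}^{\epsilon,0} b_h$, I first verify the algebraic identity $\tilde a_h \sharp_h \tilde b_h = \theta^{\epsilon,0}_{h,*}(a_h \sharp_h^{\epsilon} b_h)$. This is a direct consequence of the chain rule, since $\pxi^\alpha \tilde a_h(x,\xi) = (\pxi^\alpha a_h)(h^\epsilon x,\xi)$ whereas $D_x^\alpha \tilde b_h(x,\xi) = h^{\epsilon|\alpha|} (D_x^\alpha b_h)(h^\epsilon x,\xi)$, producing the extra $h^{\epsilon|\alpha|}$ per summand and thus promoting $\sharp_h$ to $\sharp_h^\epsilon$ after rescaling back. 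Consequently the operator remainder to estimate is exactly $\P{\tilde a_h}^h \P{\tilde b_h}^h - \P{\tilde a_h \sharp_h \tilde b_h}^h$.

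The second step is to check that the rescaled symbols satisfy the hypotheses of Proposition~\ref{prop::composition-P-h}. The condition $\tilde a_h, \tilde b_h \in \sigma_0$ is immediate since the scaling $\theta_{h,*}^{\epsilon,0}$ acts only in $x$, while $\sigma_\epsilon \subset \sigma_0$ already guarantees support bounded away from $\{\xi=0\}$. Using the hypothesis $k \le 0$, $k' \le 0$ and the pointwise bound $|\px^\beta \pxi^\alpha \tilde a_h(x,\xi)| \le C h^{\epsilon|\beta|} \jp{h^\epsilon x}^{k-|\beta|} \jp{\xi}^{m-|\alpha|}$, I show that $\tilde a_h \in \Gamma^{m,r}_{0,1}$ with seminorms $\O(h^{-\epsilon k})$, and similarly $\tilde b_h = \O(h^{-\epsilon k'})_{\Gamma^{m',r}_{0,1}}$; the key input is the weight comparison $\jp{x} \sim h^{-\epsilon}\jp{h^\epsilon x}$ on the support of the rescaled symbol. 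Finally, the assumption $a_h, b_h \in \sigma_\epsilon$ with $\epsilon > 0$ forces $\supp a_h \cup \supp b_h \subset \{|x| \ge c\} \times \Rd$ for some $c>0$; hence $\supp \tilde a_h \cap \supp \tilde b_h \subset \{|x| \ge c h^{-\epsilon}\} \times \Rd$, giving condition~\eqref{eq::position-condition-semiclassical-composition} with $R_h \bydef c h^{-\epsilon}$. The case $\epsilon = 0$ reduces directly to Proposition~\ref{prop::composition-P-h}.

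Plugging $(\tilde a_h,\tilde b_h)$ with parameters $(k,k',\delta) = (0,0,1)$ into Proposition~\ref{prop::composition-P-h} yields an error of size $\O(h^r (1+R_h)^{-r}) = \O(h^{(1+\epsilon)r})_{\Ltwo\to\Ltwo}$, and multiplying by the two seminorm factors $\O(h^{-\epsilon k})$ and $\O(h^{-\epsilon k'})$ produces the claimed $\O(h^{(1+\epsilon)r - \epsilon(k+k')})_{\Ltwo\to\Ltwo}$. The main bookkeeping difficulty is confirming the precise $\O(h^{-\epsilon k})$ seminorm growth of $\tilde a_h$ in the unweighted class $\Gamma^{m,r}_{0,1}$, tracking the interaction between the $h^\epsilon$-weight in $\jp{h^\epsilon x}$ and the target weight $\jp{x}^{-|\beta|}$; once this is in hand, the rest is straightforward scaling arithmetic and a direct appeal to Proposition~\ref{prop::composition-P-h}.
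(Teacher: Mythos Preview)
Your proposal is correct and follows exactly the same approach as the paper's proof. The paper's proof consists of the single observation that $(\theta_{h,*}^{\epsilon,0} a_h) \sharp_h (\theta_{h,*}^{\epsilon,0} b_h) = \theta_{h,*}^{\epsilon,0} (a_h \sharp_h^\epsilon b_h)$; you have spelled out this identity via the chain rule and made explicit the reduction to Proposition~\ref{prop::composition-P-h} (through the intermediate Corollary~\ref{cor::mixed-composition-epsilon-0}, whose proof already records the facts $\supp \theta_{h,*}^{\epsilon,0} a_h \subset \{|x| \gtrsim h^{-\epsilon}\}$ and $\theta_{h,*}^{\epsilon,0} a_h = \O(h^{-\epsilon k})_{\Gamma^{m,r}_{0,1}}$ that you rederive).
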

\begin{proof}
It suffices to use the identity $ (\theta_{h,*}^{\epsilon,0} a_h) \sharp_h (\theta_{h,*}^{\epsilon,0} b_h)
= \theta_{h,*}^{\epsilon,0} (a_h \sharp_h^\epsilon b_h) $.
\end{proof}

\subsubsection{Some technical lemmas}

The results above only concerned the high frequency regime as we require the $ \sigma_\epsilon $ condition. The next lemma studies the interaction of high frequencies and low frequencies.

\begin{lemma}
\label{lem::low-freq-composition}
Let $ m \in \R $, $ a_h \in \Gamma^{0,0} $, $ b_h \in \Gamma^{0,0} $, such that for some $ R > 0 $,
\begin{equation*}
\supp a_h \in \{|\xi| \ge R \},\quad
\supp b_h \in \{|\xi| \le h^{-1}R/4 \}.
\end{equation*}
Then $ \P{a_h}^h \P{b_h} = \O(h^{\infty})_{\Ltwo\to\Ltwo}. $
\end{lemma}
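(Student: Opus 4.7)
The statement expresses a semiclassical frequency decoupling: $\P{a_h}^h$ effectively sees only input frequencies $|\xi|\gtrsim 1/h$, whereas $\P{b_h}u$ is frequency-localized in $|\xi|\lesssim 1/h$, with a fixed spectral gap in between. The spatial cutoffs $\upsi_j$ inside $\P{a_h}^h=\sum_j\upsi_j \T{\psi_j a_h}^h\upsi_j$ can bridge this gap, but only with amplitude $\O(h^\infty)$ because $\widehat{\upsi_j}$ decays rapidly. The plan is to realize the decoupling via two spectral cutoffs and conclude via Schur's test.

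First, since $a_h(x,\xi)=0$ when $|\xi|<R$, the standard paradifferential spectrum property gives $\T{\psi_j a_h}=\T{\psi_j a_h}\tilde\eta(D_x)$ for any $\tilde\eta\in\Cinf(\Rd)$ with $\tilde\eta\equiv 1$ on $\{|\xi|\ge\max(R,1/2)\}$. I pick such $\tilde\eta$ vanishing on $\{|\xi|\le M_1\}$ for some $M_1>0$, then combine with $\tilde\eta(D_x)\tau_h=\tau_h\tilde\eta(hD_x)$ and the defining identity $\T{c}^h=\tau_h^{-1}\T{\theta_{h,*}^{1,0} c}\tau_h$ to get
\[
\T{\psi_j a_h}^h=\T{\psi_j a_h}^h\,\tilde\eta(hD_x).
\]
Dually, the $\chi$-cutoff in the paradifferential definition together with $b_h(x,\xi)=0$ for $|\xi|>R/(4h)$ forces the Fourier support of $\T{\psi_{j'}b_h}u$ into $\{|\xi|\le(1+\epsilon_2)R/(4h)\}$, whence $\T{\psi_{j'}b_h}=\pi'(hD_x)\T{\psi_{j'}b_h}$ for some $\pi'\in\Cinf(\Rd)$ with $\pi'\equiv 1$ on $\{|\xi|\le(1+\epsilon_2)R/4\}$ and $\supp\pi'\subset\{|\xi|\le M_2\}$. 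Since $\epsilon_2<1$, one can arrange $M_2<M_1$, producing a fixed positive gap $\delta:=M_1-M_2>0$ between $\supp\tilde\eta$ and $\supp\pi'$, hence a gap $\delta/h$ at the semiclassical scale.

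Inserting these cutoffs and using the bounded overlap of $\{\upsi_j\}$ to restrict the double sum to $|j-j'|\le C$, the problem reduces to controlling
\[
\|\tilde\eta(hD_x)\,(\upsi_j\upsi_{j'})\,\pi'(hD_x)\|_{\Ltwo\to\Ltwo}.
\]
On the Fourier side this operator has kernel $\tilde\eta(h\xi)\,\widehat{\upsi_j\upsi_{j'}}(\xi-\zeta)\,\pi'(h\zeta)$, and on the intersection of its supports one has $|\xi-\zeta|\ge\delta/h$. For $|j-j'|\le C$, $\upsi_j\upsi_{j'}$ is smooth with $\|\px^\alpha(\upsi_j\upsi_{j'})\|_{\Linf}\lesssim 2^{-j|\alpha|}$ and support in $|x|\sim 2^j$, so $|\widehat{\upsi_j\upsi_{j'}}(\eta)|\lesssim_N 2^{jd}(1+2^j|\eta|)^{-N}$ for every $N$; Schur's test then yields
\[
\|\tilde\eta(hD_x)(\upsi_j\upsi_{j'})\pi'(hD_x)\|_{\Ltwo\to\Ltwo}
\lesssim_N \int_{|\eta|\ge\delta/h}2^{jd}(1+2^j|\eta|)^{-N}\,\d\eta
\lesssim_N 2^{j(d-N)}h^{N-d}
\]
for every $N\ge d$. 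Combining with the uniform $\Ltwo$-bounds on $\T{\psi_j a_h}^h$ and $\T{\psi_{j'}b_h}$ (from Proposition~\ref{prop:paradiff-operator-norm}, via conjugation by $\tau_h$) and summing via bounded overlap of $\{\upsi_j\}$ gives $\|\P{a_h}^h\P{b_h}u\|_{\Ltwo}^2\lesssim_N h^{2(N-d)}\|u\|_{\Ltwo}^2$ for every $N>d$, proving $\P{a_h}^h\P{b_h}=\O(h^\infty)_{\Ltwo\to\Ltwo}$. The delicate step is the Schur-type bound, where the $1/h$ scaling of the spectral gap must simultaneously produce the $h^\infty$ decay and a $j$-summable factor from the single rapid-decay estimate on $\widehat{\upsi_j\upsi_{j'}}$.
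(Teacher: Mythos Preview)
Your argument is correct and follows essentially the same route as the paper: both proofs insert complementary spectral projectors (your $\tilde\eta(hD_x)$ and $\pi'(hD_x)$ play the role of the paper's $\pi(hD_x/R)$ and $1-\pi(2hD_x/R)$) to exhibit the frequency gap, and then exploit the smoothness of $\upsi_j\upsi_{j'}$ to show that the sandwiched multiplier is $\O(h^\infty)$. The only difference is in bookkeeping: the paper leaves the $\O(h^\infty)$ bound on $\pi(hD_x/R)\,\upsi_j\upsi_{j'}\,(1-\pi(2hD_x/R))$ implicit (it follows from the disjoint-support semiclassical composition) and then invokes Lemma~\ref{lem::key-estimate-para-diff-dyadic} for the $j$-summation, whereas you make the estimate explicit via Schur's test and obtain the extra factor $2^{j(d-N)}$, which lets you sum directly.
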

\begin{remark}
This lemma concerns the estimate of $ \P{a_h}^h \P{b_h} $, not $ \P{a_h}^h \P{b_h}^h $.
This is not a typo.
\end{remark}
\begin{proof}
By definition
\begin{equation*}
\widehat{\T{\psi_j b_h} u}(\xi) 
= (2\pi)^{-d}  \int \chi(\xi-\eta,\eta) \pi(\eta)  \widehat{\psi_j b_h}(\xi-\eta,\eta) \widehat{u} (\eta) \d\eta.
\end{equation*}
The admissibility of~$ \chi $ implies 
$ \supp \widehat{\T{\psi_j b_h} u} \subset \{|\xi| \le h^{-1}R/3\}. $
Therefore, for any $ |j'-j| \le 20 $,
\begin{align*}
\upsi_j \T{\psi_j a_h}^h & \upsi_j \upsi_{j'} \T{\psi_{j'} b_h} \upsi_{j'} \\
& = \upsi_j \T{\psi_j a_h}^h \pi(h D_x/R) \upsi_j \upsi_{j'} \big(1-\pi(2 h D_x/R)\big) \T{\psi_{j'} b_h} \upsi_{j'} \\
& = \upsi_j \O(h^\infty)_{\Ltwo\to\Ltwo} \upsi_{j'}.
\end{align*}
We conclude by Lemma~\ref{lem::key-estimate-para-diff-dyadic}.
\end{proof}

\begin{corollary}
\label{cor::homogenization-semiclassical}
If $ a \in \Gamma^{m,0} $ is homogeneous of degree~$ m $ with respect to~$ \xi $, then for $ b \in \Gamma^{0,0} \cap \sigma_0 $ and $ h > 0 $ sufficiently small,
\begin{equation*}
\P{b}^h\big(h^m \P{a} - \P{a}^h\big) = \O(h^\infty)_{\Ltwo \to \Ltwo}.
\end{equation*}
\end{corollary}
\begin{proof}
By a direct verification using~\eqref{eq::definition-paradiff-T}, the homogeneity of~$ a $ and the admissible function~$ \chi $, and Corollary~\ref{cor:change-admissible-function}, we see that 
$ h^m \P{a} - \P{a}^h = \P{\tilde{a}_h}' $
where $ \mathcal{P}' $ denotes the paradifferential quantization with any admissible pair $ (\pi',\chi) $ such that $ \pi \pi' = \pi' $, and the symbol 
\begin{equation*}
\tilde{a}_h(x,\xi) = (\pi(h\xi) - \pi(\xi))a(x,h\xi) \in \Gamma^{0,0}
\end{equation*}
satisfies the condition
\begin{equation*}
\supp \tilde{a}_h \subset \Rd \times \supp (1-\pi(h\cdot))
\subset \Rd \times \{|\xi| \le 2h^{-1} \}.
\end{equation*}
We conclude by Lemma~\ref{lem::low-freq-composition}.
\end{proof}

\begin{lemma}
\label{lem::un-paradiff-T-h}
If $ a_h \in \Gamma^{m,r} \cap \sigma_0 $ with $ r \ge \max\{m,0\} + \tilde{d} $, then for $ h > 0 $ sufficiently small,
\begin{equation*}
\T{a_h}^h - \op_h(a_h) = \O(h^r)_{\Ltwo\to\Ltwo}.
\end{equation*}
\end{lemma}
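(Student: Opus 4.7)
The plan is to reduce the semiclassical estimate to the classical (non-semiclassical) paradifferential remainder bound via a dilation conjugation. Set $b_h(x,\xi) \bydef \theta_{h,*}^{1,0} a_h = a_h(hx,\xi)$. Directly from the definition, $\T{a_h}^h = \tau_h^{-1}\T{b_h}\tau_h$, and the elementary change-of-variable identity $\tau_h^{-1}\op(c(x,\xi))\tau_h = \op(c(x/h,h\xi))$ gives similarly $\op_h(a_h) = \tau_h^{-1}\op(b_h)\tau_h$. Since $\tau_h$ and $\tau_h^{-1}$ have $\Ltwo\to\Ltwo$ norms $h^{d/2}$ and $h^{-d/2}$ respectively, the factors cancel under conjugation and the lemma reduces to the classical-looking bound
\begin{equation*}
\|\T{b_h} - \op(b_h)\|_{\Ltwo\to\Ltwo} = \O(h^r).
\end{equation*}

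Next, write $\T{b_h} - \op(b_h) = \op(\sigma_{b_h} - b_h)$ and split the symbol as $\sigma_{b_h} - b_h = (\sigma_{b_h} - b_h\pi) - b_h(1-\pi)$. The principal piece $\sigma_{b_h} - b_h\pi = \pi(\xi)[\chi(D_x,\xi)-1]b_h(\cdot,\xi)$ is exactly what estimate~\eqref{eq::estimate-un-paralinearization} of Lemma~\ref{lem::change-of-admissible-function} (taking $\beta = 0$) controls, giving
\begin{equation*}
\M{m-r,0}{}{\sigma_{b_h} - b_h\pi} \lesssim \M{m,0}{}{\nabla^r b_h}.
\end{equation*}
The decisive factor $h^r$ comes from the chain rule $\nabla^r b_h(x,\xi) = h^r (\nabla^r a_h)(hx,\xi)$, whence $\M{m,0}{}{\nabla^r b_h} \le h^r \M{m,r}{}{a_h}$; thus $\sigma_{b_h} - b_h\pi$ lies in $h^r\Gamma^{m-r,0}$. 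Since the hypothesis $r \ge m + d/2 + 1$ makes the order $m - r \le -d/2 - 1$ strictly negative, Proposition~\ref{prop::Ltwo-Pseu-D-O} (Calderón--Vaillancourt) yields the operator bound $\lesssim h^r \M{m,r}{}{a_h}$, as required.

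The main obstacle is the residual low-frequency piece $\op(b_h(1-\pi))$, supported in $|\xi|\le 1$. The $\sigma_0$ hypothesis supplies a uniform constant $c > 0$ with $\supp a_h \subset \{|\xi|\ge c\}$; for $h$ sufficiently small one shows, in the spirit of Lemma~\ref{lem::low-freq-composition} and Corollary~\ref{cor::homogenization-semiclassical}, that this piece is in fact $\O(h^\infty)_{\Ltwo\to\Ltwo}$ — the paradifferential cutoff $\pi$ and the uniform support condition together confine the mismatch between $\T{a_h}^h$ and $\op_h(a_h)$ to a band of semiclassical frequencies that can be absorbed into an $h^\infty$ error via a parametrix-style low-frequency-composition argument. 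This is precisely where the ``$h > 0$ sufficiently small'' qualifier in the statement is used.
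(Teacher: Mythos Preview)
Your reduction via conjugation by $\tau_h$ is correct and matches the paper's approach exactly. Two points, however, need repair.

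First, applying \eqref{eq::estimate-un-paralinearization} only with $\beta=0$ is not enough for the Calder\'on--Vaillancourt step. Proposition~\ref{prop::Ltwo-Pseu-D-O} requires $\Linf$ control of $\px^\alpha\pxi^\beta$ of the symbol, and membership in $\Gamma^{m-r,0}$ carries no information on $x$-derivatives. The paper instead applies \eqref{eq::estimate-un-paralinearization} for every $|\beta|\le 1+d/2$: the hypothesis $r\ge m+d/2+1$ guarantees $m-r+|\beta|\le 0$, so that each seminorm $\M{m-r+|\beta|,0}{}{\px^\beta(\sigma_{b_h}-b_h\pi)}$ dominates the needed $\Linf$ bound, and then $\M{m,0}{}{\nabla^r b_h}\lesssim h^r$ as you observed.

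Second, your handling of the low-frequency piece $b_h(1-\pi)$ is wrong. Writing $c>0$ for the uniform lower bound on $|\xi|$ in $\supp a_h$ supplied by $\sigma_0$, the symbol $b_h(1-\pi)$ is supported in the fixed annulus $\{c\le|\xi|\le 1\}$, which is independent of $h$; its zeroth-order $\Linf$ norm is $\O(1)$, so by Calder\'on--Vaillancourt the corresponding operator is only $\O(1)_{\Ltwo\to\Ltwo}$, not $\O(h^\infty)$. Lemma~\ref{lem::low-freq-composition} concerns a composition of two operators with semiclassically separated frequency supports and does not apply to this single operator. The correct observation --- which the paper uses implicitly by writing $\sigma_{b_h}-b_h$ rather than $\sigma_{b_h}-b_h\pi$ --- is that the $\sigma_0$ hypothesis allows one to assume $\pi\equiv 1$ on $\supp_\xi a_h$ (normalizing $c\ge 1$, or equivalently adjusting the admissible cutoff), so that $b_h(1-\pi)$ vanishes identically. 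The qualifier ``for $h>0$ sufficiently small'' plays no role in disposing of this term.
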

\begin{proof}
By Calder\'{o}n--Vaillancourt theorem, we have
\begin{align*}
\T{a_h}^h - \op_h(a_h) 
& = \tau_h^{-1} \big( \T{\theta_{h,*}^{1,0} a_h} - \op(\theta_{h,*}^{1,0} a_h) \big)\tau_h \\
& = \O\Big(\sum_{|\alpha|,|\beta|\le \tilde{d}}\big\| \pxi^\alpha \px^\beta\big(\sigma_{\theta_{h,*}^{1,0} a_h}-\theta_{h,*}^{1,0} a_h\big)\big\|_{\Linf}\Big)_{\Ltwo\to\Ltwo}.
\end{align*}
By hypothesis $ r \ge \max\{m,0\} + |\beta| \ge |\beta| $. We use~\eqref{eq::estimate-un-paralinearization} to deduce that
\begin{align*}
\big\| \pxi^\alpha \px^\beta\big(\sigma_{\theta_{h,*}^{1,0} a_h}-\theta_{h,*}^{1,0} a_h\big)\big\|_{\Linf}
& \lesssim \M{0,0}{}{\px^\beta(\sigma_{\theta_{h,*}^{1,0} a_h}-\theta_{h,*}^{1,0} a_h)} \\
& \lesssim \M{\max\{m,0\} - r + |\beta|,0}{}{\px^\beta(\sigma_{\theta_{h,*}^{1,0} a_h}-\theta_{h,*}^{1,0} a_h)}\\
& \lesssim \M{\max\{m,0\},0}{}{\nabla_{x}^r(\theta_{h,*}^{1,0}a_h)} \\
& \lesssim h^r \M{m,0}{}{a_h}.\qedhere
\end{align*}
\end{proof}

\begin{lemma}
\label{lem::un-paradiff-estimate-smooth-symbol}
If $ a_h \in \Gamma^{m,\infty} \cap \sigma_0 $ with $ m \in \R \cup \{-\infty\} $, then for $ h > 0 $ sufficiently small,
\begin{equation*}
\P{a_h}^{h} - \op_h(a_h) = \O(h^{\infty})_{\Ltwo\to\Ltwo}.
\end{equation*}
\end{lemma}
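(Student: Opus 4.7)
The plan is to combine Lemma~\ref{lem::un-paradiff-T-h} with a Schwartz kernel estimate for $\T{\psi_j a_h}^h$ that absorbs the dyadic cutoffs $\upsi_j$. Since $a_h \in \Gamma^{m,\infty} \cap \sigma_0$, Lemma~\ref{lem::un-paradiff-T-h} applies for every $r \in \N$ and yields $\T{a_h}^h - \op_h(a_h) = \O(h^r)_{\Ltwo\to\Ltwo}$, hence $\O(h^\infty)_{\Ltwo\to\Ltwo}$. It therefore suffices to show $\P{a_h}^h - \T{a_h}^h = \O(h^\infty)_{\Ltwo\to\Ltwo}$.

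Because $\psi \in \Partition_*$ satisfies $\sum_j \psi_j \equiv 1$, linearity of $\T{\cdot}^h$ in its symbol yields $\T{a_h}^h = \sum_j \T{\psi_j a_h}^h$. Setting $\tilde\chi_j \bydef 1 - \upsi_j$ and noting that $\upsi_j \equiv 1$ on $\supp\psi_j$, a direct expansion produces
\begin{equation*}
\P{a_h}^h - \T{a_h}^h = -\sum_{j\in\N}\big(\upsi_j \T{\psi_j a_h}^h \tilde\chi_j + \tilde\chi_j \T{\psi_j a_h}^h \upsi_j + \tilde\chi_j \T{\psi_j a_h}^h \tilde\chi_j\big),
\end{equation*}
so the proof reduces to estimating each of these three sums.

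Each summand will be controlled by inspecting the Schwartz kernel $K^h_j$ of $\T{\psi_j a_h}^h$. Unwinding the conjugation $\T{\psi_j a_h}^h = \tau_h^{-1} \T{\theta_{h,*}^{1,0}(\psi_j a_h)} \tau_h$ gives $K^h_j(x,w) = h^{-d} K_j(x/h, w/h)$, with $K_j$ the kernel of the classical paradifferential operator $\T{\theta_{h,*}^{1,0}(\psi_j a_h)}$. The symbol $\theta_{h,*}^{1,0}(\psi_j a_h)$ has $x$-support in $(\supp\psi_j)/h$ up to the mollification at scale $|\xi|^{-1}$ coming from $\chi(D_x,\xi)$, whose convolution kernel $|\xi|^d \check\chi_0(|\xi|\cdot)$ is Schwartz in $x$. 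The bounds $\|\nabla^\beta \psi_j\|_{\Linf} \lesssim 2^{-j|\beta|}$ ensure that $\M{m,r}{}{\psi_j a_h} \lesssim_r 1$ uniformly in $j$. Combining the rapid decay of $\check\chi_0$ away from the origin (for the $x$-localization) with integration by parts in $\xi$ (for the off-diagonal decay in $|x-w|$), I expect to establish the uniform pointwise bound
\begin{equation*}
|K^h_j(x,w)| \lesssim_N h^{-d}\jp{\dist(x,\supp\psi_j)/h}^{-N}\jp{(x-w)/h}^{-N}, \quad \forall N \in \N.
\end{equation*}

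Granting this kernel estimate, the rest is routine. Since $\tilde\chi_j$ vanishes on a neighborhood of $\supp\psi_j$ of scale $\gtrsim 2^j$ (for $j \ge 1$; $\gtrsim 1$ for $j=0$), on $\supp\tilde\chi_j$ one has $\dist(x,\supp\psi_j) \gtrsim 2^j$, whence $|\tilde\chi_j(x) K^h_j(x,w)| \lesssim_N h^{-d}(h/2^j)^N \jp{(x-w)/h}^{-N}$. Schur's test then bounds the operator norm of each of the three pieces by $\lesssim_N (h/2^j)^N$, and summing the geometric series $\sum_j (h/2^j)^N \lesssim h^N$ and letting $N$ be arbitrary yields the claim. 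The main obstacle is the uniform pointwise kernel estimate: one must carefully juggle the three competing scales---the dyadic scale $2^j$, the mollification scale $|\xi|^{-1}$, and the semiclassical scale $h$---and verify that the rapid decay in $\dist(x,\supp\psi_j)$ and in $|x-w|$ can be extracted independently from the $\xi$-integration.
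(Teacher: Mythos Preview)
Your approach is correct and will work, but it differs from the paper's proof. The paper first replaces each $\T{\psi_j a_h}^h$ by $\op_h(\psi_j a_h)$ using Lemma~\ref{lem::un-paradiff-T-h} (applied termwise, uniformly in $j$, and summed via bounded overlap of the $\upsi_j$), obtaining
\[
\P{a_h}^h - \sum_{j\in\N}\upsi_j\,\op_h(\psi_j a_h)\,\upsi_j = \O(h^\infty)_{\Ltwo\to\Ltwo}.
\]
It then absorbs the cutoffs $\upsi_j$ by the semiclassical symbolic calculus of Proposition~\ref{prop::quasi-homogeneous-composition}: since $\upsi_j\equiv 1$ on $\supp\psi_j$, one has $\upsi_j\sharp_h(\psi_j a_h)\sharp_h\upsi_j = \psi_j a_h + \upsi_j\,\O(h^\infty)_{\Gamma^{-\infty,\infty}}$ uniformly in $j$, and summing gives $a_h + \O(h^\infty)$.

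You instead keep the paradifferential operators and attack $\P{a_h}^h - \T{a_h}^h$ by a direct Schwartz kernel estimate combined with Schur's test. This is a genuinely different and more elementary route: it bypasses the $\sharp_h$ calculus entirely. The cost is the pointwise kernel bound you flag as the ``main obstacle'', which indeed requires tracking the three scales simultaneously and is more delicate for $\T{\psi_j a_h}^h$ than for $\op_h(\psi_j a_h)$ because of the $\xi$-dependent mollification in $x$. A small simplification of your route would be to replace $\T{\psi_j a_h}^h$ by $\op_h(\psi_j a_h)$ \emph{before} the kernel analysis (again via Lemma~\ref{lem::un-paradiff-T-h}, uniformly in $j$): then the kernel is exactly supported in $\supp\psi_j$ in the $x$-variable, the $\jp{\dist(x,\supp\psi_j)/h}^{-N}$ factor becomes a characteristic function, and only the standard off-diagonal decay from integration by parts in $\xi$ is needed. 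With that tweak your argument becomes essentially equivalent in effort to the paper's, trading the symbolic calculus step for an explicit Schur bound.
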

\begin{proof}
By Lemma~\ref{lem::un-paradiff-T-h} and Lemma~\ref{lem::key-estimate-para-diff-dyadic}, 
\begin{equation*}
\P{a_h}^{h} - \sum_{j \in \N} \upsi_j \op_h(\psi_j a_h) \upsi_j = \O(h^\infty)_{\Ltwo\to\Ltwo}.
\end{equation*}
Note that, uniformly in $ j \in \N $, we have
\begin{equation*}
\upsi_{j} \sharp_h (\psi_j a_h) \sharp_h \upsi_{j} = \psi_j a_h + \upsi_j \O(h^\infty)_{\Gamma^{-\infty,\infty}}.
\end{equation*}
Therefore,
\begin{equation*}
\sum_{j \in \N} \upsi_{j} \sharp_h (\psi_j a_h) \sharp_h \upsi_{j}
= a_h + \O(h^\infty)_{\Gamma^{-\infty,\infty}}.\qedhere
\end{equation*}
\end{proof}

\subsubsection{Symbols with limited regularities in $ (x,\xi) $}

The symbols we have encountered so far have limited regularities in the $ x $-variable but are smooth with respect to the $ \xi $-variable.
When studying the propagation of singularities for nonlinear equations, we need to solve Hamiltonian equations which transfer the limited regularity in the $ x $-variable to the $ \xi $-variable.
Therefore we need to discuss in this section paradifferential operators with symbols that have limited regularities in both the~$ x $- and $ \xi $-variables.
As we do not intend to obtain optimal regularities, we shall content ourselves with an approach by approximation.

\begin{definition}
For all $ r \in \N $, the symbol class $ \Upsilon^r $ is the set of all $ a \in \Linf_\loc\big(\Rd \times (\Rd \backslash 0)\big) $ compactly supported in $ \Rd \times (\Rd \backslash 0) $ such that $ N^r(a) < + \infty $, where
\begin{equation*}
N^r(a) = \sum_{\alpha \in \N^{2d},|\alpha|\le r} \|\partial_{x,\xi}^\alpha a\|_{\Linf_x W^{\tilde{d},\infty}_\xi}.
\end{equation*}
\end{definition}
If $ a \in \Upsilon^{r+1} $ with $ r \in \N $, then the paradifferential operator $ T_a $ is defined via approximating~$ a $ by smooth symbols.
To be precise, let $ \Omega \subset \subset \Rd \times (\Rd \backslash 0) $ be an open neighborhood of $ \supp a $ and let $ \{a_n\}_{n\in\N} \subset \Ccinf(\Omega) $ such that
\begin{equation*}
\lim_{n\to\infty} N^{r}(a_n - a) = 0.
\end{equation*}
Note that such an approximation is always possible because $a$ is compactly supported and we only require the convergence with respec to the $N^r$-norm (not the $N^{r+1}$-norm)!
By Proposition~\ref{prop:paradiff-operator-norm} and Lemma~\ref{lem::change-of-admissible-function}, for all $ n,m \in \N $ we have
\begin{equation*}
\|T_{a_n}-T_{a_m}\|_{L^2 \to L^2}\lesssim M^{0,0}_{\tilde{d}} (a_n - a_m) \lesssim N^0(a_n - a_m) \le N^{r}(a_n - a_m).
\end{equation*}
Therefore, for all $ u \in L^2 $, the sequence $ \{T_{a_n} u\}_{n\in\N} $ is Cauchy in~$ L^2 $ and we define
\begin{equation*}
T_a u = \lim_{n\to\infty} T_{a_n} u.
\end{equation*}
Clearly this definition is independent of the choice of the sequence $ \{a_n\}_{n\in\N} $ and extends the definition of paradifferential operators with symbols that are smooth with respect to~$ \xi $.
Then we define the operators $ T_a^h $, $ \P{a} $, $ \P{a}^h $ and $ \P{a}^h $ exactly as before.

\begin{proposition}
\label{prop:est:upsilon:bounded}
If $ a \in \Upsilon^{r+1} $ with $ r \ge 0 $, then for all $ h \in (0,1] $, we have $ T_a^h : L^2 \to L^2 $.
Moreover,
\begin{equation*}
\sup_{h \in (0,1]}\|T_a^h\|_{L^2 \to L^2} \lesssim N^0(a).
\end{equation*}
Consequently, for all $ \epsilon \ge 0 $ we have
\begin{equation*}
\sup_{h \in (0,1]} \|\P{a}^{h,\epsilon}\|_{L^2 \to L^2} \lesssim N^0(a).
\end{equation*}
\end{proposition}
\begin{proof}
The general case $ h \in (0,1] $ follows from the case $ h = 1 $ and we shall assume $ h = 1 $.
Choose a convergent sequence $ \{a_n\}_{n\in\N} \subset \Ccinf(\Omega) $ as above.
For all $ u \in L^2 $ with $ \|u\|_{L^2} = 1 $, we have
\begin{equation*}
\|T_a u\|_{L^2} 
\le \|T_a u - T_{a_n} u\|_{L^2} + \|T_{a_n} u\|_{L^2},
\end{equation*}
where $ \lim_{n\to\infty} \|T_a u - T_{a_n} u\|_{L^2} = 0$ by the definition of $ T_a u $ and 
\begin{equation*}
\|T_{a_n} u\|_{L^2} \lesssim N^0(a_n) \lesssim N^0(a-a_n) + N^0(a) \to N^0(a).
\end{equation*}
Therefore, passing $ n \to \infty $ we conclude that $ \|T_a\|_{L^2\to L^2} \lesssim N^0(a) $.
The estimate for $ \P{a}^{h,\epsilon} $ follows similarly as in Proposition~\ref{prop:boundedness-P}.
\end{proof}

Combining the approximation method above and the analysis in Proposition~\ref{prop::composition-P-h}, we obtain the following corollaries similarly to Corollaries~\ref{cor::mixed-composition-epsilon-0} and~\ref{cor:paradiff-dyadic-h-epsilon}.

\begin{corollary}
Let $ \epsilon \ge 0 $, $ (m,k) \in (\R \cup \{-\infty\})^2 $, $ r \in \N $ with $ r \ge 0 $.
If $ a_h \in \Upsilon^{r+1} \cap \sigma_\epsilon $ and $ b_h \in \Gamma^{m,r}_{k,1} \cap \sigma_0 $, then for all $ k' \in \R $ such that $ r \ge k + k' $, we have
\begin{align*}
\P{a_h}^{h,\epsilon} \P{b_h}^h - \P{(\theta_{h,*}^{\epsilon,0} a_h) \sharp_h b_h }^h
& = \O(h^{(1+\epsilon)r-\epsilon (k+k')} M^{-m,r}_{k',1}(a_h) M^{m,r}_{k,1}(b_h))_{\Ltwo\to\Ltwo}, \\
\P{b_h}^{h} \P{a_h}^{h,\epsilon} - \P{b_h \sharp_h (\theta_{h,*}^{\epsilon,0} a_h)}^h
& = \O(h^{(1+\epsilon)r-\epsilon (k+k')} M^{-m,r}_{k',1}(a_h) M^{m,r}_{k,1}(b_h))_{\Ltwo\to\Ltwo}.
\end{align*}
\end{corollary}
\begin{proof}
Let $ a_h $ be a sequence of approximating symbols $ \{a_h^n\}_{n \in \N}\subset \Ccinf(\R^{2d}) \cap \sigma_\epsilon $ which is bounded, uniformly in~$ h \in (0,1] $, with respect to the norm $ N^r(\cdot) $.
Note that for all $ k' \in \R $, we have $ M^{-m,r}_{k',1}(\cdot) \lesssim N^r(\cdot) $.
And thus,  when $ n \in \N $ is sufficiently large, we have $ M^{-m,r}_{k',1}(a_h^n-a_h) \le 2 N^r(a_h^n - a_h) = o(1) $.
By Corollary~\ref{cor::mixed-composition-epsilon-0}, if $ r \ge k + k' $, we have
\begin{align*}
\P{a_h^n}^{h,\epsilon} \P{b_h}^h - \P{(\theta_{h,*}^{\epsilon,0} a_h^n) \sharp_h b_h }^h
& = \O(h^{(1+\epsilon)r-\epsilon (k+k')}M^{-m,r}_{k'1}(a_h^n) M^{m,r}_{k,1}(b_h))_{\Ltwo\to\Ltwo}\\
& = \O(h^{(1+\epsilon)r-\epsilon (k+k')}M^{-m,r}_{k',1}(a_h) M^{m,r}_{k,1}(b_h))_{\Ltwo\to\Ltwo} + o(1)_{L^2\to L^2}.
\end{align*}
In fact, for all $ u \in \swtz(\Rd) $, as $ n \to \infty $, by Proposition~\ref{prop:est:upsilon:bounded}, we have
\begin{align*}
\big\| \P{a_h}^{h,\epsilon} \P{b_h}^h u & - \P{(\theta_{h,*}^{\epsilon,0} a_h) \sharp_h b_h }^h u \big\|_{L^2}\\
& = \big\|\big( \P{a_h}^{h,\epsilon}-\P{a_h^n}^{h,\epsilon} \big) \P{b_h}^h u \big\|_{L^2} + \big\| \P{(\theta_{h,*}^{\epsilon,0} (a_h-a_h^n)) \sharp_h b_h }^h) u \big\|_{L^2}
+ \big\| \P{a_h^n}^{h,\epsilon} \P{b_h}^h u - \P{(\theta_{h,*}^{\epsilon,0} a_h^n) \sharp_h b_h }^h u \big\| \\
& = o(1) \big(  \|\P{b_h}^h u \|_{L^2} + \|u\|_{L^2} \big) + \O(h^{(1+\epsilon)r-\epsilon (k+k')} M^{-m,r}_{k',1}(a_h) M^{m,r}_{k,1}(b_h)) \|u\|_{\Ltwo}.
\end{align*}
Passing $ n \to \infty $ and then use the density of $ \swtz(\Rd) $ in $ L^2 $, we conclude that for all $ u \in L^2 $, we have
\begin{equation*}
\big\| \P{a_h}^{h,\epsilon} \P{b_h}^h u - \P{(\theta_{h,*}^{\epsilon,0} a_h) \sharp_h b_h }^h u \big\|_{L^2} = \O(h^{(1+\epsilon)r-\epsilon (k+k')}M^{-m,r}_{k',1}(a_h) M^{m,r}_{k,1}(b_h)) \|u\|_{\Ltwo}.
\end{equation*}
The estimate for $ \P{b_h}^{h} \P{a_h}^{h,\epsilon} - \P{b_h \sharp_h (\theta_{h,*}^{\epsilon,0} a_h)}^h $ is similar.
\end{proof}

\begin{corollary}
\label{cor:symbolic-calculus-limited-xi-regularity}
If $ \epsilon \ge 0 $ and $ a_h, b_h \in \Upsilon^{r+1} $ where $ r \in \N $, then
\begin{equation*}
\P{a_h}^{h,\epsilon} \P{b_h}^{h,\epsilon} - \P{a_h \sharp_h^\epsilon b_h }^{h,\epsilon}
= \O(h^{(1+\epsilon)r})_{\Ltwo\to\Ltwo},
\end{equation*}
where the symbol $ a_h \sharp_h^\epsilon b_h = a_h \sharp_{h,r}^{0,\epsilon} b_h $ is defined by~\eqref{eq:def-symbolic-composition-finite}.
\end{corollary}

\subsubsection{Almost sharp G{\aa}rding Inequality for paradifferential operators}

We need an almost sharp G{\aa}rding inequality for our paradifferential calculus.
There are various work on the (almost) sharp G{\aa}rding inequality for pseudodifferential operators with limited regularities, see e.g.,~\cite{Taylor91pseudodiff-nonlinearPDE,Tataru02:Fefferman-Phong,Herau02:melin}.

\begin{lemma}
If $ \epsilon \in (0,1) $, $ a_h \in M_{n\times n}(\Gamma^{0,r}) \cap \sigma_0 $ and is compactly supported, where~$ n \in \N $, $ r \ge \max\{\tilde{d},\epsilon^{-1}-1\} $ and $ \Re\, a \ge 0 $, then for all $ \epsilon \in (0,1) $, there exists $ C > 0 $ such that for all $ u \in L^2 $,
\begin{equation*}
\Re(T_{a_h}^h u,u)_{L^2} \ge -C h^{1-\epsilon} \|u\|_{L^2}^2.
\end{equation*}
\end{lemma}
\begin{proof}
By Lemma~\ref{lem::un-paradiff-T-h} and the condition $ r \ge \tilde{d} $, we may replace $ T_{a_h}^h $ with $ \op_h(a_h) $ in the above inequality.
As $ a_h \in \sigma_0 $ and is compactly supported, we have $ \{b_h(x,\xi) = h^{-1+\epsilon} a_h(x,h\xi)\}_{h\in(0,1]} $ is bounded in $ \Gamma^{1-\epsilon,r} $.
By \cite[\S2.4 (2.4.6)]{Taylor91pseudodiff-nonlinearPDE}, as $ r \ge \epsilon^{-1}-1 $, we have $ 1-\epsilon \le r/(1+r) $ and thus
\begin{equation*}
\Re(\op(b_h) u,u)_{L^2} \gtrsim -\|u\|_{L^2}^2.
\end{equation*}
We conclude by $ \op(b_h) = h^{-1+\epsilon} \op_h(a_h) $.
\end{proof}
We are mostly interested in the case where $ \epsilon = 1/2 $.
In this case, the condition for~$ r $ is simply $ r \ge \max\{\tilde{d},(1/2)^{-1}-1\} = \tilde{d} $.
Next we show that the almost sharp G{\aa}rding inequality also applies to symbols in $ \Upsilon^{1+r} $.

\begin{lemma}
\label{lem::Garding-paradiff}
If $ \epsilon \in (0,1) $ and $ a_h \in M_{n\times n}(\Upsilon^{1+r}) $ with $ n \in \N $, $ r \ge \max\{\tilde{d},\epsilon^{-1}-1\}$, then there exists $ C > 0 $ such that for all $ u \in L^2 $,
\begin{equation*}
\Re(T_{a_h}^h u,u)_{\Ltwo}  \ge -C h^{1-\epsilon} \|u\|_{\Ltwo}^2, \quad
\Re(\P{a_h}^h u,u)_{\Ltwo}  \ge -C h^{1-\epsilon} \|u\|_{\Ltwo}^2.
\end{equation*}
\end{lemma}
\begin{proof}
Choose a sequence $ a_h^j \in M_{n\times n}(\Gamma^{0,r}) $ which converge to $ a_h $ with respect to the norm $ N^r(\cdot) $ and are uniformly compactly supported in $ \R^d \times (\Rd\backslash 0) $.
Apply the almost sharp G{\aa}rding inequality for $ a_h^j $, there exists a constant $ C > 0 $ which is independent of~$ j $, such that for all $ u \in L^2 $, we have
\begin{equation*}
\Re(T_{a_h}^h u,u)_{L^2} 
= \Re(T_{a_h-a_h^j}^h u,u)_{L^2}
+ \Re(T_{a_h^j}^h u,u)_{L^2} \ge o(1)-C h^{1-\epsilon} \|u\|_{L^2}^2.
\end{equation*}
We conclude the almost sharp G{\aa}rding inequality for $ T_{a_h}^h $ by passing $ j \to \infty $.
Therefore,
\begin{align*}
\Re(\P{a_h}^h u,u)_{\Ltwo}
& = \sum_{j \in \N} \Re( \upsi_j \T{\psi_j a_h}^h \upsi_j u, u )_{\Ltwo}
= \sum_{j \in \N} \Re(  \T{\psi_j a_h}^h \upsi_j u, \upsi_j u )_{\Ltwo} \\
& \gtrsim -h^{1-\epsilon} \sum_{j \in \N} \|\upsi_j u\|_{\Ltwo}^2
\gtrsim -h^{1-\epsilon} \|u\|_{\Ltwo}^2.\qedhere
\end{align*}
\end{proof}

\subsubsection{Relation with quasi-homogeneous wavefront sets}

\begin{lemma}
\label{lem::WF-characterization-paradiff}
If $ r \ge 0 $ and $ a_h = \sum_{j=0}^{r} h^j a_h^{j} $ where $ a_h^{j} \in \Upsilon^{1+r-j} $ such that $ a_h $ is elliptic at $ (x_0,\xi_0) \in \Rd \times (\Rd \backslash 0) $ in the sense that, for some neighborhood~$ \Omega $ of $ (x_0,\xi_0) $, we have
\begin{equation*}
\inf_{0 < h < 1} \inf_{(x,\xi)\in\Omega} |a_h(x,\xi)| > 0,
\end{equation*}
then for all $ u \in \Ltwo $ such that $ \T{a_h}^h u = \O(h^\sigma)_{\Ltwo} $ where $ 0 \le \sigma \le r $, we have $ (x_0,\xi_0) \not \in \WF_{0,1}^\sigma(u). $
\end{lemma}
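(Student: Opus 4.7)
The strategy is to construct a paradifferential parametrix inverting $\T{a_h}^h$ microlocally near $(x_0,\xi_0)$, push the hypothesized smallness of $\T{a_h}^h u$ through it to obtain smallness of $\T{\chi}^h u$ for some smooth cutoff $\chi$ elliptic at $(x_0,\xi_0)$, and then pass from the paradifferential operator back to the semiclassical quantization via Lemma~\ref{lem::un-paradiff-T-h}. Since $\op_h^{0,1}(\chi) = \chi(x,hD_x)$ is exactly the quantization in the definition of $\WF_{0,1}^\sigma$, the conclusion will then follow.

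Concretely, I would first shrink $\Omega$ so it avoids $\{\xi=0\}$ (possible because $\xi_0 \ne 0$) and pick $\chi \in \Ccinf(\R^{2d})$ with $\chi(x_0,\xi_0)\ne 0$ and $\supp\chi \subset \Omega$. Decomposing $a_h = \sum_{j=0}^{r} h^j a_h^j$ with $a_h^j \in \Gamma^{-\infty,r-j}$, the uniform bound $|a_h|\ge c>0$ on $\Omega$ forces $|a_h^0|\ge c/2$ on $\Omega$ for small~$h$, so $a_h^0$ is elliptic there. Fix an integer $N$ with $\sigma \le N \le r$, and iteratively solve a Borel-type hierarchy for compactly supported symbols $c_h^j \in \Gamma^{-\infty,r-j}$: start with $c_h^0 = \chi/a_h^0$, and obtain each subsequent $c_h^j$ by dividing the accumulated error by $a_h^0$ so as to cancel the order-$h^j$ contribution in the semiclassical product~$\sharp_h^{0,1}$. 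Each $c_h^j$ loses one $x$-derivative relative to $c_h^{j-1}$ because the step involves $D_x^\alpha a_h^0$, which is precisely compatible with the decreasing regularity index $r-j$ and explains the constraint $\sigma \le r$. Setting $q_h = \sum_{j=0}^{N-1} h^j c_h^j$, the construction yields $q_h \sharp_h^{0,1} a_h = \chi + h^N r_h$ with $r_h \in \Gamma^{-\infty,r-N}$ uniformly in $h$.

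Next, I would compose paradifferentially. Choosing the admissible cutoff $\pi$ supported in a half-space $\{|\xi|\ge \epsilon\}$ with $\epsilon$ smaller than the $\xi$-distance from $\supp\chi \cup \supp a_h$ to $\{\xi=0\}$ (possible since both are in $\sigma_0$), Remark~\ref{rmk::paradiff-composition-high-frequency} eliminates the $\OP{-\infty}{0}$ error. Applying Proposition~\ref{prop::composition-T} to the scaled symbols $\theta_{h,*}^{1,0} q_h$ and $\theta_{h,*}^{1,0} a_h$ and using the identity $D_x^\alpha \theta_{h,*}^{1,0} = h^{|\alpha|} \theta_{h,*}^{1,0} D_x^\alpha$ that converts the classical $\sharp$ into the semiclassical $\sharp_h^{0,1}$, then conjugating by~$\tau_h$, one obtains
\[
\T{q_h}^h \T{a_h}^h = \T{\chi}^h + h^N \T{r_h}^h + \O(h^r)_{\Ltwo \to \Ltwo}.
\]
The operators $\T{q_h}^h$ and $\T{r_h}^h$ are uniformly $\Ltwo$-bounded since their symbols lie in $\Gamma^{0,0}$. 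Evaluating at $u$ and using $\T{a_h}^h u = \O(h^\sigma)_{\Ltwo}$ with $\sigma\le N \le r$ gives $\T{\chi}^h u = \O(h^\sigma)_{\Ltwo}$. One more application of Lemma~\ref{lem::un-paradiff-T-h}, this time to the smooth compactly supported~$\chi$, identifies $\T{\chi}^h$ with $\op_h^{0,1}(\chi)$ modulo $\O(h^r)_{\Ltwo\to\Ltwo}$, yielding $\op_h^{0,1}(\chi) u = \O(h^\sigma)_{\Ltwo}$, which is exactly $(x_0,\xi_0) \notin \WF_{0,1}^\sigma(u)$. The main technical obstacle is the uniform-in-$h$ bookkeeping of $x$-regularities through the Borel hierarchy and the scaled composition, so that each $c_h^j$ remains in $\Gamma^{-\infty,r-j}$ and the remainder $r_h$ stays bounded; the choice of a low-frequency cutoff~$\pi$ compatible with the support of $a_h$ is also essential to suppress the smoothing part of Proposition~\ref{prop::composition-T}.
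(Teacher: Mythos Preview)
Your proposal is correct and follows essentially the same approach as the paper's proof: construct a semiclassical paradifferential parametrix $c_h$ (your $q_h$) so that $\T{b_h}^h = \T{c_h}^h \T{a_h}^h + \O(h^r)_{\Ltwo\to\Ltwo}$ for a cutoff $b_h$ (your $\chi$) supported where $a_h$ is elliptic, then un-paralinearize via Lemma~\ref{lem::un-paradiff-T-h}. The paper's version is terser, simply asserting the existence of $c_h \in {}_h\Sigma^{-\infty,r}$, whereas you spell out the Borel hierarchy and the scaling mechanism by which Proposition~\ref{prop::composition-T} applied to $\theta_{h,*}^{1,0}$-scaled symbols produces the $h^r$ remainder; both arguments are the same in substance.
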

\begin{proof}
Assume that $ \Omega \subset \Rd \times (\Rd \backslash 0) $. Let $ b_h \in S^{-\infty}_{-\infty} $ with $ \supp b_h \subset \Omega $. 
Then by the symbolic calculus stated in Corollary~\ref{cor:symbolic-calculus-limited-xi-regularity}, there exists $ c_h = \sum_{j=0}^r h^j c_h^j $ where $ c_h^j \in \Upsilon^{1+r-j} $, such that 
\begin{equation*}
\T{b_h}^h = \T{c_h}^h \T{a_h}^h + \O(h^r)_{\Ltwo\to\Ltwo}.
\end{equation*}
Therefore $ \T{b_h}^h u = \O(h^\sigma)_{\Ltwo} $.
By Lemma~\ref{lem::un-paradiff-T-h} we have $ \op_h(b_h) u = \O(h^\sigma)_{\Ltwo}. $
We conclude by Lemma~\ref{lem::support=decay}.
\end{proof}

\begin{lemma}
\label{lem::WF-paradiff-elliptic}
Let $ \epsilon \ge 0 $, $ e \in \Gamma^{m,r}_{0,0} $ (if $ \epsilon= 0 $) resp.\ $ \Gamma^{m,r}_{0,1} $ (if $ \epsilon > 0 $), and suppose that~$ e $ is homogeneous of degree~$ m $ with respect to~$ \xi $. Then for $ f \in \H{s} $ and $ 0 \le \sigma \le (1+\epsilon) r $,
\begin{equation*}
\WF_{\epsilon,1}^{s+\sigma-m}(\P{e} f)^\circ \subset \WF_{\epsilon,1}^{s+\sigma}(f)^\circ.
\end{equation*}
If in addition $ e $ is elliptic, i.e., for some $ C > 0 $ and $ |\xi| $ sufficiently large, $ |e(x,\xi)| \ge C|\xi|^m, $ then
\begin{align*}
\WF_{\epsilon,1}^{s+\sigma-m}(\P{e} f)^\circ = \WF_{\epsilon,1}^{s+\sigma}(f)^\circ.
\end{align*}
\end{lemma}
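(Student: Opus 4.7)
The plan is to combine the semiclassical paradifferential calculus developed above with a parametrix construction in the elliptic case. For the inclusion, fix $(x_0, \xi_0) \notin \WF_{\epsilon, 1}^{s+\sigma}(f)^\circ$ with $(x_0, \xi_0) \notin \mathscr{N}_{\epsilon, 1}$, and choose $a \in \Ccinf(\R^{2d})$ with $a(x_0, \xi_0) \ne 0$, $\supp a \cap \mathscr{N}_{\epsilon, 1} = \emptyset$, and $\op_h^{\epsilon, 1}(a) f = \O(h^{s+\sigma})_{\Ltwo}$, together with $a' \in \Ccinf(\R^{2d})$ elliptic at $(x_0, \xi_0)$ whose support lies inside $\{a \ne 0\}$. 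Using the homogeneity of $e$, Corollary~\ref{cor::homogenization-semiclassical} produces $h^m \P{e} = \P{e}^h + R_h$ with $R_h$ carrying $\xi$-support $\{|\xi| \le 2h^{-1}\}$. After identifying $\op_h^{\epsilon, 1}(a')$ with $\P{a'}^{h, \epsilon}$ modulo $\O(h^\infty)_{\Ltwo\to\Ltwo}$ via Lemma~\ref{lem::un-paradiff-estimate-smooth-symbol}, the composition $\op_h^{\epsilon, 1}(a') R_h = \O(h^\infty)_{\Ltwo\to\Ltwo}$ follows from Lemma~\ref{lem::low-freq-composition} applied after the conjugation $\op_h^{\epsilon, 1}(a') = \vartheta_h^\epsilon \op_h^{0, 1+\epsilon}(a') (\vartheta_h^\epsilon)^{-1}$ (or directly from the $\P{b} R_h = \O(h^\infty)$ clause of Corollary~\ref{cor::homogenization-semiclassical}). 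It remains to bound $\op_h^{\epsilon, 1}(a') \P{e}^h f$; Corollary~\ref{cor::mixed-composition-epsilon-0} represents $\P{a'}^{h, \epsilon} \P{e}^h$ as a semiclassical paradifferential operator whose principal symbol is supported inside $\{a \ne 0\}$, and the hypothesis on $\op_h^{\epsilon, 1}(a) f$ then yields $h^m \op_h^{\epsilon, 1}(a') \P{e} f = \O(h^{s+\sigma})_{\Ltwo}$, completing the inclusion.

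\emph{Equality under ellipticity.} Choose $\pi_0 \in \Ccinf(\Rd)$ equal to $1$ near $\xi = 0$, and set $e'(x, \xi) = (1 - \pi_0(\xi))/e(x, \xi)$. Then $e' \in \Gamma^{-m, r}_{0, \delta}$ (with $\delta = 0$ if $\epsilon = 0$ and $\delta = 1$ if $\epsilon > 0$) is homogeneous of degree $-m$ in $\xi$ outside a neighborhood of zero, and the symbolic calculus gives $e' \sharp e - 1 \in \Sigma^{-1, r-1}_{0, \delta} + \Gamma^{-\infty, 0}$. Proposition~\ref{prop::composition-P} then yields $\P{e'} \P{e} = \Id + \mathcal{R}$ with $\mathcal{R} \in \OP{-r}{-\delta r} + \OP{-\infty}{0}$, so $f = \P{e'}(\P{e} f) - \mathcal{R} f$. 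Applying the inclusion just established to $\P{e'}$ (which is of order $-m$) on $\P{e} f \in \H{s-m}$ gives $\WF_{\epsilon, 1}^{s+\sigma}(\P{e'}(\P{e} f))^\circ \subset \WF_{\epsilon, 1}^{(s-m)+\sigma}(\P{e} f)^\circ$; meanwhile $\mathcal{R} f \in \H{s+r}_{\delta r}$, so Lemma~\ref{lem::basic-properties-WF}\ref{WF::vanish} applied to this weighted regularity shows $\WF_{\epsilon, 1}^{s+\sigma}(\mathcal{R} f)^\circ = \emptyset$ throughout the range $0 \le \sigma \le (1+\epsilon) r$, yielding the reverse inclusion.

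\emph{Main obstacle.} The chief technical point is verifying that the low-frequency remainder $R_h$ of Corollary~\ref{cor::homogenization-semiclassical} and the cutoff $\pi_0$ in the parametrix both compose to $\O(h^\infty)$ errors against test symbols supported away from $\mathscr{N}_{\epsilon, 1}$, once all operators are transferred to the quasi-homogeneous scale via the rescalings $\theta_{h, *}^{\epsilon, 0}$ and $\vartheta_h^\epsilon$. Once this bookkeeping is in place, the weighted Sobolev class $\H{s+r}_{\delta r}$ of the parametrix residue matches the $\sigma$-range $[0, (1+\epsilon) r]$ exactly via Lemma~\ref{lem::basic-properties-WF}\ref{WF::vanish}, closing the argument.
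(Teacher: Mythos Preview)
Your inclusion argument follows essentially the same route as the paper's: pass from $h^m\P{e}$ to $\P{e}^h$ via Corollary~\ref{cor::homogenization-semiclassical}, kill the low-frequency remainder against a symbol in $\sigma_\epsilon$, then use Corollary~\ref{cor::mixed-composition-epsilon-0}. The paper, however, first reduces to $s=0$ by writing $f=Z^{-s}(Z^sf)+(\text{smooth})$ with $Z^\mu=\P{|\xi|^\mu}$, so that $Z^sf\in L^2$ and the $\O(h^{(1+\epsilon)r})_{\Ltwo\to\Ltwo}$ remainders from the composition formulas apply directly. Without this reduction your remainder bounds do not immediately control $f\in H^s$ when $s<0$; one can repair this by inserting a frequency annulus cutoff (which $a'$ already carries), but the reduction is cleaner. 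The paper also writes the splitting $\op_h^{\epsilon,1}(a)+\op_h^{\epsilon,1}(1-a)=\Id$ explicitly, while your phrase ``the hypothesis \dots\ then yields'' hides exactly this step.

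There is a genuine gap in your elliptic argument. With the single-step inverse $e'=(1-\pi_0)/e$ you correctly have $e'\sharp e-1\in\Sigma^{-1,r-1}_{-\delta,\delta}+\Gamma^{-\infty,0}$, the subprincipal contribution $\tfrac{1}{\alpha!}\pxi^\alpha e'\,D_x^\alpha e$ with $|\alpha|=1$ lying in $\Gamma^{-1,r-1}_{-\delta,\delta}$. Proposition~\ref{prop::composition-P} gives $\P{e'}\P{e}-\P{e'\sharp e}\in\OP{-r}{-\delta r}+\OP{-\infty}{0}$, but $\P{e'\sharp e}-\Id=\P{e'\sharp e-1}+(\P{1}-\Id)$ is only in $\OP{-1}{-\delta}+\OP{-\infty}{0}$, \emph{not} $\OP{-r}{-\delta r}$. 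Hence $\mathcal{R}=\P{e'}\P{e}-\Id\in\OP{-1}{-\delta}+\OP{-\infty}{0}$, so $\mathcal{R}f\in H^{s+1}_{\delta}$, and Lemma~\ref{lem::basic-properties-WF}\ref{WF::vanish} yields $\WF_{\epsilon,1}^{s+\sigma}(\mathcal{R}f)^\circ=\emptyset$ only for $\sigma\le 1+\epsilon$, far short of $(1+\epsilon)r$. The fix is the standard iterated parametrix: define $e'_0=e'$ and inductively $e'_j$ so that $(\sum_{i\le j}e'_i)\sharp e-1$ has order $\le -(j+1)$, up to $j=r-1$; then $\tilde e'=\sum_{j<r}e'_j\in\Sigma^{-m,r}_{0,\delta}$ satisfies $\P{\tilde e'}\P{e}-\Id\in\OP{-r}{-\delta r}+\OP{-\infty}{0}$, after which your $\sigma$-range computation is correct. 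The paper's ``routine construction of parametrix'' means this iterated version.
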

\begin{proof}
For $ \mu \in \R $, denote $ Z^\mu = \P{|\xi|^\mu} $. Then $ Z^{-\mu} Z^\mu - \Id \in \OP{-\infty}{-\infty}. $ Therefore, 
\begin{equation*}
f - Z^{-s} Z^s f \in \H{\infty}_{\infty}, \quad \P{e} f  - \P{e \sharp |\xi|^{-s}} Z^s f \in \H{s+r-m}_{\delta r} + H^\infty, 
\end{equation*}
where $ \delta = 0 $ if $ \epsilon = 0 $, while $ \delta = 1 $ if $ \epsilon > 0 $. By Lemma~\ref{lem::basic-properties-WF} and the fact that $ Z^{\pm s} $ are pseudodifferential operators with elliptic symbols in $ S^{\pm s}_0 $, we readily have
\begin{align*}
\WF^{s+\sigma}_{\epsilon,1}(f)^\circ = \WF^\sigma_{\epsilon,1}(Z^s f)^\circ, \quad
\WF^{s+\sigma-m}_{\epsilon,1}(\P{e} f)^\circ = \WF^{\sigma - (m-s)}_{\epsilon,1}(\P{e|\xi|^{-s}} Z^s f)^\circ.
\end{align*}
So we may assume that $ s = 0 $. Let $ a, b \in S^{-\infty}_{-\infty} \cap \sigma_\epsilon $, such that 
\begin{equation*}
\supp b \subset \{ a = 1 \} \subset \supp a \subset \R^{2d} \backslash \WF_{\epsilon,1}^{\sigma}(f),  
\end{equation*}
then by Lemma~\ref{lem::support=decay}, $ \op_h^{\epsilon,1}(a) f = \O(h^{\sigma})_{\Ltwo}. $
By Corollary~\ref{cor::homogenization-semiclassical}, Lemma~\ref{lem::un-paradiff-estimate-smooth-symbol}, Proposition~\ref{prop::composition-P-h}, and Corollary~\ref{cor::mixed-composition-epsilon-0},
\begin{align*}
h^m \op_h^{\epsilon,1}(b) \P{e} f
& = \op_h^{\epsilon,1}(b) \P{e}^h f + \O(h^\infty)_{\Ltwo} \\
& = \op_h^{\epsilon,1}(b) \P{e}^h \op_h^{\epsilon,1}(a) f + \op_h^{\epsilon,1}(b) \P{e}^h \op_h^{\epsilon,1}(1-a) f + \O(h^\infty)_{\Ltwo}  \\
& = \O(1)_{\Ltwo \to \Ltwo} \op_h^{\epsilon,1}(a) f + \O(h^{r(1+\epsilon)})_{\Ltwo}  \\
& = \O(h^{\sigma})_{\Ltwo},
\end{align*}
proving the first statement. The second statement follows by a construction of parametrix.
\end{proof}

\section{Asymptotically flat water waves}

\label{sec::asymptotically-flat-ww}

In this section we prove Theorem~\ref{thm::ww-weighted-sobolev-existence}. The idea is to combine the analysis in \cite{ABZ11capillary} with the dyadic paradifferential calculus in weighted Sobolev spaces. We shall use the following formal notations for simplicity. Let $ w $ be a function on $\mathbb{R}^d$ which is nowhere vanishing, then for any operator $ \A $ between some function spaces on $\mathbb{R}^d$ and for any function $ f $ on $\mathbb{R}^d$, we introduce the following notations whenever they are well-defined:
\begin{equation*}
\A^{(w)} = w \A w^{-1}, \quad f^{(w)} = wf.
\end{equation*}
Note that $ (\A f)^{(w)} = \A^{(w)} f^{(w)}. $ 
For $ k \in \R $, we also denote by an abuse of notation that
\begin{equation*}
\A^{(k)} = \A^{(\jp{x}^k)}, \quad f^{(k)} = f^{(\jp{x}^k)}.
\end{equation*}
Observe that $ \Ltwo_k = \H{0}_k $ is an Hilbert space with the inner product
\begin{equation*}
(f,g)_{\Ltwo_k} = (f^{(k)}, g^{(k)})_{\Ltwo}.
\end{equation*}

\subsection{Dirichlet--Neumann operator}

\label{sec::D-N-op}

We study the Dirichlet--Neumann operator on weighted Sobolev spaces and its paralinearization. The time variable will be temporarily omitted for simplicity.

\subsubsection{Boundary flattening}

Let $ \eta \in \Holder{1}(\Rd) $, such that
\begin{equation}
\label{eq::depth-delta}
\delta \bydef b + \inf_{x \in \Rd} \eta(x) > 0.
\end{equation}
Define $ \tau(x,z) = (x,z + \eta(x)) $ and set
\begin{align*}
\tilde{\Omega} & = \tau^{-1}(\Omega) = \{-b-\eta(x) < z < 0\},\\
\tilde{\Sigma} & = \tau^{-1}(\Sigma) = \{z=0\},\\
\tilde{\Gamma} & = \tau^{-1}(\Gamma) = \{z=-b-\eta(x)\}.
\end{align*}
Let~$ \tau_* $ be the pullback deduced by~$ \tau $, then 
\begin{equation*}
\tau_*(\dx^2+\dy^2) = (\dx\ \dz) \varrho \binom{\dx}{\dz}
\end{equation*}
where 
\begin{equation*}
\varrho = \begin{pmatrix}
\Id + (\nabla\eta) {}^t(\nabla\eta) & \nabla \eta \\ {}^t(\nabla\eta) & 1
\end{pmatrix}.
\end{equation*}
We verify that
\begin{equation*}
\varrho^{-1} = \begin{pmatrix}
\Id & -\nabla \eta \\ -{}^t(\nabla\eta) & 1 + |\nabla \eta|^2
\end{pmatrix}.
\end{equation*}
Let $ \nabla_{xz}=(\nabla_x,\pz) $, then the divergence, gradient and Laplacian with respect to the metric~$ \varrho $ are
\begin{align*}
\div_\varrho u & = \nabla_{xz} \cdot u, \\
\nabla_\varrho u & = \big(\nabla u - \nabla\eta \pz u,-\nabla\eta \cdot \nabla u + (1 + |\nabla\eta|^2) \pz u\big), \\
\Delta_\varrho u &  = \pz^2 u + (\nabla - \nabla\eta \pz)^2 u.
\end{align*}
The exterior unit normal to~$ \partial\tilde{\Omega} = \tilde{\Sigma} \cup \tilde{\Gamma} $ is
\begin{equation*}
\norm_\varrho = \langle (D\tau)^{-1}|_{T\partial\tilde{\Omega}}, \norm\rangle = \begin{cases}
\frac{{}^t(-\nabla\eta, 1 + |\nabla\eta|^2)}{\sqrt{1+|\nabla\eta|^2}}, & \tilde{\Sigma}; \\
{}^t(0,1), & \tilde{\Gamma}.
\end{cases}
\end{equation*}

Let $ \psi \in \H{1/2} $, and suppose that $ \phi $ satisfies the equation
\begin{equation*}
\Delta_{xy} \phi = 0, \quad \phi|_\Sigma = \psi, \quad \pn \phi|_\Gamma = 0,
\end{equation*}
then $ v = (\tau|_{\tilde{\Omega}})_* \phi $ satisfies
\begin{equation}
\label{eq::equation-elliptic-v}
\Delta_\varrho v = 0, \quad 
v|_{\tilde{\Sigma}} = \psi, \quad \partial_{\norm_\varrho} v|_{\tilde{\Gamma}} = 0.
\end{equation}
The Dirichlet--Neumann operator now writes as
\begin{equation*}
\sqrt{1+|\nabla\eta|^2}^{-1} G(\eta) \psi =  \partial_{\norm_\varrho} v|_{\tilde{\Sigma}}
= \norm_\varrho \cdot \nabla_{xz} v|_{z=0}.
\end{equation*}

\subsubsection{Elliptic estimate}
\label{sec:elliptic-estimate}

Let $ \chi_0 \in \Cinf(\R) $ with $ \chi_0(z) = 0 $ for $ z \le -\delta/2 $ and $ \chi_0(z) = 1 $ for $ z \ge 0 $. Decompose $ v = \tilde{v} + \ul{\psi} $, where
\begin{equation*}
\ul{\psi}(x,z) = \chi_0(z) e^{z\jp{D_x}} \psi(x).
\end{equation*}

\begin{lemma}
\label{lem::lift-of-psi}
Let $ n \in \N $, $ m \in \R $, $ \mu \in \R $, $ k \in \R $, $ a \in S^m_0 $, then
\begin{equation*}
\|\pz^n \op(a) \ul{\psi}\|_{\Ltwo_z(\R_{\le 0},\H{\mu-n-m+1/2}_k)} \lesssim \|\psi\|_{\H{\mu}_k}.
\end{equation*}
\end{lemma}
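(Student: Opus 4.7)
My plan is to identify this as a weighted and symbol-twisted version of the classical half-derivative smoothing estimate for the harmonic extension. The structure of the target norm $\H{\mu-n-m+1/2}_k$ already reveals exactly where each loss or gain comes from: $-n$ from $\pz^n$, $-m$ from $\op(a)$, and the crucial $+1/2$ from integrating $e^{z\jp{D_x}}$ against itself in $z$.

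First I would observe that $\op(a)$ acts only in the $x$ variable, so it commutes with $\pz$, giving $\pz^n \op(a) \ul{\psi} = \op(a) \pz^n \ul{\psi}$. Since $a \in S^m_0$ implies $\op(a) \in \OP{m}{0}$ (mapping $\H{\mu-n+1/2}_k \to \H{\mu-n-m+1/2}_k$), the lemma reduces to the case $a = 1$, $m = 0$, i.e., to showing
\begin{equation*}
\|\pz^n \ul{\psi}\|_{\Ltwo_z(\R_{\le 0}, \H{\mu-n+1/2}_k)} \lesssim \|\psi\|_{\H{\mu}_k}.
\end{equation*}
Applying Leibniz with $\pz e^{z\jp{D_x}} = \jp{D_x} e^{z\jp{D_x}}$ yields
\begin{equation*}
\pz^n \ul{\psi} = \sum_{j=0}^n \binom{n}{j} \chi_0^{(j)}(z)\, \jp{D_x}^{n-j} e^{z\jp{D_x}} \psi.
\end{equation*}
For $j \ge 1$, $\chi_0^{(j)}$ is supported in the bounded interval $[-\delta/2, 0]$, so the $z$-integration is reduced to a pointwise estimate which follows from the fact that $\jp{\xi}^{n-j}e^{z\jp{\xi}}$ is uniformly bounded in $S^{n-j}_0$ for $z \le 0$ (using $|z|^{|\alpha|} e^{z\jp{\xi}/2} \lesssim_\alpha \jp{\xi}^{-|\alpha|}$) combined with the embedding $\H{\mu-n+j}_k \hookrightarrow \H{\mu-n+1/2}_k$ when $j \ge 1$.

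The delicate term is $j=0$, which after commuting Fourier multipliers reduces to the key smoothing estimate
\begin{equation*}
\int_{-\infty}^0 \|e^{z\jp{D_x}}\phi\|_{\H{s+1/2}_k}^2 \dz \lesssim \|\phi\|_{\H{s}_k}^2.
\end{equation*}
In the unweighted case this is immediate from Plancherel:
\begin{equation*}
\int_{-\infty}^0 \|e^{z\jp{D_x}}\phi\|_{\H{s+1/2}}^2 \dz = \int \jp{\xi}^{2s+1}|\hat\phi(\xi)|^2 \int_{-\infty}^0 e^{2z\jp{\xi}}\dz\, \dxi = \tfrac{1}{2}\|\phi\|_{\H{s}}^2.
\end{equation*}
To incorporate the spatial weight, I would apply the dyadic characterization of Proposition~\ref{prop::characterization-weighted-Sobolev} to reduce to estimating $\|\psi_j e^{z\jp{D_x}}\phi\|_{\H{s+1/2}}$, then split $\psi_j e^{z\jp{D_x}}\phi = e^{z\jp{D_x}}(\psi_j \phi) + [\psi_j, e^{z\jp{D_x}}]\phi$. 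The main term is controlled by the unweighted estimate applied to $\psi_j\phi$; the commutator has a principal symbol of order $2^{-j} \pxi e^{z\jp{\xi}}\, 2^j \nabla\psi_j$, and the pointwise control $|z \pxi e^{z\jp{\xi}}| \lesssim \jp{\xi}^{-1}$ provides the extra decay needed to sum the off-diagonal interactions across dyadic scales.

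The main obstacle is this last step: transferring the half-derivative smoothing from the unweighted to the weighted setting requires handling the commutator between $\jp{x}^k$ (or equivalently the spatial cut-offs $\psi_j$) and the semigroup $e^{z\jp{D_x}}$, where one must carefully balance the $\Ltwo_z$-integration (which delivers the $1/2$-derivative gain) against the pointwise estimates on $ze^{z\jp{\xi}}$ arising from symbol differentiation. Once this commutator analysis is in place, the estimate can be summed over the dyadic frequencies with the weight $2^{2jk}$, and the remaining routine bounds close the argument.
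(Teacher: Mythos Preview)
Your reduction to $a=1$ and the Leibniz expansion are fine, and your identification of the key smoothing estimate is exactly right. The approach is correct, but it differs from the paper's in how the spatial weight is handled.

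You localize in $x$ via the dyadic partition $\{\psi_j\}$ and then control the commutators $[\psi_j, e^{z\jp{D_x}}]$. The paper bypasses the dyadic decomposition entirely: it sets $\lambda(z,\xi) = \chi_0(z)e^{z\jp{\xi}}\jp{\xi}^{1/2} \in \Linf_{z\le 0} S^{1/2}_0$ and $b = a\jp{\xi}^{\mu-m} \in S^\mu_0$, reduces the estimate to $\|\op(\lambda)\op(b)\psi\|_{\Ltwo_z\Ltwo_k}$, and then works directly in the weighted inner product by conjugating by $\jp{x}^k$. The symbolic calculus gives, uniformly in $z\le 0$, that $\op(\lambda)^{(k)}-(\op(\lambda)^{(k)})^*$ is of order $-1/2$ and $(\op(\lambda)^{(k)})^2-\op(\lambda^2)^{(k)}$ is of order $0$; the entire smoothing gain then collapses to the single observation
\[
\sigma(\xi) \;:=\; \int_{-\infty}^0 \lambda^2(z,\xi)\,\dz \;=\; \jp{\xi}\int_{-\infty}^0 \chi_0^2(z)e^{2z\jp{\xi}}\,\dz \;\in\; S^0_0,
\]
so that $\|\op(\lambda)\op(b)\psi\|_{\Ltwo_z\Ltwo_k}^2 = (\op(\sigma)\op(b)\psi,\op(b)\psi)_{\Ltwo_k} + \O(\|\psi\|_{\H{\mu}_k}^2)$.

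The paper's route is shorter precisely because $e^{z\jp{D_x}}$ is a Fourier multiplier: conjugation by $\jp{x}^k$ keeps everything inside the standard symbol classes, so no explicit commutator-plus-tail analysis is needed. Your dyadic approach is more hands-on and would still work (the off-diagonal pieces $[\psi_j,e^{z\jp{D_x}}](1-\tilde\psi_j)$ decay rapidly by the Poisson-type kernel bounds, and the near-diagonal commutator gains both a factor $2^{-j}$ from $\nabla\psi_j$ and an extra $\jp{\xi}^{-1}$ in the $\Ltwo_z$ sense from $\int_{-\infty}^0 z^2 e^{2z\jp{\xi}}\,\dz\sim\jp{\xi}^{-3}$), but it is the longer path here. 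One small slip: the pointwise bound you want is $|\pxi e^{z\jp{\xi}}|\lesssim\jp{\xi}^{-1}$, not $|z\,\pxi e^{z\jp{\xi}}|$, since $\pxi e^{z\jp{\xi}}$ already carries the factor $z$.
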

\begin{proof}
We only prove the case where $ n = 0 $. The general case follows with a similar argument and the identity 
\begin{equation*}
\pz^n \ul{\psi}(x,z) = \sum_{j=0}^n \binom{n}{j} \chi_0^{(n-j)}(z) \jp{D_x}^je^{z\jp{D_x}} \psi(x). 
\end{equation*}
Let 
\begin{align*}
b(x,\xi) & = a(x,\xi)\jp{\xi}^{\mu-m} \in S^\mu_0, \\
\lambda(z,\xi) & = \chi_0(z) e^{z\jp{\xi}} \jp{\xi}^{1/2} \in \Linf_{z\le 0} S^{1/2}_0.
\end{align*}
Then for all $ N \ge 0 $,
\begin{align*}
\|\op(a) \ul{\psi}\|_{\Ltwo_z(\R_{\le 0},\H{\mu-m+1/2}_k)}
& \lesssim \|\op(\lambda) \op(b) \psi\|_{\Ltwo_z(\R_{\le 0},\Ltwo_k)} + \|\psi\|_{\H{-N}_k}.
\end{align*}
Observe that
\begin{align*}
\op(\lambda)^{(k)} - \big( \op(\lambda)^{(k)} \big)^* & \in \Linf_{z\le 0}\OP{-1/2}{0}, \\
\big( \op(\lambda)^{(k)} \big)^2 - \op(\lambda^2)^{(k)} & \in \Linf_{z\le 0}\OP{0}{0}.
\end{align*}
Also note that
\begin{equation*}
\sigma(\xi) \bydef \int_{-\infty}^0 \lambda^2(z,\xi) \dz
=  \jp{\xi} \int_{-\infty}^{0} \chi_0^2(z) e^{2\jp{\xi}z} \dz
\in S^{0}_0.
\end{equation*}
Therefore,
\begin{align*}
\|\op(\lambda) \op(b) \psi\|_{\Ltwo_z(\R_{\le 0},\Ltwo_k)}^2 
& = \big( \op(\lambda^2) \op(b) \psi, \op(b) \psi \big)_{\Ltwo_z(\R_{\le 0},\Ltwo_k)} 
+ \O(\|\psi\|_{\H{\mu}_k}^2) \\
& = \big( \op(\sigma) \op(b) \psi, \op(b) \psi \big)_{\Ltwo_k} + \O(\|\psi\|_{\H{\mu}_k}^2)
= \O(\|\psi\|_{\H{\mu}_k}^2).\qedhere
\end{align*}
\end{proof}

\begin{lemma}
\label{lem::elliptic-estimate-v}
For all $ k \in \R $, $ \|\tilde{v}\|_{\H{1}_k} \le C(\|\eta\|_{\Holder{1}}) \|\psi\|_{\H{1/2}_k}. $
\end{lemma}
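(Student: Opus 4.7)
The plan is to run a weighted energy estimate for the elliptic equation solved by $\tilde v$. A key preliminary observation is that $\chi_0$ is supported in $\{z \ge -\delta/2\}$, whereas $\tilde\Gamma = \{z = -b-\eta(x)\} \subset \{z \le -\delta\}$ by \eqref{eq::depth-delta}; hence $\underline\psi$ and all of its derivatives vanish on a neighborhood of $\tilde\Gamma$, so $\partial_{\norm_\varrho}\underline\psi|_{\tilde\Gamma} = 0$. Consequently $\tilde v = v - \underline\psi$ solves the homogeneous Dirichlet--Neumann problem
\begin{equation*}
\Delta_\varrho \tilde v = -\Delta_\varrho \underline\psi \text{ in } \tilde\Omega, \quad \tilde v|_{\tilde\Sigma} = 0, \quad \partial_{\norm_\varrho}\tilde v|_{\tilde\Gamma} = 0,
\end{equation*}
which is equivalent, in variational form, to $B(\tilde v,\phi) = -B(\underline\psi,\phi)$ for every test function $\phi$ with $\phi|_{\tilde\Sigma} = 0$, where $B(u,w) := \int_{\tilde\Omega} \varrho^{-1}\nabla_{xz}u\cdot\nabla_{xz}w\,dx\,dz$ (recall that $\det\varrho = 1$).

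The form $B$ is uniformly coercive, with $B(u,u) \ge c\|\nabla_{xz}u\|_{L^2(\tilde\Omega)}^2$ for a constant $c > 0$ depending only on $\|\nabla\eta\|_{L^\infty}$, and the Poincar\'e inequality $\|u\|_{L^2(\tilde\Omega)} \le (b+\|\eta\|_{L^\infty})\|\pz u\|_{L^2(\tilde\Omega)}$ holds for every $u$ vanishing on $\tilde\Sigma$. For the unweighted case $k = 0$ I would simply test against $\phi = \tilde v$, bound $|B(\underline\psi,\tilde v)| \le \|\nabla_{xz}\underline\psi\|_{L^2(\tilde\Omega)}\|\nabla_{xz}\tilde v\|_{L^2(\tilde\Omega)}$ via Cauchy--Schwarz, and invoke Lemma~\ref{lem::lift-of-psi} (with $n = 0,1$ and symbols $1$, $\partial_{x_j}\jp{\xi}^{-1}$) to get $\|\nabla_{xz}\underline\psi\|_{L^2(\tilde\Omega)} \lesssim \|\psi\|_{\H{1/2}}$; Poincar\'e then upgrades the gradient bound to a full $H^1$ bound.

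For general $k \in \R$ I would take $\phi = \jp{x}^{2k}\tilde v$, which still vanishes on $\tilde\Sigma$. Expanding
\begin{equation*}
\nabla_{xz}(\jp{x}^{2k}\tilde v) = \jp{x}^{2k}\nabla_{xz}\tilde v + \tilde v\,\nabla\jp{x}^{2k}, \qquad |\nabla\jp{x}^{2k}| \lesssim |k|\jp{x}^{2k-1},
\end{equation*}
yields
\begin{equation*}
\int_{\tilde\Omega}\jp{x}^{2k}\varrho^{-1}|\nabla_{xz}\tilde v|^2\,dx\,dz \lesssim |k|\,\|\jp{x}^k\nabla_{xz}\tilde v\|_{L^2}\|\jp{x}^{k-1}\tilde v\|_{L^2} + |B(\underline\psi,\jp{x}^{2k}\tilde v)|.
\end{equation*}
Applying Poincar\'e to the function $\jp{x}^k\tilde v$ (which vanishes on $\tilde\Sigma$) controls $\|\jp{x}^{k-1}\tilde v\|_{L^2} \le \|\jp{x}^k\tilde v\|_{L^2} \lesssim \|\jp{x}^k\pz\tilde v\|_{L^2}$, so Young's inequality absorbs the first error term into the coercive left-hand side, producing a constant depending on $k$, $\delta$ and $\|\eta\|_{W^{1,\infty}}$. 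The right-hand side is bounded by Cauchy--Schwarz and by Lemma~\ref{lem::lift-of-psi} applied with weight $\jp{x}^k$, giving $\|\jp{x}^k\nabla_{xz}\underline\psi\|_{L^2(\tilde\Omega)} \lesssim \|\psi\|_{\H{1/2}_k}$. Combining with Poincar\'e one more time furnishes the claimed estimate.

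The main obstacle is the absorption step when $|k|$ is large, because the Poincar\'e-boosted error carries a factor $|k|\delta$ that must be dominated by the coercivity constant; this is not automatic but is resolved by observing that $k$ is a fixed real parameter, so the final constant $C = C(\|\eta\|_{W^{1,\infty}})$ may implicitly also depend on $k$ and on the lower depth bound $\delta > 0$ guaranteed by \eqref{eq::depth-delta}.
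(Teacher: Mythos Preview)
Your variational setup is correct and matches the paper's: $\tilde v$ solves $\Delta_\varrho \tilde v = -\Delta_\varrho\underline\psi$ with the stated boundary conditions, and the weighted energy estimate is the right idea. The gap is in the absorption step, and your own final paragraph does not actually close it.

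After Young's inequality and Poincar\'e, the error term you produce is bounded below, not above, by something like $c'\,|k|\,(b+\|\eta\|_{L^\infty})\,\|\jp{x}^k\nabla_{xz}\tilde v\|_{L^2}^2$, with $c'$ an absolute constant. This has to be absorbed into the coercive term $c(\|\nabla\eta\|_{L^\infty})\,\|\jp{x}^k\nabla_{xz}\tilde v\|_{L^2}^2$. When $|k|$ is large enough that $c'\,|k|\,(b+\|\eta\|_{L^\infty}) \ge c$, the inequality $c\,A^2 \le c'\,|k|\,C_{\mathrm{Poinc}}\,A^2 + (\text{data})$ carries no information. Allowing the \emph{final} constant to depend on $k$ is irrelevant: the issue is that the intermediate inequality simply fails, not that a constant blows up. The bound $\|\jp{x}^{k-1}\tilde v\|\le\|\jp{x}^k\tilde v\|$ throws away the extra $\jp{x}^{-1}$ decay and leaves you with no smallness.

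The paper's fix is to replace $\jp{x}^k$ by a regularized weight $w(x)=R\,\zeta(\jp{x}^k/R)$ that is constant (equal to $R$) on a large ball and agrees with $\jp{x}^k$ outside; then $w^{-1}|\nabla w|\lesssim |k|\,R^{-1/k}$ uniformly in $x$, and by choosing $R$ large (depending on $k$ and $\|\eta\|_{W^{1,\infty}}$) this relative gradient becomes as small as one likes, restoring coercivity of the conjugated bilinear form. After the estimate for $\tilde v^{(w)}$ one recovers the $\jp{x}^k$-weighted bound since $w\simeq\jp{x}^k$ up to constants depending on $R$ (hence on $k$). This weight-truncation trick is the missing idea in your argument.
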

\begin{proof}
Let~$ \H{1,0}_\varrho $ be the completion of the space 
\begin{equation*}
\{ f \in \Cinf(\tilde{\Omega}) : f\mathrm{\ vanishes\ in\ a\ neighborhood\ of\ } \tilde{\Sigma} \}
\end{equation*}
with respect to the norm
\begin{equation*}
\|u\|_{\H{1,0}_\varrho} \bydef \|\nabla_\varrho u\|_{\Ltwo_\varrho} = (\nabla_\varrho u,\nabla_\varrho u)_{\Ltwo_\varrho}^{1/2},
\end{equation*}
where $ (X,Y)_{\Ltwo_\varrho} \bydef \int_{\tilde{\Omega}} \varrho(X,Y) \dx\dz. $ As $ b < \infty $, by Poincaré inequality, 
\begin{equation*}
\|u\|_{\Ltwo} \le C(\|\eta\|_{\Linf}) \|\pz u\|_{\Ltwo} \le C(\|\eta\|_{\Holder{1}}) \|u\|_{\H{1,0}_\varrho}
\end{equation*}
for all $ u \in \H{1,0}_\varrho $.
Let $ 0 < \zeta \in \Cinf(\R) $ be such that $ \zeta(z) = 1 $ for $ |z| \le 1 $, and $ \zeta(z) = z $ for $ |z| \ge 2 $. For some $ R > 0 $ sufficiently large to be determined later, set $ w(x) = R \times \zeta\big(\jp{x}^k / R\big). $ Then $ \jp{x}^k \lesssim w(x) \lesssim R \jp{x}^k $, $ \supp \nabla w \subset \{\jp{x} \gtrsim R^{1/k}\} $, and $ |\nabla w(x)| \lesssim R^{(k-1)/k}  $.

As $ \tilde{v} $ satisfies the equation $ \Delta_\varrho \tilde{v} = - \Delta_\varrho \upsi, $ we consider~$ \tilde{v}^{(w)} $ as the variational solution to the equation 
$ B\big(\tilde{v}^{(w)},\cdot\big) = - L(\cdot), $
where for $ u, \varphi \in \H{1,0}_\varrho $,
\begin{equation*}
B(u,\varphi) = \big( \nabla_\varrho^{(w)} u, \nabla_\varrho^{(1/w)} \varphi\big)_{\Ltwo(\tilde{\Omega})}, \quad
L(\varphi) = \big( \nabla_\varrho^{(w)} \ul{\psi}^{(w)}, \nabla_\varrho^{(1/w)} \varphi\big)_{\Ltwo(\tilde{\Omega})}. 
\end{equation*}
Observe that $ \nabla_\varrho^{(w^{\pm 1})} = \nabla_\varrho \mp b_w, $ where $ b_w = (w^{-1} \nabla w, -\nabla\eta \cdot w^{-1} \nabla w ) \in \Linf $ satisfies 
$ \|b_w\| \le C(\|\eta\|_{\Holder{1}}) R^{-1/k}. $
We verify that~$ L $ and~$ B $ are continuous linear and bilinear forms on~$ \H{1,0}_\varrho $. Moreover~$ B $ is coercive when~$ R $ is sufficiently large, indeed,
\begin{equation}
\label{eq:coercivity-B}
B(\varphi,\varphi) = \|\nabla_\varrho \varphi\|_{\Ltwo_\varrho}^2 - \|b_w \varphi\|_{\Ltwo_\varrho}^2
\ge \big(1 - C(\|\eta\|_{\Holder{1}})R^{-2/k} \big) \|\nabla_\varrho \varphi\|_{\Ltwo_\varrho}^2.
\end{equation}
Therefore, by Lax-Milgram's Theorem and Lemma~\ref{lem::lift-of-psi},
\begin{equation*}
\|\tilde{v}\|_{\H{1}_k} \lesssim \|\tilde{v}^{(w)}\|_{\H{1,0}_\varrho} \lesssim \|L\|_{(\H{1,0}_\varrho)^*}
\lesssim \|\ul{\psi}\|_{\H{1}} \lesssim \|\psi\|_{\H{1/2}_k}.\qedhere
\end{equation*}
\end{proof}

\begin{proposition}
\label{prop::G(eta)-boundedness-Ltwo}
Let $ (\eta,\psi) \in \Holder{1} \times \H{1/2}_k $, $ k \in \R $, then
$ \|G(\eta) \psi\|_{\H{-1/2}_k} \le C(\|\eta\|_{\Holder{1}}) \|\psi\|_{\H{1/2}_k}. $
\end{proposition}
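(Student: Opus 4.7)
The plan is to argue by duality. Since the $\Ltwo$-pairing identifies the dual of $\H{-1/2}_k$ with $\H{1/2}_{-k}$, the stated bound reduces to establishing
\[
|\langle G(\eta)\psi, \varphi\rangle_{\Ltwo(\Rd)}| \le C(\|\eta\|_{\Holder{1}}) \|\psi\|_{\H{1/2}_k} \|\varphi\|_{\H{1/2}_{-k}}, \qquad \forall \varphi \in \swtz.
\]

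First, I would rewrite this pairing as a volume integral on the fluid domain. Let $\Phi$ be any $\H{1}(\Omega)$-extension of $\bar\varphi$. Green's identity together with $\Delta_{xy}\phi = 0$ and $\pn\phi|_\Gamma = 0$ yields
\[
\langle G(\eta)\psi,\varphi\rangle = \int_\Omega \nabla_{xy}\phi \cdot \nabla_{xy}\Phi \dx\dy,
\]
where I used $\dS = \sqrt{1+|\nabla\eta|^2}\dx$ on $\Sigma$ to absorb the factor in the definition of $G(\eta)\psi$. Pulling back via the boundary-flattening $\tau$ (which has unit Jacobian) and choosing the specific extension $\tilde\Phi \bydef \Phi \circ \tau = \ul{\bar\varphi}(x,z) = \chi_0(z) e^{z\jp{D_x}}\bar\varphi$, a direct computation with $\nabla_x \phi = \nabla_x v - \pz v \nabla\eta$, $\py\phi = \pz v$, and analogously for $\Phi$, shows the integrand is pointwise dominated by $(1+|\nabla\eta|)^2\, |\nabla_{xz} v|\, |\nabla_{xz}\ul\varphi|$.

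Second, I would apply a weighted Cauchy--Schwarz with the splitting $\jp{x}^k \cdot \jp{x}^{-k}$ to obtain
\[
|\langle G(\eta)\psi,\varphi\rangle| \le C(\|\eta\|_{\Holder{1}}) \|\nabla_{xz} v\|_{\Ltwo(\tilde\Omega,\jp{x}^{2k})} \|\nabla_{xz}\ul\varphi\|_{\Ltwo(\tilde\Omega,\jp{x}^{-2k})}.
\]
The first factor is controlled by $C(\|\eta\|_{\Holder{1}}) \|\psi\|_{\H{1/2}_k}$ upon decomposing $v = \tilde v + \upsi$, invoking Lemma~\ref{lem::elliptic-estimate-v} for $\tilde v$ and Lemma~\ref{lem::lift-of-psi} for $\upsi$ (with $n=0$, $a = \xi_j$ for each component of $\nabla_x$, and with $n=1$, $a = 1$ for $\pz$, all at regularity $\mu = 1/2$). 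The second factor is handled by the same applications of Lemma~\ref{lem::lift-of-psi} with $k$ replaced by $-k$, yielding a bound by $\|\varphi\|_{\H{1/2}_{-k}}$.

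I do not anticipate any serious obstacle: the only verifications are that $\ul{\bar\varphi}$ defines an $\H{1}_{-k}(\tilde\Omega)$-function with trace $\bar\varphi$ on $\tilde\Sigma$ (both immediate from Lemma~\ref{lem::lift-of-psi} and the definition of $\chi_0$), and that the coefficients produced when computing $\nabla_{xy}\phi\cdot\nabla_{xy}\Phi$ in the straightened coordinates are polynomials in $\nabla\eta$ bounded by functions of $\|\eta\|_{\Holder{1}}$.
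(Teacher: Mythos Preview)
Your duality argument is correct. The paper takes a different route: it uses Lemmas~\ref{lem::lift-of-psi} and~\ref{lem::elliptic-estimate-v} to place $v$ in $L^2_z(]{-\delta},0[,\H{1}_k)\cap \H{1}_z(]{-\delta},0[,\Ltwo_k)$, then invokes a classical Lions--Magenes type interpolation/trace lemma (quoted from~\cite{ABZ14gravity}) together with the elliptic equation for $v$ to deduce $v\in C^0_z\H{1/2}_k\cap C^1_z\H{-1/2}_k$, from which the $\H{-1/2}_k$ bound on $G(\eta)\psi$ is read off directly at $z=0$.

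Your approach is arguably more self-contained: it replaces the external interpolation lemma by Green's identity and a weighted Cauchy--Schwarz, using only Lemmas~\ref{lem::lift-of-psi} and~\ref{lem::elliptic-estimate-v} (applied once with weight $k$ for $v$ and once with weight $-k$ for the test extension $\ul{\bar\varphi}$). This is exactly the variational characterization of $G(\eta)$ and avoids any appeal to the equation for $\pz^2 v$. The paper's route, on the other hand, yields the stronger intermediate information $v\in C^0_z\H{1/2}_k$ and $\pz v\in C^0_z\H{-1/2}_k$, which is reused verbatim in the bootstrap of Proposition~\ref{prop::D-N-higher-regularity}; your argument gives the boundary estimate but not these interior traces.
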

\begin{proof}
By Lemma~\ref{lem::lift-of-psi} and Lemma~\ref{lem::elliptic-estimate-v}, 
$ v \in \Ltwo_z(]{-\delta},0[,\H{1}_k) \cap \H{1}_z(]{-\delta},0[,\Ltwo_k). $
By a classical interpolation result (see, e.g., \cite[Lemma~2.19]{ABZ14gravity}) and the equation satisfied by~$ v $, we deduce that
\begin{equation*}
v \in C^0_z([-\delta,0],\H{1/2}_k) \cap C^1_z([-\delta,0],\H{-1/2}_k).\qedhere
\end{equation*}
\end{proof}

\subsubsection{Higher regularity}

\begin{proposition}
\label{prop::D-N-higher-regularity}
Let $ (\eta,\psi) \in \H{\mu+1/2} \times \H{\sigma+1/2}_k $ where $ k \in \R $, $  \mu > 1/2+d/2 $, $ 0 \le \sigma \le [\mu- 1/2] $ then
\begin{equation*}
\|G(\eta) \psi\|_{\H{\sigma-1/2}_k} \le C(\|\eta\|_{\H{\mu+1/2}}) \|\psi\|_{\H{\sigma+1/2}_k}.
\end{equation*}
Consequently, if $ (\eta,\psi) \in \H{\mu+1/2} \times \HC{\sigma+1/2,\delta}_{k} $ with $ \delta \ge 0 $, $ k \in \N $ and $ \sigma - k\delta \ge 0 $, then
\begin{equation*}
\|G(\eta) \psi\|_{\HC{\sigma-1/2,\delta}_{k}} \le C(\|\eta\|_{\H{\mu+1/2}}) \|\psi\|_{\HC{\sigma+1/2,\delta}_{k}}.
\end{equation*}
\end{proposition}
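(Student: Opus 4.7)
My plan is to prove the estimate by induction on $\sigma$, starting from the base case $\sigma = 1/2$ already established in Proposition~\ref{prop::G(eta)-boundedness-Ltwo}, and bootstrapping in increments of~$1/2$ using the weighted dyadic paradifferential calculus of \S\ref{sec::paradiff-calculus}. The strategy follows the by-now standard elliptic symbolic factorization of the flattened Laplace equation~\eqref{eq::equation-elliptic-v} used in \cite{ABZ11capillary,AM09:paralinearization}, but the entire argument must be conjugated by the weight~$w \approx \jp{x}^k$ (as in the proof of Lemma~\ref{lem::elliptic-estimate-v}) and the remainders reexpressed via the dyadic operators~$\P{\cdot}$ rather than the usual~$\T{\cdot}$ so that polynomial spatial growth/decay is tracked consistently.

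Concretely, I would paralinearize the equation $\Delta_\varrho v = 0$, writing
\begin{equation*}
\pz^2 v + \P{\alpha} \pz v + \P{\beta} v = f,
\end{equation*}
with $\alpha \in \Gamma^{1,r}_{0,0}$, $\beta \in \Gamma^{2,r}_{0,0}$ built from~$\nabla\eta \in H^{\mu-1/2}$ (so~$r$ can be taken as large as~$\mu - 1/2 - d/2$ by Sobolev injection, which is $>0$ under the hypothesis~$\mu > 1/2 + d/2$), and a remainder~$f$ coming from Proposition~\ref{prop::paralinearization-weighted}. Using Proposition~\ref{prop::composition-P} I would then symbolically factorize this second-order paradifferential operator as $(\pz - \P{\mathcal{A}})(\pz - \P{\mathcal{B}}) v = f + Rv$, where~$\mathcal{A}$ is elliptic forward-parabolic, $\mathcal{B}$ is elliptic backward-parabolic, and~$R \in \OP{2-r}{0}$. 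The key observation is that~$\mathcal{A}$ and~$\mathcal{B}$ depend only on~$\nabla\eta$, so they lie in the unweighted classes~$\Gamma^{1,r}_{0,0}$; the weight~$\jp{x}^k$ commutes with~$\P{\mathcal{A}}$ and~$\P{\mathcal{B}}$ up to operators in~$\OP{0}{k-1}$, controlled by Proposition~\ref{prop::composition-P} applied with $(m,k)=(1,0)$ on one side and the multiplier~$\jp{x}^k$ on the other. The standard forward-parabolic estimate for $(\pz - \P{\mathcal{B}})$ then yields, after one more application of the backward estimate for $(\pz - \P{\mathcal{A}})$,
\begin{equation*}
\|v\|_{L^2_z((-\delta,0),H^{\sigma}_k)} + \|\pz v\|_{L^2_z((-\delta,0),H^{\sigma-1}_k)} \le C(\|\eta\|_{H^{\mu+1/2}})\bigl(\|\psi\|_{H^{\sigma}_k} + \|v\|_{L^2_z((-\delta,0),H^{\sigma-1/2}_k)}\bigr).
\end{equation*}
The last term on the right is controlled by the inductive hypothesis at level~$\sigma - 1/2$, closing the bootstrap. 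A trace argument in the~$z$-variable, combined with Proposition~\ref{prop::paralinearization-ab-weighted} to multiply by~$\norm_\varrho \in H^{\mu}_0 \hookrightarrow L^\infty$, then gives $G(\eta)\psi \in H^{\sigma-1}_k$ with the required estimate, using $\sigma \le \mu$ so that the paraproduct remainder lies in a positive-regularity weighted space.

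The main technical obstacle will be to verify that every commutator~$[\jp{x}^k, \P{\mathcal{A}}]$, $[\jp{x}^k, \P{\mathcal{B}}]$ and every remainder in the factorization genuinely belongs to a class where the dyadic composition formula of Proposition~\ref{prop::composition-P} applies uniformly in~$k$; this is where the spatial decay $\delta = 1$ of the weight, combined with the assumption that $\mathcal{A},\mathcal{B} \in \Gamma^{1,r}_{0,0}$ (depending on~$\eta$ only, not on the weight), pays off cleanly. The consequence for~$\HC{\sigma-1}_m$ then follows immediately by applying the first estimate to each pair $(\sigma - j/2, j)$ for $j = 0,1,\ldots,m$, the hypothesis $\sigma - m/2 \ge 1/2$ ensuring that all these pairs lie in the range of validity of the first estimate.
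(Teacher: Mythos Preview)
Your approach is a valid alternative but takes a genuinely different route from the paper's. Both arguments bootstrap in~$\sigma$ from the base case $\sigma=1/2$ of Proposition~\ref{prop::G(eta)-boundedness-Ltwo}, and both recognize that the second assertion follows immediately from the first by specializing to the pairs $(\sigma-j/2,j)$. The difference lies in how the inductive step is carried out.

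The paper does \emph{not} paralinearize or factorize the elliptic operator at this stage. Instead it applies the multiplier $\Lambda=\jp{D_x}^{\sigma-1/2}$ (together with $z$-cutoffs $\chi,\tilde\chi$) directly to~$\tilde v$, and writes the resulting equation as $-\Delta_\varrho(\chi\Lambda\tilde v)+K(\tilde\chi\Lambda\tilde v)=\text{(data)}$ with commutator $K=[\Delta_\varrho,\chi\Lambda]\Lambda^{-1}$. Writing $\Delta_\varrho=P\cdot P$ with $P=(\nabla-\nabla\eta\,\pz,\pz)$, the commutator decomposes as $K=P\cdot Q+Q\cdot P-Q\cdot\tilde Q$ with $Q=[P,\chi\Lambda]\Lambda^{-1}$; the key estimate on $Q^{(k)}$ and $(Q^*)^{(-k)}$ is then obtained via the Kato--Ponce commutator inequality, and the bootstrap closes by a direct weighted energy estimate (pairing the equation against $\chi\Lambda\tilde v$ in $L^2_{z}L^2_k$). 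No symbolic factorization, no parabolic estimate, no dyadic paradifferential calculus is used.

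What each approach buys: the paper's energy/Kato--Ponce method is self-contained and robust at the minimal regularity $\mu>1/2+d/2$; in particular it never needs to control paralinearization remainders of products like $|\nabla\eta|^2\,\pz^2 v$, which in your scheme would require Bony-type remainder bounds under the constraint $\alpha+\beta>0$ that can become delicate when both $\mu-1/2-d/2$ and the a~priori regularity of $\pz^2 v$ are small (e.g.\ $d=1$, $\mu$ near~$1$). Your paradifferential factorization route is essentially the machinery deployed \emph{later} in the paper (Proposition~\ref{prop::paralinearization-G(eta)psi-weighted}, under the stronger hypothesis $\mu>3+d/2$), so it has the virtue of unifying this estimate with the subsequent paralinearization; but porting it down to $\mu>1/2+d/2$ would require sharper paraproduct bounds than Proposition~\ref{prop::paralinearization-ab-weighted} provides, and the commutators $[\jp{x}^k,\P{\mathcal A}]$ would need to be handled without losing the weight---doable, but more work than the direct argument.
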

\begin{proof}
We shall only prove the cases where $ \sigma \in \N $.
The remaining cases follow by interpolation.
By \S\ref{sec:elliptic-estimate}, it suffices to prove that for all $ \sigma \in [0,\mu-1/2] \cap \N $, there exists $  \delta > 0 $ such that
\begin{equation*}
\tilde{v} \in \Ltwo(]{-\delta},0[,\H{\sigma+1}_k) \cap \H{1}(]{-\delta},0[,\H{\sigma}_k).
\end{equation*}
Let $ N_\sigma $ be the corresponding norm of $ \tilde{v} $, we shall prove that $ N_\sigma < +\infty $.
The case where $ \sigma=0 $ has already been proven by Lemma~\ref{lem::elliptic-estimate-v}.
It remains to bound $ N_{\sigma+1} $ by $ N_\sigma $ via a mathematical induction.
Note that if $ \chi \in \Ccinf(]-\delta,\delta[) $, then $ \chi \px^{\sigma} \tilde{v} $ satisfies the equation
\begin{equation}
\label{eq:elliptic-estimate}
-\Delta_\varrho (\chi \px^{\sigma} \tilde{v}) + K \tilde{v}
= \Delta_\varrho (\chi \px^{\sigma} \ul{\psi}) - K \ul{\psi}.
\end{equation}
where $ K = [\Delta_\varrho,\chi \px^{\sigma}] $.
Note that $ \Delta_\varrho = P \cdot P $ with $ P = (\nabla - \nabla\eta\pz,\pz) $,
so 
\begin{equation*}
K = P \cdot [P,\chi \px^{\sigma}] + [P,\chi \px^{\sigma}] \cdot P.
\end{equation*}
By an explicit calculation
\begin{equation*}
[P,\chi \px^{\sigma}] 
= (-\chi[\nabla\eta, \px^{\sigma}]\pz - \nabla\eta \chi' \px^{\sigma},\chi'\px^{\sigma}).
\end{equation*}
Integrating the following pairings by part using $ \tilde{v}|_{z=0} = 0 $, we have by Lemma~\ref{lem::lift-of-psi} that
\begin{align*}
\big| \big(\chi\px^\sigma\tilde{v}, K\chi\px^\sigma\tilde{v}\big)_{L^2L^2_k} \big|
& \lesssim \|P^*(\chi\px^\sigma\tilde{v})\|_{L^2L^2_k} \|[P,\chi \px^\sigma] \chi\tilde{v}\|_{L^2L^2_k} \\
& \qquad + \|P(\chi\px^\sigma\tilde{v})\|_{L^2L^2_k} \|[P,\chi \px^\sigma]^* \chi\tilde{v}\|_{L^2L^2_k}
\lesssim N_\sigma N_{\sigma+1};\\
\big| \big(\chi\px^\sigma\tilde{v}, K\chi\px^\sigma\ul{\psi}\big)_{L^2L^2_k} \big|
& \lesssim \|P^*(\chi\px^\sigma\tilde{v})\|_{L^2L^2_k} \|[P,\chi \px^\sigma] \chi\ul{\psi}\|_{L^2L^2_k} \\
& \qquad + \|P(\chi\px^\sigma\ul{\psi})\|_{L^2L^2_k} \|[P,\chi \px^\sigma]^* \chi\tilde{v}\|_{L^2L^2_k}
\lesssim \|\psi\|_{H^{\sigma+1/2}} (N_\sigma+N_{\sigma+1}),\\
\big| \big(\chi \px^\sigma \tilde{v}, -\Delta_\varrho (\chi \px^\sigma \ul{\psi})\big)_{L^2 L^2_k}  \big|
& \lesssim \|P(\chi \px^\sigma \tilde{v})\|_{L^2L^2_k} \|P(\chi \px^\sigma \ul{\psi})\|_{L^2L^2_k}^2
\lesssim \|\psi\|_{H^{\sigma+1/2}} N_{\sigma+1}.
\end{align*}
In the above inequalities, the adjoint operators are taken with respect to $ L^2 L^2_k $.
Use again the structure of $ \Delta_\varrho $, we have by~\eqref{eq:coercivity-B} that
\begin{align*}
\big(\chi \px^\sigma \tilde{v}, -\Delta_\varrho (\chi \px^\sigma \tilde{v})\big)_{L^2 L^2_k} 
& \gtrsim \|P(\chi \px^\sigma \tilde{v})\|_{L^2L^2_k}^2 - \|\chi \px^\sigma \tilde{v}\|_{L^2L^2_k}^2
\gtrsim \|\chi \px^\sigma \tilde{v}\|_{H^1H^1_k}^2 - N_\sigma^2.
\end{align*}
Pair~\eqref{eq:elliptic-estimate} with $ \chi \px^\sigma \tilde{v} $ and use the estimates above, for all $ \epsilon > 0 $,
\begin{equation*}
N_{\sigma+1}^2 \lesssim
\|\chi \px^\sigma \tilde{v}\|_{H^1H^1_k}^2 \lesssim N_\sigma N_{\sigma+1} + \|\psi\|_{H^{\sigma+1/2}}(N_\sigma + N_{\sigma+1})
\lesssim \epsilon N_{\sigma+1}^2 + \epsilon^{-1} (N_{\sigma}^2+\|\psi\|_{H^{\sigma+1/2}}^2).
\end{equation*}
All the constants hidden by $ \lesssim $ are of the form $ C(\|\eta\|_{H^{\mu+1/2}}) $.
We thus conclude the induction by choosing $ \epsilon > 0 $ sufficiently small.
By interpolation as in Proposition~\ref{prop::G(eta)-boundedness-Ltwo},
\begin{equation*}
v \in C^0_z([-\delta,0],\H{\sigma+1/2}_k) \cap C^1_z([-\delta,0],\H{\sigma-1/2}_k).
\end{equation*}
When $ \psi \in \HC{\sigma,\delta}_k $, then we apply the above estimate to $ \psi \in H^{\sigma-\delta j}_j $ and conclude.
\end{proof}

\subsection{Paralinearization}

Now we paralinearize the system of water waves. The following results are immediate consequences of the analysis in~\cite{ABZ11capillary} and our dyadic paradifferential calculus on weighted Sobolev spaces. 

\begin{proposition}
\label{prop::paralinearization-G(eta)psi-weighted}
Let $ (\eta,\psi) \in \HC{\mu+1/2,\delta}_{k} \times \HC{\mu,\delta}_{k} $ with $ \mu-1/2 \in \N $, $ k \in \N $ and $ \mu - \delta k > 3+d/2 $.
Let
\begin{equation*}
B  = \frac{\nabla\eta \cdot \nabla\psi + G(\eta)\psi}{1+|\nabla\eta|^2}, \quad
V  = \nabla\psi -  B \nabla\eta,
\end{equation*}
and $ \lambda = \lambda^{(1)} + \lambda^{(0)} \in \Gamma^{3/2,\mu-1/2-\tilde{d}}_{0,0}+\Gamma^{1/2,\mu-3/2-\tilde{d}}_{0,0} $, where
\begin{align*}
\lambda^{(1)}(x,\xi) & = \sqrt{ (1+|\nabla\eta|^2) |\xi|^2 - (\nabla\eta \cdot \xi)^2 }, \\
\lambda^{(0)}(x,\xi) & = \frac{1+|\nabla\eta|^2}{2 \lambda^{(1)}} \big\{ \nabla \cdot \big( \alpha^{(1)} \nabla\eta \big) + i \pxi \lambda^{(1)} \cdot \nabla \alpha^{(1)}  \big\}, 
\end{align*}
and $ \alpha^{(1)}(x,\xi) = \frac{\lambda^{(1)} + i \nabla\eta \cdot \xi}{1+|\nabla\eta|^2}, $ then 
\begin{equation*}
G(\eta)\psi = \P{\lambda}(\psi-\P{B}\eta) - \P{V} \cdot \nabla\eta + R(\eta,\psi),
\end{equation*}
where $ R(\eta,\psi) \in \HC{\mu + 1/2,\delta}_{k} $.
\end{proposition}
We shall denote $ \omega = \psi - \P{B} \eta $, which is called the good unknown of Alinhac.
\begin{proof}
We only sketch the proof, for the key ingredients are already given in \cite{ABZ11capillary}. 
We simply replace the paradifferential calculus in~\cite{ABZ11capillary} by our dyadic paradifferential calculus.
Let~$ v $ be defined as in~\S\ref{sec::D-N-op}.
Rewrite~\eqref{eq::equation-elliptic-v} as
\begin{equation*}
\alpha \pz^2 v + \Delta v + \beta \cdot \nabla \pz v - \gamma \pz v = 0.
\end{equation*}
where $ \alpha = 1+|\nabla\eta|^2 $, $ \beta = -2\nabla\eta $, $ \gamma = \Delta\eta $.
Applying Proposition~\ref{prop::paralinearization-ab-weighted}, we obtain as in~\cite[Lemma~3.17]{ABZ11capillary},
\begin{equation}
\label{eq:equation-v-paralinearized}
\P{\alpha} \pz^2 u + \Delta u + \P{\beta} \cdot \nabla \pz u - \P{\gamma} \pz u \in C([-\delta,0],\HC{\mu,\delta}_k),
\end{equation}
where $ u = v - \P{\pz v} \zeta $ with $ \zeta(x,z) = z+\eta(x) $.
Define $ a_\pm = a_\pm^{(1)} + a_\pm^{(0)} \in \Gamma^{1,\mu-1/2-\tilde{d}}_{0,0} + \Gamma^{0,\mu-3/2-\tilde{d}}_{0,0} $~by
\begin{align*}
a_\pm^{(1)}(x,\xi) & = {1 \over 2\alpha} \Big( -\beta \cdot \xi \pm \sqrt{4\alpha|\xi|^2 -(\beta\cdot\xi)^2} \Big), \\
a_\pm^{(0)}(x,\xi) & = \pm {1 \over a_-^{(1)} - a_+^{(1)}} \Big( i\pxi a_-^{(1)} \cdot \px a_+^{(1)} - {\gamma \over \alpha} a_\pm^{(1)} \Big),
\end{align*}
then we factorize~\eqref{eq:equation-v-paralinearized} as
\begin{equation*}
\P{\alpha}(\pz - \P{a_-})(\pz-\P{a_+}) u \in C([-\delta,0],\HC{\mu,\delta}_k).
\end{equation*}
Because $ \Re\, a_-^{(1)} \le 0 $, a parabolic estimate (see e.g., \cite[Proposition~3.19]{ABZ11capillary}) implies that
\begin{equation*}
(\pz u - \P{a_+} u)|_{z=0} \in \HC{\mu+1/2,\delta}_k.
\end{equation*}
We conclude by setting 
$ \lambda = (1+|\nabla\eta|^2) a_+ - i \nabla\eta\cdot\xi. $
\end{proof}

The proofs of the following results are in the same spirit and much simpler.
Their proofs are exactly the same as in~\cite{ABZ11capillary}, simply replacing the usual paradifferential calculus with our dyadic paradifferential calculus, particularly the Propositions~\ref{prop::paralinearization-weighted} and~\ref{prop::paralinearization-ab-weighted}.
Therefore we shall omit the proofs.

\begin{proposition}
\label{prop::paralinearization-surface-tension-weighted}
Let $ \eta \in \HC{\mu+1/2,\delta}_{k} $ with $ \mu - 1/2 \in \N $, $ \mu - \delta k > 3 +  d/2 $ and define 
$ \ell = \ell^{(2)} + \ell^{(1)} \in \Gamma^{2,\mu-1/2-\tilde{d}}_{0,0} + \Gamma^{1,\mu-3/2-\tilde{d}}_{0,0} $ where
\begin{equation*}
\ell^{(2)} = \frac{(1+|\nabla\eta|^2)|\xi|^2 - (\nabla\eta \cdot \xi)^2}{(1+|\nabla\eta|^2)^{3/2}}, \quad
\ell^{(1)} = \frac{1}{2} \pxi \cdot D_x \ell^{(2)},
\end{equation*}
then
$ H(\eta) = -\P{\ell} \eta + f(\eta), $
where $ f(\eta) \in \HC{2\mu-2-d/2,2\delta}_{k} $.
\end{proposition}

\begin{proposition}
\label{prop::paralinearization-other-terms-weighted}
Let $ (\eta,\psi) \in \HC{\mu+1/2,\delta}_{k} \times \HC{\mu,\delta}_{k} $, with $ \mu-1/2 \in \N $, $ \mu - \delta k > 3 +  d/2 $, then
\begin{equation*}
\frac{1}{2} |\nabla\psi| - \frac{1}{2} \frac{(\nabla\eta \cdot \nabla\psi + G(\eta) \psi)^2}{1+|\nabla\eta|^2}
= \P{V} \cdot \nabla\psi - \P{B} \P{V} \cdot \nabla\eta - \P{B} G(\eta) \psi + f(\eta,\psi),
\end{equation*}
where $ f(\eta,\psi) \in \HC{2\mu-2-d/2,2\delta}_{k} $.
\end{proposition}

Note that in the above paralinearization results, we do not use the spatial decay of the symbols, as we only require the symbols to be in the classes $ \Gamma^{m,r}_{0,0} $.
These results will only be used in the proof of the Cauchy theory, where the spatial decay of the symbols is not important.
Later when we study the propagation of singularities, we will heavily use the spatial decay of the symbols.

Combining Propositions~\ref{prop::paralinearization-G(eta)psi-weighted}, \ref{prop::paralinearization-surface-tension-weighted} and~\ref{prop::paralinearization-other-terms-weighted}, we obtain the paralinearization of the water wave system.

\begin{proposition}
\label{prop::paralinearization-WW}
Let $ (\eta,\psi) \in \HC{\mu+1/2,\delta}_{k} \times \HC{\mu,\delta}_{k} $, with $ \mu-1/2 \in \N $, $ \mu - \delta k > 3 +  d/2 $, then $ (\eta,\psi) $ solves the water wave equation if and only if
\begin{equation*}
(\pt + \P{V}\cdot\nabla + \L)\binom{\eta}{\psi} = f(\eta,\psi)
\end{equation*}
where
\begin{equation*}
\L = Q^{-1} \begin{pmatrix} 0 & - \P{\lambda} \\ \P{\ell} & 0 \end{pmatrix} Q, 
\quad \mathrm{with} \quad
Q = \begin{pmatrix} \Id & 0 \\ -\P{B} & \P{\lambda} \end{pmatrix},
\end{equation*}
and $ \displaystyle f(\eta,\psi) = Q^{-1} \binom{f_1}{f_2} \in \HC{\mu+1/2}_{k} \times \HC{\mu}_{k} $ is defined by
\begin{align*}
f_1 & = G(\eta) \psi - \{ \P{\lambda} (\psi - \P{B}\eta) - \P{V} \cdot \nabla \eta \},\\
f_2 & = -\frac{1}{2} |\nabla\psi|^2 + \frac{1}{2} \frac{(\nabla\eta \cdot \nabla\psi + G(\eta)\psi)^2}{1+|\nabla\eta|^2} + H(\eta) \\
& \quad \quad + \P{V} \cdot \nabla\psi - \P{B} \P{V} \cdot \nabla\eta - \P{B} G(\eta)\psi + \P{\ell} \eta - g\eta.
\end{align*}
\end{proposition}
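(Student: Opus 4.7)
The plan is to apply the three paralinearization identities obtained above---Proposition~\ref{prop::paralinearization-G(eta)psi-weighted} for $G(\eta)\psi$, Proposition~\ref{prop::paralinearization-surface-tension-weighted} for the mean curvature $H(\eta)$, and Lemma~\ref{lem::parlinearization-other-terms-weighted} for the quadratic nonlinearity---to rewrite each scalar equation of~\eqref{eq::equation-water-wave} as a paradifferential identity, and then to recognize that the resulting pair of identities is precisely the matrix equation in the statement.

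First I would introduce the good unknown $\omega = \psi - \P{B}\eta$ and rewrite the kinematic equation. Proposition~\ref{prop::paralinearization-G(eta)psi-weighted} yields
\begin{equation*}
G(\eta)\psi = \P{\lambda}\omega - \P{V}\cdot\nabla\eta + R(\eta,\psi), \quad R(\eta,\psi) \in \HC{\mu+1/2}_m,
\end{equation*}
so the first equation of~\eqref{eq::equation-water-wave} reads $\pt\eta + \P{V}\cdot\nabla\eta - \P{\lambda}\omega = f_1$, matching the definition of $f_1$. For the dynamic equation, substituting $H(\eta) = -\P{\ell}\eta + f(\eta)$ from Proposition~\ref{prop::paralinearization-surface-tension-weighted} and the paralinearization of the quadratic term from Lemma~\ref{lem::parlinearization-other-terms-weighted} and rearranging yields
\begin{equation*}
\pt\psi + \P{V}\cdot\nabla\psi + \P{\ell}\eta - \P{B}\P{V}\cdot\nabla\eta - \P{B}G(\eta)\psi = f_2,
\end{equation*}
where $f_2$ coincides with the expression in the statement (using Lemma~\ref{lem::parlinearization-other-terms-weighted} in reverse to identify the right-hand side). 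Since each paralinearization is an exact equality, both scalar identities hold if and only if $(\eta,\psi)$ solves~\eqref{eq::equation-water-wave}.

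To assemble these into the matrix form I would observe that $Q\binom{\eta}{\psi} = \binom{\eta}{\omega}$ and that, by the very definition of $\L$, the conjugation $Q\L Q^{-1}$ is the antidiagonal operator matrix with entries $-\P{\lambda}$ (top-right) and $\P{\ell}$ (bottom-left). Multiplying the matrix equation on the left by $Q$ therefore produces in its top row the scalar kinematic identity verbatim, and in its bottom row the equation $-\P{B}(\pt\eta + \P{V}\cdot\nabla\eta) + \pt\psi + \P{V}\cdot\nabla\psi + \P{\ell}\eta = f_2$. Substituting the top row identity $\pt\eta + \P{V}\cdot\nabla\eta = \P{\lambda}\omega + f_1$ and using $\P{B}(\P{\lambda}\omega + f_1) = \P{B}(G(\eta)\psi + \P{V}\cdot\nabla\eta)$---which is just $\P{B}$ applied to the definition of $f_1$---reduces the bottom row to the scalar dynamic identity. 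All steps are reversible, so the same manipulations prove the converse.

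The main difficulty is purely bookkeeping: tracking the weighted Sobolev regularity of every remainder. One needs $\P{B}f_1 \in \HC{\mu}_m$, which follows from Proposition~\ref{prop::composition-P} together with the regularity $B \in \H{\mu-1/2}_m$ (established in the proof of Proposition~\ref{prop::paralinearization-G(eta)psi-weighted}) and $f_1 \in \HC{\mu+1/2}_m$; and one needs $f_2 \in \HC{\mu}_m$, which is immediate from Proposition~\ref{prop::paralinearization-surface-tension-weighted}, Lemma~\ref{lem::parlinearization-other-terms-weighted}, and $\eta \in \HC{\mu+1/2}_m$. All the genuinely analytic work has been carried out in the three preceding statements, so no new estimate is required here.
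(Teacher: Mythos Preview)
Your proposal is correct and follows exactly the approach the paper intends: it omits the proof, stating only that it results from combining the analysis in \cite{ABZ11capillary} with the dyadic paradifferential calculus, i.e., from assembling Proposition~\ref{prop::paralinearization-G(eta)psi-weighted}, Proposition~\ref{prop::paralinearization-surface-tension-weighted}, and Lemma~\ref{lem::parlinearization-other-terms-weighted} into the matrix identity via the good unknown and the conjugation by $Q$. One small correction: $B\in\HC{\mu-1}_m$ rather than $\H{\mu-1/2}_m$, but this does not affect your argument since $\P{B}$ is of order zero either way.
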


\subsection{Symmetrization}
\label{sec::symmetrization}

\begin{definition}
For $ T > 0 $, $ \gamma \in \R $ and two operators~$ \A, \B \in \Linf([0,T],\OP{\gamma}{0}) $, we say that $ \A \sim_\gamma \B $, or simply $ \A \sim \B $ when there is no ambiguity of the choice of~$ \gamma $, if 
\begin{equation*}
\A - \B \in \Linf([0,T],\OP{\gamma-3/2}{0}).
\end{equation*}
\end{definition}

By \cite{ABZ11capillary}, there exists symbols which depend solely on $ \eta $, 
\begin{equation*}
\gamma  = \gamma^{(3/2)} + \gamma^{(1/2)}, \quad
p  = p^{(1/2)} + p^{(-1/2)}, \quad
q  = q^{(0)},
\end{equation*}
whose principal symbols being explicitly
\begin{equation*}
\gamma^{(3/2)} = \sqrt{\ell^{(2)} \lambda^{(1)}}, \quad
p^{(1/2)} = (1+|\nabla\eta|^2)^{-1/2} \sqrt{\lambda^{(1)}}, \quad
q^{(0)} = (1+|\nabla\eta|^2)^{1/4}, 
\end{equation*}
such that
\begin{equation}
\label{eq:symbol-symmetrization}
\P{p} \P{\lambda} \sim_{\frac{3}{2}} \P{\gamma} \P{q}, \quad
\P{q} \P{\ell} \sim_2 \P{\gamma} \P{p}, \quad
\P{\gamma} \sim_{\frac{3}{2}} (\P{\gamma})^*.
\end{equation} 
Define the symmetrizer
\begin{equation*}
S = \begin{pmatrix} \P{p} & 0 \\ 0 & \P{q} \end{pmatrix} Q,
\end{equation*}
then the first two relations in~\eqref{eq:symbol-symmetrization} can be rephrased as
\begin{equation}
\label{eq::symmetrization}
S \L \sim \begin{pmatrix} 0 & -\P{\gamma} \\ \P{\gamma} & 0 \end{pmatrix} S.
\end{equation}
where the equivalence relation $ \sim $ is applied separately to each component of the matrices.

\subsection{Approximate system}

Set the mollifier $ J_\varepsilon = \P{j_\varepsilon} $ where $ j_\varepsilon = j_\varepsilon^{(0)} + j_\varepsilon^{(-1)} $,
\begin{equation*}
j_\varepsilon^{(0)} = \exp(-\varepsilon \gamma^{(3/2)}), \quad
j_\varepsilon^{(-1)} = \frac{1}{2} \pxi \cdot D_x j_\varepsilon^{(0)}.
\end{equation*}
Then uniformly for $ \varepsilon > 0 $, we have
\begin{equation*}
J_\varepsilon \P{\gamma} \sim_{\frac{3}{2}} \P{\gamma} J_\varepsilon,\quad
J_\varepsilon^* \sim_0 J_\varepsilon. 
\end{equation*}
Let $ \tilde{p} = \tilde{p}^{(-1/2)} + \tilde{p}^{(-3/2)} $ with
\begin{align*}
\tilde{p}^{(-1/2)} &= 1/p^{(1/2)}, \\
\tilde{p}^{(-3/2)} &= -\big( \tilde{p}^{(-1/2)}p^{(-1/2)} + \frac{1}{i}\pxi\tilde{p}^{(-1/2)} \cdot \px p^{(1/2)} \big) / p^{(1/2)},
\end{align*}
then we have
\begin{equation*}
\P{p} \P{\tilde{p}} \sim_0 \Id, \quad
\P{q} \P{1/q} \sim_0 \Id.
\end{equation*}
Let
\begin{equation*}
\L_\varepsilon = \L Q^{-1} \begin{pmatrix} \P{\tilde{p}} J_\varepsilon \P{p} & 0 \\ 0 & \P{1/q} J_\varepsilon \P{q} \end{pmatrix} Q,
\end{equation*}
then as in~\eqref{eq::symmetrization} we have
\begin{equation}
\label{eq::symmetrization-L-epsilon}
S\L_\varepsilon \sim \begin{pmatrix}
0 & -\P{\gamma} \\ \P{\gamma} & 0
\end{pmatrix}
J_\varepsilon S.
\end{equation}
We define the approximate system
\begin{equation}
\label{eq:equation-ww-epsilon}
(\pt + \P{V}\cdot\nabla J_\varepsilon + \L_\varepsilon) \binom{\eta}{\psi} = f(J_\varepsilon\eta, J_\varepsilon\psi).
\end{equation}

\subsection{A priori estimate}

From now on we restrict ourselves to the case where $ \delta = 1/2 $.
The weighted Sobolev spaces $ \HC{\mu+1/2,1/2}_k \times \HC{\mu,1/2}_k $ are the spaces where we do the energy estimates.

\begin{proposition}
\label{prop::a-priori-estimate}
Let $ (\eta,\psi) \in C^1([0,T],\HC{\mu+1/2,1/2}_{k} \times \HC{\mu,1/2}_{k}) $ with $ \mu-1/2 \in \N $, $ \mu - k/2 > 3 + d/2 $ solve the approximate system~\eqref{eq:equation-ww-epsilon}.
Define
\begin{equation*}
M_T = \sup_{0\le t \le T} \|(\eta,\psi)(t)\|_{\HC{\mu+1/2}_{k} \times \HC{\mu}_{k}}, \quad
M_0 = \|(\eta,\psi)(0)\|_{\HC{\mu+1/2}_{k} \times \HC{\mu}_{k}}.
\end{equation*}
Then there exists some non-decaying function~$ C : \R_{\ge 0} \to \R_{\ge 0} $ such that
\begin{equation*}
M_T \le C(M_0) + TC(M_T).
\end{equation*}
\end{proposition}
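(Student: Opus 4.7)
The strategy follows~\cite{ABZ11capillary}, recast inside the weighted Sobolev scale by means of the dyadic paradifferential calculus of~\S\ref{sec::paradiff-calculus}. Introduce the symmetrized unknown $\Phi \bydef S\binom{\eta}{\psi}$. Applying $S$ to~\eqref{eq:equation-ww-epsilon} and invoking~\eqref{eq::symmetrization-L-epsilon} yields
\[
\pt \Phi + \P{V}\cdot\nabla J_\varepsilon \Phi + \begin{pmatrix} 0 & -\P{\gamma}\\ \P{\gamma} & 0 \end{pmatrix} J_\varepsilon \Phi = F_\varepsilon,
\]
where the forcing $F_\varepsilon$ collects $Sf(J_\varepsilon\eta,J_\varepsilon\psi)$, the commutator $[S,\pt+\P{V}\cdot\nabla J_\varepsilon]$, and all $\sim$-remainders from~\S\ref{sec::symmetrization}. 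By Proposition~\ref{prop::paralinearization-WW} together with Proposition~\ref{prop::composition-P} and Proposition~\ref{prop::adjoint-P} applied with $\delta=1$, each component of $F_\varepsilon$ is $1/2$ derivative more regular than what the leading part of $\Phi$ requires, with $\sup_{[0,T]}\|F_\varepsilon\|_{\HC{\mu}_m\times\HC{\mu-1/2}_m}\le C(M_T)$ uniformly in $\varepsilon$.

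For each $j\in\{0,1,\ldots,m\}$ define $\Lambda_j\bydef \jp{x}^j\jp{D_x}^{\mu-j/2}\in\OP{\mu-j/2}{-j}$ and the component energy $E_j(t)\bydef \|\Lambda_j\Phi(t)\|_{\Ltwo(\Rd;\C^2)}^2$. By the ellipticity of $\P{p}, \P{q}$ modulo $\OP{-1}{0}$ and Proposition~\ref{prop::characterization-weighted-Sobolev}, $E(t)\bydef\sum_{j=0}^m E_j(t)$ is equivalent to $\|(\eta,\psi)(t)\|^2_{\HC{\mu+1/2}_m\times\HC{\mu}_m}$ up to a multiplicative factor $C(M_T)$. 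Writing $\ddt E_j=2\Re(\Lambda_j\pt\Phi,\Lambda_j\Phi)_{\Ltwo}$ and substituting the equation produces three contributions, each to be bounded by $C(M_T)(1+E_0+\cdots+E_j)$:
\begin{enumerate}[label=(\roman*)]
\item \emph{Dispersive term.} Since $\P{\gamma}^*\sim_{3/2}\P{\gamma}$, the matrix $\bigl(\begin{smallmatrix}0 & -\P{\gamma}\\\P{\gamma} & 0\end{smallmatrix}\bigr)J_\varepsilon$ is skew-adjoint on $\Ltwo$ modulo $\OP{0}{0}$, so $\bigl(\Lambda_j\bigl(\begin{smallmatrix}0 & -\P{\gamma}\\\P{\gamma} & 0\end{smallmatrix}\bigr)J_\varepsilon\Phi,\Lambda_j\Phi\bigr)_{\Ltwo}$ splits into a principal part that vanishes by skew-symmetry and a commutator $\bigl([\Lambda_j^*\Lambda_j,\P{\gamma}J_\varepsilon]\Phi,\Phi\bigr)_{\Ltwo}$; by Proposition~\ref{prop::composition-P} with $\delta=1$ this commutator is of order $(2\mu-j,-2j+1)$, and so is controlled by $\sum_{k\le j}E_k$.
\item \emph{Transport term.} As $V$ is real-valued and bounded, $\P{V}\cdot\nabla+(\P{V}\cdot\nabla)^*\in\OP{0}{0}$ by Proposition~\ref{prop::adjoint-P}, and the commutator $[\Lambda_j^*\Lambda_j,\P{V}\cdot\nabla J_\varepsilon]$ gains one $x$-weight by Proposition~\ref{prop::composition-P}; both contributions are bounded by $C(M_T)\sum_{k\le j}E_k$.
\item \emph{Forcing.} Cauchy--Schwarz gives $(\Lambda_j F_\varepsilon,\Lambda_j\Phi)_{\Ltwo}\le\|\Lambda_j F_\varepsilon\|_{\Ltwo}E_j^{1/2}\le C(M_T)(1+E_j)$.
\end{enumerate}
Summing over $j$ yields $\ddt E\le C(M_T)(1+E)$, and Gronwall then gives $E(t)\le C(M_0)+TC(M_T)$, as required.

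\textbf{Main obstacle.} The technical heart of the argument is the simultaneous bookkeeping of $\xi$-derivatives and $x$-weights when commuting paradifferential operators built from $\eta, V, B$ through the weight $\Lambda_j$. It is crucial to pair $\Lambda_j^*\Lambda_j$ with the original $\Phi$ (rather than naively pairing $\Lambda_j\Phi$ with itself in a single commutator step), so that both one derivative and one $x$-weight are available to absorb each error; the dyadic paradifferential calculus of~\S\ref{sec::paradiff-calculus} — specifically Proposition~\ref{prop::composition-P} applied with $\delta=1$, which trades one $x$-decay for one $\xi$-derivative in every remainder — is designed precisely so that this exchange stays inside the scale controlled by $(E_j)_{j=0}^m$. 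The hypothesis $m\le 2\mu-6-d$ ensures that every coefficient $V, B, \lambda, \ell, \gamma, p, q$ has enough regularity and decay for the remainders of Proposition~\ref{prop::paralinearization-G(eta)psi-weighted}, Proposition~\ref{prop::paralinearization-surface-tension-weighted} and Lemma~\ref{lem::parlinearization-other-terms-weighted} to be absorbed into $F_\varepsilon$ with bound $C(M_T)$.
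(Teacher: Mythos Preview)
Your approach is essentially the paper's: symmetrize with $S$, conjugate by weight operators indexed by the levels $j=0,\ldots,m$ of the scale $\HC{\mu}_m$, and absorb each commutator loss (half a derivative traded for one unit of weight) into the adjacent level $E_{j-1}$. The only substantive difference is that the paper takes the weight as the \emph{dyadic paradifferential} operator $\Lambda^\mu_k=\P{m^{\mu-k/2}_k}$ and builds it into $\Phi$ from the outset, so that Proposition~\ref{prop::composition-P} applies verbatim and the endgame reduces to the unweighted $L^2$ energy identity of~\cite{ABZ11capillary}; your pseudodifferential $\Lambda_j=\jp{x}^j\jp{D_x}^{\mu-j/2}$ would first need to be recast in that form, and your stated commutator order $(2\mu-j,-2j+1)$ should read $(2\mu-j+1/2,\,2j-1)$, but the mechanism and conclusion are the same.
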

\begin{proof}
For $ 0 \le j \le k $, set
\begin{align*}
M^j_T & = \sup_{0\le t \le T} \|(\eta,\psi)(t)\|_{\H{\mu+1/2-j/2}_j \times \H{\mu-j/2}_j}, \\
M^j_0 & = \|(\eta,\psi)(0)\|_{\H{\mu+1/2-j/2}_{j} \times \H{\mu-j/2}_{j}}.
\end{align*}
By~\cite{ABZ11capillary}, we know
\begin{equation*}
M^0_T \le C(M^0_0) + TC(M^0_T).
\end{equation*} 
It remains to prove that for $ 1 \le j \le k $, we have
\begin{equation*}
M^j_T \le C(M^j_0) + TC(M_T).
\end{equation*}
To do this, let $ \Lambda^\mu_j = \P{m^{\mu-j/2}_{j}} $, and set 
\begin{equation*}
\displaystyle \Phi = \Lambda^\mu_j S \binom{\eta}{\psi}.
\end{equation*}
Then
\begin{equation*}
(\pt + \P{V}\cdot\nabla J_\varepsilon) \Phi + \begin{pmatrix}
0 & - \P{\gamma} \\ \P{\gamma} & 0
\end{pmatrix} J_\varepsilon \Phi = F_\varepsilon
\end{equation*}
where $ F_\varepsilon = F_\varepsilon^1 + F_\varepsilon^2 + F_\varepsilon^3, $ with
\begin{align*}
F_\varepsilon^1 & = \Lambda^\mu_k S f(J_\varepsilon\eta, J_\varepsilon\psi), \\
F_\varepsilon^2 & = [\pt + \P{V}\cdot\nabla J_\varepsilon, \Lambda^\mu_j S] \binom{\eta}{\psi}, \\
F_\varepsilon^3 & = \begin{pmatrix}
0 & - \P{\gamma} \\ \P{\gamma} & 0
\end{pmatrix} J_\varepsilon \Lambda^\mu_j S \binom{\eta}{\psi} - \Lambda^\mu_j S \L_\varepsilon \binom{\eta}{\psi}.
\end{align*}
By Propositions~\ref{prop::paralinearization-WW}, \ref{prop::paralinearization-G(eta)psi-weighted}, \ref{prop::paralinearization-surface-tension-weighted} and~\ref{prop::paralinearization-other-terms-weighted},
\begin{equation*}
\|f(J_\varepsilon\eta, J_\varepsilon\psi)\|_{\HC{\mu+1/2,1/2}_{k} \times \HC{\mu,1/2}_{k}}
\le C\big(\|(J_\varepsilon\eta, J_\varepsilon\psi)\|_{\HC{\mu+1/2,1/2}_{k} \times \HC{\mu,1/2}_{k}}\big) 
\le C\big(\|(\eta, \psi)\|_{\HC{\mu+1/2,1/2}_{k} \times \HC{\mu,1/2}_{k}}\big).
\end{equation*}
Therefore, 
\begin{equation*}
\|F_\varepsilon^1\|_{\Linf([0,T],\Ltwo)} \le C(M_T).
\end{equation*}

As $ \P{V}\cdot\nabla J_\varepsilon $ is a scalar operator, Proposition~\ref{prop::composition-P} gives
\begin{align*}
\| [\pt + \P{V}\cdot\nabla J_\varepsilon, \Lambda^\mu_j S] \|_{\Linf([0,T],\H{\mu+1/2-j/2}_j \times \H{\mu-j/2}_j \to \Ltwo \times \Ltwo)} \le C(M_T),
\end{align*}
which implies 
\begin{equation*}
\|F_\varepsilon^2\|_{\Linf([0,T],\Ltwo)} \le C(M_T).
\end{equation*}

By~\eqref{eq::symmetrization-L-epsilon}, the operator
$ \begin{pmatrix}
0 & - \P{\gamma} \\ \P{\gamma} & 0
\end{pmatrix} J_\varepsilon  S -  S \L_\varepsilon
$ sends $ \H{\mu+1/2} \times \H{\mu} $ to $ \H{\mu}\times\H{\mu} $. 
Unfortunately, 
\begin{align*}
R :\!&= \begin{pmatrix}
0 & - \P{\gamma} \\ \P{\gamma} & 0
\end{pmatrix} J_\varepsilon \Lambda^\mu_k S - \Lambda^\mu_k S \L_\varepsilon  \\
& = \begin{pmatrix}
0 & - \P{\gamma} \\ \P{\gamma} & 0
\end{pmatrix} J_\varepsilon [\Lambda^\mu_k,S]
+ [S \L_\varepsilon,\Lambda^\mu_k] 
+ \bigg( \begin{pmatrix}
0 & - \P{\gamma} \\ \P{\gamma} & 0
\end{pmatrix} J_\varepsilon S - S \L_\varepsilon \bigg) \Lambda^\mu_k \\
& =: (\mathrm{I}) + (\mathrm{II}) + (\mathrm{III})
\end{align*} 
does not send $ \H{\mu+1/2-j/2}_{j} \times \H{\mu-j/2}_{j} $ to $ \Ltwo \times \Ltwo $ because  the sub-principal symbol can not be canceled out in the symbolic calculus, due to the existence of $ \Lambda^\mu_j $. Particularly, we need to use Proposition~\ref{prop::composition-P} to estimate the commutators $ [\Lambda^\mu_j,S] $ and $ [S \L_\varepsilon,\Lambda^\mu_j]  $, and obtain
\begin{equation*}
\Big\|R \binom{\eta}{\psi}\Big\|_{\Ltwo\times\Ltwo}
\lesssim \|(\eta,\psi)\|_{\H{\mu+1-j/2}_{j-1} \times \H{\mu+1/2-j/2}_{j-1}}
+ \|(\eta,\psi)\|_{\H{\mu+1/2-j/2}_{j} \times \H{\mu-j/2}_{j}}.
\end{equation*}
More precisely, the first term on the right hand side comes from (I) and (II) while the second term comes from (III). When $ j \ge 1 $, 
\begin{equation*}
\H{\mu+1-j/2}_{j-1} \times \H{\mu+1/2-j/2}_{j-1}
= \H{\mu+1/2-(j-1)/2}_{j-1} \times \H{\mu-(j-1)/2}_{j-1}
\supset \HC{\mu+1/2,1/2}_k \times \HC{\mu,1/2}_k,
\end{equation*}
and we deduce that 
\begin{equation*}
\|F_\varepsilon^3\|_{\Linf([0,T],\Ltwo)} \le C(M_T).
\end{equation*}

Finally by an exact same energy estimate as in \cite{ABZ11capillary}, we conclude that
\begin{equation*}
M^j_T \lesssim \|\Phi\|_{\Linf([0,T],\Ltwo)} \le C(M^j_0) + T C(M_T).\qedhere
\end{equation*}
\end{proof}

\subsection{Existence}

\begin{lemma}
\label{lem::approximate-Cauchy}
For all $ (\eta_0,\psi_0) \in \HC{\mu+1/2,1/2}_{k} \times \HC{\mu,1/2}_{k} $ where $ \mu - 1/2 \in \N $ and $ \mu - k/2 > 3 + d/2 $, and for all $ \varepsilon > 0 $, the Cauchy problem of the approximate system~\eqref{eq:equation-ww-epsilon} has a unique maximal solution 
\begin{equation*}
(\eta_\varepsilon,\psi_\varepsilon) \in C([0,T_\varepsilon[,\HC{\mu+1/2,1/2}_{k} \times \HC{\mu,1/2}_{k}).
\end{equation*}
Moreover, there exists $ T_0 > 0 $ such that 
\begin{equation*}
\inf_{\varepsilon \in ]0,1]} T_\varepsilon \ge T_0.
\end{equation*}
\end{lemma}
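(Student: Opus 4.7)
The plan is to treat the approximate system \eqref{eq:equation-ww-epsilon} as an ordinary differential equation in the Banach space $\HC{\mu+1/2}_{m}\times\HC{\mu}_{m}$ for each fixed $\varepsilon>0$, and then extract a uniform lower bound on $T_\varepsilon$ by bootstrapping the a priori estimate of Proposition~\ref{prop::a-priori-estimate}. The key observation is that the mollifier $J_\varepsilon=\P{j_\varepsilon}$ with symbol $j_\varepsilon^{(0)}=\exp(-\varepsilon\gamma^{(3/2)})$ gains infinitely many derivatives for fixed $\varepsilon$ (with norms blowing up as $\varepsilon\to 0^+$), so that each of the three operators $\P{V}\cdot\nabla J_\varepsilon$, $\L_\varepsilon$ and the nonlinearity $(\eta,\psi)\mapsto f(J_\varepsilon\eta,J_\varepsilon\psi)$ maps $\HC{\mu+1/2}_{m}\times\HC{\mu}_{m}$ into itself.

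First I would establish local existence and uniqueness by Picard--Lindelöf. Abbreviate $U=(\eta,\psi)$ and denote by $\Phi_\varepsilon(U)$ the right-hand side of~\eqref{eq:equation-ww-epsilon} (namely $-\P{V[U]}\cdot\nabla J_\varepsilon U-\L_\varepsilon[U] U+f(J_\varepsilon U)$). Using the dyadic paradifferential symbolic calculus (Proposition~\ref{prop::composition-P}, Proposition~\ref{prop::paralinearization-ab-weighted}) together with the smoothing property $J_\varepsilon\in\O(\varepsilon^{-N})_{\OP{-N}{0}}$ valid for any $N\in\N$, one checks that $\Phi_\varepsilon$ is locally Lipschitz as a map from $\HC{\mu+1/2}_{m}\times\HC{\mu}_{m}$ to itself; the paralinearization identities of Proposition~\ref{prop::paralinearization-G(eta)psi-weighted}, Proposition~\ref{prop::paralinearization-surface-tension-weighted} and Lemma~\ref{lem::parlinearization-other-terms-weighted}, combined with the elliptic estimates of Proposition~\ref{prop::D-N-higher-regularity} for the Dirichlet--Neumann operator in weighted spaces, supply the required control of $f(J_\varepsilon U)$ and of the coefficients $V[U]$, $B[U]$ appearing in $\L_\varepsilon$ and $\P{V}\cdot\nabla J_\varepsilon$. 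The Cauchy--Lipschitz theorem then produces a unique maximal solution $(\eta_\varepsilon,\psi_\varepsilon)\in C([0,T_\varepsilon[,\HC{\mu+1/2}_{m}\times\HC{\mu}_{m})$, together with the standard blow-up alternative: either $T_\varepsilon=+\infty$, or $\|U_\varepsilon(t)\|_{\HC{\mu+1/2}_{m}\times\HC{\mu}_{m}}\to\infty$ as $t\to T_\varepsilon^-$.

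Next, to exhibit a uniform $T_0>0$, I would apply the a priori bound of Proposition~\ref{prop::a-priori-estimate}, which holds uniformly in $\varepsilon\in ]0,1]$ since the underlying paradifferential constants there do not depend on $\varepsilon$. Setting $M_0^{}=\|U_\varepsilon(0)\|_{\HC{\mu+1/2}_{m}\times\HC{\mu}_{m}}$ and choosing $R>2C(M_0)+1$, pick $T_0>0$ so small that $T_0 C(R)\le \tfrac{1}{2}(R-C(M_0))$. A standard continuity/bootstrap argument then shows that $M_t\le R$ on the whole interval $[0,\min\{t,T_\varepsilon,T_0\}]$: if $t^*<\min\{T_\varepsilon,T_0\}$ were the first time $M_{t^*}=R$, the inequality $M_{t^*}\le C(M_0)+t^*C(R)<R$ would contradict this. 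The blow-up alternative then forces $T_\varepsilon\ge T_0$ for every $\varepsilon\in ]0,1]$.

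The main obstacle is the local Lipschitz property of $\Phi_\varepsilon$, because $f(J_\varepsilon U)$ contains the nonlinear nonlocal term $G(J_\varepsilon\eta)J_\varepsilon\psi$ and the paralinearization remainders, which are cubic or worse in $U$ through the coefficients $B$, $V$, $\lambda$, $\ell$. One must in particular prove Lipschitz continuity of $\eta\mapsto G(\eta)$ between the appropriate weighted Sobolev spaces, which requires revisiting the elliptic estimate of Lemma~\ref{lem::elliptic-estimate-v} and the higher regularity argument of Proposition~\ref{prop::D-N-higher-regularity} for differences of two densities $\eta_1,\eta_2$, and exploiting the smoothing by $J_\varepsilon$ to absorb the half-derivative loss. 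Once this is secured, the contraction argument closes in a standard way, and the rest of the proof is a routine application of the ODE framework and the uniform bound of Proposition~\ref{prop::a-priori-estimate}.
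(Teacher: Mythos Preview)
Your proposal is correct and follows essentially the same approach as the paper: write \eqref{eq:equation-ww-epsilon} as an ODE $\pt X=\mathcal{F}_\varepsilon(X)$ in $\HC{\mu+1/2}_{m}\times\HC{\mu}_{m}$, use that $J_\varepsilon\in\OP{-\infty}{0}$ is smoothing to make $\mathcal{F}_\varepsilon$ locally Lipschitz, apply Cauchy--Lipschitz, and then obtain the uniform lower bound on $T_\varepsilon$ from the a~priori estimate of Proposition~\ref{prop::a-priori-estimate} via a bootstrap/blow-up alternative argument.

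The one place where the paper proceeds more efficiently than you is the Lipschitz continuity of $\eta\mapsto G(\eta)\psi$. Rather than redoing the elliptic estimates of Lemma~\ref{lem::elliptic-estimate-v} and Proposition~\ref{prop::D-N-higher-regularity} for differences $\eta_1-\eta_2$, the paper invokes the shape derivative formula
\[
\langle \d G(\eta)\psi,\varphi\rangle = -G(\eta)(B\varphi)-\nabla\cdot(V\varphi),
\]
which immediately reduces the required Lipschitz bound to the already-established mapping property of $G(\eta)$ in weighted spaces (Proposition~\ref{prop::D-N-higher-regularity}). This bypasses the difference-estimate computation you anticipate and is worth knowing as a shortcut.
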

\begin{proof}
Following \cite{ABZ11capillary}, the existence follows from the existence theory of ODEs by writing~\eqref{eq:equation-ww-epsilon} in the compact form 
\begin{equation*}
\pt X = \mathcal{F}_\varepsilon(X),
\end{equation*}
where $ \mathcal{F}_\varepsilon $ is a Lipschitz map on $ \HC{\mu+1/2,1/2}_{k} \times \HC{\mu,1/2}_{k} $. Indeed, $ J_\varepsilon \in \OP{-\infty}{0} $ is a smoothing operator.\footnote{We do not need $ J_\varepsilon \in \OP{-\infty}{-\infty} $ because the operators such as $ \P{V} \cdot \nabla, \L $, etc., are all of non-positive orders with respect to the spatial decay.} The estimates to proving the Lipschitz regularity can be carried out similarly as in the proof of Proposition~\ref{prop::a-priori-estimate}. The only nontrivial term that remains is the Dirichlet--Neumann operator, whose regularity follows by combining Proposition~\ref{prop::D-N-higher-regularity} and the shape derivative formula (which goes back to Zakharov~\cite{Zakharov98shape:derivative},
\begin{equation*}
\langle \d G(\eta) \psi, \varphi \rangle
\bydef \lim_{h \to 0} \frac{1}{h} \big(G(\eta+h\varphi) - G(\eta)\big) \psi 
= -G(\eta)(B\varphi) - \nabla \cdot (V\varphi).
\end{equation*}
A standard abstract argument then shows that $ T_\varepsilon $ has a strictly positive lower bound, we refer to \cite{ABZ11capillary} for more details.
\end{proof}

\begin{proof}[Proof of Theorem~\ref{thm::ww-weighted-sobolev-existence}]

By Lemma~\ref{lem::approximate-Cauchy}, we obtain a sequence $ \{(\eta_\varepsilon,\psi_\varepsilon)\}_{0<\varepsilon\le 1} $ which satisfies the equation~\eqref{eq:equation-ww-epsilon} and is uniformly bounded in $ \Linf([0,T],\HC{\mu+1/2,1/2}_{k} \times \HC{\mu,1/2}_{k}) $ for some $ T > 0 $. By~\eqref{eq:equation-ww-epsilon}, the time derivatives $ \{(\pt\eta_\varepsilon,\pt\psi_\varepsilon)\}_{0<\varepsilon\le 1} $ are uniformly bounded in $ \Linf([0,T],\HC{\mu-1,1/2}_{k} \times \HC{\mu-3/2,1/2}_{k})) $. By~\cite{ABZ11capillary}, there exists 
\begin{equation}
\label{eq::continuity-eta-psi}
(\eta,\psi) \in C([0,T],\H{\mu+1/2}\times\H{\mu})
\end{equation}
which solves~\eqref{eq::equation-water-wave}, such that as $ \varepsilon \to 0 $, we have $ (\eta_\varepsilon,\psi_\varepsilon) \to (\eta,\psi) $ weakly in $ \Ltwo([0,T],\HC{\mu+1/2,1/2}_{k} \times \HC{\mu,1/2}_{k}) $, and strongly in $ C([0,T],\HC{\mu-1,1/2}_{k} \times \HC{\mu-3/2,1/2}_{k}) $. We then prove that for $ 1 \le j \le k $, 
\begin{equation*}
\Phi = \Phi(\eta,\psi) \bydef \Lambda^\mu_j S(\eta,\psi) \binom{\eta}{\psi}
\end{equation*}
lies in $ C([0,T],\Ltwo) $, where $ \Lambda^\mu_j $ is defined in Proposition~\ref{prop::a-priori-estimate}, and $ S = S(\eta,\psi) $ is the symmetrizer. Up to an extraction of a subsequence, we may assume by weak convergence that 
\begin{align*}
(\eta,\psi) & \in \Linf([0,T],\HC{\mu+1/2,1/2}_{k} \times \HC{\mu,1/2}_{k}), \\
(\pt\eta,\pt\psi) & \in \Linf([0,T],\HC{\mu-1,1/2}_k \times \HC{\mu-3/2,1/2}_k),
\end{align*}
with 
\begin{equation*}
\|(\eta,\psi)\|_{\Linf([0,T],\HC{\mu+1/2,1/2}_{k} \times \HC{\mu}_{k}) \cap \Holder{1}([0,T],\HC{\mu-1,1/2}_k \times \HC{\mu-3/2}_k)} \le C(\|(\eta_0,\psi_0)\|_{\HC{\mu+1/2,1/2}_{k} \times \HC{\mu,1/2}_{k}}).
\end{equation*}
This already implies that $ (\eta,\psi) $ is weakly continuous in $ \HC{\mu+1/2,1/2}_{k} \times \HC{\mu,1/2}_{k} $. By the analysis in the previous section,
\begin{equation*}
(\pt + \P{V}\cdot\nabla )  \Phi + \begin{pmatrix}
0 & - \P{\gamma} \\ \P{\gamma} & 0
\end{pmatrix}  \Phi = F,
\end{equation*}
with
\begin{equation*}
\|F\|_{\Linf([0,T],\Ltwo)} \le C(\|( \eta_0, \psi_0)\|_{\HC{\mu+1/2,1/2}_{k}\times\HC{\mu,1/2}_{k}}).
\end{equation*}
Let $ J_h = \op_h(e^{-|x|^2-|\xi|^2}) $. 
Now that $ e^{-h^2|x|^2-h^2|\xi|^2} \in S^{0}_0 $, we have the commutator estimate
\begin{equation*}
[J_h,\P{V}\cdot\nabla] = \O(1)_{\OP{0}{-1}}, \quad
[J_h,\P{\gamma}] = \O(1)_{\OP{1/2}{-1}}.
\end{equation*}
Because $ k \ge 1 $, by the same spirit of estimating~$ R $ in Proposition~\ref{prop::a-priori-estimate}, we obtain the following energy estimate
\begin{equation*}
\frac{\d}{\dt} \|J_h\Phi(t)\|_{\Ltwo}^2 \le C(\|(\eta_0,\psi_0)\|_{\HC{\mu+1/2,1/2}_{k}\times\HC{\mu,1/2}_{k}}).
\end{equation*}
Therefore, $ t \mapsto \|J_h\Phi(t)\|_{\Ltwo}^2 $ are uniformly Lipschitizian. Consequently, by Arzel\`{a}--Ascoli theorem, $ t \mapsto \|\Phi(t)\|_{\Ltwo}^2 $ is continuous, because $ J_h\Phi \to \Phi $ as $ h \to 0 $. Combining the weak continuity, we deduce by functional analysis that $ \Phi \in C([0,T],\Ltwo) $. By~\eqref{eq::continuity-eta-psi}, the paradifferential calculus, and the definition of~$ \Phi $, we deduce that
\begin{equation*}
(\eta,\psi) \in C([0,T],\HC{\mu+1/2,1/2}_{k} \times \HC{\mu,1/2}_{k}).
\end{equation*}
Thus we finish the proof of Theorem~\ref{thm::ww-weighted-sobolev-existence}.
\end{proof}

\section{Propagation of singularities for water waves}

\label{sec::proof-main-thm}

\subsection{Finer paralinearization and symmetrization}

To study the propagation of singularities, we need much finer results of paralinearization and symmetrization than Proposition~\ref{prop::paralinearization-G(eta)psi-weighted} and Proposition~\ref{prop::paralinearization-WW} so as to gain regularities in the remainder terms.

\begin{proposition}
\label{prop::paralinearization-D-N-higher-order}
If $ (\eta,\psi) \in \H{\mu+1/2} \times \H{\mu} $ with $ \mu-1/2 \in \N $ and $ \mu > 3+ d/2 $, then there exists $ \lambda = \lambda^{(1)} + \lambda^{(0)} + \cdots \in \Sigma^{1,\mu-1/2-\tilde{d}} $ such that
\begin{equation*}
G(\eta)\psi = \P{\lambda}(\psi-\P{B}\eta) - \P{V} \cdot \nabla\eta + R(\eta,\psi),
\end{equation*}
where $ R(\eta,\psi) \in \H{2\mu-K-d/2} $ for some $ K > 0 $ independent of the dimension~$ d $.
Moreover $ \lambda^{(1-j)} $, when it is defined, is a function of derivatives $ \partial_x^\alpha \eta $ where $ |\alpha| \le 1+j $.
\end{proposition}
\begin{proof}
This theorem follows by replacing the usual paradifferential calculus with the dyadic paradifferential calculus in the analysis of \cite{AM09:paralinearization}. In \cite{AM09:paralinearization}, the explicit expression for~$ \lambda $ is given. We write it down for the sake of later applications.
\begin{equation*}
\lambda = (1+|\nabla\eta|^2) a_+ - i \nabla\eta\cdot\xi,
\end{equation*}
where $ a_\pm = \sum_{j \le 1} a_\pm^{(j)} \in \Sigma^{1,\mu-1-d/2} $ is defined as follows. Setting $ c = \frac{1}{1+|\nabla\eta|^2} $, then
\begin{align*}
a^{(1)}_- & = ic\nabla\eta \cdot \xi - \sqrt{c|\xi|^2-(c\nabla\eta\cdot\xi)^2}, &
a^{(1)}_+ & = ic\nabla\eta \cdot \xi + \sqrt{c|\xi|^2-(c\nabla\eta\cdot\xi)^2},\\
a^{(0)}_- & = \frac{i\pxi a^{(1)}_- \cdot \px a^{(1)}_+ - c \Delta \eta a^{(1)}_-}{a^{(1)}_+-a^{(1)}_-}, &
a^{(0)}_+ & = \frac{i\pxi a^{(1)}_- \cdot \px a^{(1)}_+ - c \Delta \eta a^{(1)}_+}{a^{(1)}_--a^{(1)}_+}.
\end{align*}
Suppose that $ a_\pm^{(j)} $ are defined for $ m \le j \le 1 $, then we define
\begin{align*}
a_-^{(m-1)} & = \frac{1}{a_-^{(1)} - a_+^{(1)}} \sum_{m \le k \le 1} \sum_{m \le \ell \le 1} \sum_{|\alpha| = k + \ell - m} \frac{1}{\alpha!} \pxi^\alpha a_-^{(k)} D_x^\alpha a_+^{(\ell)} \\
a_+^{(m-1)} & = - a_-^{(m-1)}.
\end{align*}
The principal and sub-principal symbols of~$ \lambda $ coincide with the ones given by Proposition~\ref{prop::paralinearization-G(eta)psi-weighted}.
\end{proof}

\begin{proposition}
\label{prop::paralinearization-ww-fine}
Let $ (\eta,\psi) \in \H{\mu+1/2} \times \H{\mu} $ with $ \mu-1/2 \in \N $ and $ \mu > 3+ d/2 $.
Let $ \Lambda^\mu = \P{(\gamma^{(3/2)})^{2\mu/3}} $,
and set
\begin{equation*}
w = \Lambda^\mu U S \binom{\eta}{\psi}, \quad
U = \begin{pmatrix}
-i & 1 \\ \phantom{-}i & 1
\end{pmatrix}.
\end{equation*}
Then there exists $ Q \in M_{2\times2}(\Sigma^{0,\mu-1/2-2-\tilde{d}}_{0,0}) $ and $ \zeta \in \Sigma^{-1/2,\mu-1/2-2-\tilde{d}}_{0,0} $ such that for some $ K > 0 $ which is independent of the dimension~$ d $, we have
\begin{equation}
\label{eq::equation-w-sys}
(\pt +  \P{V} \cdot \nabla + \P{Q}) w
+ i \P{\gamma} \begin{pmatrix}
1 & 0 \\ 0 & -1
\end{pmatrix} w
+ \frac{ig}{2} \P{\zeta} \begin{pmatrix*}
1 & -1 \\ 1 & -1
\end{pmatrix*}
\in \H{\mu-K-d/2}.
\end{equation}
\end{proposition}

\begin{remark}
\label{rmk::w-u-u-bar}
Because $ \chi $ in the definition of paradifferential operators is an even function, we verify that $ \Lambda^\mu $, $ \P{p} $, $ \P{q} $, $ \P{B} $ all map real-valued functions to real-valued functions. Therefore, $ \displaystyle w = \binom{u}{\bar{u}} $ with 
\begin{equation}
\label{eq::def-u}
u = \Lambda^\mu (-i, 1) S \binom{\eta}{\psi} 
=  \Lambda^\mu \P{q}\omega - i\Lambda^\mu\P{p} \eta.
\end{equation}
recalling that $ \omega = \psi - \P{B} \eta $ is the good unknown of Alinhac.
\end{remark}

\begin{proof}
Combining Proposition~\ref{prop::paralinearization-D-N-higher-order} and Proposition~\ref{prop::paralinearization-WW}, moving the term $ g\eta $ to the left hand side,
\begin{equation*}
(\pt + \P{V}\cdot\nabla + \L)\binom{\eta}{\psi} + g \binom{0}{\eta} = f(\eta,\psi),
\end{equation*}
where $ \displaystyle f(\eta,\psi) = Q^{-1} \binom{f_1}{f_2} \in \H{2\mu+1/2-K-d/2} \times \H{2\mu-K-d/2} $ for some $ K > 0 $ and is defined by
\begin{align*}
f_1 & = G(\eta) \psi - \{ \P{\lambda} (\psi - \P{B}\eta) - \P{V} \cdot \nabla \eta \},\\
f_2 & = -\frac{1}{2} |\nabla\psi|^2 + \frac{1}{2} \frac{(\nabla\eta \cdot \nabla\psi + G(\eta)\psi)^2}{1+|\nabla\eta|^2} + H(\eta) \\
& \quad \quad + \P{V} \cdot \nabla\psi - \P{B} \P{V} \cdot \nabla\eta - \P{B} G(\eta)\psi + \P{\ell} \eta.
\end{align*}
Given two time-dependent operators $ \A, \B : \swtz \to \swtz' $, we say that $ \A \sim \B $ if 
\begin{equation*}
\A - \B \in \Linf([0,T],\OP{-\mu+d/2+K}{0}).
\end{equation*}
By the ellipticity of~$ \gamma^{(3/2)} $, $ p^{(1/2)} $ and $ q^{(0)} $, we can find paradifferential operators~$ \tilde{\Lambda}^{\mu} $ and~$ \tilde{S} $ by a routine construction of parametrix such that 
$ \tilde{\Lambda}^{\mu} \Lambda^\mu \sim \Id, $  $ \tilde{S} S \sim \Id. $
We can find $ \zeta \in \Sigma^{-1/2,\mu-1/2-2-\tilde{d}} $ with principal symbol $ \zeta^{(-1/2)} = q^{(0)} / p^{(1/2)} $ which implies (note that the only nonzero entries in the following matrices are in the lower left corners)
\begin{equation*}
\begin{pmatrix}
0 & 0 \\ \P{\zeta} & 0
\end{pmatrix} \Lambda^\mu S
- \Lambda^\mu S \begin{pmatrix}
0 & 0 \\ 1 & 0
\end{pmatrix} 
\sim 0.
\end{equation*}

Then by~\eqref{eq::symmetrization} and the fact that the Poisson bracket between the symbol of $ \Lambda^\mu $ and $ \gamma $ vanishes, we find by the symbolic calculus two symbols $ A, B \in M_{2\times 2}(\Sigma^{0,\mu-1/2-2-\tilde{d}}) $ such that
\begin{align*}
\A \bydef [\pt + \P{V}\cdot\nabla, \Lambda^\mu S] 
\sim  [\pt + \P{V}\cdot\nabla, \Lambda^\mu S] \tilde{S} \tilde{\Lambda}^{\mu} \Lambda^\mu S 
& \sim \P{A} \Lambda^\mu S, \\
\B \bydef \begin{pmatrix}
0 & - \P{\gamma} \\ \P{\gamma} & 0
\end{pmatrix} \Lambda^\mu S  - \Lambda^\mu S \L 
\sim \bigg( \begin{pmatrix}
0 & - \P{\gamma} \\ \P{\gamma} & 0
\end{pmatrix} - \Lambda^\mu S \L \tilde{S} \tilde{\Lambda}^{\mu} \bigg)  \Lambda^\mu S
& \sim \P{B} \Lambda^\mu S.
\end{align*}
In fact, by Proposition~\ref{prop::composition-P}, the symbol $A$ is a finite sum of symbols which is given by the symbolic calculus of the operator $[\pt + \P{V}\cdot\nabla, \Lambda^\mu S] \tilde{S} \tilde{\Lambda}^{\mu} $, whereas the symbol $B$ is given by the symbolic calculus of the operator $\begin{pmatrix}
0 & - \P{\gamma} \\ \P{\gamma} & 0
\end{pmatrix} - \Lambda^\mu S \L \tilde{S} \tilde{\Lambda}^{\mu} $.
Clearly $A$ is or zeroth order. The reason why $B$ is of zeroth order is the condition~\eqref{eq::symmetrization} according to which we constructed the symbols $\gamma,p,q$.

Let $ \displaystyle \Phi = \Lambda^\mu S\binom{\eta}{\psi} $, and write
$ g\begin{pmatrix}
0 \\ \eta
\end{pmatrix} = \begin{pmatrix}
0 & 0 \\ g & 0
\end{pmatrix} \begin{pmatrix}
\eta \\ \psi
\end{pmatrix}, $
we obtain by the analysis above that
\begin{equation*}
(\pt + \P{V}\cdot\nabla) \Phi + \begin{pmatrix}
0 & - \P{\gamma} \\ \P{\gamma} & 0
\end{pmatrix} \Phi
+ \begin{pmatrix}
0 & 0 \\ g \P{\zeta} & 0
\end{pmatrix} \Phi
= \P{A} \Phi + \P{B} \Phi + F,
\end{equation*}
where 
\begin{align*}
F = (\A + \B)\binom{\eta}{\psi} - \P{A+B} \Phi + \begin{pmatrix}
0 & 0 \\ g \P{\zeta} & 0
\end{pmatrix} \Phi  - g \Lambda^\mu S \binom{0}{\eta} + \Lambda^\mu S f(\eta, \psi)
\in \H{\mu-K-d/2}.
\end{align*}
Finally, observe that
\begin{align*}
U \begin{pmatrix}
0 & - \P{\gamma} \\ \P{\gamma} & 0
\end{pmatrix} U^{-1} 
& = i \begin{pmatrix}
\P{\gamma} & 0 \\ 0 & -\P{\gamma}
\end{pmatrix}, \\
U \begin{pmatrix}
0 & 0 \\ \P{\zeta} & 0
\end{pmatrix} U^{-1} 
& = \frac{i}{2} \begin{pmatrix}
\P{\zeta} & -\P{\zeta} \\ \P{\zeta} & -\P{\zeta}
\end{pmatrix},
\end{align*}
We conclude by setting
\begin{equation*}
Q = -\frac{1}{2} U(A+B)U^{-1}.\qedhere
\end{equation*}
\end{proof}

\begin{remark}
\label{remark::symbol-polynomial-decay}
By Proposition~\ref{prop::paralinearization-D-N-higher-order} and the symbolic calculus, the symbols that we have encountered, such as $ \lambda $, $ \zeta $ and $ Q $ etc., are of the form $ a = a^{(m)} + a^{(m-1)} + \cdots $ such that $ a^{(m-j)} $, whenever it is defined, is a function of $ (\nabla\eta,\ldots,\nabla^{j+1}\eta) $.
To be precise $ a^{(m-j)} = f_j(\nabla\eta,\ldots,\nabla^{j+1}\eta,\xi) $ where $ f_j $ is homogeneous of degree $ m-j $ in~$ \xi $ and $ f_0(0,\ldots,0,\xi) = |\xi|^{m} $, $ f_j(0,\ldots,0,\xi) = 0 $ for $ j \ge 1 $.
Note that if $ \eta \in \HC{\mu+1/2,1/2}_k $, then for all $j \le \mu+1/2-\tilde{d}$, we have $\nabla^j \eta \in \HC{\mu+1/2-j,1/2}_k$.
Therefore, by Lemma~\ref{lem:weighted-symbol-injection},
\begin{equation}
\label{eq:decay-eta}
\nabla^j \eta \in
\Holder{\min\{[2(\mu+1/2-j-\tilde{d})/3],k\}}_{0,1}
\cap
\jp{x}^{-\min\{2(\mu+1/2-j-\tilde{d}),k\}} L^\infty,
\end{equation}
and consequently
\begin{equation}
\label{eq:symbol-optimal-decay}
a^{(m)}-|\xi|^m 
\in \Gamma^{m,0}_{-\min\{2\mu-1-2\tilde{d},k\},0},\quad
a^{(m-j)}
\in \Gamma^{m-j,0}_{-\min\{2\mu-1-2j-2\tilde{d},k\},0}.
\end{equation}
As another consequence of~\eqref{eq:decay-eta}, we also have
\begin{equation}
\label{eq::symbol-decay-complete}
\begin{split}
a^{(m)} - |\xi|^m \in  & \Gamma^{m,\min\{[2(\mu-1/2-\tilde{d})/3],k\}}_{0,1},\\
a^{(m-j)} \in \Gamma^{m-j,\min\{[2(\mu-1/2-j-\tilde{d})/3],k\}}_{-j,1}
\subset & \Gamma^{m-j,\min\{[2(\mu-1/2-\tilde{d})/3],k\}-j}_{-j,1}.
\end{split}
\end{equation}
\end{remark}

\begin{lemma}
\label{lem::WF-eta-psi==u}
Let~$ u $ be defined as~\eqref{eq::def-u}. If $ (\eta,\psi) \in \H{\mu+1/2} \times \H{\mu} $ with $ \mu-1/2 \in \N $, then for $ 0 \le \sigma \le r \in \N $ with $ r < \mu-1/2-1-\tilde{d} $,
\begin{equation*}
\WF_{0,1}^\sigma(u)^\circ = \WF_{0,1}^{\mu+1/2+\sigma}(\eta)^\circ \cup \WF_{0,1}^{\mu+\sigma}(\psi)^\circ.
\end{equation*}
If $ (\eta,\psi) \in \HC{\mu+1/2}_{k} \times \HC{\mu}_{k} $, with $ k \le \frac{2}{3}(\mu-1-\tilde{d}) $, then for $ 0 \le \sigma \le \frac{3}{2} k $, 
\begin{equation*}
\WF_{1/2,1}^\sigma(u)^\circ = \WF_{1/2,1}^{\mu+1/2+\sigma}(\eta)^\circ \cup \WF_{1/2,1}^{\mu+\sigma}(\psi)^\circ.
\end{equation*}
\end{lemma}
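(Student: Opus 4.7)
My proof is based on the structural identities from Remark~\ref{rmk::w-u-u-bar},
\begin{equation*}
u + \bar u = 2\Lambda^\mu \P{p}\eta,\qquad u - \bar u = -2i\Lambda^\mu\P{q}\omega,\qquad \omega = \psi - \P{B}\eta,
\end{equation*}
which turn the comparison of wavefront sets into the question of how the elliptic paradifferential operators $\Lambda^\mu\P{p}$ (order $\mu+1/2$) and $\Lambda^\mu\P{q}$ (order $\mu$) transfer wavefront information between their inputs and outputs. Each has a $\xi$-homogeneous elliptic principal symbol, so Lemma~\ref{lem::WF-paradiff-elliptic} applies up to lower-order corrections.

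For the inclusion $\WF^\sigma_{0,1}(u)^\circ \subset \WF^{\mu+1/2+\sigma}_{0,1}(\eta)^\circ \cup \WF^{\mu+\sigma}_{0,1}(\psi)^\circ$, I start from the definition \eqref{eq::def-u} and expand each composition $\Lambda^\mu\P{p}$ and $\Lambda^\mu\P{q}$ via Proposition~\ref{prop::composition-P} into a finite sum of paradifferential operators whose principal symbols are $\xi$-homogeneous of decreasing orders ($\mu+1/2, \mu-1/2,\dots$ and $\mu, \mu-1,\dots$ respectively), modulo a remainder sufficiently regularizing under the bound $r < \mu-1-d/2$. Applying Lemma~\ref{lem::WF-paradiff-elliptic} piece by piece transfers the wavefront inclusion to $\eta$ and $\omega$. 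The order-zero operator $\P{B}$ appearing in $\omega = \psi - \P{B}\eta$ contributes at most $\WF^{\mu+\sigma}_{0,1}(\eta)^\circ$, which is absorbed into $\WF^{\mu+1/2+\sigma}_{0,1}(\eta)^\circ$ by monotonicity of wavefront sets in the order index.

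For the converse inclusion, I construct paradifferential parametrices $E_p, E_q$ of orders $-(\mu+1/2),-\mu$ for $\Lambda^\mu\P{p}$ and $\Lambda^\mu\P{q}$, so that $E_p\Lambda^\mu\P{p}-\Id$ and $E_q\Lambda^\mu\P{q}-\Id$ are of sufficiently negative order to contribute nothing at the relevant wavefront level (using Lemma~\ref{lem::basic-properties-WF} to kill the smoothing tail). Writing $\eta = \tfrac{1}{2}E_p(u+\bar u)$ modulo smooth terms, decomposing $E_p$ into $\xi$-homogeneous pieces, and applying Lemma~\ref{lem::WF-paradiff-elliptic} transfers $\WF^\sigma_{0,1}(u+\bar u)^\circ$ into $\WF^{\mu+1/2+\sigma}_{0,1}(\eta)^\circ$. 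Since Lemma~\ref{lem::basic-properties-WF}(\ref{WF::conjugate}) identifies $\WF^\sigma_{0,1}(\bar u)^\circ$ with the $\xi$-reflection of $\WF^\sigma_{0,1}(u)^\circ$, and since $\WF^{\mu+1/2+\sigma}_{0,1}(\eta)^\circ$ is itself $\xi$-symmetric (reality of $\eta$), the inclusion collapses to the claimed equality for $\eta$. The parallel argument using $\Lambda^\mu\P{q}\omega = \tfrac{i}{2}(u-\bar u)$, followed by $\psi = \omega + \P{B}\eta$, handles $\psi$.

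For the second part (weighted case, $\WF_{1/2,1}$ with $(\eta,\psi)\in\HC{\mu+1/2}_m\times\HC{\mu}_m$), I repeat the argument verbatim but invoke Lemma~\ref{lem::WF-paradiff-elliptic} with $\epsilon=1/2$; this is available because Remark~\ref{remark::symbol-polynomial-decay} places the relevant symbols in the classes $\Gamma^{\cdot,\cdot}_{\cdot,1}$, while the constraint $m < \tfrac{2}{3}(\mu-1-d/2)$ and $\sigma\le\tfrac{3}{2}m$ controls the regularity budget both for the symbolic expansions of $\Lambda^\mu\P{p}$ and $\Lambda^\mu\P{q}$ and for the parametrix errors. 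The main technical obstacle I anticipate is the symmetry analysis in the converse direction: since $u$ is genuinely complex-valued, $\WF^\sigma(u)^\circ$ is not a priori $\xi$-symmetric, whereas the right-hand side is, so one must carefully decompose $u = A - iB$ with $A = \Lambda^\mu\P{p}\eta$ and $B = \Lambda^\mu\P{q}\omega$ both real-valued (using Remark~\ref{rmk::w-u-u-bar}) and exploit the evenness-in-$\xi$ of the Fourier truncation in the paradifferential quantization to recover a genuine equality, rather than a mere inclusion modulo $\xi$-reflection.
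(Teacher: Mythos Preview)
Your argument is close to the paper's. For the forward inclusion both apply Lemma~\ref{lem::WF-paradiff-elliptic} to $\Lambda^\mu$, $\P{p}$, $\P{q}$, $\P{B}$ and absorb the $\P{B}\eta$ contribution via $\WF^{\mu+\sigma}(\eta)^\circ\subset\WF^{\mu+1/2+\sigma}(\eta)^\circ$; the paper does not bother to expand the compositions into homogeneous pieces, but the content is the same. For the converse the paper is more economical than your parametrix route: rather than inverting $\Lambda^\mu\P{p}$ and $\Lambda^\mu\P{q}$, it uses the set identity $X\cup Y=X\cup(Y\setminus X)$ together with $\WF^{\mu+\sigma}(\P{B}\eta)^\circ\subset\WF^{\mu+1/2+\sigma}(\eta)^\circ$ to swap $\psi$ for $\omega$ on the complement of $\WF^{\mu+1/2+\sigma}(\eta)^\circ$, obtaining
\[
\WF^{\mu+1/2+\sigma}_{\epsilon,1}(\eta)^\circ\cup\WF^{\mu+\sigma}_{\epsilon,1}(\psi)^\circ
=\WF^{\mu+1/2+\sigma}_{\epsilon,1}(\eta)^\circ\cup\WF^{\mu+\sigma}_{\epsilon,1}(\omega)^\circ,
\]
and then invokes only the \emph{elliptic} (equality) case of Lemma~\ref{lem::WF-paradiff-elliptic} on each factor to reach $\WF^\sigma(\Lambda^\mu\P{p}\eta)^\circ\cup\WF^\sigma(\Lambda^\mu\P{q}\omega)^\circ$. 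No remainder bookkeeping is needed.

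The $\xi$-symmetry obstacle you flag at the end is genuine, and the paper does not address it either: the passage from $\WF^\sigma(\Lambda^\mu\P{p}\eta)^\circ\cup\WF^\sigma(\Lambda^\mu\P{q}\omega)^\circ$ back to $\WF^\sigma(u)^\circ$ is simply asserted. For real $A,B$ one has in general only $\WF(A)^\circ\cup\WF(B)^\circ=\WF(A-iB)^\circ\cup\WF(A+iB)^\circ$, so what both arguments actually establish is the identity with $\WF^\sigma(u)^\circ\cup\WF^\sigma(\bar u)^\circ$ on the left. Your proposed fix via the evenness of the admissible cutoff $\chi$ does not close this gap: that evenness is exactly what makes $A=\Lambda^\mu\P{p}\eta$ and $B=\Lambda^\mu\P{q}\omega$ real in the first place (Remark~\ref{rmk::w-u-u-bar}), but it does not force $\WF^\sigma(u)^\circ$ itself to be $\xi$-symmetric. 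Note, however, that in the paper's propagation arguments the analysis is carried out on the pair $w=(u,\bar u)$, so the symmetrized version of the lemma is what is effectively used downstream.
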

\begin{proof}
Clearly if $ \eta \in \H{\mu+1/2} $, then $ (\gamma^{(3/2)})^{2\mu/3} \in \Gamma^{\mu,r} $, $ p^{(1/2)} \in \Gamma^{1/2,r} $, $ q^{(0)} \in \Gamma^{0,r} $, $ B \in \Gamma^{0,r} $.
By~\eqref{eq::symbol-decay-complete}, if $ \eta \in \HC{\mu+1/2}_k $, then $ (\gamma^{(3/2)})^{2\mu/3} \in \Gamma^{\mu,k}_{0,1} $, $ p^{(1/2)} \in \Gamma^{1/2,k}_{0,1} $, $ q^{(0)} \in \Gamma^{0,k}_{0,1} $, $ B \in \Gamma^{0,k}_{0,1} $.
By Lemma~\ref{lem::WF-paradiff-elliptic} and~\eqref{eq::def-u}, for either $ \epsilon = 0 $ or $ \epsilon = 1/2 $,
\begin{align*}
\WF_{\epsilon,1}^{\sigma}(u)^\circ
& = \WF_{\epsilon,1}^{\sigma}(\Lambda^\mu \P{p}\eta)^\circ \cup \WF_{\epsilon,1}^{\sigma}(\Lambda^\mu \P{q}(\psi-\P{B}\eta))^\circ \\
& = \WF_{\epsilon,1}^{\mu+1/2+\sigma}(\eta)^\circ \cup \WF_{\epsilon,1}^{\mu+\sigma}(\psi-\P{B}\eta)^\circ \\
& \subset \WF_{\epsilon,1}^{\mu+1/2+\sigma}(\eta)^\circ \cup \big( \WF_{\epsilon,1}^{\mu+\sigma}(\psi)^\circ \cup \WF_{\epsilon,1}^{\mu+\sigma}(\P{B}\eta)^\circ \big) \\
& \subset \WF_{\epsilon,1}^{\mu+1/2+\sigma}(\eta)^\circ \cup \big( \WF_{\epsilon,1}^{\mu+\sigma}(\psi)^\circ \cup \WF_{\epsilon,1}^{\mu+\sigma}(\eta)^\circ \big) \\
& = \WF_{\epsilon,1}^{\mu+1/2+\sigma}(\eta)^\circ \cup \WF_{\epsilon,1}^{\mu+\sigma}(\psi)^\circ.
\end{align*}
Conversely, as $ \WF_{\epsilon,1}^{\mu+\sigma}(\P{B}\eta)^\circ
\subset \WF_{\epsilon,1}^{\mu+1/2+\sigma}(\eta)^\circ $, we have
\begin{align*}
\WF_{\epsilon,1}^{\mu+1/2+\sigma}&(\eta)^\circ \cup \WF_{\epsilon,1}^{\mu+\sigma}(\psi)^\circ \\ 
& = \WF_{\epsilon,1}^{\mu+1/2+\sigma}(\eta)^\circ \cup \big( \WF_{\epsilon,1}^{\mu+\sigma}(\psi)^\circ \backslash \WF_{\epsilon,1}^{\mu+1/2+\sigma}(\eta)^\circ \big) \\
& =\WF_{\epsilon,1}^{\mu+1/2+\sigma}(\eta)^\circ \cup \big( \WF_{\epsilon,1}^{\mu+\sigma}(\psi-\P{B}\eta)^\circ \backslash \WF_{\epsilon,1}^{\mu+1/2+\sigma}(\eta)^\circ \big) \\
& = \WF_{\epsilon,1}^{\mu+1/2+\sigma}(\eta)^\circ \cup \WF_{\epsilon,1}^{\mu+\sigma}(\psi-\P{B}\eta)^\circ\\
& = \WF_{\epsilon,1}^{\sigma}(u)^\circ.
\end{align*}
The lemma follows.
\end{proof}

\subsection{Proof of Theorem~\ref{thm::main-infinite}}

By Lemma~\ref{lem::WF-eta-psi==u}, it is equivalent to prove the following theorem.

\begin{theorem}
\label{thm::main-infinite-u}
Under the hypothesis of Theorem~\ref{thm::main-infinite}, let~$ u $ be defined by~\eqref{eq::def-u}, and let
\begin{equation*}
(x_0,\xi_0) \in \WF_{1/2,1}^\sigma(u_0)^\circ
\end{equation*}
with $ 0 \le \sigma < k/2 - 3/2 $. 
Let $ t_0 \in [0,T] $, and suppose that
\begin{equation*}
x_0+\frac{3}{2}t|\xi_0|^{-1/2}\xi_0 \ne 0, 
\end{equation*}
for all $ t \in [0,t_0] $, then
\begin{equation*}
\Big(x_0+\frac{3}{2}t_0|\xi_0|^{-1/2}\xi_0,\xi_0\Big) \in \WF_{1/2,1}^\sigma(u(t_0))^\circ.
\end{equation*}
\end{theorem}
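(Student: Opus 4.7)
The plan is to transfer the argument for Theorem~\ref{thm::model-eq}~\ref{thm::model-eq-infinity}—specialized to $\gamma = 3/2$, for which the constraint $\rho\gamma = \delta+\rho$ forces $(\delta,\rho) = (1/2,1)$—to the paradifferential system~\eqref{eq::equation-w-sys}, using the quasi-homogeneous semiclassical paradifferential calculus developed in \S\ref{sec::paradiff-calculus}. By Remark~\ref{rmk::w-u-u-bar} it is enough to track $u$, since $\bar u$ is determined from $u$ and Lemma~\ref{lem::basic-properties-WF}~\ref{WF::conjugate} relates their quasi-homogeneous wavefront sets. Extracting the first line of~\eqref{eq::equation-w-sys}, the scalar~$u$ satisfies
\begin{equation*}
\pt u + \P{V}\cdot\nabla u + i\P{\gamma} u + \tfrac{ig}{2}\P{\zeta}(u-\bar u) + \P{Q_{11}} u + \P{Q_{12}}\bar u = f, \qquad f \in C([-T,T], \H{\mu-4-d/2}).
\end{equation*}
Under the rescaling $\op_h^{1/2,1}$, the operator $i\P{\gamma}$ carries the full principal symbol $|\xi|^{3/2}$ (the remainder of $\gamma^{(3/2)}-|\xi|^{3/2}$ has polynomial spatial decay, by Remark~\ref{remark::symbol-polynomial-decay}), while $\P{V}\cdot\nabla$, $\P{Q}$ and $\P{\zeta}$ are all strictly subordinate; in particular the $u\leftrightarrow\bar u$ coupling through $\P{\zeta}$ is of order $-1/2$ and can be absorbed into the iterative remainders.

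I would then reproduce the parametrix construction of \S\ref{sec::proof-model-eq}. Choose $\varphi \in \Ccinf(\Rd\times(\Rd\backslash 0))$ with $\varphi(x_0,\xi_0)\ne 0$ so that $\op_h^{1/2,1}(\varphi)u(0) = \O(h^\sigma)_{\Ltwo}$, and construct
\begin{equation*}
a_h(t,x,\xi) \sim \sum_{0 \le j \le J} h^{3j/2} a_h^j(t,x,\xi), \qquad a_h^j \in \Holder{\infty}_\loc(\R, S^{-\infty}_{-\infty}),
\end{equation*}
supported in a thin tube around the flow line $t\mapsto(x_0+\tfrac{3}{2}t|\xi_0|^{-1/2}\xi_0,\xi_0)$ of $X_{|\xi|^{3/2}}$, with $a_h^0|_{t=0}=\varphi$, by solving iteratively the transport equations $\Lag_t a_h^0 = 0$ and $\Lag_t a_h^j + b_h^{j-1}=0$ for $j\ge 1$. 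Here the $b_h^{j-1}$ are the subprincipal terms produced by Proposition~\ref{prop::composition-P-h}, Proposition~\ref{prop::adjoint-P-h} and Corollary~\ref{cor::mixed-composition-epsilon-0} when commuting $\pt + i\P{\gamma} + \P{V}\cdot\nabla + \P{Q} + \tfrac{ig}{2}\P{\zeta}$ with $\P{a_h}^{h,1/2}$; the first term $b_h^0$ collects not only the Schr\"odinger-type corrections already encountered in \S\ref{sec::proof-model-eq} but also the contributions of the spatially-decaying parts $\gamma-|\xi|^{3/2}$, $V$, $Q$ and $\zeta$. The non-vanishing assumption $x_0+\tfrac{3}{2}t|\xi_0|^{-1/2}\xi_0 \ne 0$ on $[0,t_0]$ guarantees that $\supp a_h(t,\cdot)$ remains away from $\{x=0\}\cup\{\xi=0\}$, hence $a_h\in\sigma_{1/2}$ and the quasi-homogeneous composition formulas apply.

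With $A_h = \P{a_h}^{h,1/2}$, I would then compute
\begin{equation*}
\ddt \|A_h u\|_{\Ltwo}^2 = 2\Re\big(A_h u, [\pt + i\P{\gamma}+\cdots] A_h u\big)_{\Ltwo} + 2\Re(A_h u, A_h f)_{\Ltwo} + (\text{coupling}).
\end{equation*}
The self-adjoint part of $i\P{\gamma}$, together with the paradifferential G{\aa}rding inequality (Lemma~\ref{lem::Garding-paradiff}) applied to the skew-symmetric lower-order terms, absorbs all the perturbative contributions into $\O(h^\infty)\|A_h u\|_{\Ltwo}^2$ by design of $a_h$; the remainder $(A_h u, A_h f)_{\Ltwo}$ is $\O(h^\sigma)$ as soon as $\sigma$ stays within the regularity budget of $f$, by Lemma~\ref{lem::basic-properties-WF}~\ref{WF::vanish} applied to $f \in \H{\mu-4-d/2}$. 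Gronwall's inequality then gives $A_h u(t_0) = \O(h^\sigma)_{\Ltwo}$, and Lemma~\ref{lem::WF-characterization-paradiff} finishes the proof.

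The main obstacle is the finite-regularity nature of the paralinearization: the Egorov-type iteration can only be closed at a fixed finite step $J$, and this cap—together with the weighted Sobolev regularity of the coefficients $\gamma, V, Q, \zeta$ (Remark~\ref{remark::symbol-polynomial-decay})—is precisely what pins down the admissible range $0\le\sigma\le m/2-3/2$. A secondary technical point is that because the subprincipal symbols carry only polynomial spatial decay, the composition estimates of Proposition~\ref{prop::composition-P-h} must be used with $\delta=1$, so one must keep careful track of the trade-off $h^{r(1+\epsilon)-\epsilon(k+k')}$ between the semiclassical gain and the power of $\jp{x}$ lost at each step in order to close the induction with a uniform remainder $\O(h^\infty)_{\Ltwo\to\Ltwo}$.
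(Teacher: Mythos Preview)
Your approach is in the right spirit but takes a substantially harder route than the paper. You propose to run the full semiclassical paradifferential Egorov machinery of \S\ref{sec::paradiff-calculus} directly on~\eqref{eq::equation-w-sys}; the paper instead observes that, because the $(1/2,1)$-wavefront set only sees the region $|x|\sim h^{-1/2}$, the spatial decay recorded in Remark~\ref{remark::symbol-polynomial-decay} lets one throw \emph{every} variable-coefficient perturbation into a source term. Concretely, introducing $X^\nu = \sum_{k\in\Z} H^{\nu-k/2}_k$ (so that $f\in X^\nu$ forces $\WF_{1/2,1}^{\nu}(f)^\circ = \emptyset$ by Lemma~\ref{lem::basic-properties-WF}~\ref{WF::vanish}), one checks that $\P{V}\cdot\nabla w$, $\P{Q}w$, $(\P{\gamma}-\P{|\xi|^{3/2}})w$ and $(\P{\zeta}-\P{|\xi|^{-1/2}})w$ all land in $X^{m/2-3/2}$. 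After a high-frequency cutoff $w' = \pi(D_x)w$, this leaves a \emph{constant-coefficient} $2\times 2$ system with source in $X^{m/2-3/2}$. The residual off-diagonal coupling is then removed not by an iterative argument but by a single Fourier multiplier $P(D_x)$ that diagonalizes the matrix $|\xi|^{3/2}\bigl(\begin{smallmatrix}1&0\\0&-1\end{smallmatrix}\bigr) + \tfrac{g}{2}|\xi|^{-1/2}\bigl(\begin{smallmatrix}1&-1\\1&-1\end{smallmatrix}\bigr)$; setting $v = \Re u' + i\theta(D_x)\Im u'$ with $\theta(\xi)\sim 1$ at high frequency, one arrives at the scalar model $\pt v + i|D_x|^{3/2}\theta(D_x)v \in X^{m/2-3/2}$, and the proof of~\ref{thm::model-eq-infinity} applies verbatim.

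The gain is twofold: the paper never needs Propositions~\ref{prop::composition-P-h}--\ref{prop::adjoint-P-h} or Corollary~\ref{cor::mixed-composition-epsilon-0} here, and the $u\leftrightarrow\bar u$ coupling (which in your sketch is handled somewhat loosely---$\P{Q_{12}}\bar u$ is order~$0$, not $-1/2$, and in an energy estimate for $u$ alone it does not obviously close without separately controlling $A_h\bar u$) disappears by exact diagonalization. Your paradifferential-Egorov route is essentially the machinery the paper reserves for Theorem~\ref{thm::main-finite}, where the geometry at finite~$x$ genuinely matters and cannot be discarded; for propagation at spatial infinity it is unnecessary.
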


\begin{proof}
For $ \nu \in \R $, denote 
\begin{equation*}
X^\nu = \sum_{k \in \Z} H^{\nu-k/2}_k.
\end{equation*}
By Lemma~\ref{lem::basic-properties-WF}, if $ f \in X^\nu $, then $ \WF_{1/2,1}^{\nu}(f)^\circ = \emptyset $. 
Also note that if $ f \in X^\nu $ and $ a \in \Sigma^{m,r}_{0,1} $, then $ \P{a} f \in X^{\nu-m} $.
As $ k < 2\mu-d $, we have $ V \in \HC{\mu}_{k} \subset \jp{x}^k H^{\mu-k/2} \subset \jp{x}^{-k} L^\infty $ which implies
\begin{equation*}
\P{V} \cdot \nabla w \subset \P{V} H^{-1} \subset H^{-1}_k \subset X^{k/2-1}.
\end{equation*}
By Remark~\ref{remark::symbol-polynomial-decay}, particularly~\eqref{eq:symbol-optimal-decay},
\begin{equation*}
\P{Q} w
\in \sum_{j<\mu-\tilde{d}} H^{j}_{\min\{2\mu-1-2j-2\tilde{d},k\}}
\subset 
\sum_{j<\mu-\tilde{d}} X^{\min\{\mu-1-\tilde{d},j+k/2\}}
\subset X^{k/2}.
\end{equation*}
Similarly
\begin{align*}
\P{\gamma} w - \P{|\xi|^{3/2}} w 
& 
\in \sum_{j<\mu-\tilde{d}} H^{j-3/2}_{\min\{2\mu-1-2j-2\tilde{d},k\}}
\subset X^{k/2-3/2}, \\
\P{\zeta} w - \P{|\xi|^{-1/2}} w 
& \in \sum_{j<\mu-\tilde{d}} H^{j+1/2}_{\min\{2\mu-1-2j-2\tilde{d},k\}}
\subset X^{k/2+1/2}.
\end{align*}
By the hypothesis on $ m $, we thus obtain
\begin{equation}
\pt w'
+ i |D_x|^{3/2} \begin{pmatrix}
1 & 0 \\ 0 & -1
\end{pmatrix} w'
+ \frac{ig}{2} |D_x|^{-1/2} \begin{pmatrix}
1 & -1 \\ 1 & -1
\end{pmatrix} w'
\in X^{k/2-3/2},
\end{equation}
where $ w' = \pi(D_x) w $, and $ \pi \in \Cinf(\Rd) $ which vanishes near the origin, and equals to~$ 1 $ out side a neighborhood of the origin. Moreover, we require that $ \supp \pi \subset \{\tilde{\pi}=1\} $ such that $ 1 - \tilde{\pi} \in \Ccinf(\Rd) $ and $ \tilde{\pi}(\xi) = 0 $ if $ |\xi|^2 \le |g| $. Observe that the matrix
\begin{equation*}
M = |\xi|^{3/2} \begin{pmatrix}
1 & 0 \\ 0 & -1
\end{pmatrix}
+ \frac{g}{2} |\xi|^{-1/2} \begin{pmatrix}
1 & -1 \\ 1 & -1
\end{pmatrix}
\end{equation*}
is symmetrizable when restricted to $ \supp \pi $. Indeed, let 
\begin{equation*}
P = \frac{1}{2} \begin{pmatrix}
\phantom{-} 1+\theta & \phantom{-}1-\theta \\ -(1-\theta) & -(1+\theta)
\end{pmatrix},
\end{equation*}
where $ \theta = \sqrt{\tilde{\pi}(\xi) \cdot (g|\xi|^{-2}+1)} $, then $ P \in \OP{0}{0} $.
For $ \xi \in \supp \pi $, we have
\begin{equation*}
PMP^{-1} = |\xi|^{3/2}\theta(\xi) \begin{pmatrix}
1 & 0 \\ 0 & -1
\end{pmatrix}.
\end{equation*}
Set 
\begin{equation*}
\tilde{w} = P(D_x) w'
= P(D_x)\binom{u'}{\overline{u'}} =
\binom{\phantom{-}\Re\, u' + i \theta(D_x) \Im\, u'}{-\Re\, u' + i \theta(D_x) \Im\, u'},
\end{equation*}
where $ u' = \pi(D_x) u $, then
\begin{equation*}
\pt \tilde{w} + |D_x|^{3/2}\theta(D_x) \begin{pmatrix}
1 & 0 \\ 0 & -1
\end{pmatrix} \tilde{w} \in X^{k/2-3/2}.
\end{equation*}
Finally, let $ v = \Re\, u' + i \theta(D_x) \Im\, u' $, then $ \WF^\sigma_{1/2,1}(u)^\circ = \WF^\sigma_{1/2,1}(v)^\circ $, and
\begin{equation*}
\pt v + |D_x|^{3/2}\theta(D_x) v \in X^{k/2-3/2}.
\end{equation*}
We are left to prove that if $ (x_0,\xi_0) \in \WF_{1/2,1}^\sigma(v(0))^\circ $, then
\begin{equation*}
\Big(x_0+\frac{3}{2}t_0|\xi_0|^{-1/2}\xi_0,\xi_0\Big) \in \WF_{1/2,1}^\sigma(v(t_0)).
\end{equation*}
Because $ \theta(\xi) \sim 1 $ in the high frequency regime, similar proof as~\ref{thm::model-eq-infinity} of Theorem~\ref{thm::model-eq} yields the conclusion.
\end{proof}

\subsection{Proof of Theorem~\ref{thm::main-finite}}

\subsubsection{Hamiltonian flow}

Let $ \Phi = \Phi_s : \Rd \times (\Rd \backslash 0) \to \Rd \times (\Rd \backslash 0) $ be the Hamiltonian flow of 
\begin{equation*}
H(x,\xi) = \gamma^{(3/2)}(0,x,\xi) 
= \Big(|\xi|^2 - \frac{(\nabla\eta_0 \cdot \xi)^2}{1+|\nabla\eta_0|^2} \Big)^{3/4}.
\end{equation*}
That is 
\begin{equation*}
\ps \Phi_s(x,\xi) = X_{H}(\Phi_s(x,\xi)), \quad
\Phi|_{s=0} = \Id_{\Rd \times (\Rd \backslash 0)},
\end{equation*}
where $ X_{H} = (\pxi H, -\px H) $. We use~$ s $ to denote the time variable in accordance to the semiclassical time variable in the following section. Observe that 

\begin{lemma}
\label{lem::Geo-Phi-relation}
For $ (x,\xi) \in \Rd \times (\Rd \backslash 0) $, we have 
\begin{equation*}
\Phi_s(x,\xi) = \Geo_{\varphi_s(x,\xi)}(x,\xi),
\end{equation*}
where $ \Geo $ is the geodesic flow defined in \S\ref{sec::intro-Microlocal-Smoothing-Effect}, and
\begin{equation*}
\varphi_s(x,\xi) = \frac{3}{4} \int_0^s G(\Phi_\sigma(x,\xi))^{-1/4}  \d\sigma.
\end{equation*}
\end{lemma}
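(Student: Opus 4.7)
The plan is to recognize that $H$ is a smooth function of the geodesic Hamiltonian $G$, and then reduce the identity to the elementary fact that the Hamiltonian flow of $f(G)$ is obtained from the flow of $G$ by a time-change dictated by $f'(G)$, combined with the conservation of $G$ along both flows.

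First I would unpack the definition of $H$. Using the explicit expressions $\ell^{(2)}=\big((1+|\nabla\eta|^2)|\xi|^2-(\nabla\eta\cdot\xi)^2\big)/(1+|\nabla\eta|^2)^{3/2}$ and $\lambda^{(1)}=\sqrt{(1+|\nabla\eta|^2)|\xi|^2-(\nabla\eta\cdot\xi)^2}$ introduced in \S\ref{sec::symmetrization}, a direct computation gives
\begin{equation*}
\gamma^{(3/2)}(0,x,\xi)=\sqrt{\ell^{(2)}\lambda^{(1)}}(0,x,\xi)=\Big(|\xi|^2-\frac{(\nabla\eta_0\cdot\xi)^2}{1+|\nabla\eta_0|^2}\Big)^{3/4}=G(x,\xi)^{3/4}.
\end{equation*}
Thus $H=f\circ G$ with $f(t)=t^{3/4}$ on $t>0$, so $X_H=f'(G)\,X_G=\tfrac{3}{4}G^{-1/4}X_G$ on $\Rd\times(\Rd\backslash 0)$.

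Next I would verify that $G$ is conserved along $\Phi_s$. This is immediate from $\{G,H\}=\{G,f(G)\}=0$, so $s\mapsto G(\Phi_s(x,\xi))$ is constant, equal to $G(x,\xi)$. In particular, $\varphi_s(x,\xi)=\tfrac{3}{4}\int_0^s G(\Phi_\sigma(x,\xi))^{-1/4}\,\drv\sigma=\tfrac{3}{4}s\,G(x,\xi)^{-1/4}$; the integral form in the statement is essentially cosmetic, but it makes the time-change manifestly intrinsic.

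Finally I would verify the identity by an ODE uniqueness argument. Define $\Psi_s(x,\xi)\bydef\Geo_{\varphi_s(x,\xi)}(x,\xi)$; then $\Psi_0=\Id$, and differentiating in $s$ one obtains, using $\partial_s\varphi_s=\tfrac{3}{4}G(\Phi_s(x,\xi))^{-1/4}=\tfrac{3}{4}G(x,\xi)^{-1/4}$ (by the conservation step) and $\partial_t\Geo_t=X_G(\Geo_t)$,
\begin{equation*}
\partial_s\Psi_s(x,\xi)=\tfrac{3}{4}G(x,\xi)^{-1/4}X_G(\Psi_s(x,\xi)).
\end{equation*}
Since $G$ is also conserved along $\Geo$, we have $G(\Psi_s(x,\xi))=G(x,\xi)$, and therefore
\begin{equation*}
\partial_s\Psi_s(x,\xi)=\tfrac{3}{4}G(\Psi_s(x,\xi))^{-1/4}X_G(\Psi_s(x,\xi))=X_H(\Psi_s(x,\xi)).
\end{equation*}
Thus $\Psi_s$ and $\Phi_s$ satisfy the same ODE with the same initial condition, so $\Psi_s=\Phi_s$ by the uniqueness part of the Picard--Lindel\"of theorem, which is the claim. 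There is no real obstacle here: the only subtlety is keeping track of where the flow lives (we stay in $\Rd\times(\Rd\backslash 0)$, away from $\xi=0$ where $G^{-1/4}$ blows up), which is automatic since $G$ is bounded below on any co-geodesic starting at $\xi_0\ne 0$.
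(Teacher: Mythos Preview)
Your proof is correct and follows essentially the same route as the paper's: identify $H=G^{3/4}$, set $\Psi_s=\Geo_{\varphi_s}$, differentiate, and invoke ODE uniqueness. Your version is in fact more transparent than the paper's, since you make explicit the conservation of $G$ along both $\Phi$ and $\Geo$; the paper's computation silently replaces $G(\Phi_s(x,\xi))$ by $G(\Geo_{\varphi_s(x,\xi)}(x,\xi))$ in the derivative of $\varphi_s$, which is only justified once one has noted that both equal $G(x,\xi)$.
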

\begin{proof}
We have $ \Geo_{\varphi_0(x,\xi)}(x,\xi) = \Geo_{0}(x,\xi) = (x,\xi) = \Phi_0(x,\xi). $ Then observe that
\begin{equation*}
H(x,\xi) = G(x,\xi)^{3/4} = \varrho_x^{-1}(\xi,\xi)^{3/4}.
\end{equation*}
Therefore,
\begin{align*}
\tfrac{\d}{\ds} \Geo_{\varphi_s(x,\xi)}(x,\xi)
& = \tfrac{\d}{\ds}\varphi_s(x,\xi) (\tfrac{\d}{\ds} \Geo)_{\varphi_s(x,\xi)}(x,\xi)  \\
& = \tfrac{3}{4}G(\Geo_{\varphi_s(x,\xi)}(x,\xi))^{-1/4} X_G(\Geo_{\varphi_s(x,\xi)}(x,\xi)) \\
& = X_{H}(\Geo_{\varphi_s(x,\xi)}(x,\xi)).
\end{align*}
We conclude by the uniqueness of solutions to Hamiltonian ODEs.
\end{proof}

\begin{lemma}
\label{lem::hamiltonian-flow-to-infinity}
Suppose that for some $ \epsilon > 0 $, $ \nabla\eta_0 \in \Holder{0}_{1/2+\epsilon} $, $ \nabla^2 \eta_0 \in \Holder{0}_{1+\epsilon}. $ Let $ (x_0,\xi_0) \in \Rd \times (\Rd \backslash 0) $ such that the co-geodesic $ \{(x_s,\xi_s) = \Phi_s(x_0,\xi_0)\}_{s\in\R} $ is forwardly non-trapping. Set
\begin{equation*}
z_s = x_s - x_0 - \frac{3}{2} \int_0^s |\xi_\sigma|^{-1/2} \xi_\sigma \d\sigma,
\end{equation*}
then there exists $ (z_{+\infty},\xi_{+\infty}) \in \Rd \times (\Rd \backslash 0) $ such that \begin{equation*}
\lim_{s\to+\infty} (z_s,\xi_s) = (z_{+\infty},\xi_{+\infty}).
\end{equation*}
Consequently, by Lemma~\ref{lem::Geo-Phi-relation}, let $ (x'_s,\xi'_s) = \Geo_s(x_0,\xi_0) $, then 
\begin{equation*}
\lim_{s \to + \infty} \xi'_s = \xi_{+\infty}.
\end{equation*}
\end{lemma}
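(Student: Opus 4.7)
The strategy is to exploit conservation of $H = G^{3/4}$ together with the spatial decay of the coefficients to upgrade the qualitative non-trapping condition into quantitative linear growth $|x_s| \gtrsim s$, from which integrability of $\dot\xi_s$ and $\dot z_s$ follows. Write Hamilton's equations explicitly with $G(x,\xi) = \varrho_0^{-1}(x)(\xi,\xi) = |\xi|^2 - \frac{(\nabla\eta_0\cdot\xi)^2}{1+|\nabla\eta_0|^2}$:
\begin{equation*}
\dot x_s = \tfrac{3}{2}G_0^{-1/4}\varrho_0^{-1}(x_s)\xi_s, \qquad
\dot\xi_s = -\tfrac{3}{4}G_0^{-1/4}(\partial_x\varrho_0^{-1})(x_s)(\xi_s,\xi_s),
\end{equation*}
where $G_0 := G(x_0,\xi_0)$ is conserved. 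Uniform positive definiteness of $\varrho_0^{-1}$ (from $\nabla\eta_0\in L^\infty$) keeps $|\xi_s|$ between two positive constants, and $|\dot x_s|$ is also uniformly bounded above and below. Using $\varrho_0^{-1} = \Id - \frac{{}^t\nabla\eta_0\,\nabla\eta_0}{1+|\nabla\eta_0|^2}$ together with the weighted decay assumptions yields
\begin{equation*}
|\varrho_0^{-1}(x) - \Id| \lesssim \jp{x}^{-1-2\epsilon},
\qquad
|\partial_x\varrho_0^{-1}(x)| \lesssim \jp{x}^{-3/2-2\epsilon},
\end{equation*}
hence the central bound $|\dot\xi_s| \lesssim \jp{x_s}^{-3/2-2\epsilon}$.

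The hard part is the quantitative lower bound on $|x_s|$. I would resolve this by a Virial-type argument on $F_s := x_s\cdot\xi_s$: using conservation of $G$,
\begin{equation*}
\dot F_s
= \tfrac{3}{2}G_0^{-1/4}\varrho_0^{-1}(x_s)(\xi_s,\xi_s) + x_s\cdot\dot\xi_s
= \tfrac{3}{2}G_0^{-1/4}G_0 + O(\jp{x_s}^{-1/2-2\epsilon})
= \tfrac{3}{2}H_0 + O(\jp{x_s}^{-1/2-2\epsilon}).
\end{equation*}
The forward non-trapping hypothesis supplies some $S_0$ beyond which $\jp{x_s}$ is so large that the error is at most $H_0/2$, giving $\dot F_s \ge H_0$ for $s \ge S_0$. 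Combined with $|F_s| \le |x_s||\xi_s| \lesssim |x_s|$, this produces $|x_s| \gtrsim s$ for all sufficiently large $s$.

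Once linear growth is established, $|\dot\xi_s| \lesssim s^{-3/2-2\epsilon}$ is integrable on $[S_0,\infty)$, so $\xi_s$ is Cauchy and converges to some $\xi_{+\infty}$; the uniform lower bound on $|\xi_s|$ passes to the limit and ensures $\xi_{+\infty} \ne 0$. For $z_s$, expanding
\begin{equation*}
\dot z_s = \tfrac{3}{2}\bigl[\,(G_0^{-1/4}-|\xi_s|^{-1/2})\xi_s + G_0^{-1/4}(\varrho_0^{-1}(x_s)-\Id)\xi_s\,\bigr]
\end{equation*}
and using $|G_0 - |\xi_s|^2| = \frac{(\nabla\eta_0(x_s)\cdot\xi_s)^2}{1+|\nabla\eta_0(x_s)|^2} \lesssim \jp{x_s}^{-1-2\epsilon}$ gives $|\dot z_s| \lesssim \jp{x_s}^{-1-2\epsilon} \lesssim s^{-1-2\epsilon}$, so $z_s$ converges as well.

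Finally, the statement for $\Geo$ follows from Lemma~\ref{lem::Geo-Phi-relation}: since $G$ is conserved along $\Phi$, the reparametrization $\varphi_s = \tfrac{3}{4}G_0^{-1/4}s$ is linear, so $\Geo_s = \Phi_{\frac{4}{3}G_0^{1/4}s}$ and the convergence $\xi_s \to \xi_{+\infty}$ transfers directly to $\xi'_s$. The backward case is symmetric.
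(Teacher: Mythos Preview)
Your proof is correct and follows essentially the same route as the paper: both hinge on the Virial identity $\tfrac{d}{ds}(x_s\cdot\xi_s)=\tfrac{3}{2}H_0+O(\jp{x_s}^{-1/2-2\epsilon})$, use forward non-trapping to make the error term harmless for large~$s$, and then exploit the spatial decay of $\varrho_0^{-1}-\Id$ and $\partial_x\varrho_0^{-1}$ to obtain integrability of $\dot\xi_s$ and $\dot z_s$.

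The only noteworthy difference is in how the integrability step is executed. You pass from the Virial inequality directly to linear growth $|x_s|\gtrsim s$ and then bound $\int|\dot z_s|\,ds\lesssim\int s^{-1-2\epsilon}\,ds$. The paper instead turns the Virial bound into a sojourn-time estimate $\lambda\{s\ge0:|x_s|\le R\}\lesssim R|\xi_0|^{-1/2}$ and then integrates $\jp{x_s}^{-1-\epsilon}$ via the layer-cake formula. Your version is more elementary and gives the same conclusion; the paper's measure formulation has the mild advantage of being uniform from $s=0$ (not just $s\ge S_0$) and of tracking the dependence on $|\xi_0|$, but neither point is needed for the lemma as stated. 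Your separate bounds $|\dot\xi_s|\lesssim\jp{x_s}^{-3/2-2\epsilon}$ and $|\dot z_s|\lesssim\jp{x_s}^{-1-2\epsilon}$ are in fact slightly sharper than the paper's uniform $O(\jp{x_s}^{-1-\epsilon})$ coming from $E=H-|\xi|^{3/2}$.
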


\begin{proof}
Because $ \{(x_s,\xi_s)\}_{s\in\R} $ is forwardly non-trapping, and we only consider the limiting behavior when~$ s \to +\infty $, we may assume that $ \varepsilon_0 \bydef \| \jp{x} \nabla^2 \eta_0\|_{\Linf} $ is sufficiently small. As $ \nabla\eta_0 \in \Linf $, we have $ H(\cdot,\xi) \simeq |\xi|^{3/2} $. Then 
\begin{equation*}
\frac{\d}{\ds} (x_s \cdot \xi_s) = \pxi H(x_s,\xi_s) \cdot \xi_s - x_s \cdot \px H(x_s,\xi_s), 
\end{equation*}
where 
\begin{equation*}
\pxi H(x_s,\xi_s) \cdot \xi_s = \frac{3}{2} H(x_s,\xi_s) = \frac{3}{2} H(x_0,\xi_0) \simeq |\xi_0|^{3/2},
\end{equation*}
and
\begin{align*}
\px H(x_s,\xi_s) & = \frac{3}{4} H(x_s,\xi_s)^{-1/3} \px G(x_s,\xi_s) \\
& = \frac{3}{4} H(x_s,\xi_s)^{-1/3} \Big( \frac{2 \nabla\eta_0 \cdot \xi_s}{1+|\nabla\eta_0|^2} \nabla^2\eta_0 \xi_s - \frac{2(\nabla\eta_0\cdot\xi_s)^2}{(1+|\nabla\eta_0|^2)^2} \nabla^2\eta_0 \nabla\eta_0 \Big)\Big|_{x=x_s}.
\end{align*}
Therefore 
\begin{equation*}
x_s \cdot \px H(x_s,\xi_s) = \O(\varepsilon|\xi_s|^{3/2}) = \O(\varepsilon |\xi_0|^2),
\end{equation*}
and consequently, 
\begin{equation}
\label{eq::growth-rate-x-cdot-xi}
\frac{\d}{\ds} (x_s \cdot \xi_s) \gtrsim |\xi_0|^{3/2}.
\end{equation}
So for any bounded set $ B \subset \Rd $, 
\begin{align}
\label{eq::escaping-time-estimate}
\lambda (s \ge 0 : x_s \in B ) 
& \lesssim \frac{\sup\{ |x \cdot \xi| : (x,\xi) \in B \times \Rd, H(x,\xi) = H(x_0,\xi_0) \}}{|\xi_0|^{3/2}} \\
& \lesssim \sup_{x\in B} |x| \jp{\xi_0}^{-1/2}, \nonumber
\end{align}
where~$ \lambda $ is the Lebesgue measure on~$ \R $. Let 
\begin{equation*}
E(x,\xi) = H(x,\xi) - |\xi|^{3/2},
\end{equation*}
then by the hypothesis of the decay of~$ \eta_0 $, $ E \in \Gamma^{3/2,1}_{-1-\epsilon,0} $. By the definition of~$ z_s $, we have
\begin{equation*}
\frac{\d}{\ds}(z_s,\xi_s) = (\pxi E,-\px E)(x_s,\xi_s)
= \O(\jp{x_s}^{-1-\epsilon}),
\end{equation*}
where we used the conversation of $ H(x_s,\xi_s) $ to deduce the boundedness of $ \xi_s $.
By~\eqref{eq::escaping-time-estimate},
\begin{equation*}
\int_{0}^\infty \jp{x_s}^{-1-\epsilon} \ds
= (1+\epsilon) \int_0^{\infty} t^\epsilon \lambda(s \ge 0 : \jp{x_s}^{-1} > t ) \d t 
\lesssim \int_0^1 t^\epsilon \sqrt{t^{-2}-1} \d t
< \infty.
\end{equation*}
Therefore, for any $ 0 < s^- < s^+ $ with $ s^- \to \infty $,
\begin{align*}
|(z_{s^+},\xi_{s^+})-(z_{s^-},\xi_{s^-})| 
& \lesssim \int_{s^-}^{s^+} \jp{x_\sigma}^{-1-\epsilon} \d\sigma \to 0,
\end{align*}
implying that $ (x_s,\xi_s) $ is a Cauchy sequence as $ s \to \infty $.
\end{proof}

\subsubsection{Construction of symbol}

For $ h \ge 0 $, and $ h^{1/2} s \le T $, set
\begin{equation*}
H_h(s,x,\xi) = \gamma^{(3/2)}(h^{1/2}s, x,\xi),
\end{equation*}
so in particular $ H(x,\xi) \equiv H_0(s,x,\xi) $. For $ h > 0 $, the semiclassical time variable $ s = h^{-1/2} t $ was inspired by Lebeau \cite{Lebeau1992schrodinger}, see also \cite{Zhu20control} for an application in theory of control for water waves.

For $ a \in \Cinf([0,h^{-1/2}T[ \times \R^{2d}) $, set
\begin{equation*}
\Lag^\pm_{h,s} a = \ps a \pm \{H_h,a\}.
\end{equation*}

\begin{lemma}
\label{lem::chi-symbol-construction-main}
Suppose that for some $ \epsilon > 0 $, $ \nabla \eta_0 \in \Holder{0}_{1/2+\epsilon} $, $ \nabla^2 \eta_0 \in \Holder{0}_{1+\epsilon} $, $ \nabla^3 \eta_0 \in \Holder{0}_{3/2+\epsilon} $. Let $ (x_0,\xi_0) \in \Rd \times (\Rd \backslash 0) $ such that the co-geodesic $ \{(x_s,\xi_s) = \Phi_s(x_0,\xi_0)\}_{s\in\R} $ is forwardly non-trapping, then there exists $ s_0 > 0 $, $ K > 0 $ and
\begin{equation}
\label{eq::symbol-chi-space}
\chi^\pm \in \Holder{1}(\R_{\ge 0},\Upsilon^{\mu-K-\tilde{d}}) \cap \Holder{1}(\R_{\ge s_0},S^{-\infty}_0)
\end{equation}
in the sense that
\begin{equation*}
\|N^{\mu-K-\tilde{d}}(\chi^\pm)\|_{\Linf(\R_{\ge 0})} + \|N^{\mu-K-\tilde{d}}(\ps \chi^\pm)\|_{\Linf(\R_{\ge 0})} < +\infty,
\end{equation*}
and satisfies the following conditions:
\begin{enumerate}
\item $ \chi^\pm(0,x,\xi) \in S^{-\infty}_{-\infty} $ is elliptic at $ (x_0,\pm \xi_0) $;
\item for all $ t_0 > 0 $, $ \chi^\pm(s,\frac{s}{t_0}x,\xi) \in S^{-\infty}_{-\infty} $ is elliptic at $ (\frac{3}{2}t_0|\xi_\infty|^{-1/2}\xi_\infty,\pm\xi_{\infty}) $ for sufficiently large~$ s $;~and
\item if $ \Omega $ is a neighborhood of $ (\frac{3}{2} t_0 |\xi_\infty|^{-1/2} \xi_\infty, \pm\xi_{\infty}) $, then $ \chi^\pm $ can be chosen such that 
\begin{equation*}
\supp \chi^\pm\big(s,\frac{s}{t_0}x,\xi\big) \subset \Omega
\end{equation*}
for sufficiently large $ s $.
\end{enumerate}
Moreover, if $ (\eta,\psi) \in \HC{\mu+1/2}_k \times \HC{\mu}_k $ with $ \mu > 3+d/2 $ and $ m \ge 2 $, then
\begin{equation*}
\Lag^\pm_{h,s} \chi^\pm  \in\Linf([0,h^{-1/2}T],\jp{x}^{-1}\Upsilon^{\mu-K-\tilde{d}-1}),
\end{equation*}
and
\begin{equation}
\label{eq::chi-pm-quasi-positivity}
\Lag^\pm_{h,s} \chi^\pm 
\ge \O(h^{1/2})_{\Linf([0,h^{-1/2}T],\jp{x}^{-1}\Upsilon^{\mu-K-\tilde{d}-1})}.
\end{equation}
\end{lemma}
\begin{proof}
Let $ \phi \in \Ccinf(\Rd) $ such that
\begin{enumerate}[label=(\roman*)]
\item \label{phi::support} $ \phi \ge 0 $, $ \phi(x) = 1 $ for $ |x| \le 1/2 $, $ \phi(x) = 0 $ for $ |x| \ge 1 $, $ \supp \phi = \{|x| \le 1 \} $;
\item \label{phi::decay} $ x \cdot \nabla\phi(x) \le 0 $ for all $ x \in \Rd $;
\item \label{phi::radial} $ y \cdot \nabla\phi(x) = 0 $ for all $ x,y \in \Rd $ with $ x \cdot y = 0 $.
\end{enumerate}
Such $ \phi $ can be constructed by setting $ \phi(x) = \varphi(|x|) $ where $ \varphi : \R \to \R $ satisfies $ 0 \le \varphi \le 1 $, $ \varphi(z) = 1 $ if $ z \le 1/2 $, $ \varphi(z) = 0 $ if $ z \ge 1 $. 
For $ \rho > 0 $, $ \delta > 0 $, $ \lambda > 0 $, $ \nu > 0 $ and sufficiently large $ s > 0 $, set
\begin{equation*}
\tilde{\chi}^\pm(s,x,\xi) = \phi\Big(  \frac{x - x_s}{\rho \lambda \delta s} \Big) \phi\Big( \frac{\xi \mp \xi_s}{\rho(\delta - s^{-\nu})}\Big).
\end{equation*}
We verify that $ \Lag_{0,s}^\pm \tilde{\chi}^\pm(s,\cdot) \ge 0 $ for $ s > 0 $ sufficient large. Indeed,
\begin{align*}
\Lag_{0,s}^\pm & \tilde{\chi}^\pm(s,x,\xi) \\
& = \Big( \pm\frac{\pxi H(x,\xi) - \pxi H(x_s, \mp \xi_s)}{\rho \lambda \delta s} - \frac{x-x_s}{\rho \lambda \delta s^2} \Big) \nabla\phi\Big( \frac{x-x_s}{ \rho \lambda \delta s} \Big) \phi\Big(\frac{\xi \mp \xi_s}{\rho(\delta - s^{-\nu})}\Big) \\
& \quad + \Big( \pm \frac{\px H(x_s, \mp \xi_s) - \px H(x,\xi)}{\rho (\delta - s^{-\nu})} - \nu\frac{\xi\mp\xi_s}{\rho(\delta - s^{-\nu})^2s^{\nu+1}} \Big) \phi\Big( \frac{x-x_s}{\rho \lambda \delta s} \Big) \nabla\phi\Big(\frac{\xi \mp \xi_s}{\rho(\delta - s^{-\nu})}\Big).
\end{align*}
By~\ref{phi::support},
\begin{align*}
\supp \phi\Big( \frac{\cdot - x_s}{ \rho \lambda \delta s} \Big)
& \subset \big\{x \in \Rd : |x-x_s| \le \rho \lambda \delta s \big\}, \\
\supp \phi\Big(\frac{\cdot \mp \xi_s}{\rho(\delta - s^{-\nu})}\Big)
& \subset \big\{\xi \in \Rd : |\xi \mp \xi_s| \le \rho(\delta - s^{-\nu}) \big\}, \\
\supp \nabla\phi\Big( \frac{\cdot - x_s}{ \rho\lambda \delta s} \Big)
& \subset \big\{x \in \Rd : \frac{1}{2} \rho \lambda \delta s \le |x-x_s| \le \rho \lambda \delta s \big\}, \\
\supp \nabla\phi\Big(\frac{\cdot \mp \xi_s}{\rho(\delta - s^{-\nu})}\Big)
& \subset \big\{\xi \in \Rd : \frac{1}{2} \rho (\delta - s^{-\nu})  \le |\xi \mp \xi_s| \le \rho (\delta - s^{-\nu}) \big\}.
\end{align*}
By Lemma~\ref{lem::hamiltonian-flow-to-infinity},
\begin{equation*}
x_s
= x_0 + \frac{3}{2} \int_0^s |\xi_\sigma|^{-1/2} \xi_\sigma \d\sigma + z_s
= \frac{3}{2} s |\xi_{\infty}|^{-1/2} \xi_{\infty} + o(s).
\end{equation*}
Therefore, by writing
\begin{equation*}
\tilde{\chi}^\pm \big(s,\frac{s}{t_0}x,\xi\big) 
= \phi\Big( \frac{x - \frac{3}{2}t_0|\xi_{\infty}|^{-1/2} \xi_{\infty} + o(1)}{\rho \lambda \delta t_0} \Big) \phi\Big(\frac{\xi \mp \xi_\infty + o(1)}{\rho(\delta - s^{-\nu})}\Big),
\end{equation*}
we see that $ \tilde{\chi}^\pm(s,\frac{s}{t_0}x,\xi) $ is elliptic at $ (\frac{3}{2}t_0|\xi_\infty|^{-1/2}\xi_\infty,\pm\xi_{\infty}) $ for sufficiently large~$ s $. Moreover, if $ \rho\lambda\delta $ is sufficiently small and~$ s $ is sufficiently large, then
\begin{equation*}
\supp \phi\Big( \frac{\cdot - x_s}{ \rho \lambda \delta s} \Big) \subset \{x \in \Rd : |x| \gtrsim s \}.
\end{equation*}
Therefore, by the hypothesis on $ \eta_0 $,  we have for $ (x,\xi) \in \supp \tilde{\chi}^\pm(s,\cdot) $,
\begin{equation*}
\nabla_{x\xi}^2 H(x,\xi) = \begin{pmatrix}
\nabla_x^2 H & \nabla_x\nabla_\xi H \\
\nabla_\xi \nabla_x H & \nabla_\xi^2 H
\end{pmatrix}(x,\xi)
= \begin{pmatrix}
\O(s^{-2-\epsilon}) & \O(s^{-3/2-\epsilon}) \\ \O(s^{-3/2-\epsilon}) & \O(1)
\end{pmatrix},
\end{equation*}
and consequently, by the finite increment formula,
\begin{align*}
|\pxi H(x_s, \mp \xi_s) - \pxi H(x,\xi)| 
& \lesssim s^{-3/2-\epsilon} |x-x_s| + |\xi\mp\xi_s| 
\lesssim s^{-1/2-\epsilon} \rho \lambda \delta + \rho \delta;  \\
|\px H(x_s, \mp \xi_s) - \px H(x,\xi)| 
& \lesssim s^{-2-\epsilon} |x-x_s| + s^{-3/2-\epsilon} |\xi\mp\xi_s|  
\lesssim \rho \lambda \delta s^{-1-\epsilon} + \rho \delta s^{-3/2-\epsilon}.
\end{align*}
By~\ref{phi::radial} and the estimates above,
\begin{align*}
\big( \pxi H(x,\xi) & - \pxi H(x_s, \mp \xi_s) \big) \cdot \nabla\phi\Big( \frac{x-x_s}{ \rho \lambda \delta s} \Big) \\
& = \big( \pxi H(x,\xi) - \pxi H(x_s, \mp \xi_s) \big) \cdot \frac{x-x_s}{|x-x_s|^2} (x-x_s)\cdot \nabla\phi\Big( \frac{x-x_s}{ \rho \lambda \delta s} \Big) \\
& = \O(s^{-3/2-\epsilon}+\lambda^{-1}s^{-1}) (x-x_s)\cdot \nabla\phi\Big( \frac{x-x_s}{ \rho \lambda\delta s} \Big); \\
\big( \px H(x_s, \mp \xi_s) & - \px H(x,\xi) \big) \cdot \nabla\phi\Big(\frac{\xi \mp \xi_s}{\rho (\delta - s^{-\nu})}\Big) \\
& = \big( \px H(x_s, \mp \xi_s) - \px H(x,\xi) \big) \cdot \frac{\xi \mp \xi_s}{|\xi \mp \xi_s|^2} (\xi \mp \xi_s)\cdot \nabla\phi\Big(\frac{\xi \mp \xi_s}{\rho(\delta - s^{-\nu})}\Big) \\
& = \O(\lambda s^{-1-\epsilon}+s^{-3/2-\epsilon}) (\xi \mp \xi_s)\cdot \nabla\phi\Big(\frac{\xi \mp \xi_s}{\rho(\delta - s^{-\nu})}\Big).
\end{align*}
Finally, we fix $ 0 < \nu < \epsilon $, $ \delta > 0 $. Then, when $ \lambda $ is sufficiently large, and $ s \ge s_0 - 1 > 0 $ with~$ s_0 $ being sufficiently large, by~\ref{phi::decay},
\begin{align}
\label{eq::Lag_0-chi}
\Lag_{0,s}^\pm \tilde{\chi}^\pm
& = -\frac{1+\O(s^{-1/2-\epsilon}+\lambda^{-1})}{\rho\lambda\delta s^{2}} (x-x_s) \cdot \nabla\phi\Big( \frac{x-x_s}{ \rho \lambda \delta s} \Big) \phi\Big(\frac{\xi \mp \xi_s}{\rho(\delta - s^{-\nu})}\Big) \\
& \qquad - \frac{\nu-\O(\lambda)s^{\nu-\epsilon}}{\rho(\delta-s^{-\nu})^{2}s^{\nu+1}}   (\xi\mp\xi_s) \cdot \phi\Big( \frac{x-x_s}{\rho \lambda \delta s} \Big) \nabla\phi\Big(\frac{\xi \mp \xi_s}{\rho(\delta - s^{-\nu})}\Big) 
\ge 0. \nonumber
\end{align}
We verify as in Lemma~\ref{lem::symbol-prop-to-infinifty-model-eq} that 
\begin{equation*}
\tilde{\chi}^\pm \in \Holder{\infty}(\R_{\ge s_0},S^{-\infty}_0), \quad
\Lag_{0,s}^\pm \tilde{\chi}^\pm \in \Holder{\infty}(\R_{\ge s_0},\Gamma^{-\infty,\mu-K-\tilde{d}}_{-1,0}). 
\end{equation*} 
We then choose $ \rho>0 $ sufficiently small such that $ \rho\lambda\delta $ is small and that $ \supp \tilde{\chi}^\pm(s,\frac{s}{t_0}x,\xi) \subset \Omega $ when~$ s $ is large.
Next, we set for $ s \ge s_0 $, 
\begin{equation*}
\chi^\pm(s,x,\xi) = \tilde{\chi}^\pm(s,x,\xi).
\end{equation*}
To define $ \chi^\pm $ for $ s \le s_0 $, we choose $ \rho \in \Cinf(\R) $ such that $ 0 \le \rho \le 1 $, $ \rho(s) = 1 $ for $ s \ge s_0 $, and $ \rho(s) = 0 $ for $ s \le s_0-\alpha $ for some small $ \alpha > 0 $ to be specified later, and solve the transport equation on $ [0,s_0] $,
\begin{equation*}
\Lag_{0,s}^\pm \chi^\pm(s,x,\xi) = \rho(s) \Lag_{0,s}^\pm \tilde{\chi}^\pm(s,x,\xi), \quad
\chi^\pm(s_0,x,\xi) = \tilde{\chi}^\pm(s_0,x,\xi).
\end{equation*}
Because the vector field involved in the definition of $ \Lag_{0,s}^\pm $ is in $ \Holder{\mu-K-\tilde{d}} $ with respect to the~$ x $ variable, we deduce that $ \chi^\pm \in \Holder{1}(\R_{\ge 0},\Upsilon^{\mu-K-\tilde{d}}) $ and thus $ \chi^\pm $ satisfies~\eqref{eq::symbol-chi-space}.
Clearly
\begin{equation}
\label{eq::positivity-Lag-chi-0}
\Lag_{0,s}^\pm \chi^\pm \ge 0.
\end{equation}
Moreover, because
\begin{equation*}
\chi^\pm(s,x,\xi) = \tilde{\chi}^\pm(s_0,\Phi_{\pm(s_0-s)}(x,\xi)) - \int_s^{s_0} \rho(\sigma) \Lag_{0,s}^\pm \tilde{\chi}^\pm(\sigma,\Phi_{\pm(\sigma-s)}(x,\xi)) \d\sigma,
\end{equation*}
if we choose $ \alpha > 0 $ sufficiently small, then
\begin{align*}
\chi^\pm(0,x_0,\pm\xi_0) 
& = \tilde{\chi}^\pm(s_0,x_{s_0},\pm\xi_{s_0}) - \int_{s_0-\alpha}^{s_0} \rho(\sigma) \Lag_{0,s}^\pm \tilde{\chi}^\pm(\sigma,x_{\sigma},\pm\xi_{\sigma}) \d\sigma \\
& \ge 1 - \|\Lag_{0,s}^\pm \tilde{\chi}^\pm(\sigma,x_{\sigma},\pm \xi_{\sigma})\|_{\Lone_\sigma([s_0-\alpha,s_0])} > 0.
\end{align*}
Therefore, $ \chi^\pm(0,\cdot) $ is elliptic at $ (x_0,\pm \xi_0) $. 

To estimate $ \Lag_{h,s}^\pm \chi^\pm $, we use
\begin{align*}
H_h(s,x,\xi) - H_0(s,x,\xi) 
& = H_h(s,x,\xi) - H_h(0,x,\xi)  \\
& = \int_0^s (\ps H_h)(\sigma,x,\xi) \d\sigma
= h^{1/2} \int_0^s (\pt \gamma^{(3/2)})(h^{1/2}\sigma,x,\xi) \d\sigma,
\end{align*}
and write
\begin{align*}
\Lag_{h,s}^\pm \chi^\pm(s,\cdot) - \Lag_{0,s}^\pm \chi^\pm(s,\cdot) 
& = \pm \{H_h-H_0,\chi^\pm\}(s,\cdot) \\
& =  \pm h^{1/2} \int_0^s \{ \pt \gamma^{(3/2)}(h^{1/2}\sigma,\cdot), \chi^\pm(s,\cdot) \} \d\sigma.
\end{align*}
Observe that
\begin{equation*}
\pt \gamma^{(3/2)}
= - \frac{3}{2} \Big(|\xi|^2 - \frac{(\nabla\eta \cdot \xi)^2}{1+|\nabla\eta|^2} \Big)^{-1/4} \Big( \frac{\nabla\eta \cdot \xi}{1+|\nabla\eta|^2} \nabla G(\eta)\psi \cdot \xi - \frac{(\nabla\eta \cdot \xi)^2}{(1+|\nabla \eta|^2)^2} \nabla G(\eta)\psi \cdot \nabla\eta\Big).
\end{equation*}
By hypothesis and Proposition~\ref{prop::D-N-higher-regularity}, $ \nabla G(\eta)\psi \in \HC{\mu-2,1/2}_{k} \subset \H{\mu-3}_2 $ as $ k \ge 2 $. Therefore,
\begin{equation*}
\pt \gamma^{(3/2)}(h^{1/2}\cdot,\cdot) \in \Linf([0,h^{-1/2}T],\Gamma^{3/2,\mu-K-\tilde{d}}_{-2,0}).
\end{equation*}
Using $ |x| \sim s $ on $ \supp \chi^\pm(s,\cdot) $, we have, uniformly for all $ s \in [0,h^{-1/2}T] $,
\begin{equation*}
\jp{s} \{ \pt \gamma^{(3/2)}(h^{1/2}\sigma,\cdot), \chi^\pm(s,\cdot) \}
\in \Linf_\sigma([0,h^{-1/2}T],\jp{x}^{-1}\Upsilon^{\mu-K-\tilde{d}-1}).
\end{equation*}
Therefore, 
\begin{align*}
\Lag_{h,s}^\pm \chi^\pm(s,\cdot) - \Lag_{0,s}^\pm \chi^\pm(s,\cdot) 
& = \pm h^{1/2} \jp{s}^{-1}  \int_0^s \O(1)_{\Linf([0,h^{-1/2}T],\jp{x}^{-1}\Upsilon^{\mu-K-\tilde{d}-1})} \d\sigma \\
& =  \pm h^{1/2} \jp{s}^{-1} \O(s)_{\jp{x}^{-1}\Upsilon^{\mu-K-\tilde{d}-1}} \\
& = \O(h^{1/2})_{\jp{x}^{-1}\Upsilon^{\mu-K-\tilde{d}-1}}
\end{align*}
which, together with~\eqref{eq::positivity-Lag-chi-0}, proves~\eqref{eq::chi-pm-quasi-positivity}.
\end{proof}

\subsubsection{Propagation}

Now we prove Theorem~\ref{thm::main-finite}. By Lemma~\ref{lem::WF-eta-psi==u} and Lemma~\ref{lem::Geo-Phi-relation}, it suffices to prove the following propagation theorem for $u$ defined as in~\eqref{eq::def-u}.

\begin{theorem}
\label{thm::main-finite-u}
Under the hypothesis of Theorem~\ref{thm::main-finite}, let~$ u $ be defined as~\eqref{eq::def-u}. Let 
\begin{equation*}
(x_0,\xi_0) \in \WF_{0,1}^\sigma(u_0)^\circ,
\end{equation*}
with $ 0 \le \sigma < \min\{(\mu-K-\tilde{d})/2,3k/2\} $ for some $ K > 0 $, such that the co-geodesic $ \{(x_s,\xi_s) = \Phi_s(x_0,\xi_0)\}_{s\in\R} $ is forwardly non-trapping. Set 
\begin{equation*}
\xi_\infty = \lim_{s\to+\infty} \xi_s,
\end{equation*}
then for all $ t_0 \in (0,T] $, we have
\begin{equation*}
\Big(\frac{3}{2}t_0|\xi_{\infty}|^{-1/2}\xi_{\infty},\xi_{\infty}\Big) \in \WF_{1/2,1}^\sigma(u(t_0)).
\end{equation*}
\end{theorem}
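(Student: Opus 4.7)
The plan is to adapt the proof of Theorem~\ref{thm::model-eq}\,\ref{thm::model-eq-finity} to the quasilinear setting, replacing the semiclassical pseudodifferential operators $\op_h^{\delta,\rho}$ with the semiclassical paradifferential operators $\P{\cdot}^{h,\epsilon}$ of~\S\ref{sec::paradiff-calculus}, and the constant Hamiltonian $|\xi|^{3/2}$ with the variable coefficient $H(x,\xi) = \gamma^{(3/2)}(0,x,\xi)$ whose flow $\Phi_s$ was studied in Lemma~\ref{lem::Geo-Phi-relation} and Lemma~\ref{lem::hamiltonian-flow-to-infinity}. I would argue by contradiction: assuming $\big(\tfrac{3}{2}t_0|\xi_{\infty}|^{-1/2}\xi_{\infty},\xi_{\infty}\big) \not\in \WF_{1/2,1}^\sigma(u(t_0))$, I will construct a semiclassical paradifferential operator elliptic at $(x_0,\xi_0)$ whose action on $u_0$ decays as $\O(h^\sigma)_{\Ltwo}$, contradicting $(x_0,\xi_0) \in \WF_{0,1}^\sigma(u_0)^\circ$ via Lemma~\ref{lem::WF-characterization-paradiff}.

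First I would use Proposition~\ref{prop::paralinearization-ww-fine} and Remark~\ref{rmk::w-u-u-bar} to reduce to a scalar equation
\begin{equation*}
\pt u + \P{V}\cdot\nabla u + i\P{\gamma} u = R,
\end{equation*}
where $R$ collects the $\H{\mu - 4 - d/2}$ remainder, the lower-order $\P{Q}$ contributions, and the $\P{\zeta}$ coupling to $\bar u$ (of order $-1/2$); the last term is handled exactly as in the proof of Theorem~\ref{thm::main-infinite-u} by a cut-off to $|\xi|^2 \gg g$ followed by a matrix diagonalization. Passing to the semiclassical time $s = h^{-1/2} t$ and defining, for $a_h \in \Gamma^{-\infty,\infty} \cap \sigma_0$,
\begin{equation*}
\L_s^h \P{a_h}^h \bydef \ps \P{a_h}^h + ih^{1/2}[\P{\gamma}, \P{a_h}^h],
\end{equation*}
Proposition~\ref{prop::composition-P-h}, Corollary~\ref{cor::homogenization-semiclassical} and Lemma~\ref{lem::un-paradiff-estimate-smooth-symbol} combine to give
\begin{equation*}
\L_s^h \P{a_h}^h = \P{\Lag_{h,s}^+ a_h}^h + h\,\P{b_h}^h + \O(h^N)_{\Ltwo\to\Ltwo}
\end{equation*}
for some $b_h$ with $\supp b_h \subset \supp a_h$ and $N$ bounded by the regularity of the coefficients of $\gamma$; this is the paradifferential analogue of Lemma~\ref{lem::lag-derivative-at-infinity-model-eq}.

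Next I would set up the asymptotic expansion $a_h \sim \sum_{j \in \N} h^j \varphi^j(s) a_h^j$ with $\varphi^j \in P_j$ (the logarithmic class of~\eqref{eq::def-P_j}), $a_h^0$ built from $\chi^+$ of Lemma~\ref{lem::chi-symbol-construction-main} (whose initial ellipticity at $(x_0,\xi_0)$ and ellipticity of the rescaled symbol at the target $\big(\tfrac{3}{2}t_0|\xi_\infty|^{-1/2}\xi_\infty,\xi_\infty\big)$ for large $s$ are tailor-made for this proof), and each subsequent $a_h^{j+1}$ constructed iteratively to absorb the error from step $j$, using the positivity $\Lag_{h,s}^+ \chi^+ \geq \O(h^{1/2})$ from Lemma~\ref{lem::chi-symbol-construction-main} and the paradifferential G{\aa}rding inequality (Lemma~\ref{lem::Garding-paradiff}). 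This produces
\begin{equation*}
\L_s^h \P{a_h}^h \geq \O(h^N \jp{s}^{-1})_{\Ltwo \to \Ltwo},
\end{equation*}
with $N$ as large as the regularity permits. The contradiction hypothesis, combined with Lemma~\ref{lem::support=decay} transposed to the paradifferential setting via Lemma~\ref{lem::un-paradiff-estimate-smooth-symbol}, forces $(u, \P{a_h}^h u)_{\Ltwo}|_{s = h^{-1/2}t_0} = \O(h^{2\sigma})$. Integrating $\tfrac{d}{ds}(u,\P{a_h}^h u)_{\Ltwo}$ from $0$ to $h^{-1/2}t_0$ and using $\int_0^{h^{-1/2}t_0} \jp{s}^{-1} \varphi^j(s)\,\ds = \O(|\log h|^{j+1})$ yields $(u, \P{a_h}^h u)_{\Ltwo}|_{s=0} = \O(h^{2\sigma})$; Cauchy--Schwarz and Lemma~\ref{lem::WF-characterization-paradiff} then close the contradiction.

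The hardest point, and what dictates the upper bound $\sigma \leq \min\{\mu/2 - 3 - d/4,\,3m/2\}$, is the careful propagation of paradifferential remainders through the semiclassical calculus. Unlike the pseudodifferential proof of Theorem~\ref{thm::model-eq}\,\ref{thm::model-eq-finity} where every remainder is $\O(h^\infty)$, each application of Proposition~\ref{prop::composition-P-h} or Proposition~\ref{prop::adjoint-P-h} here gains only finitely many powers of $h$, bounded by the regularity of the coefficients $\gamma$, $V$, $Q$, $\zeta$ (see Remark~\ref{remark::symbol-polynomial-decay}). The asymptotic expansion must be truncated at a finite order, and balancing the available regularity against the $h^{-1/2}$ growth coming from the long time interval $[0, h^{-1/2} t_0]$ produces the stated bound on $\sigma$. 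A secondary technical issue is preserving the weighted class $\Gamma^{-\infty,\mu-3-d/2}_{-1,0}$ of $\Lag_{h,s}^+ \chi^+$ throughout the iteration, which should follow from the algebraic structure of $P_j$ and the $\jp{s}^{-1}$ weight in the positivity estimate.
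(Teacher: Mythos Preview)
Your overall strategy is correct and matches the paper's, but there is a genuine gap in the reduction to a scalar equation, and a resulting error in the step size of the asymptotic expansion.

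The diagonalization used in Theorem~\ref{thm::main-infinite-u} relies on first replacing $\P{\gamma}$ by $\P{|\xi|^{3/2}}$ (the difference landing in $X^{m/2-3/2}$, hence harmless for the $(1/2,1)$ wavefront set), after which the remaining matrix is a pure Fourier multiplier and can be diagonalized by the constant matrix $P(D_x)$. In the present theorem that replacement is not available: the $(0,1)$ wavefront set at $s=0$ is probed by symbols with \emph{compact} support in $x$, so the spatial decay of $\gamma^{(3/2)}-|\xi|^{3/2}$ buys nothing, and you must keep the full variable-coefficient $\gamma$ to see the geodesic flow $\Phi_s$. Once $\gamma$ stays variable-coefficient, conjugating by $P(D_x)$ no longer diagonalizes the system; and the order-$0$ matrix $Q\in M_{2\times 2}(\Sigma^{0,\mu-5/2})$ cannot be moved into the remainder either: after passing to semiclassical time it contributes $h^{1/2}\P{Q}w=\O(h^{1/2})_{\Ltwo}$, and integrating $(\A_h\,h^{1/2}\P{Q}w,w)$ over $[0,h^{-1/2}t_0]$ gives only $\O(1)$, which kills the argument.

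The paper therefore does \emph{not} reduce to a scalar equation. It works with the full $2\times2$ system and builds diagonal matrix symbols $\chi_j=\mathrm{diag}(\chi_j^+,\chi_j^-)$ from Lemma~\ref{lem::chi-symbol-construction-main}, keeping $\P{Q}$ and the $\P{\zeta}$ coupling inside the operator $\L_s^h$. Because $Q$ is off-diagonal its commutator with the diagonal $\A_h^0$ has no principal-symbol cancellation, so $h^{1/2}[\P{Q},\A_h^0]=h^{1/2}\P{b_h^2}^h+\O(h^\rho)$ gains only $h^{1/2}$; combined with $\Lag_{h,s}^\pm\chi^\pm\ge\O(h^{1/2})$ (which you noted), this forces the expansion $\A_h=\sum_{j=0}^{2\rho}h^{j/2}\varphi^j\A_h^j$ in powers of $h^{1/2}$, not $h^j$ as you wrote. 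Once you make these two changes---keep the system and halve the step size---your outline coincides with the paper's proof.
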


Under the semiclassical time variable $ s = h^{-1/2} t $, \eqref{eq::equation-w-sys} becomes
\begin{align*}
(\ps + h^{1/2}\P{V} \cdot \nabla + h^{1/2}\P{Q}) w 
+ i h^{1/2} \begin{pmatrix}
\P{\gamma} & 0 \\ 0 & -\P{\gamma}
\end{pmatrix} w +  \frac{ih^{1/2}g}{2} \P{\zeta} \begin{pmatrix*}
1 & -1 \\ 1 & -1
\end{pmatrix*} w \\
= F_h = \O(h^{1/2})_{\H{\mu-K-\tilde{d}}},
\end{align*}
for some $ K > 0 $.
We define $ \L_s^h $ which applies to time dependent operators $ \A : \swtz \to \swtz' $,
\begin{equation*}
\L^h_s \A = \ps \A + h^{1/2} \Big[ \P{V} \cdot \nabla + \P{Q} +  i \P{\gamma} \begin{pmatrix}
1 & 0 \\ 0 & -1
\end{pmatrix} + \frac{ig}{2} \P{\zeta} \begin{pmatrix*}
1 & -1 \\ 1 & -1
\end{pmatrix*} , \A \Big].
\end{equation*}
We also define $ \Lag_s^h $ which applies to symbols of the diagonal form $ A = \begin{pmatrix}
A^+ & 0 \\ 0 & A^-
\end{pmatrix} $,
\begin{equation*}
\Lag_s^h A = \begin{pmatrix}
\Lag^+_{h,s} A^+ & 0 \\ 0 & \Lag^-_{h,s} A^-
\end{pmatrix}.
\end{equation*}

\begin{proof}[Proof of Theorem~\ref{thm::main-finite-u}]
We shall from now on denote $ \rho = \mu - K - \tilde{d} $ for some sufficiently large $ K > 0 $, also denote $ I_h = [0,h^{-1/2}T] $ and
\begin{equation*}
Y_h^\rho = \Linf\Big( I_h,M_{2\times 2}\Big(\sum_{j=0}^\rho h^j \Upsilon^{\rho-j}\Big) \Big)
\end{equation*}
for simplicity. 
More precisely, a symbol $ A_h = \sum_{j=0}^\rho h^j A_h^j \in Y_h^\rho $ if
\begin{equation*}
\sup_{h\in(0,1]} \sup_{s\in[0,h^{-1/2}T]} N^{\rho-j}(A_h^j) <+\infty,
\end{equation*}
where the norm $ N^{\rho-j}(A_h^j) $ is applied to every component of $ A_h^j $.
Choose a strictly increasing sequence $ \{\lambda_j\}_{j \ge 0} \subset [1,1+\epsilon) $ with $ \epsilon > 0 $ being sufficiently small. Define $ \chi^{\pm}_j $ as in Lemma~\ref{lem::chi-symbol-construction-main} where we replace~$ \phi $ with $ \phi(\cdot/\lambda_j) $. Then
\begin{equation*}
\supp \chi^\pm_j \subset \{\chi^\pm_{j+1} > 0\}
\end{equation*}
for all $ j \in \N $.
Set
\begin{equation*}
\chi_j = \begin{pmatrix}
\chi^{+}_j & 0 \\ 0 & \chi^{-}_j
\end{pmatrix}.
\end{equation*}
We shall construct an operator $ \A_h \in \Linf(I_h,\Ltwo\to\Ltwo) $ such that:
\begin{enumerate}
\item \label{chi::support} $ \A_h $ is a paradifferential operator, more precisely, there exists 
\begin{equation*}
A_h^\pm \in \Holder{1}(\R_{\ge 0},\Upsilon^{\rho+1}) \cap \Holder{1}(\R_{\ge s_0},S^{-\infty}_0)
\end{equation*}
for some $ s_0 > 0 $, such that
\begin{equation*}
\A_h - \P{A_h}^h = \O(h^{\rho})_{\Linf(I_h,\Ltwo\to\Ltwo)}, \quad
A_h = \begin{pmatrix}
A_h^+ & 0 \\ 0 & A^-_h
\end{pmatrix}.
\end{equation*}
Moreover, we require that
\begin{equation*}
\supp A_h^\pm \subset \bigcup_{j\ge0} \supp \chi^\pm_j.
\end{equation*}

\item $ A_h^\pm(0,x,\xi) $ is elliptic at $ (x_0,\pm \xi_0) $;
\item \label{chi::elliptic-long-time} $ A_h^\pm\big(s,\frac{s}{t_0}x,\xi\big) \in S^{-\infty}_{-\infty} $ is elliptic at $ (\frac{3}{2}t_0|\xi_{\infty}|^{-1/2}\xi_{\infty},\xi_{\infty}) $ for $ s > 0 $ sufficiently large;
\item $ \L^h_s \A_h  \ge \O(h^{\rho})_{\Linf(I_h,\Ltwo\to\Ltwo)} $. \label{item:positivity-lagrange}
\end{enumerate}
We shall construct $ \A_h $ of the form 
\begin{equation*}
\A_h = \sum_{j\ge 0}^{2\rho} h^{j/2} \varphi^j \A_h^j,
\end{equation*}
where $ \varphi \in P_j $, recalling the definition~\eqref{eq::def-P_j}, and $ \A^j_h \in \Linf(I_h,\Ltwo\to\Ltwo) $. We begin by setting
\begin{equation*}
\A_h^0 = (\P{\chi_0}^h)^* \P{\chi_0}^h, \quad \varphi^0 \equiv 1.
\end{equation*}
Therefore, by the symbolic calculus, Lemma~\ref{lem::chi-symbol-construction-main} and Corollary~\ref{cor::homogenization-semiclassical} (observe that the symbol of $ \A^0_h $ belongs to $ \sigma_0 $, and that $ \gamma $ is a sum of homogeneous symbols),
\begin{equation*}
\ps \A_h^0 + h^{1/2} \Big[i \P{\gamma} \begin{pmatrix}
1 & 0 \\ 0 & -1
\end{pmatrix}, \A_h^0 \Big]
= 2 \P{\chi_0 \Lag^h_s \chi_0}^h + h \P{b_h^0}^h + \O(h^{\rho})_{\Linf(I_h,\Ltwo\to\Ltwo)},
\end{equation*}
for some symbol $ b_h^0 $ such that $ \jp{x} b_h^0 \in Y^\rho_h $.
This $ \jp{x} $ factor comes from the spatial decay of $ \partial_{x,\xi} \gamma $.
Moreover, we have $ \supp b_h^0 \subset \supp \chi_0 $, which implies $ \jp{s} b_h^0 \in Y^\rho_h $. Similarly,
\begin{equation*}
h^{1/2}[\P{V} \cdot \nabla,\A_h^0]
= h^{1/2} \P{b^1_h}^h + \O(h^{\rho})_{\Linf(I_h,\Ltwo\to\Ltwo)},
\end{equation*}
where $ \jp{s} b^1_h \in Y_h^\rho $, with $ \supp b^1_h \subset \supp \chi_0 $. Be careful that, because~$ Q $ and $ \P{\zeta} \begin{pmatrix*}
1 & -1 \\ 1 & -1
\end{pmatrix*} $ are not diagonal matrices, their commutators with~$ \A_h^0 $ do not gain an extra~$ h $, for the principal symbols do not cancel each other. So,
\begin{align*}
h^{1/2}[\P{Q},\A_h^0]
& = h^{1/2} \P{b^2_h}^h + \O(h^{\rho})_{\Linf(I_h,\Ltwo\to\Ltwo)}, \\\
h^{1/2}\Big[\P{\zeta} \begin{pmatrix*}
1 & -1 \\ 1 & -1
\end{pmatrix*},\A_h^0\Big]
& = h \P{b^3_h}^h + \O(h^{\rho})_{\Linf(I_h,\Ltwo\to\Ltwo)},
\end{align*}
where $ \jp{s} b^2_h, \jp{s} b^3_h \in Y^\rho_h $, with $ \supp b^2_h \cup \supp b^3_h \subset \supp \chi_0 $. By Lemma~\ref{lem::chi-symbol-construction-main},
\begin{equation*}
\chi_0 \Lag^h_s \chi_0 \ge h^{1/2} b_h^4,
\end{equation*}
where $ \jp{s} b_h^4 \in Y^\rho_h $, with $ \supp b_h^4 \subset \supp \chi_0 $. Therefore, combining the idea described above~\eqref{eq:from-garding-to-symbol-construction} and the paradifferential G{\aa}rding inequality (Lemma~\ref{lem::Garding-paradiff}, where we take $ \epsilon = 1/2 $),
\begin{equation*}
\P{\chi_0 \Lag^h_s \chi_0}^h - h^{1/2} \P{b_h^4}^4 \ge h^{1/2} \P{b_h^5}^h + \O(h^{\rho})_{\Ltwo\to\Ltwo},
\end{equation*}
for some $ b^5_h \in Y^\rho_h $ with $ \supp b^5_h \subset \{\chi_1 > 0\} $. 
In fact, choose $c_h \in L^\infty(\mathbb{R}_{\ge 0}, S^{-\infty}_0)$ such that \begin{equation*}
  \supp a_h \subset \{c_h = 1\} \subset \supp c_h \subset \supp \chi_1.
\end{equation*}
Then for all $v \in L^2$, we have
\begin{align*}
\bigl\langle v, \bigl(\P{\chi_0 \Lag^h_s \chi_0}^h - h^{1/2} \P{b_h^4}^4\bigr) v \bigr\rangle_{L^2}
& = \langle \P{c_h} v, \bigl(\P{\chi_0 \Lag^h_s \chi_0}^h - h^{1/2} \P{b_h^4}^4\bigr) \P{c_h} v \rangle_{L^2} + \O(h^\rho) \\
& \gtrsim -Ch^{1/2} \|\P{c_h} v\|_{L^2}^2 + \O(h^\rho).
\end{align*}
Therefore, it suffices to choose $b_h^5$ such that
\begin{equation*}
\P{b_h^5} - C \P{c_h}^* \P{c_h} = \O(h^\rho)_{L^\infty(I_h,L^2\to L^2)},
\end{equation*}
which can be achived by Proposition~\ref{prop::composition-P-h} and Proposition~\ref{prop::adjoint-P-h}.
Set
\begin{equation*}
\alpha_h^0 = \jp{s}(b_h^1 + b_h^2 + 2 b_h^4 + 2b_h^5) \in Y^\rho_h, \quad
\beta_h^0 = \jp{s}(b_h^0 + b_h^3) \in Y^\rho_h.
\end{equation*}
Then
\begin{equation*}
\L^h_s \A_h^0 \ge h^{1/2} \jp{s}^{-1} \P{\alpha_h^0 + h^{1/2}\beta_h^0}^h + \O(h^{\rho})_{\Linf(I_h,\Ltwo\to\Ltwo)}.
\end{equation*}
Suppose that we have found $ \A_h^j \in \Linf(I_h,\Ltwo\to\Ltwo) $, $ \varphi^j \in P_j $ for $ j = 0,\ldots, \ell-1 $, and $ \psi^{\ell-1} \in P_{\ell-1} $, $ \alpha_h^{\ell-1}, \beta_h^{\ell-1} \in Y^\rho_h $ with 
\begin{equation*}
\supp \alpha_h^{\ell-1} \cup \supp \beta_h^{\ell-1} \subset \{\chi_\ell > 0\},
\end{equation*}
such that
\begin{equation}
\label{eq::thm-main-finite-induction-hypothesis}
\L^h_s \Big( \sum_{j=0}^{\ell-1} h^{j/2} \varphi^j \A_h^j \Big)
\ge h^{\ell/2} \jp{s}^{-1} \psi^{\ell-1} \P{\alpha_h^{\ell-1} + h^{1/2} \beta_h^{\ell-1} }^h + \O(h^{\rho})_{\Linf(I_h,\Ltwo\to\Ltwo)}.
\end{equation}
Then as in the proof of~\eqref{thm::model-eq-finity} of Theorem~\ref{thm::model-eq}, we set 
\begin{equation*}
\varphi^\ell(s) = \int_0^s (1+\sigma)^{-1}\psi^{\ell-1}(\sigma) \d \sigma, \quad
\A_h^{\ell} = C_\ell \varphi^\ell \P{\chi_{\ell}}^{h},
\end{equation*}
where the constant~$ C_\ell $ is sufficiently large, such that by Lemma~\ref{lem::chi-symbol-construction-main}, in the sense of positivity of matrices,
\begin{align*}
C_\ell \Lag_h^s (\varphi^\ell \chi_\ell) 
& = C_\ell(1+s)^{-1}\psi^{\ell-1}\chi_\ell + C_\ell \varphi^\ell \Lag_h^s \chi_\ell \\
& \ge \jp{s}^{-1}\psi^{\ell-1} \alpha_h^{\ell-1} + \varphi^\ell h^{1/2} \jp{s}^{-1} \tilde{\beta}_h^\ell.
\end{align*}
for some $ \tilde{\beta}_h^\ell \in Y^\rho_h $. By the paradifferential G{\aa}rding inequality, and a routine construction of parametrix, we find $ \tilde{\alpha}_h^\ell \in Y^\rho_h $, with $ \supp \tilde{\alpha}_h^\ell \subset \{ \chi_{\ell+1} > 0 \} $, such that
\begin{equation*}
\P{C_\ell \Lag_h^s (\varphi^\ell \chi_\ell)}^h - \jp{s}^{-1} \P{ \psi^{\ell-1}  \alpha_h^{\ell-1} + h^{1/2} \varphi^\ell \tilde{\beta}_h^\ell }^h
\ge h \jp{s}^{-1} \P{(\psi^{\ell-1} + \varphi^\ell) \tilde{\alpha}_h^\ell}^h  + \O(h^{\rho})_{\Linf(I_h,\Ltwo\to\Ltwo)}.
\end{equation*}
Similarly as in the estimate of $ \A_h^0 $, by a symbolic calculus, we find $ \ul{\alpha}_h^\ell, \ul{\beta}_h^\ell \in Y^\rho_h $, with 
\begin{equation*}
\supp \ul{\alpha}_h^\ell \cup \supp \ul{\beta}_h^\ell \subset \supp \chi_{\ell},
\end{equation*}
such that
\begin{equation*}
\L^h_s \A_h^\ell = \P{C_\ell \Lag_h^s (\varphi^\ell \chi_\ell)}^h + h^{1/2} \jp{s}^{-1} \varphi^\ell \P{\ul{\alpha}_h^\ell + h^{1/2}\ul{\beta}_h^\ell} + \O(h^{\rho})_{\Linf(I_h,\Ltwo\to\Ltwo)}.
\end{equation*}
Summing up the two inequalities above,
\begin{align}
\label{eq::thm-main-finite-induction-step}
\L^h_s \A_h^\ell - \jp{s}^{-1} \psi^{\ell-1} \P{ \alpha_h^{\ell-1}}^h 
\ge h^{1/2} \jp{s}^{-1} & \P{\varphi^\ell(\ul{a}_h^\ell+\tilde{\beta}_h^\ell) + h^{1/2}(\psi^{\ell-1} + \varphi^\ell) \tilde{\alpha}_h^\ell + h^{1/2} \varphi^\ell \ul{\beta}_h^\ell }^h \\
& \quad + \O(h^{\rho})_{\Linf(I_h,\Ltwo\to\Ltwo)}. \nonumber
\end{align}
Therefore, combining~\eqref{eq::thm-main-finite-induction-hypothesis} and~\eqref{eq::thm-main-finite-induction-step},
\begin{equation*}
\L^h_s \Big( \sum_{j=0}^{\ell} h^{j/2} \varphi^j \A_h^j \Big)
\ge h^{(\ell+1)/2} \jp{s}^{-1} \psi^\ell \P{\alpha_h^{\ell} + h^{1/2} \beta_h^{\ell} }^h + \O(h^{\rho})_{\Linf(I_h,\Ltwo\to\Ltwo)},
\end{equation*}
where 
\begin{equation*}
\psi^\ell = 1 + \psi^{\ell-1} + \varphi^\ell, \quad
\alpha_h^\ell = \frac{\psi^{\ell-1}}{\psi^\ell} \beta_h^{\ell-1} + \frac{\phi^{\ell}}{\psi^\ell} (\ul{\alpha}_h^\ell + \tilde{\beta}_h^\ell), \quad
\beta_h^\ell = \frac{\psi^{\ell-1}+\varphi^\ell}{\psi^\ell} \tilde{\alpha}_h^\ell + \frac{\varphi^\ell}{\psi^\ell}\ul{\beta}_h^\ell.
\end{equation*}
Thus we close the induction procedure.

To finish the proof, suppose that 
\begin{align*}
\Big(\frac{3}{2}t_0|\xi_{\infty}|^{-1/2}\xi_{\infty},\xi_{\infty} \Big) & \not\in \WF_{1/2,1}^\sigma(u(t_0)), \\
\Big(\frac{3}{2}t_0|\xi_{\infty}|^{-1/2}\xi_{\infty},-\xi_{\infty}\Big) & \not\in \WF_{1/2,1}^\sigma(\overline{u(t_0)}).
\end{align*}
By Lemma~\ref{lem::chi-symbol-construction-main}, we can choose $ \phi $ such that for sufficiently small $ h>0 $,
\begin{align*}
\supp \theta_{1/h,*}^{1/2,0} \chi_j^+|_{s=h^{-1/2}t_0} \subset \R^{2d} \backslash \WF_{1/2,1}^\sigma(u(t_0)), \\
\supp \theta_{1/h,*}^{1/2,0} \chi_j^-|_{s=h^{-1/2}t_0} \subset \R^{2d} \backslash \WF_{1/2,1}^\sigma(\overline{u(t_0)}).
\end{align*}
So by Lemma~\ref{lem::un-paradiff-estimate-smooth-symbol} and Lemma~\ref{lem::support=decay}, 
\begin{equation*}
(\A_h w,w)_{\Ltwo}|_{s=h^{-1/2}t_0} = \O(h^{2\sigma}).
\end{equation*}
By our construction, $ \varphi^\ell(0) = 0 $ for all $ \ell \ge 1 $, so
\begin{equation*}
\A_h|_{s=0} = \A_h^0|_{s=0} = (\P{\chi_0}^h)^* \P{\chi_0}^h|_{s=0}.
\end{equation*}
Because $ F_h = \O(h^{1/2})_{\H{\rho}} $, we have, by Lemma~\ref{lem::basic-properties-WF}, that
$ \A_h F_h = \O(h^{\rho+1/2})_{\Ltwo}. $
Therefore, by~\eqref{item:positivity-lagrange},
\begin{align*}
\|\P{\chi_0}^h w|_{s=0}\|_{\Ltwo}^2 
= & \Re(\A_h w,w)_{\Ltwo}|_{s=h^{-1/2}t_0} 
- \int_0^{h^{-1/2}t_0} \Re(\L_s^h \A_h w,w)_{\Ltwo} \ds \\
& \qquad \qquad \qquad \qquad \qquad - \int_0^{h^{-1/2}t_0} \Re(\A_h F_h,w)_{\Ltwo} \ds \\
\le & \O(h^{2\sigma}) + \O(h^{\rho-1/2}) = \O(h^{2\sigma}).
\end{align*}
Observe that $ \chi_0|_{s=0} $ is of compact support with respect to~$ x $, we have
\begin{equation*}
\P{\chi_0|_{s=0}}^h = \T{\beta_h}^h + \O(h^{\rho})_{\Ltwo\to\Ltwo},
\end{equation*}
where 
\begin{equation*}
\beta_h = \sum_{j \ge 0} \psi_j \chi_0|_{s=0} \sharp_h \psi_j \in \sum_{j=0}^\rho h^j \Upsilon^{\rho-j}
\end{equation*}
is a finite summation.
By Lemma~\ref{lem::WF-characterization-paradiff} and~\eqref{eq::symbol-decay-complete}, we conclude that, if $ (x_0,\xi_0) \not \in \WF_{0,1}^\sigma(u_0) $ provided $ \sigma \le {3 \over 2} r $ where
\begin{equation*}
r = \min\{[2(\mu-1-\tilde{d})/3],k\}.
\end{equation*}
Under the hypothesis of theorem we have $ r = k $.
\end{proof}

\subsection{Proof of Corollary~\ref{cor::local-smoothing-effect}}

\label{sec::local-smoothing-effect}

The case when $ d = 1 $ is trivial. For the second case, we shall prove that on any co-geodesic $ \{(x_t,\xi_t)\}_{t\in\R} $, 
\begin{equation}
\label{eq::x-dot-xi-to-infinity}
\lim_{t\to+\infty} x_t \cdot \xi_t = \infty,
\end{equation}
so no geodesics can be trapped. The proof of~\eqref{eq::x-dot-xi-to-infinity} is almost finished by the proof of Lemma~\ref{lem::hamiltonian-flow-to-infinity}. Indeed, similar calculations imply that
\begin{equation*}
\frac{\d}{\dt} (x_t \cdot \xi_t) \gtrsim |\xi_0|^2.
\end{equation*}

\section*{List of notations}

\begin{itemize}[label={}]
  \item $\WF^\mu(u)$ wavefront set
  \item $\WF^\mu_{\delta,\rho}(u)$ quasi-homogeneous wavefront set
  \item $\op(a)$ pseudodifferential operator
  \item $\op_h(a)$ semiclassical pseudodifferential operator
  \item $\op_h^{\delta,\rho}(a)$ quasi-homogeneous semiclassical pseudodifferential operator
  \item $T_a$ paradifferential operator
  \item $\mathcal{P}_a$ dyadic paradifferential operator
  \item $\mathcal{P}_a^h$ semiclassical dyadic paradifferential operator
  \item $\mathcal{P}_a^{h,\epsilon}$ quasi-homogeneous semiclassical dyadic paradifferential operator
  \item $a \sharp_h^{\delta,\rho} b$ composition of symbols
  \item $\zeta_h^{\delta,\rho} a$ adjoint of symbols
  \item $\mathscr{S}$, $\mathscr{S}'$ Schwartz function space and tempered distribution space
  \item $\mathcal{H}^{\mu,\delta}_k$, $W^{r,\infty}_{k,\delta}$ weighted Sobolev spaces
  \item $\Gamma^{m,r}$ paradifferential symbol class
  \item $\Gamma^{m,r}_{k,\delta}$ weighted paradifferential symbol class
  \item $\Sigma^{m,r}_{k,\delta}$ weighted paradifferential poly-symbol class
  \item $M^{m,r}$, $M^{m,r}_{k,\delta}$ symbol norm and weighted symbol norm
  \item $\theta^{\delta,\rho}_h$ phase-space scaling operator
\end{itemize}

\bibliography{wws} 
\bibliographystyle{plain}

\end{document}